\newlist{steps}{enumerate}{1}
\setlist[steps, 1]{label = Step \arabic*:}
\numberwithin{equation}{section}
\newtheorem{theorem}{Theorem}[section]
\newtheorem{lemma}[theorem]{Lemma}
\newtheorem{corollary}{Corollary}[section]
\theoremstyle{remark}
\newtheorem{assumption}{Assumption}[section]
\newtheorem{remark}{Remark}[section]
\newtheorem{experiment}{Experiment}[section]
\providecommand{\customgenericname}{}
\newcommand{\newcustomtheorem}[2]{%
  \newenvironment{#1}[1]
  {%
   \renewcommand\customgenericname{#2}%
   \renewcommand\theinnercustomgeneric{##1}%
   \innercustomgeneric
  }
  {\endinnercustomgeneric}
}
\DeclareMathOperator{\dist}{dist}
\DeclareMathOperator{\bias}{Bias}
\DeclareMathOperator{\var}{Var}
\DeclarePairedDelimiter\floor{\lfloor}{\rfloor}
\newcommand{\rf}[1]{{\color{black}  #1}} 
\newcommand{\rs}[1]{{\color{black}  #1}} 
\begin{document}
\begin{frontmatter}
\title{Simplest random walk for approximating Robin boundary value problems and ergodic limits of reflected diffusions}
\runtitle{Simplest random walk for reflected diffusions}


\begin{aug}
\author[A]{\fnms{Benedict} \snm{Leimkuhler}\ead[label=e1]{B.Leimkuhler@ed.ac.uk}},
\author[B]{\fnms{Akash} \snm{Sharma}\ead[label=e2]{Akash.Sharma1@nottingham.ac.uk}}
\and
\author[B]{\fnms{Michael V.} \snm{Tretyakov}\ead[label=e3]{Michael.Tretyakov@nottingham.ac.uk}}
\address[A]{School of Mathematics and the Maxwell Institute for Mathematical Sciences, University of Edinburgh\printead[presep={,\ }]{e1}}

\address[B]{School of Mathematical Sciences, University of Nottingham \printead[presep={,\ }]{e2,e3}}

\end{aug}

\begin{abstract}
 A simple-to-implement weak-sense numerical method to approximate reflected stochastic differential equations (RSDEs) is proposed and analysed. It is proved that the method has the first order of weak convergence. Together with the Monte Carlo technique, it can be used to numerically solve linear parabolic and elliptic PDEs with Robin boundary condition. One of the key results of this paper is the use of the proposed method for computing ergodic limits, i.e. expectations with respect to the invariant law of RSDEs, both inside a domain in  $\mathbb{R}^{d}$ and on its boundary. This allows to efficiently sample from distributions with compact support.
    Both time-averaging and ensemble-averaging estimators are considered and analysed. A number of extensions are considered including a second-order weak approximation, the case of arbitrary oblique direction of reflection, and a new adaptive weak scheme to solve a Poisson PDE with Neumann boundary condition. The presented theoretical results are supported by several numerical experiments.
\end{abstract}

\begin{keyword}[class=MSC2010]
\kwd[Primary ]{60H35}
\kwd[; secondary ]{65C30
}\kwd{60H10}
\kwd{37H10}
\end{keyword}

\begin{keyword}
\kwd{reflected stochastic differential equations}
\kwd{weak approximation}
\kwd{ergodic limits}
\kwd{reflected Brownian dynamics}
\kwd{stochastic gradient system in bounded domains}
\kwd{Neumann boundary value problem}
\kwd{sampling from distributions with compact support} \kwd{sampling on manifold}
\end{keyword}

\end{frontmatter}

\section{Introduction}
This paper is devoted to weak approximation of stochastic differential equations (SDEs) with reflecting boundary conditions in a multidimensional domain $G\subset \mathbb{R}^{d}$. Many models from physics, biology, engineering, and finance can be described using SDEs. In some of those scenarios reflected SDEs (RSDEs) can be used as a modelling tool. Let us list a few examples from different fields. Applications of reflected diffusion processes in stock management as well as in quality control were considered in \cite{23}. Heavy traffic behaviour of queuing systems is modelled  using the reflected Brownian motion in \cite{24}.
In \cite{79} (see also \cite{80}), it is demonstrated that the solution (known as the optimal portfolio process) of consumption investment problems with transaction cost are governed by RSDEs. 
In \cite{leite2019}, the authors used RSDEs to model dynamics of reacting chemical species with the constraint that concentration of species cannot be negative.
Related issues in sampling measures and modelling natural phenomena in constrained space using reflective boundaries arise in many different connections, for example in molecular modeling \cite{ william2019},
biological models \cite{bio2,bio3}, 
continuum mechanics \cite{cont}, chemistry \cite{chem1,chem2}, and statistical inference \cite{stats1,stats3}.
We expect our study therefore to be of wide interest.

The Feynman-Kac formula gives the probabilistic representation of solutions of parabolic and elliptic PDEs with Neumann/Robin boundary condition as the expectation of a functional of the reflected diffusion process. Solving such PDEs using deterministic methods, for example finite difference methods, requires \rf{approximating} solutions in the whole domain, therefore the computational cost increases exponentially with \rf{increasing} dimension. The use of the Monte Carlo method to find solutions of such PDEs is preferred when the solution is not needed in the whole domain but only at certain points. Further, in the case of the Monte Carlo methods, independent trajectories can be simulated using parallel computers.

 Another important application of RSDEs is in making use of a stochastic gradient system with reflection for drawing samples from higher dimensional distributions with compact support (see \cite{55} and Section~\ref{section6} here). This application is one of the main objectives of this paper in the setting of ergodic limits  (Section~\ref{section6}).

Let $G \subset \mathbb{R}^{d}$ be a bounded domain with boundary $\partial G$, $ Q := [T_{0}, T)\times G$ be a cylinder in $\mathbb{R}^{d+1}$, and $S$ be the lateral surface of $Q$. Let $b: \bar{Q} \rightarrow \mathbb{R}^{d}$ and $\sigma : \bar{Q} \rightarrow \mathbb{R}^{d \times d}$.
Consider the RSDEs
\begin{align}\label{eq:1}
    dX(s) &= b(s,X(s))ds + \sigma(s,X(s))dW(s) + \nu(X(s))I_{\partial G}(X(s)) dL(s),
\end{align}
$$\, \rf{X(t_0)=x,}\;\; T_{0} \leq t_{0} \leq T, \; \;  t_{0} \leq s \leq T, \; \;  x \in \bar{G}, $$
where $W(s)$ is a $d$-dimensional standard Wiener process defined on a filtered probability space $(\Omega, \mathscr{F}, (\mathscr{F}_{s})_{s\geq 0}, \mathbb{P}) $; $\nu(z) $, $ z \in \partial G$, is the inward normal vector to the boundary $  \partial G$; and $I_{\partial G}(z)$ is the indicator function of $z \in \partial G$. 
Further, $L(s)$ is the  local time \rf{of the process $X(s)$ on the boundary} $ \partial G$  adapted to the filtration $(\mathscr{F}_{s})_{s\geq 0}$. A local time is a scalar increasing process continuous in $s$ which increases only when $X({s}) \in \partial G$ (see the precise definition in e.g. \cite{11,17,18}):
\begin{align*}
    L(t) = \int_{\rf{t_0}}^{t}I_{\partial G}\big(X(s)\big)dL(s),\;\;\; a.s.
\end{align*}
\rf{We also note \cite{11} that in the integral form of (\ref{eq:1}) the term
\[
K(t)=\int_{t_0}^{t}\nu(X(s))I_{\partial G}\big(X(s)\big)dL(s)
\]
is a $d$-dimensional bounded variation process which increases only when $X(s) \in \partial G$.}

The following questions are considered in this work:
\begin{itemize}
    \item \textit{How to numerically (in the weak sense) solve RSDEs and the  related linear parabolic equation with Robin boundary condition?}

    The proposed simple-to-implement weak approximation of (\ref{eq:1}) numerically solves a linear advection-diffusion equation with Neumann boundary condition, and its extension solves an advection-diffusion equation with a decay/growth term and with the non-homogeneous Robin (in other words, third) boundary condition  (see Sections~\ref{section2}-\ref{section4}).

    \item \textit{How to compute  ergodic limits in the domain $G$ as well as on the boundary $\partial G$? }

    We introduce time-averaging and ensemble-averaging estimators to numerically calculate expectations with respect to the invariant density of a reflected diffusion $X(t)$ which lies in $\bar{G}$. We also propose estimators to compute \rs{integrals} with respect to the normalised restriction of the invariant density of $X(t)$ on $\partial G$ (see Section~\ref{section6}).

    \item \textit{How to sample from distributions with compact support using Brownian dynamics?}

    This directly follows from the previous point. Drawing samples from compactly supported targeted distributions has many applications, especially in machine learning and molecular dynamics. The proposed algorithm applied to a stochastic gradient system (Brownian dynamics) with reflection  efficiently samples from distributions whose support is a compact set $\bar{G}$ as well as from distributions whose support is a $d-1$ dimensional hyper-surface $\partial G$ (see Subsection~\ref{section2.4.3}).

    \item \textit{How to numerically solve a linear elliptic equation with Robin boundary condition?}

    The probabilistic representation of the solution of the elliptic Robin problem involves integration of functionals of $X(t)$ on $[0,\infty)$. The weak method of Section~\ref{section2} is applied and analysed in the case of the elliptic problem (see Section \ref{section5}). The special case of the Poisson equation with Neumann boundary condition is treated separately, for which a new adaptive time-stepping algorithm is proposed (see Subsection~\ref{subsection5.2}).  
    
\end{itemize}

\rf{The} approaches to numerically approximate the solutions of RSDEs driven by Wiener processes have taken three directions. The first two approaches are penalty methods \cite{9,8,49} and projection methods \cite{14,15,49,bayer2010}.
Introduce the projection map onto $%
\bar{G}:$
$
\Pi (x)=\arg \min_{y\in \bar{G}}\left\vert x-y\right\vert \,,\;\;x\in
\mathbb{R}^{d}
$.
We note that if $x\in \bar{G}$ then $\Pi (x)=x.$ In projection schemes, the map $
\Pi (x)$ is applied at every step of a numerical scheme (e.g., the Euler method) approximating the RSDEs with the local time term omitted. To construct penalty schemes, one replaces the reflection term in the RSDEs with $\beta _{\lambda }(X(s))ds$, where $\beta _{\lambda }(x):= (x-\Pi (x))/\lambda
\,,\;\;x\in \mathbb{R}^{d}$, and  $\lambda$ is a positive constant.
We note that $\beta_{\lambda } (x)=0$ for $x\in \bar{G}$. Then these resulting SDEs are approximated, e.g. by the Euler scheme (see a brief description of penalty and projection methods in \cite[Section 5.6]{49} and the references therein).
The convergence of penalty schemes is mainly shown in the mean-square sense (see \cite{9,8}). In this paper, we are interested in weak approximation.

 Liu \cite{14} proposed a weak scheme by combining projection and orthogonal transformation of the diffusion matrix and obtained  weak first-order convergence. \rf{Costantini}, Pacchiarotti and Sartoretto \cite{15} obtained weak order $1/2$ through a projection scheme. Expectations of complex functionals of local time can be evaluated using their scheme.

Methods which are neither penalty methods nor projection methods, we term reflection methods \cite{asmussen1995,1,65,64,2,49}.  Milstein \cite{1} (see also \cite{49}) proposed a weak scheme with first order of accuracy to solve the Robin boundary value problem for parabolic PDEs. The scheme is not easy to implement as it requires to change the local coordinates when the discretized sample path reaches the proximity of the boundary (see  its implementation in \cite{Bernal2019}). Gobet \cite{65,64} suggested a reflected scheme in half space which locally approximates $\bar{G}$ and gained a half order of weak convergence for any oblique direction of reflection. The order is improved to one for the co-normal reflection. Bossy, Gobet and Talay \cite{2} analyzed an Euler scheme combined with a symmetrized procedure for simulation of reflected diffusion with oblique reflection and obtained first order of accuracy. 
 Since their scheme is based on Gaussian increments, they propose to restart the simulation if a discretized sample path takes a very large increment outside the domain. 
Approximating ergodic limits inside the domain $G$ using the scheme of \cite{2} was considered in \cite{43}, where the corresponding convergence in time step was proved with an order lower than $1$. The other possible limitation of the method is that it is not known how to adapt it to compute expectations of integrals with respect to local time.

In this paper, we propose a new reflection method to numerically approximate RSDEs. The method does not require any orthogonal transformation of diffusion matrix or change of local coordinates thus it is easy to implement. This new  method is based on the idea of symmetrized reflection on the boundary, however we apply the weak Euler scheme which uses bounded random variables making sure that discretized sample paths cannot move beyond the boundary outside the domain by more that ${\cal{O}}(\sqrt{h})$, where $h$ is the time step. We note that although our method is based on symmetrized reflection like \cite{2}, our approach to prove convergence is entirely different - we use appropriate PDEs \rf{as in typical} proofs of weak convergence \cite{1,41,49,42}. Further, the path we take for the analysis allows us to compute expectations of functionals of local time accompanied by optimal error estimates. Moreover, our approach works for time averaging estimation of ergodic limits,  both in the domain $G$ and on the boundary $\partial G$, and the corresponding first-order convergence in the time step is proved in both cases. We also extend our algorithm to approximate SDEs with reflection in any oblique inward direction with first order of convergence. In addition, we modify our algorithm by introducing a new procedure near the boundary which results in a second-order method. 

As already highlighted earlier, our method can be used to solve elliptic PDEs with Robin boundary condition, where the two cases are considered separately: the first  with a decay term (equations (\ref{eqn2.10})-(\ref{eqn2.11})), where we achieve first order accuracy; the second case (equations (\ref{gpe1})-(\ref{gpe2})) is without a decay term (the Poisson problem), which causes additional difficulties. We introduce an  adaptive time-stepping scheme based on a novel idea of double time-discretization  to solve the Poisson problem with first order  accuracy.

 In, for example, molecular dynamics (see \cite{100} and references therein) and Bayesian statistics (see e.g., \cite{101,102,103} and references therein), it is usually necessary to compute \rf{the} expectation of a given function with respect to the invariant law of the diffusion (ergodic limit). 
 There are two common approaches to computing ergodic limits. One is to simulate a numerical trajectory over a long period of time and take the average at discretized points (time-averaging estimation, see e.g. \cite{42, 40, 41, 43} and references therein). The other is to simulate many independent numerical realizations of the associated SDEs and average them at a sufficiently large finite time (ensemble-averaging estimation, see e.g. \cite{40}).

 In this work we are interested in finding how close  numerical time-averaging and ensemble-averaging estimators are to the corresponding expectations with respect to the RSDEs' stationary measure. In \cite{41} it \rf{was} shown how an appropriate Poisson equation can be used to obtain the long time average of the functional of diffusion processes. In the same spirit, here we make use of the Poisson equation with Neumann boundary condition for analysis of numerical time-averaging estimators. For proving accuracy of the ensemble-averaging estimators, we exploit a parabolic PDE with Neumann boundary condition.

In many applications it is required to sample from distributions with compact support, $\bar{G}$ (see e.g. \cite{75,73,74}). Although there are methods which have been proposed to sample from distributions subject to certain constraints, most of them are not well studied  theoretically except e.g. \cite{55,56}, where \cite{55} uses a projection scheme and \cite{56} exploits a penalty method for sampling from log-concave distributions with compact support. Both works establish bounds \rf{on \rs{a} distance (in total variation norm or Wasserstein distance) between the stationary measure} of a Markov chain generated by a numerical method and the corresponding stationary measure of the underlying reflected Brownian dynamics. 
We are interested in establishing  closeness (including convergence order) of estimators to the ergodic limits with respect to generic ergodic RSDEs allowing us to sample from arbitrary distributions with compact support $\bar G$.

Finally, in this paper, we also develop a methodology to sample from a targeted probability distribution which lies on the hyper-surface $\partial G$ that can be considered as the boundary of a bounded domain $G$. There are geometric Monte Carlo methods for sampling from distributions lying on $\partial G$ (see e.g. \cite{ 57,58} and the references therein). Our approach is different and is based on weak approximation of the local time of the reflected diffusion $X(t)$ on the boundary $\partial  G$ (see Subsection~\ref{section2.4.3} for details).

\section{Numerical method to approximate reflected SDEs} \label{section2}
In this section, we first discuss the existence and uniqueness of solutions of RSDEs (\ref{eq:1}).

In the paper we will use the following functional spaces. Let $C^{\frac{p+\epsilon}{2},p+\epsilon } (\bar{Q})$ (or $C^{p+\epsilon}(\bar{G})$) be a H{\"o}lder space containing functions $u(t,x)$ (or $u(x)$) whose partial derivatives 
 $\frac{\partial^{i+|j|}u }{\partial t^{i} \partial x^{j_{1}}\cdots \partial  x^{j_{d}}} $ (or
 $\frac{\partial^{|j|}u }{ \partial x^{j_{1}}\cdots \partial  x^{j_{d}}} $)  with
$2i + |j| < p+ \epsilon$ (or $|j| < p+\epsilon$) are continuous in $\bar{Q}$ (or $\bar{G}$) with finite norm $\mid\cdot \mid_{Q}^{(p+\epsilon)}$ (or $\mid\cdot\mid_{G}^{(p+\epsilon)}$), where $i \in \mathbb{N}\cup \{0\}$, $p \in \mathbb{N}\cup \{0\}$, $  0 <\epsilon < 1$, $j$ is a multi-index, and $\mid \cdot \mid^{p+\epsilon}$ is the H{\"o}lder norm (see details in \cite[pp. 7-8]{3}). 
However, for brevity of the notation, in what follows we will omit $\epsilon$ and write $C^{\frac{p}{2},p} (\bar{Q})$ (or $C^{p}(\bar{G})$) instead of $C^{ \frac{p+\epsilon}{2},p+\epsilon} (\bar{Q})$ (or $C^{p+\epsilon}(\bar{G})$), which should not lead to any confusion. \rs{The} notation $f(t,z) \in C^{\frac{p }{2}, p}(\bar{S})$ will have the same meaning as explained above.
In what follows we will omit $\epsilon$ in the notation of the H{\"o}lder norm  by writing $\mid \cdot \mid^{p}$ instead of $\mid \cdot \mid^{p+\epsilon}$. We also denote by $C(\bar{Q})$ the   set of functions which are continuous in $ \bar{Q}$.

We make the following assumptions regarding the problem (\ref{eq:1}).
\begin{assumption}\label{as:3}
The boundary $\partial G$ of domain $ G $ belongs to $C^{4}$.
\end{assumption}
\begin{assumption}\label{as:2}
The coefficients $b(t,x)$ and $\sigma(t,x)$ are $C^{1,2}(\bar{Q})$ functions.
\end{assumption}
If Assumptions \ref{as:3} and \ref{as:2} hold, there exists a unique strong solution to the RSDEs (\ref{eq:1}). The meaning of existence of unique strong solution is same as given in \cite[p. 149]{17}. We note that the unique solution of (\ref{eq:1}) exists when $b(t,x)$ and $\sigma(t,x)$ are just Lipschitz continuous in $x$ and the domain $G$ is either a convex \rf{set} or $C^{2}$ smooth  \cite{10,11, 12}. Assumptions~\ref{as:3}-\ref{as:2} are used in subsequent sections to prove optimal (highest possible) order of convergence for the proposed algorithm. At the same time, we note that the algorithm can be used in practice under weaker assumptions.

 Now, we construct a new algorithm which approximates RSDEs (\ref{eq:1}). Let $(t_{0},x) \in Q$. We introduce the uniform discretization of the time interval $[t_{0}, T]$ so that $t_{0} < \dots < t_{N}= T$, $ h := (T-t_{0})/N$ and $t_{k+1} = t_{k} + h$.

 We consider a Markov chain $(t_{k}, X_{k})_{k\geq 0}$ with $X_{0}= x$ approximating the solution $X_{t_0,x}(t)$ of the RSDEs (\ref{eq:1}). Since $X(t)$ cannot take values outside $\bar{G}$, the Markov chain should remain in $\bar{G}$ as well. To this end, the chain has an auxiliary (intermediate) step every time it moves from the time layer $t_{k}$ to $t_{k+1}$. We denote this auxiliary step by $X_{k+1}^{'}$. In moving from $X_{k}$ to $X_{k+1}^{'}$, we apply the weak Euler scheme
\begin{equation}\label{eq:11}
    X_{k+1}^{'}  = X_{k} + hb_{k} + h^{1/2}\sigma_{k}\xi_{k+1},
\end{equation}
where $b_{k} = b(t_{k},X_{k})$, $\sigma_{k} = \sigma(t_{k},X_{k})$ and $\xi_{k+1} = (\xi^{1}_{k+1}, \dots ,\xi^{d}_{k+1})^{\top}$, $\xi^{i}_{k+1}$, $i= 1,\dots,d$, $k=0,\dots,N-1$, are mutually independent random variables taking values  $\pm 1$ with probability $1/2$.
\\ \\

 \begin{minipage}[b]{\textwidth}
   \centering
        \includegraphics[scale =0.6]{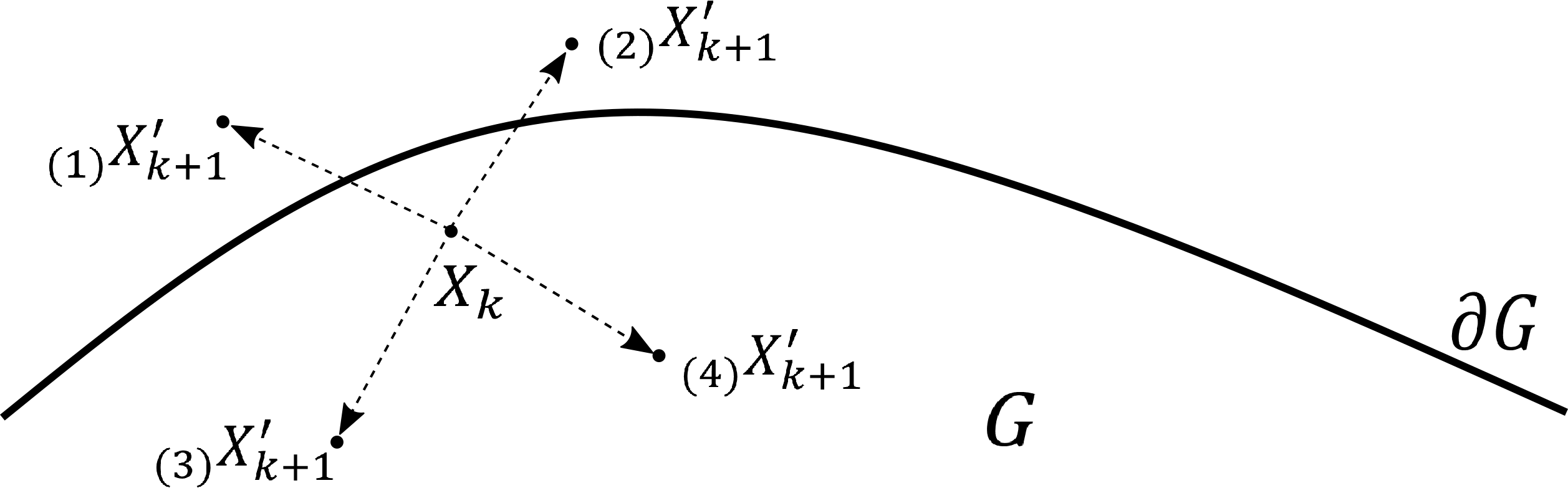}
    \captionof{figure}{ Four possible realizations ${}_{\footnotesize(i)}X_{k+1}^{'} $ of $X_{k+1}^{'}$ given $X_{k}$ in two dimensions.}\label{figure2.1}
\end{minipage}\qquad \qquad

Taking this auxiliary step $X_{k+1}^{'}$ while moving from $X_{k}$ to $X_{k+1}$ \rf{represents} cautious behaviour and gives us an opportunity to check whether the realized value of $X_{k+1}^{'}$ is inside the domain $G$ or not (see Fig.~\ref{figure2.1}). If $X_{k+1}^{'} \in \bar{G}$ then on the same time layer we assign values to $X_{k+1}$ as
\begin{align}
    X_{k+1} &= X_{k+1}^{'}. \label{eq:12}
\end{align}
 However, if the realized value of $X_{k+1}^{'}$ goes outside of $\bar{G}$ then we need an additional construction so that $ X_{k+1} \in G$ (see Fig.~\ref{figure2.2}). First, we find the projection of $X_{k+1}^{'}$ onto $\partial G$ which we denote as $  X_{k+1}^{\pi} $ and we calculate $ r_{k+1} = \dist (X_{k+1}^{'}, X_{k+1}^{\pi})$ which is the shortest distance between $X_{k+1}^{'}$ and $X_{k+1}^{\pi}$. Note that $\dist(X_{k}, X_{k+1}^{'}) = \mathcal{O}(h^{1/2})$, therefore under Assumption \ref{as:3} and for sufficiently small $h$, the projection $X_{k+1}^{\pi}$ of $X_{k+1}^{'}$ on $\partial G$ is unique \cite[Proposition 1]{2}. Moreover, the projection $X_{k+1}^{\pi}$ and the shortest distance $r_{k+1}$ satisfy the following equation, $X_{k+1}^{\pi} = X_{k+1}^{'} + r_{k+1}\rf{\nu(X_{k+1}^{\pi})} $, \rf{where $ \nu(X_{k+1}^{\pi}) $ is the inward normal vector to the boundary $\partial G$ at the projection  $X_{k+1}^{\pi}$.} 
 Thereafter, we add $ r_{k+1}\nu(X_{k+1}^{\pi}) $ to $ X_{k+1}^{\pi}$ to arrive at a point which we take as $ X_{k+1}$. This transition from intermediate step $X_{k+1}^{'}$ to $X_{k+1}$ makes sure that $X_{k+1} \in G $. We also  highlight that 
 $X_{k+1}^{'}$ and $ X_{k+1}$ are symmetric around $X_{k+1}^{\pi}$ along the direction $\nu(X_{k+1}^{\pi})$. Therefore, combining the above steps of calculating $ X_{k+1}^{\pi}$ from $X_{k+1}^{'}$ and then $X_{k+1}$ from $ X_{k+1}^{\pi}$, we have
\begin{align}
    X_{k+1} = X_{k+1}^{'} + 2r_{k+1}\nu(X_{k+1}^{\pi}). \label{eq:15}
\end{align}
\begin{minipage}[b]{\textwidth}
   \centering
        \includegraphics[scale =0.6]{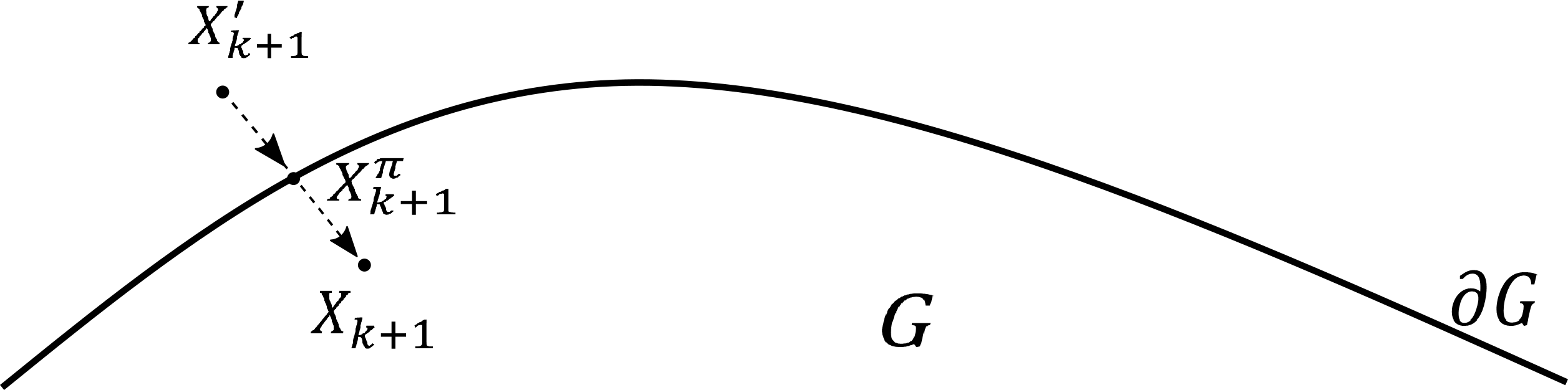}
    \captionof{figure}{One step transition in two dimensions from $X_{k+1}^{'}$ to $X_{k+1}$ using projection $X_{k+1}^{\pi}$ of $X_{k+1}^{'}$ on $\partial G$.} \label{figure2.2}
\end{minipage}\qquad \qquad \qquad
 We formally write our algorithm as Algorithm \ref{algorithm2.1}.
 \vspace{-2ex}
\begin{algorithm}
\caption{Algorithm to approximate normal reflected diffusion\label{algorithm2.1}}

\begin{steps}
\begin{em}
\item \begin{textit} Set $X_{0} = x$, $X_{0}^{'}= x$, $k=0$.\end{textit}
\item \begin{textit}Simulate $\xi_{k+1}$ and find $X_{k+1}^{'}$ using (\ref{eq:11}).\end{textit}

\item \textbf{If} $ X_{k+1}^{'} \in \bar{G} $ then $X_{k+1}= X_{k+1}^{'}$, \textbf{else} \\
\begin{enumerate}[label= (\roman*)]
\item  find the projection $X_{k+1}^{\pi}$ of $X_{k+1}^{'}$ on $\partial G$,
\item calculate $ r_{k+1} = \dist(X_{k+1}^{'}, X_{k+1}^{\pi}) $ and find $X_{k+1}$ according to (\ref{eq:15}).
\end{enumerate}
\item \textbf{If} $k+1=N$ then \textbf{stop}, \textbf{else} put $k := k+1$ and \textbf{return} to Step 2.
\end{em}
\end{steps}

\end{algorithm}
\vspace{-4ex}
\begin{remark}
To approximate the RSDEs inside the domain $G$ in Algorithm \ref{algorithm2.1}, we \rf{exploit} a particular method \rf{with discrete random variables used for approximating the Wiener increments}, the weak Euler scheme (\ref{eq:11}). Instead of (\ref{eq:11}), one can use any approximation with local weak order $2$ and bounded increments (e.g., the walk over spheres as in \cite{1,49} (see also \cite{Bernal2019}) or any other method \rf{with discrete random variables}) and complement it with the symmetric reflection (\ref{eq:15}). Such versions of  Algorithm~\ref{algorithm2.1} have the same convergence properties as the ones proved in this paper for Algorithm~\ref{algorithm2.1}. We restrict ourselves here to the weak Euler scheme (\ref{eq:11}) because it is the simplest scheme and also for definiteness.
\end{remark}

\section{Solving parabolic PDEs with Robin boundary condition}\label{section4}

 In Subsection \ref{section3.1}, we introduce a parabolic PDE with Robin boundary condition along with  assumptions required for  the existence of its solution and with the link to the reflected diffusion process via the Feynman-Kac formula. Subsection~\ref{section3.2} describes an extension of Algorithm~\ref{algorithm2.1} to solve the Robin parabolic problem. We state the main convergence theorem of the proposed algorithm in Subsection~\ref{subsection3.3} and prove it in Subsection~\ref{subsection3.4}.

\subsection{Probabilistic representations}\label{section3.1}

Consider the parabolic PDE
\begin{equation}\label{eq:2}
\frac{\partial u}{\partial t} + \frac{1}{2} \sum\limits_{i,j=1}^{d} a^{ij}(t,x) \frac{\partial u}{\partial x^{i} \partial x^{j}} + \sum \limits_{i=1}^{d} b^{i}(t,x)\frac {\partial u}{\partial x^{i}} + c(t,x)u + g(t,x) = 0, \; \; \;  (t,x) \in Q,
\end{equation}
with terminal condition
\begin{equation}\label{eq:3}
u(T,x) = \varphi(x), \; \; x \in \bar{G},
\end{equation}
and Robin boundary condition
\begin{equation}\label{eq:4}
\frac{ \partial u}{\partial \nu} + \gamma(t,z)u = \psi(t,z), \; \; \; (t,z) \in S, 
\end{equation}
where $\nu = \nu(z)$ is the direction of the inner normal to the surface $\partial G$ at a point $ z\in \partial G$.

We can write equation (\ref{eq:2}) in a more compact form as
\begin{equation}\label{eq:5}
    \frac{\partial u}{\partial t} + (b\cdot\nabla)u + \frac{1}{2}(a:\nabla\nabla)u + cu + g = 0,
\end{equation}
where $(:)$ denotes the Frobenius product of two matrices, $(\cdot)$ denotes the scalar product of two vectors, $c =c(t,x)$ and $g= g(t,x)$ are scalar functions,  and $ a(t,x) = \{ a^{ij}(t,x) \} $ is a $d \times d$ symmetric matrix,
 i.e. $ a : \bar{Q} \rightarrow \mathbb{R}^{d\times d}$.

 In addition to Assumptions \ref{as:3}-\ref{as:2}, we also make the following assumptions.
\begin{assumption}\label{a01}
$ g(t,x) \in C^{1,2}(\bar{Q})$, $\varphi(x) \in C^{4}(\bar{G})$, and $ \psi(t,z) \in C^{1.5,3}(\bar{S})$.
\end{assumption}
\begin{assumption}\label{asn2}
$c(t,x) \in C^{1,2}(\bar{Q})$ and $\gamma(t,z) \in C^{1.5,3}(\bar{S})$.
\end{assumption}
\begin{assumption}\label{as:1}
The symmetric matrix $a = \{a^{ij}\}$ satisfies the condition of uniform ellipticity in $\bar{Q}$, i.e., there exists a positive constant $a_{0}$ such that for all $ y \in \mathbb{R}^{d}$:
\begin{equation}
    a_{0}\mid y \mid^{2} \; \leq \; (a(t,x)y\cdot y) , \;\;\;  (t,x) \in \bar{Q}.
    \label{eq:ell}
\end{equation}
\end{assumption}

\noindent Define a sequence of functions $v_{k}$ recursively as
\begin{align*}
    v_{0} = \varphi,\;\;\;\;
    v_{k+1} = \rs{-}\sum\limits_{i=0}^{k}\binom{k}{i}\mathscr{A}^{(i)}v_{k-i}  - \frac{\partial^{k}g}{\partial t^{k}}(T,x),
\end{align*}
with
\begin{equation*}
    \mathscr{A}^{(i)}v = \frac{1}{2}\sum\limits_{j,l=1}^{d}\frac{\partial^{i}a^{jl}}{\partial t^{i}}(T,x)\frac{\partial^{2}v}{\partial x^{j}{\partial x^{l}}} \rs{+} \sum\limits_{j=1}^{d}\frac{\partial^{i}b^{j}}{\partial t^{i}}(T,x)\frac{\partial v}{\partial x^{j}} \rs{+} \frac{\partial^{i}c}{\partial t^{i}}(T,x)v.
\end{equation*}

\begin{assumption}\label{ass4}
The compatibility condition of order 1 is fulfilled for the problem (\ref{eq:2})-(\ref{eq:4}), i.e.,  the following relationship holds:
\begin{equation*}
    \bigg[ \frac{\partial v_{k}}{\partial \nu} + \sum\limits_{i=0}^{k} \binom{k}{i}\frac{\partial^{i}\gamma}{\partial t^{i}}\bigg|_{t=T}v_{k-i}\bigg]\bigg|_{\partial G} = \frac{\partial^{k} \psi}{\partial t^{k}}\bigg|_{t=T}, \;\;\;\;\; k=0,1.
\end{equation*}
\end{assumption}

It is known \cite{3} that if Assumptions \ref{as:3}-\ref{as:2} and \ref{a01}-\ref{ass4} are satisfied then the problem (\ref{eq:2})-(\ref{eq:4}) has a unique solution $u(t,x) \in C^{2,4}(\bar{Q})$ satisfying the inequality
\begin{equation} \label{eq:6}
    \mid u\mid_{\bar{Q}}^{(4)}\; \leq \;C(T)\Big(\mid g\mid_{\bar{Q}}^{(2)} + \mid \varphi\mid_{\bar{G}}^{(4)} + \mid \psi\mid_{\bar{S}}^{(3)}\Big),
\end{equation}
where $C(T)$ is a positive constant dependent on $T$. We also note that $u(t,x)$ satisfies the PDE (\ref{eq:2}) in $\bar Q$ under Assumptions~\ref{as:3}-\ref{as:2} and \ref{a01}-\ref{ass4}.

Let a matrix $ \sigma(t,x)$ be found from the equation
$$ \sigma(t,x)\sigma(t,x)^{\top} = a(t,x).$$ As is known \cite{16,18,17}, the probabilistic representation of the solution of problem (\ref{eq:2})-(\ref{eq:4}) is given by
\begin{equation}\label{eq:7}
u(t_{0},x) = \mathbb{E}\big(\varphi(X_{t_{0},x}(T))Y_{t_{0},x,1}(T) + Z_{t_{0},x,1,0}(T)\big),
\end{equation}
where $X_{t_{0},x}(s)$, $ Y_{t_{0},x,y}(s) $, $Z_{t_{0},x,y,z}(s)$, $ s \geq t_{0}$, is the solution of the Cauchy problem for the system of RSDEs
\begin{align}
 dX(s) &= b(s,X(s))ds + \sigma (s,X(s))dW(s) + \nu (X(s))I_{\partial G}(X(s)) dL(s),    \label{eq:8} \\
 dY(s) &= c(s,X(s))Y(s)ds + \gamma(s,X(s))I_{\partial G}(X(s))Y(s)dL(s),  \label{eq:9}\\
 dZ(s) &= g(s,X(s))Y(s)ds - \psi(s,X(s))I_{\partial G}(X(s))Y(s)dL(s),     \label{eq:10}
 \end{align}
 with $X(t_{0}) = x$, $Y(t_{0}) = y$, $Z(t_{0}) = z$,
$ T_{0} \leq t_{0} \leq s \leq T, \; \;  x \in \bar{G}.$

 \subsection{Numerical method} \label{section3.2}
 In this subsection we modify our Algorithm \ref{algorithm2.1} to construct a Markov chain $(t_{k}, X_{k},Y_{k},Z_{k})_{k\geq 0}$ with $X_{0}= x$, $Y_{0}= 1$, $Z_{0}= 0$  to approximate the solution $u(t_{0},x)$ of (\ref{eq:2})-(\ref{eq:4}) at $(t_{0},x) \in \bar{Q}$. We approximate RSDEs (\ref{eq:8}) according to Algorithm~\ref{algorithm2.1} and complement it by an approximation of (\ref{eq:9}) and (\ref{eq:10}). If the intermediate step $X_{k+1}^{'}$ introduced in Algorithm~\ref{algorithm2.1}, belongs to $\bar{G}$ then we use the Euler scheme:
 \begin{align}
      Y_{k+1} &= Y_{k}  + hc(t_{k},X_{k})Y_{k}, \label{mdy1}\\
      Z_{k+1} &= Z_{k} + hg(t_{k}, X_{k})Y_{k}. \label{mdz1}
 \end{align}
If $X_{k+1}^{'} \notin \bar{G}$ then
 \begin{align}
        Y_{k+1} &= Y_{k}  + hc(t_{k},X_{k})Y_{k} + 2r_{k+1}\gamma(t_{k+1}, X_{k+1}^{\pi})Y_{k}\nonumber \\ & \;\;\;\; +          2r_{k+1}^{2}\gamma^{2}(t_{k+1},X_{k+1}^{\pi})Y_{k},\label{eq:16}\\
     Z_{k+1} &= Z_{k} + hg(t_{k}, X_{k})Y_{k} - 2r_{k+1}\psi(t_{k+1}, X^{\pi}_{k+1})Y_{k} \nonumber \\& \;\;\;\; - 2r_{k+1}^{2}\psi(t_{k+1}, X_{k+1}^{\pi})\gamma(t_{k+1}, X_{k+1}^{\pi})Y_{k},\label{eq:17}
\end{align}
where $X_{k+1}^{\pi}$ is the projection of $X_{k+1}^{'}$ on $\partial G$ and $ r_{k+1} = \dist (X_{k+1}^{'}, X_{k+1}^{\pi})$ which is the shortest distance between $X_{k+1}^{'}$ and $X_{k+1}^{\pi}$.

 The approximation (\ref{eq:16})-(\ref{eq:17}) is derived via numerical analysis (see \rs{Lemma~\ref{lemma2.3} and} Theorem~\ref{conp}) aimed at obtaining first-order weak convergence of the proposed algorithm.
 We write this modified algorithm as Algorithm~\ref{algorithm3.1}.
\vspace{-2ex}
\begin{algorithm}
\caption{Algorithm to approximate (\ref{eq:8})-(\ref{eq:10}) }\label{algorithm3.1}
\begin{steps}
\begin{em}
\item \begin{textit} Set $X_{0} = x$, $Y_{0} = 1$, $Z_{0} = 0$, $X_{0}^{'}= x$, $k=0$.\end{textit}
\item \begin{textit}Simulate $\xi_{k+1}$ and find $X_{k+1}^{'}$ using (\ref{eq:11}).\end{textit}
       \item \textbf{If} ${ X_{k+1}^{'} \in \bar{G}} $ then
$X_{k+1} = X_{k+1}^{'}$ and calculate $Y_{k+1}$ and $Z_{k+1}$ according to (\ref{mdy1}) and (\ref{mdz1}), respectively,\;
\textbf{else} find $X_{k+1}$, $Y_{k+1}$ and $Z_{k+1}$ according to (\ref{eq:15}), (\ref{eq:16}) and (\ref{eq:17}), respectively.

\item \textbf{If} $k+1=N$ then \textbf{stop}, \textbf{else} put $k := k+1$ and \textbf{return} to Step 2.
\end{em}
\end{steps}

\end{algorithm}
\vspace{-2ex}

We note that Algorithm \ref{algorithm3.1} can be applied to the Robin problem (\ref{eq:2})-(\ref{eq:4}) in the layer-wise manner.
Recall (see \cite[Chapters 7-8]{49} and also references therein) that layer methods are deterministic numerical methods for PDEs which are constructed using probabilistic representations of the PDEs' solutions together with the weak-sense approximation of SDEs. Based on Algorithm \ref{algorithm3.1}, we can write a layer method  which is a deterministic method with fictitious nodes for (\ref{eq:2})-(\ref{eq:4}). In the one-dimensional case such a layer method was proposed in \cite{81} (see also \cite[Section 8.4.4]{49}). It was applied to a one-dimensional semilinear parabolic PDE with Neumann boundary condition. Thanks to the results of our paper, we now have a probabilistic representation of that layer method from \cite{81} and hence can prove its global order of convergence. Furthermore, based on Algorithm \ref{algorithm3.1}, we can construct a layer method for a multi-dimensional semilinear Neumann problem, which is simpler than the layer method proposed in \cite{81} (see also \cite[Section 8.5]{49}) for the multi-dimensional case which used the weak approximation from \cite{1}.
This connection of Algorithms~\ref{algorithm2.1} and \ref{algorithm3.1} with layer methods also provides further intuition for the approximation of $X(t)$ near the boundary: from the weak-sense \rf{perspective}, we replace the directional derivative in the Robin boundary condition with the central finite-difference using a fictitious node outside the domain.

\subsection{Finite-time convergence theorem}\label{subsection3.3}
We state the main theorem of this section which gives the estimate for the weak-sense error of Algorithm~\ref{algorithm3.1} at finite time.
\begin{theorem}\label{conp}
 The weak order of accuracy of Algorithm \ref{algorithm3.1} is  $\mathcal{O}(h)$ under Assumptions \ref{as:3}-\ref{as:2} and \ref{a01}-\ref{ass4}, i.e., for sufficiently small $h>0$
 \begin{equation}
    \mid \mathbb{E}(\varphi(X_{N})Y_{N} + Z_{N}) - u(t_{0},X_{0}) \mid \;  \leq \; Ch, \label{eq:th4.4}
\end{equation}
where $u(t,x)$ is solution of (\ref{eq:2})-(\ref{eq:4}) 
and $C$ is a positive constant independent of $h$.
\end{theorem}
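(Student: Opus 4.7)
The strategy I would follow is the classical ``weak-error via Kolmogorov equation'' scheme due to Talay--Milstein, with the additional boundary-layer bookkeeping forced by the Robin condition. Let $u \in C^{2,4}(\bar Q)$ be the solution of (\ref{eq:2})--(\ref{eq:4}) guaranteed by Assumptions \ref{as:3}--\ref{as:2} and \ref{a01}--\ref{ass4}; by the Feynman--Kac representation (\ref{eq:7}) and the fact that $(X_0,Y_0,Z_0)=(x,1,0)$, I write the global error as a telescoping sum
\begin{equation*}
\mathbb{E}\bigl[\varphi(X_N)Y_N + Z_N\bigr] - u(t_0,X_0) \;=\; \sum_{k=0}^{N-1} \mathbb{E}\bigl[\,u(t_{k+1},X_{k+1})Y_{k+1}+Z_{k+1} - u(t_k,X_k)Y_k - Z_k\,\bigr].
\end{equation*}
It then suffices to prove the one-step estimate $|\mathbb{E}[\cdot \mid \mathcal{F}_{t_k}]|\le C h^2\,(1+Y_k^2)$, combined with an $L^p$-uniform bound $\sup_k\mathbb{E}[Y_k^p]\le C$, so that summing $N=(T-t_0)/h$ contributions yields (\ref{eq:th4.4}).

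Next I would split the one-step analysis according to the Algorithm~\ref{algorithm3.1} dichotomy. In the \emph{interior case} $X_{k+1}'\in\bar G$, the update reduces to the weak Euler scheme (\ref{eq:11}), (\ref{mdy1})--(\ref{mdz1}). Taylor expanding $u(t_{k+1},X_{k+1})$ around $(t_k,X_k)$ up to order three in $x$ and two in $t$, using that $\mathbb{E}\xi^i_{k+1}=\mathbb{E}(\xi^i_{k+1})^3=0$ and $\mathbb{E}\xi^i_{k+1}\xi^j_{k+1}=\delta_{ij}$, I obtain the familiar identity
\begin{equation*}
\mathbb{E}\bigl[u(t_{k+1},X_{k+1}')\mid \mathcal{F}_{t_k}\bigr] = u(t_k,X_k) + h\Bigl(\tfrac{\partial u}{\partial t} + (b\cdot\nabla)u + \tfrac12(a{:}\nabla\nabla)u\Bigr)(t_k,X_k) + O(h^2),
\end{equation*}
and the parabolic PDE (\ref{eq:5}) converts the bracket into $-(cu+g)(t_k,X_k)$. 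Combined with $Y_{k+1}=Y_k(1+hc)$ and $Z_{k+1}-Z_k=hgY_k$, the interior contribution collapses to $O(h^2)(1+Y_k)$ as required.

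The boundary case $X_{k+1}'\notin\bar G$ is the heart of the matter. Here $r_{k+1}=\mathrm{dist}(X_{k+1}',X_{k+1}^\pi)=O(\sqrt{h})$ by (\ref{eq:11}) and Assumption~\ref{as:3}, and $X_{k+1}=X_{k+1}'+2r_{k+1}\nu(X_{k+1}^\pi)$ is the mirror image of $X_{k+1}'$ across $\partial G$. Expanding $u(t_{k+1},X_{k+1})$ around the boundary point $(t_{k+1},X_{k+1}^\pi)$ and using the symmetry, the odd-order normal-derivative terms cancel between $X_{k+1}'$ and $X_{k+1}$, leaving
\begin{equation*}
u(t_{k+1},X_{k+1}) = u(t_{k+1},X_{k+1}^\pi) + r_{k+1}\tfrac{\partial u}{\partial\nu}(t_{k+1},X_{k+1}^\pi) + r_{k+1}^2 R_k,
\end{equation*}
with $|R_k|\le C|u|_{\bar Q}^{(4)}$. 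The Robin condition (\ref{eq:4}) lets me replace $\partial u/\partial\nu$ on $\partial G$ by $\psi-\gamma u$, so that
\begin{equation*}
u(t_{k+1},X_{k+1})\bigl[1 + 2r_{k+1}\gamma + 2r_{k+1}^2\gamma^2\bigr]Y_k \;-\; 2r_{k+1}\psi Y_k - 2r_{k+1}^2\psi\gamma\,Y_k
\end{equation*}
(all Robin data evaluated at $(t_{k+1},X_{k+1}^\pi)$) matches, to $O(h^{3/2})$ in each additional factor and $O(h^2)$ overall, the expansion of $u(t_{k+1},X_{k+1})Y_{k+1}+Z_{k+1}$ coming from (\ref{eq:16})--(\ref{eq:17}). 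This is exactly the motivation for the specific second-order coefficients in those formulas: they arise from multiplying the two first-order corrections from the normal derivative and the boundary data. An additional interior Taylor expansion in $t$ from $t_{k+1}$ back to $t_k$ and in $x$ from $X_{k+1}^\pi$ back to $X_k$ then matches the remaining first-order terms against $h\,\partial_t u$, $h(b\cdot\nabla)u$, $\tfrac{h}{2}(a{:}\nabla\nabla)u$ and yields a one-step error of order $h^2(1+Y_k)$ for the boundary case as well.

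The main obstacle I expect is the final summation, because the naive number of boundary steps is hard to control. I would handle this by two parallel arguments: (i) using the identity $r_{k+1}\le |\sigma_k\xi_{k+1}|\sqrt{h}+hb_k = O(\sqrt{h})$ almost surely and the fact that the boundary event $\{X_{k+1}'\notin\bar G\}$ forces $\mathrm{dist}(X_k,\partial G)\le C\sqrt{h}$, to bound moments of $r_{k+1}^j\,\mathbf{1}_{X_{k+1}'\notin\bar G}$; and (ii) bounding the expected number of boundary excursions using the discrete analogue of the local-time estimate $\mathbb{E} L(T)<\infty$ for RSDEs (\ref{eq:8}), which ultimately bounds $\sum_k \mathbb{E}[r_{k+1}\mathbf{1}_{X_{k+1}'\notin\bar G}]$ uniformly in $N$. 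The linear Grönwall-type bound for $\mathbb{E} Y_k^p$ is standard given the $O(\sqrt{h})$ multiplicative increments in (\ref{eq:16}). Assembling these ingredients delivers (\ref{eq:th4.4}).
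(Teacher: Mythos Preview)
Your decomposition and the interior analysis are fine, but the core claim that the boundary one-step error is $O(h^{2})$ is incorrect, and this is precisely where the proof requires nontrivial work.

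When $X_{k+1}'\notin\bar G$, expanding $u(t_{k+1},X_{k+1})$ and $u(t_{k+1},X_{k+1}')$ symmetrically around $X_{k+1}^{\pi}$ and using the Robin condition to match the $2r_{k+1}\partial_{\nu}u$ term against the extra terms in (\ref{eq:16})--(\ref{eq:17}) is the right idea, but what survives is \emph{not} $O(h^{2})$: the third-order Taylor remainders contribute $O(r_{k+1}^{3})=O(h^{3/2})$, and cross terms such as $hc_{k}\cdot r_{k+1}\partial_{\nu}u$ are of the same order. The paper's Lemma~\ref{lemma2.3} obtains the sharp bound $CY_{k}\,r_{k+1}h$ for the reflection sub-step, which is $O(h^{3/2})$. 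With only $O(h^{3/2})$ per boundary step, summing over $N=O(1/h)$ steps gives $O(h^{1/2})$, so your plan of ``$N\times O(h^{2})=O(h)$'' cannot close.

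What actually closes the argument is the boundary-layer estimate you only gesture at in your item (ii): the paper's Lemma~\ref{bl} proves
\[
\mathbb{E}\Big[\sum_{k=0}^{N-1} r_{k}\,I_{G_{-r}}(X_k')\prod_{i=0}^{k-1}\bigl(1+K r_{i}\,I_{G_{-r}}(X_i')\bigr)\Big]\le C
\]
via an explicit discrete super-solution construction (not by appealing to a continuous local-time bound). This is the key technical lemma: it says the accumulated ``local-time weight'' $\sum r_{k+1}I_{\bar G^{c}}$ is $O(1)$, so the total boundary contribution is $Ch\cdot O(1)=O(h)$. Moreover, your ``standard Gr\"onwall'' bound for $\sup_k\mathbb{E}Y_k^{p}$ is not standard at all: from (\ref{eq:16}) each boundary step multiplies $Y$ by $(1+O(\sqrt{h}))$, and without control on the number of such steps the product can blow up. The paper instead bounds $Y_{k}\le e^{C_{1}T}\prod_{i}(1+C_{2}r_{i}I_{G_{-r}}(X_i'))$ and feeds this product directly into Lemma~\ref{bl} and Corollary~\ref{corollary1}. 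So the entire proof hinges on that lemma; it is not a side remark but the main technical content you would need to supply.
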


We will prove this theorem in the next subsection.  The scheme of the proof is roughly as follows (cf. \cite{1,49}). We first prove two lemmas on weak local errors of Algorithm~\ref{algorithm3.1}: Lemma~\ref{lemma2.2} gives order ${\cal{O}}(h^2)$ for the one-step approximation for the intermediate step  $X_{k+1}^{'}$ (i.e., of the Euler approximation)  and Lemma~\ref{lemma2.3} gives local order ${\cal{O}}(h^{3/2})$ for $X_{k+1}$ when  $X_{k+1}^{'}$ goes outside $\bar{G}$. The number of steps when $X_{k+1}^{'} \in \bar{G}$ is obviously ${\cal{O}}(1/h)$. The sense of Lemma~\ref{bl} is that the average number of steps when $X_{k+1}^{'} \notin \bar{G}$ is ${\cal{O}}(1/\sqrt{h})$. Appropriately combining the three lemmas, we get  first order convergence as stated in the theorem.

\subsection{Proof of Theorem~\ref{conp}}\label{subsection3.4}

This subsection is devoted to analysis of the error incurred while numerically solving the Robin problem  (\ref{eq:2})-(\ref{eq:4})  using Algorithm \ref{algorithm3.1} and the probabilistic representation (\ref{eq:7}). In Subsection~\ref{sec4.1} we prove two lemmas regarding the one-step approximation corresponding to Algorithm~\ref{algorithm3.1}. In Subsection~\ref{section2.4} we prove a lemma on  the average number of steps when $X_{k+1}^{'} \notin \bar{G}$.  Theorem~\ref{conp} itself is proved in Subsection~\ref{ssec:thm31prf}. We introduce the additional notation to be used in the future analysis: $u_{k+1} = u(t_{k+1},X_{k+1})$, $u_{k}= u(t_{k},X_{k})$, $u_{k+1}^{\pi}=u(t_{k+1},X_{k+1}^{\pi})$, $u^{\prime}_{k+1} = u(t_{k+1},X_{k+1}^{'})$, $a_{k} =a(t_{k}, X_{k})$, $b_{k}= b(t_{k},X_{k})$, $c_{k}= c(t_{k},X_{k})$, $g_{k}= g(t_{k},X_{k})$, $\psi_{k+1}^{\pi} = \psi(t_{k+1}, X_{k+1}^{\pi})$, $\gamma_{k+1}^{\pi}= \gamma(t_{k+1},X_{k+1}^{\pi})$, $\sigma_{k} = \sigma(t_{k}, X_{k})$, $Y_{k+1}^{'}  = Y_{k}+hc_{k}Y_{k}$, and $Z^{'}_{k+1} = Z_{k}+hg_{k}Y_{k}$. For $d$-dimensional vectors  $V_{j}$, $j =1,\dots,p$,
we denote the $p$-th  spatial derivative of a smooth function $v(x)$ evaluated in
the directions  $V_{j}$ by $D^{p}v(x)[V_{1},\dots,V_{p}]$:
\begin{equation*}\label{notationequation}
  D^{p}v(x)[V_{1},\dots,V_{p}] =  \sum_{i_1,\dots,i_p=1}^d\frac{\partial^{p} }{\partial x^{i_{1}}\dots\partial x^{i_{p}}} v(x) \, \prod\limits_{j=1}^{p}V_{j}^{i_{j}}.
\end{equation*}

It is not difficult to see that in Algorithm~\ref{algorithm2.1}, under Assumptions~\ref{as:3}-\ref{as:2}, $r_{k} := \dist(X_{k}^{'},X_{k}^{\pi}) = \mathcal{O}(h^{1/2})$ whenever $X_{k+1}^{'}\in \bar{G}^{c}$. Under Assumption \ref{as:3}, we can introduce a $d-1$-dimensional surface outside $G$ parallel to $\partial G$  whose distance to $\partial G$ is $r$ which is large enough so that for all $k$, we have $r_{k} \leq r $ and $r=\mathcal{O}(h^{1/2})$. We denote this surface as $\mathbb S_{-r}$, and we denote the layer between the two surfaces $\partial  G$ and $\mathbb S_{-r}$ as $G_{-r}$ and also $Q_{-r}:=[T_{0},T)\times G_{-r}$.
\subsubsection{One-step approximation}\label{sec4.1}
In this subsection, we will prove two lemmas. Lemma~\ref{lemma2.2} estimates the error of the one-step approximation in moving from $X_{k}$ at time layer $t_{k}$ to $X_{k+1}^{'}$ at time layer $t_{k+1}$, i.e. it is about the local weak error of the Euler scheme.  Lemma~\ref{lemma2.3} estimates the error of the one-step approximation at the same time layer $t_{k+1}$ in moving from $X_{k+1}^{'} $ to $X_{k+1}$ given that $X_{k+1}^{'}$ goes outside $\bar{G}$.

We will use the following result. It is known \cite[Proposition 1.17]{19} that under Assumption~\ref{as:3} the solution $u(t,x) \in C^{2,4}(\bar{Q})$ can be extended to  a function $u(t,x) \in C^{2,4}(\bar{Q}\cup\bar{Q}_{-r})$. This extension of $u(t,x)$ and its derivatives will be used in proofs where we need to expand $u(t,x)$ around $x^{\pi} \in \partial G$ when $ x \in \bar{G}^{c}$. We pay attention to the fact that $u(t,x)$ and its derivatives are uniformly bounded for $(t,x)\in \bar{Q} \cup \bar{Q}_{-r}$. 

\begin{lemma} \label{lemma2.2}
Under Assumptions \ref{as:3}-\ref{as:2} and \ref{a01}-\ref{ass4}, the one-step error of Algorithm~\ref{algorithm3.1} associated with moving from $X_k$ to $X^{'}_{k+1}$ is estimated as
\begin{equation*}
    \Big|\mathbb{E} \Big( u_{k+1}^{\prime}Y_{k+1}^{'} + Z_{k+1}^{'} - (u_{k}Y_{k} + Z_{k})\big| X_{k},Y_{k},Z_{k}\Big)\Big| \leq CY_{k}h^{2}, \;\;\;\; k=0,1,\dots,N-1,
\end{equation*}
where  $C$ is a positive constant independent of $h$.
\end{lemma}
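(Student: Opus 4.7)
The plan is the standard weak-Euler one-step analysis: Taylor-expand $u_{k+1}'=u(t_{k+1},X_{k+1}')$ around $(t_k,X_k)$, take conditional expectation, and use the PDE (\ref{eq:2}) to cancel the $O(h)$ contribution, leaving an $O(h^2)$ remainder that carries $Y_k$ as a prefactor. I will work with the increment $\Delta X := X_{k+1}'-X_k = hb_k + h^{1/2}\sigma_k\xi_{k+1}$, and use the crucial fact that $\xi_{k+1}\in\{\pm 1\}^d$ gives the deterministic bound $|\Delta X|\leq Ch^{1/2}$, which is the main simplification relative to a Gaussian-increment Euler step. The argument also needs the $C^{2,4}$ extension of $u$ to $\bar Q\cup\bar Q_{-r}$ recalled just before the lemma, so that the expansion is legitimate even when the realisation of $X_{k+1}'$ falls into $G_{-r}$.

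Expanding to first order in $t$ and to third order in $x$ with a fourth-order spatial Lagrange remainder, the bound (\ref{eq:6}) together with $|\Delta X|\leq Ch^{1/2}$ controls both the pure-$t$ remainder and the fourth-order spatial remainder deterministically by $Ch^2$, while the $h\cdot \Delta X$ and $h\cdot(\Delta X)^{\otimes 2}$ cross terms contribute at most $O(h^2)$ after conditioning. The odd-in-$\xi$ contributions (the first-order and the purely cubic spatial terms) vanish in expectation thanks to $\mathbb{E}[\xi_{k+1}^i]=0$ and $\mathbb{E}[\xi_{k+1}^i\xi_{k+1}^j\xi_{k+1}^\ell]=0$, while the quadratic-in-$\xi$ contribution is handled using $\mathbb{E}[\Delta X\,\Delta X^\top\mid X_k] = ha_k + h^2 b_k b_k^\top$. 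Altogether,
\begin{equation*}
\mathbb{E}[u_{k+1}'\mid X_k] = u_k + h\bigl(\partial_t u_k + (b_k\cdot\nabla)u_k + \tfrac12(a_k:\nabla\nabla)u_k\bigr) + O(h^2).
\end{equation*}
Substituting this together with $Y_{k+1}' = Y_k + hc_k Y_k$ and $Z_{k+1}' - Z_k = hg_k Y_k$, and absorbing the $h^2 c_k Y_k \cdot [\,\cdots\,]$ cross term into the remainder, yields
\begin{equation*}
\mathbb{E}\bigl[u_{k+1}'Y_{k+1}' + Z_{k+1}' - u_k Y_k - Z_k \bigm| X_k,Y_k,Z_k\bigr] = h Y_k\Bigl[\partial_t u + (b\cdot\nabla)u + \tfrac12(a:\nabla\nabla)u + cu + g\Bigr]_{(t_k,X_k)} + O(h^2)Y_k,
\end{equation*}
and the bracket vanishes because $u$ solves (\ref{eq:2}) pointwise in $\bar Q$.

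I do not expect a genuine obstacle here: the estimate is essentially routine once the $C^{2,4}$ regularity is available. The only mildly delicate point is bookkeeping — tracking all cross terms between the $h$-expansion of $Y_{k+1}'$ (via $hc_k$) and the $h$-expansion of $u_{k+1}'$, to make sure everything not matched by the PDE falls within $O(h^2)Y_k$. This is immediate from the uniform boundedness of the coefficients and their derivatives under Assumptions~\ref{as:2} and~\ref{asn2}, and the bounded-increment property of $\xi_{k+1}$ gives pathwise control on $|\Delta X|$ so no moment-tail estimates are needed.
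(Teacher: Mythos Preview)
Your proposal is correct and follows essentially the same route as the paper: Taylor-expand $u_{k+1}'$ around $(t_k,X_k)$ using the $C^{2,4}$ extension to $\bar Q\cup\bar Q_{-r}$, kill the odd-in-$\xi$ terms via the moment identities of the bounded increments, identify the surviving $O(h)$ contribution with the PDE operator applied to $u$, and absorb all cross terms with $Y_{k+1}'=Y_k(1+hc_k)$ into an $O(h^2)Y_k$ remainder. The paper merely organises the expansion into two explicit remainder blocks $R_{1,k+1}$ (the $O(h^2)$ part) and $R_{2,k+1}$ (the mean-zero part), but the substance is identical.
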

\begin{proof}
This is a standard result (see e.g. \cite{49}) for the weak Euler approximation (\ref{eq:11}), (\ref{mdy1})-(\ref{mdz1}) used in Algorithm~\ref{algorithm3.1} but we include its proof here as we will refer to it later (in Section~\ref{section6}), where we will need to take into account dependence of the error on time.

We expand $u_{k+1}^{\prime} $  firstly around $(t_{k},X_{k+1}^{'})$ and then around $(t_{k},X_{k}) $:
\begin{align*}
u_{k+1}^{\prime}
 = u_{k} + h\frac{\partial u_{k}}{\partial t} + h(b_{k}\cdot\nabla)u_{k} + \frac{h}{2}(a_{k}:\nabla\nabla)u_{k} +  R_{1,k+1} + R_{2,k+1},
\end{align*}
so that
\begin{align*}
    R_{1,k+1} &= (h^{2}/2)D^{2}u_{k}[b_{k},b_{k}] 
    + (h^{3}/6)D^{3}u_{k}[b_{k},b_{k},b_{k}]  + (h^{2}/2)D^{3}u_{k}[b_{k},\sigma_{k}\xi_{k+1},\sigma_{k}\xi_{k+1}] \\
    &\;\;  + (1/24)D^{4}u(t_{k},X_{r})[\delta_{k+1},\delta_{k+1},\delta_{k+1},\delta_{k+1}]  + h^{2}(b_{k}\cdot\nabla)D_{t}u_{k}
    \\ & \;\; + (h/2)D_{t}D^{2}u(t_{k},X_{r}^{'})[\delta_{k+1},\delta_{k+1}] + (h^{2}/2)D_{t}^{2}u(t_{r}, X_{k+1}^{'}),\\
R_{2,k+1} &=  h^{1/2}(\sigma_{k}\xi_{k+1}\cdot\nabla)u_{k} + (h/2)\big( D^{2}u_{k}[\sigma_{k}\xi_{k+1},\sigma_{k}\xi_{k+1}] - (a_{k}:\nabla\nabla)u_{k}\big)\\ & \;\; + h^{3/2}D^{2}u_{k}[b_{k},\sigma_{k}\xi_{k+1}] + ( h^{3/2}/6)D^{3}u_{k}[\sigma_{k}\xi_{k+1},\sigma_{k}\xi_{k+1},\sigma_{k}\xi_{k+1}] \\ & \;\; + (h^{5/2}/2)D^{3}u_{k}[b_{k},b_{k},\sigma_{k}\xi_{k+1}]   + h^{3/2}(\sigma_{k}\xi_{k+1}\cdot\nabla)D_{t}u_{k} , 
\end{align*}
where $D_{t}^{j} = \frac{\partial^{j}}{\partial t^{j}}$ with $j=1,2$, $\delta_{k+1} = hb_{k} + h^{1/2}\sigma_{k}\xi_{k+1} $, $t_{r} = t_{k}+\alpha_{1}h $, $X_{r}^{'}=X_{k} + \alpha_{2}\delta_{k+1}$ with some $\alpha_{1}$, $\alpha_{2} \in (0,1)$. Notice that
\begin{align*}
\mathbb{E}(\xi^{i}_{k+1}|X_{k},Y_{k},Z_{k}) &= 0,& 
\mathbb{E}(\xi^{i}_{k+1}\xi^{j}_{k+1}|X_{k},Y_{k},Z_{k}) &= 0,  \;\;\; i \neq j,\\ \mathbb{E}(\xi_{k+1}^{i}\xi_{k+1}^{j}\xi_{k+1}^{m})|X_{k},Y_{k},Z_{k}) &= 0, & \mathbb{E}((\xi_{k+1}^{i})^{2}|X_{k},Y_{k},Z_{k}) &= 1,
\end{align*}
    where $ i,j,m = 1,\dots d$, then  it is not difficult to deduce that  $\mathbb{E}(R_{2,k+1}|X_{k},Y_{k},Z_{k}) = 0$, $k = 0,\dots, N-1$. Further, again using moments of $\xi_{k+1}$, noticing $\mathbb{E}(\delta_{k+1})^2 = \mathcal{O}(h)$, $ k = 0,\dots,N-1$, and recalling that the function $u(t,x)$ is uniformly bounded for all $(t,x) \in [T_{0},T]\times \bar{G}\cup\bar{G}_{-r}$, we get
  $  |\mathbb{E}(R_{1,k+1}|X_{k},Y_{k},Z_{k})|\leq Ch^{2}$,   $k=0,\dots ,N-1$. Therefore, using equation (\ref{eq:5}), we obtain the desired bound as
\begin{align*}
   &\Big| \mathbb{E}\Big(u_{k+1}^{\prime}Y_{k+1}^{'}  + Z_{k+1}^{'} - (u_{k}Y_{k} + Z_{k}) \big|X_{k},Y_{k},Z_{k} \Big)\Big|  \\
   &= \Big| \mathbb{E}\Big(u_{k+1}^{\prime} \big(Y_{k} + hc_{k}Y_{k} \big) - u_{k}Y_{k} + Z_{k+1}^{'} - Z_{k}\big| X_{k},Y_{k},Z_{k} \Big)  \Big| \\ & \leq \Big|  \Big(u_{k} + h\frac{\partial u}{\partial t}(t_{k},X_{k}) + h(b_{k}\cdot\nabla)u_{k} + h\frac{(a_{k}:\nabla\nabla)}{2}u_{k} \Big)\big( Y_{k} + hc_{k}Y_{k}\big)  -u_{k}Y_{k}
   + hg_{k}Y_{k}\Big|  \\ & \;\;\; + \Big|  \mathbb{E}\Big(R_{1,k+1} + R_{2,k+1}\big| X_{k},Y_{k},Z_{k}\Big)( Y_{k} + hc_{k}Y_{k})\Big|    \leq CY_{k}h^{2}.
\end{align*}
\end{proof}

\begin{lemma} \label{lemma2.3}
Under Assumptions \ref{as:3}-\ref{as:2} and \ref{a01}-\ref{ass4}, the one-step error of Algorithm~\ref{algorithm3.1} near the boundary is estimated  for all $ k= 0,\dots,N-1$  as
\begin{equation}
    \Big| u_{k+1}Y_{k+1} + Z_{k+1} - (u_{k+1}^{\prime}Y_{k+1}^{'} + Z_{k+1}^{'})\Big| \leq CY_{k}r_{k+1}h\rf{I_{\bar{G}^{c}}(X_{k+1}^{'})\;\;\;\; a.s., }
    \label{eq:onestepbou}
\end{equation}
where $C$ is a positive constant independent of $h$.
\end{lemma}
\begin{proof} \rf{It is obvious that the left-hand side of (\ref{eq:onestepbou}) is equal to $0$ when $X_{k+1}^{'} \in \bar G$.

Let us consider the error on the event $\{X_{k+1}^{'} \in \bar{G}^{c} \}$.} We have \rf{
\begin{align*}
\big| u_{k+1}Y_{k+1} + Z_{k+1}  - (u_{k+1}^{\prime}Y_{k+1}^{'} + Z_{k+1}^{'})\big|  \\
= \big|\big(u_{k+1} - u_{k+1}^{\prime}\big)Y_{k+1} + u_{k+1}^{\prime} \big( Y_{k+1} - Y_{k+1}^{'}\big) + Z_{k+1} - Z_{k+1}^{'}\big|,
\end{align*}}
where we have three errors to analyse: $\Gamma_{1} :=\rf{  (u_{k+1} - u_{k+1}^{\prime})Y_{k+1}}$, $\Gamma_{2} :=\rf{ u_{k+1}^{\prime}(Y_{k+1}-Y_{k+1}^{'}) }$, and $\Gamma_{3} := \rf{Z_{k+1} - Z_{k+1}^{'}}$.

Taylor expansion of $u_{k+1}$ and $u_{k+1}^{\prime}$ around the boundary point $X_{k+1}^{\pi}$ of the same time layer $t_{k+1}$ gives
 \begin{align}
 u_{k+1} &= u_{k+1}^{\pi} + r_{k+1}(\nu\cdot\nabla)u_{k+1}^{\pi} + (r_{k+1}^{2}/2)D^{2}u_{k+1}^{\pi}[\nu,\nu] + R_{3,k+1},\label{2.26} \\  
 u_{k+1}^{\prime} &= u_{k+1}^{\pi} - r_{k+1}(\nu\cdot\nabla)u_{k+1}^{\pi} + (r_{k+1}^{2}/2)D^{2}u_{k+1}^{\pi}[\nu,\nu] + R_{4,k+1}\label{2.27},
 \end{align}
 where $\nu$ is evaluated at $X_{k+1}^{\pi}$ and for $j =3,4$,
 \begin{equation*}
   R_{j,k+1} = (-1)^{j+1}(r_{k+1}^{3}/6)D^{3}u(t_{k+1}, X_{k+1}^{\pi} + (-1)^{j+1}\alpha_{j} r_{k+1}\nu)[\nu,\nu,\nu],\;\;\;\alpha_{j} \in (0,1).
 \end{equation*}
  We notice that $(r_{k+1})^{m} \leq Ch^{m/2}$, $k=0,\dots,N-1$, for any $m \geq 1$. Recall $u(t,x) \in C^{2,4}( \bar{Q}\cup\bar{Q}_{-r})$ which implies that
 \begin{align*}
    \rf{ | R_{j,k+1}|} &\leq Cr_{k+1}^{3}, \;\;\;\;\;\;\; j =3,4,
 \end{align*}
 \rf{where $C>0$ is a nonrandom constant independent of $h$.}

 Using the expansions (\ref{2.26})-(\ref{2.27}) and substituting $Y_{k+1}$ from (\ref{eq:16}), $\Gamma_{1}$ becomes
\begin{flalign*}
\Gamma_{1} &= \rf{ 2r_{k+1}(\nu\cdot\nabla)u_{k+1}^{\pi}\big(Y_{k} + hc_{k}Y_{k} +2r_{k+1}\gamma_{k+1}^{\pi}Y_{k} + 2r_{k+1}^{2}(\gamma_{k+1}^{\pi})^{2}Y_{k}\big) + (R_{3,k+1}} \\
& \;\;\;\;- R_{4,k+1})Y_{k+1}  \\
&\;= \rf{2r_{k+1}Y_{k}(\nu\cdot\nabla)u_{k+1}^{\pi} +  4r_{k+1}^{2}\gamma_{k+1}^{\pi}Y_{k}(\nu\cdot\nabla)u_{k+1}^{\pi} + 2hr_{k+1}c_{k}Y_{k}(\nu\cdot\nabla)u_{k+1}^{\pi} }\\
& \;\;\;\;  + \rf{4r_{k+1}^{3}(\gamma_{k+1}^{\pi})^{2}Y_{k}(\nu\cdot\nabla)u_{k+1}^{\pi}  +  \big(R_{3,k+1} - R_{4,k+1}\big)Y_{k+1}}.
\end{flalign*}
Similarly $\Gamma_{2}$ gives
\begin{flalign*}
    \Gamma_{2} &= \rf{ \big(u_{k+1}^{\pi} - r_{k+1}(\nu\cdot\nabla)u_{k+1}^{\pi}\big)\big( 2r_{k+1}\gamma_{k+1}^{\pi}Y_{k} + 2r_{k+1}^{2}(\gamma_{k+1}^{\pi})^{2}Y_{k}\big) }\\
    & \;\;\;\; + \rf{ \Big(\frac{r_{k+1}^{2}}{2}D^{2}u_{k+1}^{\pi}[\nu,\nu]+R_{4,k+1}\Big)(Y_{k+1} - Y_{k+1}^{'})}\\
    & \;= \rf{2r_{k+1}\gamma_{k+1}^{\pi}Y_{k}u_{k+1}^{\pi} - 2r_{k+1}^{2}\gamma_{k+1}^{\pi}Y_{k}(\nu\cdot\nabla)u_{k+1}^{\pi} + 2r_{k+1}^{2}(\gamma^{\pi}_{k+1})^{2}Y_{k}u_{k+1}^{\pi}}  \\
    & \;\;\;\;-\rf{2r_{k+1}^{3}(\gamma_{k+1}^{\pi})^{2}Y_{k}(\nu\cdot\nabla)u_{k+1}^{\pi}
    +\Big(\frac{r_{k+1}^{2}}{2}D^{2}u_{k+1}^{\pi}[\nu,\nu]  + R_{4,k+1}\Big)(Y_{k+1}-Y_{k+1}^{'})},
\end{flalign*}
and, using the value of $Z_{k+1}$ and $Z_{k+1}^{'}$, $\Gamma_{3}$ becomes
\begin{flalign*}
\Gamma_{3} & =\rf{ - 2r_{k+1}\psi_{k+1}^{\pi}Y_{k} - 2r_{k+1}^{2}\gamma_{k+1}^{\pi}\psi_{k+1}^{\pi}Y_{k}}.
\end{flalign*}
Combining $\Gamma_{1}, \Gamma_{2}, \Gamma_{3}$ and using the boundary condition (\ref{eq:4}), we obtain
\begin{align*}
    \big| \Gamma_{1}& +  \Gamma_{2} +  \Gamma_{3}\big|  \leq  \rf{\Big|2hr_{k+1}c_{k}Y_{k}(\nu\cdot\nabla)u_{k+1}^{\pi} \Big|} \\
    & \;\;\;\; + \rf{\Big|2r_{k+1}^{3}(\gamma_{k+1}^{\pi})^{2}Y_{k}(\nu\cdot\nabla)u_{k+1}^{\pi} + \frac{r_{k+1}^{2}}{2}D^{2}u_{k+1}^{\pi}[\nu,\nu](Y_{k+1}-Y_{k+1}^{'})\Big|}\\
    & \;\;\;\;\;  + \rf{\big| R_{3,k+1}Y_{k+1} - R_{4,k+1}Y_{k+1}^{'}\big|
    \leq CY_{k}hr_{k+1}},
\end{align*}
\rf{which gives the error estimate in the case $\{X_{k+1}^{'} \in \bar{G}^{c} \}$}.
\end{proof}

 \subsubsection {Lemma on the number of steps when $X^{'}_{k} \notin \bar G$} \label{section2.4}
  Consider the Markov chain $(t_{k},X^{'}_{k})$ generated by  Algorithm~\ref{algorithm3.1}. Recall that $X_{k}^{'}$ can take values outside $\bar{G}$. If $X_{k}^{'} \in \bar{G}^{c}$, to calculate $X_{k+1}^{'}$ we first determine $X_{k}$ according to (\ref{eq:15}) and then simulate $\xi_{k+1}$ to find $X_{k+1}^{'}$ as in (\ref{eq:11}). %
 
\rf{Let $\rs{P_h=}P$ be the one-step transition operator for the  Markov chain $(t_{k},X_{k}^{'})$}, $\rs{k=0,\ldots,N}$:
\begin{equation*}
   \rs{ (P_h V)(t,x) = PV(t,x) := \mathbb{E}\big[ \big . V(t+h, X^{'}_{1}) \big | t_{0}=t, X_{0}^{'}=x\big],}
\end{equation*}
\rf{where $(t,x)$ is an arbitrary point in $[T_{0}, T)\times G$
\rs{and $V(t,x)$ is a function from $[T_{0}, T]\times \big(\bar G \cup \bar{G}_{-r}\big)$ to $\mathbb R$.}}
\rs{We note that
\begin{equation*}
\mathbb{E}\big[ \big . V(t_{k+1}, X^{'}_{k+1}) \big | X_{k}^{'}=x\big]
=PV(t_k,x).
\end{equation*}
}

Consider the boundary value problem associated with the Markov chain $(t_{k},X_{k}^{'})$:
\begin{align}
    q(x) PV(t,x) - V(t,x) &= -g(t,x),  &(t,x) \in [T_{0}, T-h]\times \big(\bar{G}\cup \bar{G}_{-r}\big), \label{eq:18} \\
    V(T,x) &= 0,       &x \in  \big(\bar{G} \cup \bar{G}_{-r}\big),\label{eq:19}
\end{align}
where $g(t,x) \geq 0$ and $q(x) > 0$. The solution to this problem \rs{starting from $(t,x)=(t_k,x)$} is given by \cite{20, 46, 49}:\rs{
\begin{equation}
    V(t_k,x) =  \mathbb{E}\bigg[ \bigg . \sum\limits_{i=k}^{N-1}g(t_{i},X_{i}^{'})\prod\limits_{j=k}^{i-1}q(X_{j}^{'}) \bigg | X_{k}^{'}=x\bigg]. \label{eq:20}
\end{equation}}

The next lemma is related to an estimate of the number of steps which the Markov chain $X_{k}^{'}$ spends in the layer $G_{-r}$ that lies outside the $\bar{G}$. It is used in proving the main convergence theorem  (Theorem \ref{conp}). 
\begin{lemma} \label{bl}
Under Assumptions \ref{as:3} - \ref{as:2}, for \rf{any} constant $K > 0$ and for sufficiently small $h$, the following inequality holds
\begin{equation*}
    \mathbb{E}\bigg(\sum\limits_{k=0}^{N-1}r_{k}I_{G_{-r}}(X_{k}^{'})\prod_{i=0}^{k-1}\big(1+Kr_{i}I_{G_{-r}}(X_{i}^{'})\big) \bigg)\leq C,
\end{equation*}
where $C$ is a positive constant independent of $h$.
\end{lemma}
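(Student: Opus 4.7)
The plan is to identify the expression inside the expectation with $V(t_0, X_0)$ for a BVP of the form (\ref{eq:18})--(\ref{eq:19}), and then bound $V$ by comparison with an explicitly constructed bounded supersolution. Setting $g(t, x) := r(x) I_{G_{-r}}(x)$ and $q(x) := 1 + K r(x) I_{G_{-r}}(x)$, where $r(x) := \dist(x, \partial G)$ for $x \in G_{-r}$ and $r(x) := 0$ for $x \in \bar G$, the non-negativity $g \geq 0$ and positivity $q \geq 1$ are clear, and (\ref{eq:20}) identifies the sum in the lemma with $V(t_0, X_0)$. A standard backward induction (using the positivity of $P$ and of $q$) shows that any $\tilde V$ with $\tilde V(T, \cdot) \geq 0$ and $\tilde V - q P \tilde V \geq g$ on $[T_0, T - h] \times (\bar G \cup \bar G_{-r})$ automatically satisfies $V \leq \tilde V$, so the task reduces to exhibiting such a $\tilde V$ bounded uniformly in $h$.

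To build $\tilde V$, take $w \in C^4(\bar G \cup \bar G_{-r})$ to be a smoothly truncated signed distance function: $w|_{\partial G} = 0$, $(\nu \cdot \nabla w)|_{\partial G} = 1$, and $\|w\|_\infty$ is made arbitrarily small by cutting off sufficiently close to $\partial G$; in particular, arrange $K\|w\|_\infty < 1$. Set
\[
    \tilde V(t, x) := e^{\mu(T - t)}\,[M - A w(x)]
\]
with constants $A, M, \mu > 0$ to be chosen. Denote the spatial part of the generator by $\mathcal L := (b \cdot \nabla) + \tfrac{1}{2}(a : \nabla \nabla)$. The weak-Euler expansion used in Lemma~\ref{lemma2.2} gives $P w(x) = w(x) + h \mathcal L w(x) + O(h^{3/2})$ for $x \in \bar G$. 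For $x \in G_{-r}$ the chain first reflects to $X_k = x + 2 r \nu \in \bar G$ before the Euler step; Taylor expansion using $\nu \cdot \nabla w = 1$ on $\partial G$ yields $w(X_k) - w(x) = 2 r + O(r^2)$, whence $P w(x) - w(x) = 2 r + h \mathcal L w(X_k) + O(r^2 + h^{3/2})$. Substituting these, using $r = \mathcal O(\sqrt h)$ and $e^{-\mu h} = 1 - \mu h + O(h^2)$, one obtains
\begin{align*}
    \tilde V - P \tilde V &= e^{\mu(T - t)}\,h\,[\mu(M - A w) + A \mathcal L w] + O(h^{3/2}), && x \in \bar G, \\
    \tilde V - q P \tilde V &= e^{\mu(T - t)}\bigl\{r(2 A - K M) + h\,[\mu M + A \mathcal L w(X_k)]\bigr\} + O(h), && x \in G_{-r}.
\end{align*}
The interior inequality $\tilde V - P \tilde V \geq 0$ is secured by $M \geq A\|w\|_\infty + A\|\mathcal L w\|_\infty/\mu$; the boundary-layer inequality $\tilde V - q P \tilde V \geq r$ is secured by $2 A - K M \geq 2$ together with a lower bound on $\mu M$ sufficient to dominate the $O(h)$ error. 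These constraints are simultaneously satisfiable because $K\|w\|_\infty < 1$: choose $\mu$ so large that $K\|\mathcal L w\|_\infty/\mu < 1$, then pick $M$ in the nonempty interval $[A\|w\|_\infty + A\|\mathcal L w\|_\infty/\mu,\ 2(A - 1)/K]$ (valid for $A$ sufficiently large). The resulting $\tilde V$ is bounded uniformly in $h$ by $e^{\mu T} M$, which delivers the claim.

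The main obstacle is the boundary-layer inequality, where the penalty factor $q - 1 = K r$ inflates $P \tilde V$ while the reflection provides the compensating gain $-2 A r$ through $\nu \cdot \nabla w = 1$; forcing $2 A/(K M) > 1$ requires $w$ to carry simultaneously unit inward normal derivative on $\partial G$ and arbitrarily small sup-norm on the whole of $\bar G \cup \bar G_{-r}$. This is why a truncated signed distance must be used rather than the signed distance itself, and why $\|\mathcal L w\|_\infty$ must be absorbed by sending $\mu$ to infinity. All remaining manipulations are routine Taylor-expansion book-keeping exploiting $r = \mathcal O(\sqrt h)$ and the $C^4$ regularity of $w$ on $\bar G \cup \bar G_{-r}$.
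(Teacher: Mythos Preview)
Your proposal is correct and follows essentially the same strategy as the paper: cast the sum as the solution $V$ of the discrete boundary-value problem (\ref{eq:18})--(\ref{eq:19}) with $g(t,x)=r_0(x)I_{G_{-r}}(x)$ and $q(x)=1+Kr_0(x)I_{G_{-r}}(x)$, then dominate $V$ by an explicit supersolution built as an exponential time factor times a spatial barrier whose unit normal derivative on $\partial G$ produces a $2r$-gain upon reflection that beats the $Kr$ penalty from $q$. The only difference is in the concrete barrier: the paper uses the multiplicative form $U(t,x)=e^{K_1(T-t)}e^{K_2(L-\dist(x,\mathbb S_{-l}))}$ based on the distance to an auxiliary exterior surface $\mathbb S_{-l}$ (which avoids any truncation and keeps $\|\mathcal L w\|_\infty$ harmless), whereas you use the additive form $e^{\mu(T-t)}[M-Aw(x)]$ with a truncated signed distance $w$, trading the auxiliary surface for the need to send $\mu$ large enough to absorb $\|\mathcal L w\|_\infty$.
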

\begin{proof}
\rf{Let $r_{0}(x)$ be the distance of $x$ from boundary $\partial G$. If we take $g(t,x) = r_{0}(x)I_{G_{-r}}(x)$ and $ q(x) = (1+Kr_{0}(x){I_{G_{-r}}(x)})  $ in  (\ref{eq:18}), then the solution to (\ref{eq:18})-(\ref{eq:19}) is (cf. (\ref{eq:20})):
\begin{equation*}
v(\rs{t_0},x) = \mathbb{E}\bigg( \bigg . \sum\limits_{k=0}^{N-1}r_{k}I_{G_{-r}}(X_{k}^{'})\prod_{i=0}^{k-1}(1+Kr_{i}I_{G_{-r}}(X_{i}^{'})) \bigg | \rs{ X_{0}^{'}=x} \bigg),
\end{equation*}
where $r_{k} = \dist(X_{k}^{'},\partial G)$ as before.
 If we can find a solution $V(t,x)$ to (\ref{eq:18})-(\ref{eq:19}) with a function $g(t,x)$ such that it satisfies the inequality
\begin{equation}
    g(t,x) \geq r_{0}(x) I_{G_{-r}}(x), \label{eq:21}
\end{equation}
for all $(t,x) \in [T_{0},T-h]\times(\bar{G}\cup \bar{G}_{-r})$, then we have
\begin{equation*}
    v(\rs{t_0},x) \leq V(\rs{t_0},x).
\end{equation*}
Introduce the function
\begin{equation}
   w(x) =
    \begin{cases}
    0, & x \in \bar{G}\textbackslash \bar{G}_{l}, \label{wx}\\
    \dist^{2}(x,\mathbb{S}_{l}), & x \in \bar{G}_{l}\cup \bar{G}_{-r}.
    \end{cases}
\end{equation}
Since Algorithm~\ref{algorithm3.1} takes steps according to (\ref{eq:11})-(\ref{eq:15}), we can write
 \begin{equation*}
 \Delta X^{'} := X^{'}-x = 2r_{0}(x)\nu I_{G_{-r}}(x) + \delta,
 \end{equation*}
  where the normal $\nu = \nu(x^{\pi})$ is evaluated at the projection of $x$ on $\partial G$, i.e. $  x^{\pi}$, and $\delta = b(t,x+2r_{0}(x)I_{G_{-r}}(x)\nu )h + \sigma(t,x+2r_{0}(x)I_{G_{-r}}(x)\nu)h^{1/2}\xi$.
Note that
 \begin{equation}
 |\mathbb{E}(\delta)| = \mathcal{O}(h), \,\,\,\,\, |\delta |^{2} = \mathcal{O}(h), \,\,\,\,\, |\Delta X^{'} |^{2} = \mathcal{O}(h).
 \label{eq:deltaprop}
 \end{equation}

 We have the following approximation \rs{when $x \in \bar{G_{l}} \cup G_{-r} $} (cf. \cite{1,49}):
\begin{equation}
      \dist(x+\Delta X^{'},\mathbb{S}_{l}) = \dist(x,\mathbb{S}_{l})+ \Bigg(\Delta X^{'}\cdot \frac{x-x^{\pi}_{l}}{| x-x^{\pi}_{l}|}\Bigg)+ \mathcal{O}(h), \label{eq:22}
   \end{equation}
   where $x^{\pi}_{l}$ is the projection of $x$ onto the surface $\mathbb{S}_{l}$.
 Hence, 
  we have 
   \begin{align}
   \mathbb{E}\big(\dist(x+\Delta X^{'},\mathbb{S}_{l})\big)^2  &= \mathbb{E} \bigg(\dist(x,\mathbb{S}_{l}) + \bigg(\Delta X^{'}\cdot\frac{x-x_{l}^{\pi}}{| x-x_{l}^{\pi}|}\bigg)+\mathcal{O}(h)\bigg)^{2} \nonumber  \\
   &= \Big(\dist(x,\mathbb{S}_{l})\Big)^{2} +  2\dist(x,\mathbb{S}_{l})\mathbb{E}\bigg(\Delta X^{'}\cdot \frac{x-x_{l}^{\pi}}{| x- x_{l}^{\pi}|}\bigg)
   + \mathcal{O}(h).\label{eq:23}
   \end{align}
 Also, define another function $U(t,x) $ as
\begin{equation}\label{utx}
    U(t,x) =
    \begin{cases}
    0, & (t,x) \in \{T\}\times \big(\bar{G} \cup \bar{G}_{-r}\big),\\
    e^{K_{1}(T-t)}e^{K_{2}w(x)}, & (t,x) \in [T_{0}, T-h] \times \big(\bar{G} \cup \bar{G}_{-r}\big),
    \end{cases}
\end{equation}
where $K_{1}$ and $K_{2}$ are positive constants which choice will be discussed later in the proof.

Applying the one step operator $P$ to $U$, for $t<T$ we get
\begin{equation*}
    PU(t,x) = e^{K_{1}(T-t-h)}\mathbb{E}\Big(e^{K_{2}w(X_{1}^{'})}\Big).
\end{equation*}
We note that if $ x $ as well as $ X_{1}^{'} \in \bar{G}_{l}\cup \bar{G}_{-r}$, then due to (\ref{eq:22}) and (\ref{eq:23}) we ascertain
\begin{align}
    PU(t,x) = e^{K_{1}(T-t)}e^{K_{2}w(x)} &\big(1 - K_{1}h + \mathcal{O}(h^{2})\big) \nonumber \\ & \times \mathbb{E}\Bigg( 1
    +  2K_{2}\dist(x,\mathbb{S}_{l})\bigg( \Delta X^{'}\cdot \frac{x-x^{\pi}_{l}}{| x-x^{\pi}_{l}|}\bigg) + \mathcal{O}(h)\Bigg),\label{eq:24}
\end{align}
where $\mathcal{O}(h^{2})$ depends on $K_{1}$ and $\mathcal{O}(h)$ depends on $K_{2}$.

Thereafter, we calculate $(1+Kr_{0}(x){I_{G_{-r}}(x)})PU(t,x)-U(t,x)$ at points $(t,x)$ lying in different regions identified by four different cases discussed below with the aim of finding $g(t,x)$ satisfying the inequality (\ref{eq:21}). Introduce the region $ S_{h} = \{ x | \dist(x,\mathbb{S}_{l}) < K_{3}h^{1/2}\} $ where $K_{3} $ is chosen so that for $x \in G_{l}\backslash S_{h}$, in one step transition, any of the $2^d$ realizations of $X_{1}^{'}$ cannot cross $\mathbb{S}_{l}$.
We also note that for $x \in S_{h}$, in one step transition $X_{1}^{'}$ may or may not cross the surface $\mathbb{S}_{l}$.  In the first case, i.e. in Case 1, we discuss the scenario when $x \in \bar{G}_{-r}$. In Case 2, we choose $x \in G_{l}\backslash S_{h}$ so that $X_{1}^{'}$ cannot cross $\mathbb{S}_{l} $ and remains in $G_{l}\cup G_{-r} $. In Case 3, we take $x \in \bar{G}\backslash(\bar{G_{l}}\cup S_{h})$  so that all realizations of $X_{1}^{'}$ also belong to $\bar{G}\backslash\bar{G}_{l}$. We examine the scenario when $x \in S_{h}$ in Case 4.
\begin{description}
   \item[Case 1:] $x \in \bar{G}_{-r}$. \\
 In this case $\dist(x,\mathbb{S}_{l}) = l + r_{0}(x)$.
 Since  $\mathbb{S}_{l} $ is parallel to $\partial G$, $\nu(x^{\pi})=\nu(x^{\pi}_l)$.
 Further, using (\ref{eq:24}), (\ref{eq:deltaprop}), $r_{0}^{2}(x) = \mathcal{O}(h)$, and $\Big(\nu(x^{\pi}_l)\cdot \frac{x-x_{l}^{\pi}}{|x-x_{l}^{\pi}|}\Big) = -1 $, we get
 \begin{align*}
      &(1+Kr_{0}(x))PU(t,x) -U(t,x) = e^{K_{1}(T-t)}e^{K_{2}w(x)}\Big(\big(1 + Kr_{0}(x)\big)\big(1-K_{1}h + \mathcal{O}(h^{2})\big)\\ & \;\;\;\;\times \Big( 1 + 4K_{2}r_{0}(x)l\Big(\nu\cdot \frac{x-x_{l}^{\pi}}{| x-x_{l}^{\pi}|}\Big) + \mathcal{O}(h)\Big)  - 1\Big) = \Big(-\underbrace{\big(4K_{2}r_{0}(x)l  - Kr_{0}(x)\big)}_{{1}} \\
      & -\underbrace{\big(K_{1} h - \beta_1(K_{2}) \mathcal{O}(h) + \beta_2(K_1,K_2)\mathcal{O}(h^{3/2}) \big)}_{{2}}\Big)e^{K_{1}(T-t)}e^{K_{2}w(x)},
\end{align*}
where $|\mathcal{O}(h^k)| \leq Ch^k$ with $C>0$ being independent of $h$, $K_1$ and $K_2$,  $\beta_1(K_2)$ is a function of $K_2$, and $\beta_2(K_1,K_2)$ is a function of $K_1$ and $K_2$.

\item[Case 2:]  $ x \in G_{l}\backslash S_{h} $ and hence all realizations of $X_{1}^{'} \in G_{l}\cup G_{-r}$.\\ 
In this case $\Delta X^{'} = \delta$, where $\delta = b(t,x)h + \sigma(t,x)h^{1/2}\xi$. Using (\ref{eq:deltaprop}) and (\ref{eq:24}), we obtain
\begin{align*}
    &PU(t,x)-U(t,x) = e^{K_{1}(T-t)}e^{K_{2}w(x)} \big(1 - K_{1}h + \mathcal{O}(h^{2})\big)\\ & \times \bigg( 1 +  2K_{2}\bigg(b(t,x)\cdot \frac{x-x_{l}^{\pi}}{| x-x_{l}^{\pi}|}\bigg)\dist(x,\mathbb{S}_{l})h  + \mathcal{O}(h)  \bigg)  - e^{K_{1}(T-t)}e^{K_{2}w(x)} \\
    & = \big(-h K_{1} + \beta_1(K_2) \mathcal{O}(h)+ \beta_2(K_1,K_2) \mathcal{O}(h^2)\big)e^{K_{1}(T-t)}e^{K_{2}w(x)},
\end{align*}
where $|\mathcal{O}(h^k)| \leq Ch^{k}$ with $C>0$ being independent of $h$, $K_1$ and $K_2$, $\beta_1(K_2)$ is a function of $K_2$, and $\beta_2(K_1,K_2)$ is a function of $K_1$ and $K_2$ (note that the functions $\beta_i$ here are different than the ones in Case 1).
\item[Case 3:] $x \in \bar{G}\backslash(G_{l}\cup S_{h})$ and all realizations of $ X_{1}^{'} \in \bar{G}\backslash\bar{G_{l}} $. \\
We have,
 $
 PU(t,x)-U(t,x)  = e^{K_{1}(T-t-h)} - e^{K_{1}(T-t)}
 = \big(-K_{1}h + \mathcal{O}(h^{2})\big)e^{K_{1}(T-t)}.
 $
 \item[Case 4:] $x \in S_{h}$. \\
 It can be observed that $ w(x) = \mathcal{O}(h)$ and $ w(x+\delta) = \mathcal{O}(h)$. Hence
 \begin{align*}
 PU(t,x)-U(t,x)  =  \big(-K_{1}h +K_2 \mathcal{O}(h)+\beta_2(K_1,K_2) \mathcal{O}(h^2)\big)e^{K_{1}(T-t)},
 \end{align*}
where $|\mathcal{O}(h^k)| \leq Ch^k$ with $C>0$ being independent of $h$, $K_1$ and $K_2$ and $\beta_2(K_1,K_2)$ is a function of $K_1$ and $K_2$.
\end{description}
Now firstly we analyze Case 1. We take $K_{2} > \frac{K+1}{4l}$ which ensures that  term $1$ is always greater than $r_{0}(x)$. Then we choose $K_{1}$ in a manner that not only term $2$ in Case 1 is positive but also $f(t,x) = -\big((1+Kr_{0}(x)I_{G_{-r}}(x))PU(t,x) -U(t,x)\big)$ is positive in Case 2 as well as in Case 4. It is evident from second and fourth case that such a choice of $K_{1}$ is dependent on $K_{2}$.
As one can see, Case 3 trivially satisfies the condition that $ f(t,x) = -\big(PU(t,x)-U(t,x)\big)$ is positive. We take $g(t,x) = f(t,x)$ which is greater than $r_{0}(x)$ in $G_{-r}$. As can be  easily observed, we have constructed a function $V(t,x) = U(t,x)$  which is a solution of (\ref{eq:18})-(\ref{eq:19}) with $q(x) = (1+Kr_{0}(x)I_{G_{-r}}(x))$ and $g(t,x)  \geq  I_{G_{-r}}(x)r_{0}(x)$.
Therefore, $v(\rs{t_0},x) \leq V(\rs{t_0},x)$ and the lemma is proved. }
\end{proof}

\begin{corollary} \label{corollary1}
Under Assumptions \ref{as:3} - \ref{as:2}, for \rf{any} constant $K>0$ the following inequalities hold
\begin{equation}\label{neweq3.29}
    \mathbb{E}\bigg(\sum\limits_{k=1}^{N}\prod_{i=0}^{k-1}(1+Kr_{i}I_{G_{-r}}(X_{i}^{'}))\bigg) \leq \frac{C}{h}
\end{equation}
\rs{and also
\begin{align}\label{neweq3.30}
    \mathbb{E}\bigg( \prod_{i=0}^{N-1}(1+Kr_{i}I_{G_{-r}}(X_{i}^{'}))  \bigg) \leq C,
\end{align}}
where $C$ is a positive constant independent of $h$.
\end{corollary}
\begin{proof}
Again consider the boundary value problem (\ref{eq:18})-(\ref{eq:19}) related to the Markov chain $(t_{k},X_{k}^{'})$ with the solution given by (\ref{eq:20}). If we chose $g(t,x) = 1$ and $q(x) = (1+r_{0}{I_{\bar{G}_{-r}}(x)})$ then the solution
 of the problem is
\begin{equation}\label{3.27}
     v(\rs{t_0},x) = \mathbb{E}\Bigg( \bigg . \sum\limits_{k=0}^{N-1}\prod_{i=0}^{k-1}\big(1+Kr_{i}I_{G_{-r}}(X_{i}^{'})\big) \bigg | \rs{ X_{0}^{'}=x} \Bigg).
\end{equation}
If we can find a solution to (\ref{eq:18})-(\ref{eq:19}) with a function $g(t,x) \geq 1$ then $ v(t,x) \leq V(t,x)$. Keeping this in mind, one can easily check if we take $g(t,x) = f(t,x)/h $ where $ f(t,x) $ is constructed according to the cases discussed in Lemma \ref{bl} then our aim to get $g(t,x) \geq 1$ is fulfilled. In turn we obtain $ v(\rs{t_0},x) \leq V(\rs{t_0},x)$ where $ V(t,x) = U(t,x)/h$.

We now show that $\mathbb{E}\big(\prod_{i=0}^{N-1}(1+\rs{K}r_{i}I_{G_{-r}}(X_{i}^{'}))\big) \leq C$, where $C $ is independent of $h$. Note that
\begin{align*}
    &\mathbb{E}\bigg(\sum\limits_{k=0}^{N-1}r_{k}I_{G_{-r}}(X_{k}^{'})\prod_{i=0}^{k-1}(1+ Kr_{i}I_{G_{-r}}(X_{i}^{'}))\bigg)
     = \frac{1}{K}\mathbb{E}\bigg(\sum\limits_{k=0}^{N-1}\Big( \prod\limits_{i=0}^{k}(1+Kr_{i}I_{G_{-r}}(X_{i}^{'})) \\ & - \prod\limits_{i=0}^{k-1}(1+Kr_{i}I_{G_{-r}}(X_{i}^{'}))\Big)\bigg)
     =\frac{1}{K}\mathbb{E}\bigg(\prod\limits_{i=0}^{N-1}\big(1+Kr_{i}I_{G_{-r}}(X_{i}^{'})\big) - 1\bigg),
\end{align*}
which on using Lemma~\ref{bl} gives \rs{(\ref{neweq3.30}). We have $v(t_0,x) \leq C/h$, where $v(t_0,x)$ is from (\ref{3.27}), which together with (\ref{neweq3.30}) yields  (\ref{neweq3.29})}.
\end{proof}

\subsubsection{Convergence theorem\label{ssec:thm31prf}}
  In Lemma \ref{lemma2.2} we have shown that the order of the one-step approximation in moving from ($X_{k}$, $Y_{k}$, $Z_{k}$) to ($X_{k+1}^{'}$, $Y^{'}_{k+1}$, $Z_{k+1}^{'}$) is $\mathcal{O}(h^{2}) $ and in Lemma \ref{lemma2.3} we have obtained the order $\mathcal{O}(hr_{k+1})$ of the one-step approximation in the case $X_{k+1}^{'} \notin \bar{G}$. Now, we combine these two lemmas along with Lemma \ref{bl} and Corollary \ref{corollary1} to obtain the weak order of convergence of Algorithm \ref{algorithm3.1}.

\begin{proof}[Proof of Theorem \ref{conp}]
We have
\begin{align*}
    &\big|  \mathbb{E}\big(\varphi(X_{N})Y_{N} + Z_{N}\big)  - u(t_{0}, X_{0})Y_{0} \big|
     = \Bigg| \mathbb{E}\Bigg( \sum\limits_{k=0}^{N-1}\Big(u_{k+1} Y_{k+1} + Z_{k+1} - \big(u_{k}Y_{k} + Z_{k}\big)\Big) \Bigg)\Bigg| \\
    & = \Bigg| \mathbb{E}\Bigg( \sum\limits_{k=0}^{N-1}u_{k+1}Y_{k+1} - u_{k+1}^{\prime}Y_{k+1}^{'} + u_{k+1}^{\prime}Y_{k+1}^{'} - u_{k}Y_{k} +  Z_{k+1} - Z_{k+1}^{'} + Z_{k+1}^{'} - Z_{k}\Bigg)\Bigg|.
\end{align*}
When $X_{k+1}^{'} \in \bar{G}$, we have $u_{k+1} = u_{k+1}^{\prime}$, $Y_{k+1}=Y_{k+1}^{'}$ and $Z_{k+1} = Z_{k+1}^{'}$, therefore
\begin{align}
    &\big| \mathbb{E}\big(\varphi(X_{N})Y_{N} + Z_{N}\big) - u(t_{0}, X_{0})Y_{0} \big| \leq \Bigg| \mathbb{E}\Bigg(\sum\limits_{k=0}^{N-1}\Big( u_{k+1}Y_{k+1} - u_{k+1}^{\prime}Y_{k+1}^{'} + Z_{k+1} \nonumber\\ & \;\;\; - Z_{k+1}^{'}\Big)I_{G_{-r}}\big(X_{k+1}^{'}\big) \Bigg)\Bigg| \nonumber  + \Bigg|\mathbb{E}\Bigg( \sum\limits_{k=0}^{N-1}\Big(u_{k+1}^{\prime}Y_{k+1}^{'} - u_{k}Y_{k} + Z_{k+1}^{'}-Z_{k}\Big)\Bigg)\Bigg|
   \nonumber \\ & \leq \Bigg| \mathbb{E}\Bigg( \sum\limits_{k=0}^{N-1}\rf{\Big(u_{k+1}Y_{k+1} - u_{k+1}^{\prime}Y_{k+1}^{'} + Z_{k+1}-Z_{k+1}^{'}\Big) I_{G_{-r}}\big(X_{k+1}^{'}\big)} \Bigg)\Bigg| \nonumber\\ & \;\;\;\;  + \Bigg| \mathbb{E}\Bigg(  \sum \limits_{k=0}^{N-1}\mathbb{E}\Big( u_{k+1}^{\prime}Y_{k+1}^{'}  - u_{k}Y_{k} + Z_{k+1}^{'} - Z_{k}\Big| X_{k}, Y_{k},Z_{k}\Big) \Bigg)\Bigg|. \label{ep4.16}
\end{align}
 Now we apply Lemma \ref{lemma2.2} and Lemma \ref{lemma2.3} to get
\begin{eqnarray}
 \big| \mathbb{E}\big(\varphi(X_{N})Y_{N} + Z_{N}\big)  - u(t_{0}, X_{0})Y_{0} + Z_{0}\big|    &\leq& Ch\Bigg| \mathbb{E}\Bigg( \sum\limits_{k=0}^{N-1}r_{k+1}Y_{k}I_{G_{-r}}\big( X_{k+1}^{'}\big) \Bigg)\Bigg| \nonumber \\ & +&  C h^{2}\Bigg|\mathbb{E} \sum\limits_{k=0}^{N-1}Y_{k}\Bigg|.\label{eq:30}
  \end{eqnarray}
From (\ref{mdy1}) and (\ref{eq:16}), we have
\begin{align*}
    Y_{k} &= Y_{k-1} + hc_{k-1}Y_{k-1} + 2r_{k}\gamma_{k}^{\pi}Y_{k-1}I_{G_{-r}}(X_{k}^{'}) + 2r_{k}^{2}\big(\gamma_{k}^{\pi}\big)^{2}Y_{k-1}I_{G_{-r}}(X_{k}^{'}),
\end{align*}
which, using the uniform boundedness  of $ \gamma(t,x)$ and $c(t,x)$,  $ (t,x) \in \bar{Q}$, and recalling $r_{k} = \mathcal{O}(h^{1/2})$, gives
\begin{align}
    Y_{k} & \leq Y_{k-1}\Big(1 + C_{1}h + C_{2}r_{k}I_{G_{-r}}\big( X_{k}^{'}\big)\Big) \leq Y_{k-1}\big(1 + C_{1}h\big)\big( 1 +  C_{2}r_{k}{I_{G_{-r}}(X_{k}^{'})}\big) \nonumber\\&
    \leq Y_{0}(1+C_{1}h)^{k}\prod_{i=1}^{k}\Big( 1 + C_{2}r_{i}I_{G_{-r}}(X_{i}^{'})\Big) \leq e^{C_{1}T}\prod_{i=1}^{k}\Big(1+C_{2}r_{i}I_{G_{-r}}(X_{i}^{'})\Big), \label{eq:31} 
\end{align}
where $C_{1}$ and $C_{2}$ are some positive constants. Then substituting (\ref{eq:31}) in (\ref{eq:30}), we obtain
\begin{align}
     \big| \mathbb{E}\big(\varphi(X_{N})Y_{N} + Z_{N}\big) - u(t_{0}, X_{0})\big| \nonumber & \leq  Ch\mathbb{E}\Bigg( \sum\limits_{k=1}^{N}r_{k}I_{G_{-r}}\big( X_{k}^{'}\big)\prod_{i=1}^{k-1}\Big(1+C_{2}r_{i}I_{G_{-r}}(X_{i}^{'})\Big)\Bigg) \nonumber \\ & \;\;\;\; +  C h^{2}\mathbb{E}\Bigg( \sum\limits_{k=1}^{N}\prod_{i=1}^{k-1}\big(1+C_{2}r_{i}I_{G_{-r}}(X_{i}^{'})\big)\Bigg),  \label{s2.33}
\end{align}
 which, together with Lemma \ref{bl},  (\ref{neweq3.29})  and the fact that $r_{N}=\mathcal{O}(h^{1/2})$,  implies the required result (\ref{eq:th4.4}).
\end{proof}

\begin{remark}\label{remark3.1}
We observe that Lemma~\ref{bl} 
does not require Assumption~\ref{as:1} and the proofs of Lemmas~\ref{lemma2.2}-\ref{lemma2.3} and Theorem~\ref{conp} rely only on sufficient smoothness of the solution $u(t,x)$ of (\ref{eq:2})-(\ref{eq:4}). This implies that Assumption~\ref{as:1} can be replaced by an appropriate hypoellipticity condition, but we do not pursue such a refinement of our results here.
We also note that Milstein's algorithm \cite{1} (see also \cite{49}) does intrinsically require the strong ellipticity (i.e.,  Assumption~\ref{as:1}) as its construction rests on changing local coordinates near the boundary.

\end{remark}

\begin{remark}
If we define the PDE (\ref{eq:2}) on $[0,T)\times \bar{G}$ (cf. (\ref{eq:47})-(\ref{eq:49}) and (\ref{eq:3.64})-(\ref{Nnonhb}) in the next section) then under Assumptions \ref{as:3}-\ref{as:2} and \ref{a01}-\ref{as:1} the solution $u(t,x)$ of (\ref{eq:2})-(\ref{eq:4}) belongs to $C^{2,4}([0,T)\times \bar{G})$ \cite{3,64} and $C(\bar{Q})$ \cite{lunbook}. Under these relaxed conditions, we can also prove the result of Theorem~\ref{conp} (cf. Lemmas~\ref{theorem3.9} and \ref{lemma3.12}).
\end{remark}

\section{Computing ergodic limits}\label{section6}
In Subsection~\ref{section2.3}, we set the scene required for introducing and analyzing time-averaging and ensemble-averaging estimators for computing ergodic limits. In Subsection~\ref{section5.1}, we introduce continuous time-averaging and ensemble-averaging estimators, and in Subsection~\ref{subsection4.3}, we present their numerical counterparts, which is computationally the main part of this section.  Subsections~\ref{section5.2} and \ref{nea} are devoted to error analysis of numerical time-averaging and ensemble-averaging estimators, respectively. 
Subsection~\ref{section5.5} addresses the question how close the stationary measure  $\mu$ of the RSDEs' (\ref{rsde}) solution $X(t)$ and \rs{a} stationary measure $\mu^{h}$ of the Markov chain $(X_k)_{k\geq 0}$ (constructed according to Algorithm~\ref{algorithm2.1}) are.

Here we consider computing ergodic limits using the simple-to-implement Algorithm~\ref{algorithm2.1}. At the same time, we note that for this purpose one can also use Milstein's algorithm \cite{1} (see also \cite[Chapter 6]{49}). As we mentioned in the introduction, weak first-order convergence of this algorithm at finite time was proved in \cite{1}. The theoretical results of this section on computing ergodic limits by Algorithm~\ref{algorithm2.1} can be transferred without any additional ideas required to the use of Milstein's algorithm for computing ergodic limits (see also Remark~\ref{remark3.1}).

 \subsection{Ergodic RSDEs and Poisson PDE} \label{section2.3}
This subsection is divided into four parts. As it was mentioned in the introduction, the main tool for obtaining continuous time-averaging estimators and analysis of errors of numerical time-averaging estimators is the Poisson equation with Neumann boundary condition. We discuss the existence and uniqueness of its solution in Subsection~\ref{subsection2.4.0}. We consider
ergodic limits with respect to the invariant density of the solution $X(t)$ of RSDEs in a bounded domain $G$ in Subsection~\ref{subsection2.4.1} and \rs{integrals} with respect to the normalised restriction of the invariant density of $X(t)$ on the boundary $\partial G$  in Subsection~\ref{subsection2.4.2}. We showcase our methodology by demonstrating how to sample from a given measure on $G$ or on $\partial G$ using Brownian dynamics (in other words, stochastic gradient system, which is also called Langevin equations in the fields of statistics and machine learning) with reflection on the boundary in Subsection~\ref{section2.4.3}.

\subsubsection{Poisson PDE with Neumann boundary condition}\label{subsection2.4.0}
Consider the Neumann problem for the Poisson equation
\begin{align}
    \mathscr{A}u(x) := \frac{1}{2}\sum\limits_{i,j=1}^{d}a^{ij}(x)\frac{\partial^{2}u(x) }{\partial x^{i}\partial x^{j}} + \sum\limits_{i=1}^{d}b^{i}(x)\frac{\partial u(x) }{\partial x^{i}} &= \phi_{1}(x),  & x \in G, \label{gpe1}\\
    (\nabla u(z) \cdot\nu(z)) &= \phi_{2}(z), & z\in \partial G. \label{gpe2}
\end{align}
We will need the following assumptions in addition to Assumption~\ref{as:3}.
\begin{assumption} \label{as:4}
The symmetric matrix $ a= \{a^{ij}\} $ in the operator $\mathscr{A} $
 is uniformly elliptic in $ \bar{G} $.
 \end{assumption}

\begin{assumption}\label{as:5}
  $a(x)$ and $b(x)$ are $ C^{2}(\bar{G})$ functions.
 \end{assumption}

\begin{assumption}
$\phi_1(x) \in C^2(\bar G)$ and $\phi_2(z) \in C^3(\partial G)$. \label{assu6.1}
\end{assumption}
 By $ \eta$ denote the co-normal vector on the boundary $\partial G$ whose direction cosines are
\begin{equation*}
    \cos(\eta(z), e^{i}) = \frac{1}{\alpha_{0}(z)}\sum\limits_{k=1}^{d}\frac{a^{ik}(z)}{2}\cos(\nu(z),e^{k}), \;\;\;\;\;\; z \in \partial G,
\end{equation*}
where $e^{i} $, $i =1, \dots, d$, represent the standard basis in the Cartesian notation, $\cos(\nu(z),e^{i})$ are the direction cosines of the inward normal $\nu(z)$ at $ z \in \partial G$, and $\alpha_{0}(z)$ is the normalization factor given by
\begin{equation*}
    \alpha_{0}(z) = \bigg(\sum\limits_{i=1}^{d}\bigg( \sum\limits_{k=1}^{d}\frac{a^{ik}(z)}{2}\cos(\nu(z),e^{k})\bigg)^{2}\bigg)^{1/2}, \;\;\;\;\;\; z \in \partial G.
\end{equation*}

Under Assumptions \ref{as:3}, \ref{as:4} and \ref{as:5}, there is a  unique solution $\rho(x)$ of the 
following \rf{Robin} problem \rf{for the stationary Fokker-Planck equation} \cite{50,45}:
\begin{equation} \label{sFP}
    \mathscr{A}^{*}\rho(x)  := \frac{1}{2}\sum\limits_{i,j=1}^{d}\frac{\partial^{2}}{\partial x^{i}\partial x^{j}}\big(a^{ij}(x)\rho(x)\big) - \sum\limits_{i=1}^{d} \frac{\partial }{\partial x^{i}}\big( b^{i}(x)\rho(x)\big) = 0, \;\;\;\;\; x \in G,
\end{equation}
with boundary condition
\begin{equation}\label{sFPbc}
    \alpha^{*}(z) (\nabla \rho(z)\cdot\eta^{*}(z))- \tilde{b}(z)\rho(z) = 0,\;\;\;\;\;\ z \in \partial G,
\end{equation}
where
$\eta^{*}(z),\;z\in \partial G$, is a unit vector whose direction cosines are
\begin{equation*}
    \cos(\eta^{*}(z), e^{i}) = \frac{2\alpha_{0}(z)}{\alpha^{*}(z)}\cos(\eta(z),e^{i}) -  \frac{\alpha(z)}{\alpha^{*}(z)}\cos(\nu(z),e^{i}).
\end{equation*}
\rf{Here} the normalization constant, $ \alpha^{*}(z)$, is such that $\sum\limits_{i=1}^{d}(\cos(\eta^{*}(z),e^{i}))^{2} = 1$, and
\begin{equation}
\alpha(z) = \alpha_{0}(z)\cos(\eta(z),\nu(z)) \rf{= \frac{1}{2}(\nu(z)\cdot a(z)\nu(z)), }\label{vrho}
\end{equation}
\rf{while} $\tilde{b}$ is \rf{given} in \cite[pp. 13-15]{50}.
\rf{Since the expression for $\tilde{b}$ is cumbersome  and not used in this paper, we do not provide it here}.
Note that $\rho(z)$, $z \in \partial G$, in (\ref{sFPbc}) is the trace of $\rho(x)$, $x \in \bar{G}$.
\rf{We note that (\ref{sFP})-(\ref{sFPbc}) is an adjoint homogeneous problem to  (\ref{gpe1})-(\ref{gpe2}).}

 As is known \cite{50,18}, solvability of the problem (\ref{gpe1})-(\ref{gpe2}) requires the compatibility (centering) condition
\begin{equation} \label{cc}
    \int_{G}\phi_{1}(x)\rho(x) dx +  \int_{\partial G} \phi_{2}(z) \alpha(z) \rho(z) dz = 0.
\end{equation}
If Assumptions \ref{as:3}, \ref{as:4}, \ref{as:5} and \ref{assu6.1} hold along with the centering condition (\ref{cc}) then the problem (\ref{gpe1})-(\ref{gpe2}) has a unique solution (up to an additive constant) $u(x) \in C^{4}(\bar{G})$ (see \cite{45,50}, \cite[Chapter 6]{104}, \cite[Theorem 3]{107}).

 \subsubsection{Ergodic limits in $G$\label{subsection2.4.1}}
 Consider the RSDEs
 \begin{align}
dX(s) = b(X(s))ds + \sigma (X(s))dW(s) + \nu (X(s))I_{\partial G}(X(s)) dL(s), \,\, X(0) = x. \label{rsde}
 \end{align}
 If Assumptions \ref{as:3}, \ref{as:4} and \ref{as:5} hold then there exists a unique invariant probability measure of the process $X(t)$ governed by the RSDEs (\ref{rsde}). Moreover, this measure is absolutely continuous with respect to  Lebesgue measure. We denote this invariant measure by $\mu(x)$ and its density by $\rho(x)$, $ x\in \bar{G}$, and we state that $ \rho(x) > 0$ and $\rho(x) \in C^{2}(\bar{G}) $ (see \cite{ 45} for more details). Indeed, $\rho(x)$ is the solution of the stationary Fokker-Planck equation (\ref{sFP}), and the restriction $\rho(z),\; z \in \partial G$, of the invariant density $\rho(x)$, satisfies the boundary condition (\ref{sFPbc}). We are interested in computing ergodic limits inside the domain, i.e., in calculating the following integral for some function $\varphi(x) \in C^{2}(\bar{G})$:
 \begin{equation}
  \bar{\varphi} = \int_{G} \varphi(x)\rho(x)dx. \label{q2.24}
\end{equation}
We consider the corresponding approximations in Subsection~\ref{subsection4.3}.

\subsubsection{Ergodic limits on $\partial G$}\label{subsection2.4.2}
There are two aspects \rs{related to} computing ergodic limits on the boundary $\partial G$ using RSDEs as explained below.
\begin{enumerate}[label= (\roman*)]
\item Consider the (restricted) \rs{function} $\rho(z)$ defined on $\partial G$. We denote $\int_{\partial G}\rho(z)dz$ as $\kappa$ and define
 \begin{equation}\label{eb2.25}
 \rho'(z) = \rho(z)/\kappa.
 \end{equation}
 One can notice that $\rho'(z)$ is a probability density on the boundary $\partial G$. We can calculate, for some function $\psi(z) \in C^{3}(\partial G)$,
 \begin{equation}
 \bar{\psi}^{'} = \int_{\partial G} \psi(z) \rho'(z)dz\rs{.} \label{q2.26}
 \end{equation}
 To find $\bar{\psi}^{'}$, we first need to calculate
 \begin{equation}\label{psibar}
 \bar{\psi} = \int_{\partial G} \psi(z) \rho(z) dz.
 \end{equation}

\item   Using $L(t)$ as the random time change process allows us to build a Markov process $\tilde{X}(t)=\tilde{X}_x(t)$ from the solution $X(t)=X_x(t)$ of RSDEs (\ref{rsde}).  We obtain this process $\tilde{X}(t)$ on the boundary $\partial G$ by putting $\tilde{X}(t) = X(L^{-1}(t))$, where $ L^{-1}(t)$ is the right continuous inverse of $L(t)$. \rf{Note that $\lim_{t \rightarrow \infty}L(t) = \infty$ (cf. (\ref{3.17})).} Denote by $\tilde{\mathscr{F}}$ and $\tilde{\mathscr{F}}_{t}$ the smallest sigma algebras which make $\{ \tilde{X}(t),\; 0 \leq t < \infty \}$ and $\{ \tilde{X}(s),\; 0 \leq s\leq t \} $ measurable, respectively. Further, let $\tilde{\mathbb{P}}$ be the probability measure defined by $\tilde{\mathbb{P}}(B) = \mathbb{P}\big(X\big(L^{-1}(\rf{\cdot})\big) \in B  \big)$, where $B \in \tilde{\mathscr{F}}$ (see \cite{30}). The process $\tilde{X}(t)$ is a strong Markov \cite[Theorems 1 and  2]{36} jump  \cite{53} process  on $\partial G$. It induces the following semigroup $\tilde{T}_{t}$ on $C(\partial G)$ \cite[ Theorem 9.1]{30}:
\begin{align}\label{semigrp}
    \tilde{T}_{t}f(x) &= \tilde{\mathbb{E}}f\big(\tilde{X}_{x}(t)\big) = \mathbb{E}f\big(X_{x}\big(L^{-1}(t)\big)\big),
\end{align}
where $f$ is a bounded measurable function on $\partial G$, $\rf{x \in \partial G}$,  $\tilde{\mathbb{E}}(\cdot)$ is expectation with respect to the probability measure $\tilde{\mathbb{P}}$. Under Assumptions \ref{as:3} and \ref{as:4}-\ref{as:5}, $\tilde{X}(t)$ has a unique stationary distribution on $\partial G$ \cite[p. 174]{18}, which we denote as $\tilde{\mu} $ and its density we denote as $\tilde{\rho}$. Hence, computing ergodic limits on the boundary $\partial G$ also means evaluating the integral $\int_{\partial G}\psi(z) \tilde{\rho}(z)dz$, which is  expectation with respect to the invariant law of $\tilde{X}(t)$:
\begin{equation}
       \tilde{\psi}  = \int_{\partial G}\psi(z)\tilde{\rho}(z)dz.
       \label{tildepsi}
\end{equation}
We will discuss the relationship between the densities $\tilde{\rho}(z)$ and $\rho(z)$ in Subsection \ref{subsection4.2.1}.

\end{enumerate}
We consider  approximations of $\bar{\psi}^{'}$ and $\tilde{\psi}$ in Subsection~\ref{subsection4.3}.

\subsubsection{Gradient system for sampling from a given measure with compact support} \label{section2.4.3}
This subsection highlights one of the major applications of this paper. Consider the simplified RSDEs
\begin{align}
 dX(s) &= b(X(s))ds + \sigma dW(s) + \nu (X(s))I_{\partial G}(X(s)) dL(s), \label{rgsde}
 \end{align}
 where $\sigma>0$ is a constant.

Under Assumptions \ref{as:3} and \ref{as:5}, the above RSDEs  has a stationary distribution whose density, $\rho(x)$, solves the following stationary Fokker-Planck equation
\begin{equation} \label{sgFP}
    \frac{\sigma^2}{2}\sum\limits_{i=1}^{d}\frac{\partial^{2}}{(\partial x^{i})^2}\rho(x) - \sum\limits_{i=1}^{d} \frac{\partial }{\partial x^{i}}\big( b^{i}(x)\rho(x)\big) = 0, \;\;\;\;\; x \in G,
\end{equation}
with boundary condition (note that $\tilde{b}(z)$ reduces to $b(z)$ in (\ref{sFPbc}), cf. \cite[pp. 13-15]{50}):
\begin{equation}\label{sgFPbc}
   \frac{\sigma^2}{2}(\nabla \rho(z)\cdot\nu(z)) - (b(z)\cdot\nu(z))\rho(z) = 0,\;\;\;\;\;\ z \in \partial G.
\end{equation}

Suppose we are given a probability density $\rho(x) \in C^{3}(\bar{G})$, $\rho(x)>0$ for $x \in \bar{G}$  from which we want to sample/with respect to which we would like to compute some integrals. As we mentioned in the introduction, such problems often occur in statistics \cite{101,102,103} and in molecular dynamics \cite{100}. To solve this problem, we can use the RSDEs (\ref{rgsde}) analogously to how SDEs (Brownian dynamics and Langevin equations) are used for this task in $\mathbb{R}^d$ (see e.g. \cite{RT96,40,100} and references therein). Indeed, if we 
take in (\ref{rgsde})
\begin{equation}\label{eb2.32}
b(x) = \frac{\sigma^2}{2}\nabla\log{\rho(x)},
\end{equation}
then the equations (\ref{sgFP})-(\ref{sgFPbc}) are trivially satisfied. The RSDEs (\ref{rgsde}), (\ref{eb2.32}) \rf{are} a stochastic gradient system with reflection, or, in other words, Brownian dynamics with reflection. Long time simulation of the gradient system (\ref{rgsde}), (\ref{eb2.32}) can be used for sampling from a given distribution having density $\rho(x)$  with the compact support $\bar{G}$.

As we have emphasized in the introduction, using the methodology developed in this paper we can also sample from the distribution having density $\rho'(z)$ with support on the boundary $\partial G$.
To illustrate the normalised restricted density $\rho'(z)$, let us look at a simple example. Consider $G = \{ x_{1}^{2} + x_{2}^{2} < 1\}$, $\partial G = \{ z_{1}^{2} + z_{2}^{2} = 1\}$, and the density function
\begin{equation*}
    \rho(x) = \frac{1}{\mathbb{Z}}(x_{1}^{2} + x_{2}^{2})e^{\beta x_{1}}, \, x \in \bar G,
\end{equation*}
where $\beta \geq 0$ is a constant and $\mathbb{Z}$ is the normalisation constant. Further, $\kappa = \int_{\partial G}\rho(z)dz = \frac{2\pi I_{0}(\beta)}{\mathbb{Z}}$, where $I_{0}(\beta)$ is the modified Bessel function of order $0$.
Then the normalised restriction $\rho'(z)$ is given by
   $ \rho'(z) = \frac{e^{\beta z_{1}}}{2\pi I_{0}(\beta)}, \, z \in \partial G.$
  We write $\rho'(z)$ in the polar coordinates to get
\begin{equation*}
    \rho'(\theta) = \frac{e^{\beta \cos(\theta)}}{2\pi I_{0}(\beta)}, 
\end{equation*}
which is the probability density function of von Mises' distribution. The motivation for considering this particular example lies in the fact that this distribution has a number of applications in directional statistics (see e.g. \cite{76}). Algorithm~\ref{algorithm2.1} described in Section~\ref{section2} as well as the estimators of Subsection~\ref{subsection4.3} can be used to sample from von Mises' distribution. The same approach is applicable to von Mises-Fisher, Bingham and Kent distributions or any other distribution on $d-1$ dimensional hyper-surface $\partial G$ (see a related numerical experiment, Experiment~\ref{experiment8.3}, in Subsection~\ref{section8.2}), which are widely used in bioinformatics, computer vision, geology and astrophysics (see, e.g. \cite{77}).

To conclude, the application of Algorithm~\ref{algorithm2.1}  to the gradient system with reflection (\ref{rgsde}), (\ref{eb2.32}) allows us to efficiently sample from any given distribution with the density $\rho(x)$ defined in $G$ and from its normalised restriction $\rho'(z)$ on $\partial G$.

 \subsection{Time-averaging and ensemble-averaging estimators} \label{section5.1}

In Subsection \ref{subsection4.2.1}, we obtain continuous time-averaging estimators for $\bar{\varphi}$ from (\ref{q2.24}) and for $\bar{\psi}^{'}$ from (\ref{q2.26}). In Subsection~\ref{subsection4.2.2}, we introduce continuous ensemble-averaging estimators for the same.  

\subsubsection{Time-averaging estimators}\label{subsection4.2.1}
{\bf Time-averaging estimator for $\bar{\varphi}$ from (\ref{q2.24}).} Consider the Neumann problem
\begin{align}
     &\mathscr{A}u(x) = \varphi(x) -\bar{\varphi},\;\;\;\;  x \in G, \label{n311}\\
     &(\nabla u(z)\cdot\nu(z))  = 0,\;\;\;\; z \in \partial G. \label{n312}
\end{align}
It is not difficult to verify that the compatibility condition (\ref{cc}) is satisfied for the problem (\ref{n311})-(\ref{n312}). Then, under Assumptions~\ref{as:3}, \ref{as:4} and \ref{as:5} for any $\varphi(x) \in C^{2}(\bar{G})$, there exists a solution $u(x) \in C^{4}(\bar{G})$ (see Subsection~\ref{subsection2.4.0}). Apply Ito's  formula to the function $u(X(t))$ with $X(t)$ from (\ref{rsde}):
\begin{eqnarray}
    u(X(t)) - u(x) &=& \int_{0}^{t}\mathscr{A}u(X(s))ds + \int_{0}^{t} \nabla u(X(s))\cdot\sigma(X(s))dW(s)\nonumber \\ & +& \int_{0}^{t} \big(\nabla u(X(s))\cdot\nu(X(s))\big)dL(s). \label{itof}
\end{eqnarray}
Rearranging the terms in (\ref{itof}), dividing by $t$, and using equation (\ref{n312}), we get
\begin{eqnarray}
    \frac{1}{t}\int_{0}^{t}\mathscr{A}u(X(s))ds  = - \frac{M(t)}{t} + \frac{u(X(t))-u(x)}{t}, \label{neq:37}
\end{eqnarray}
where $M(t) = \int_{0}^{t}\nabla u(X(s))\cdot\sigma(X(s))dW(s)$ is a  martingale with respect to $(\mathscr{F}_{t})_{t\geq 0}$. Then (\ref{n311}) implies
\begin{equation}
    \frac{1}{t}\int_{0}^{t}\varphi(X(s))ds - \bar{\varphi} = - \frac{M(t)}{t} + \frac{u(X(t))-u(x)}{t}. \label{n6.5}
\end{equation}
We know from Ito's isometry that
$    \mathbb{E}\big(M(t)^{2}\big)  \leq Ct,
$
where $C$ is some positive constant independent of $t$.  As a consequence of uniform boundedness of $u(x) $ in $\bar{G}$, we obtain
\begin{equation}
    \mathbb{E}\bigg(\frac{1}{t}\int_{0}^{t}\varphi(X(s))ds - \bar{\varphi}\bigg)^{2} \leq \frac{C}{t}.  \label{n6.6}
\end{equation}

 Kronecker's lemma \cite[Theorem 3.3]{22} implies $\frac{M(t)}{t}\rightarrow 0 $ a.s. as $t\rightarrow \infty$. Again uniform boundedness of $u(x)$,  $x \in \bar{G}$, yields that the last term on the right hand side of (\ref{n6.5}) goes to $0$ as $ t \rightarrow \infty$. Therefore, we have
\begin{align}
    \lim\limits_{t \rightarrow \infty}\frac{1}{t}\int_{0}^{t} \varphi(X(s)) ds = \bar{\varphi}\;\;\;\; a.s. \label{varphibar}
\end{align}
If we take expectation on both sides of (\ref{n6.5}), we get
\begin{equation}
    \bigg| \frac{1}{t}\mathbb{E}\bigg(\int_{0}^{t}\varphi(X(s))ds\bigg) - \bar{\varphi}\bigg| \leq \frac{C}{t}.  \label{bias}
\end{equation}
Consequently, it is natural to take $ \frac{1}{T}\int_{0}^{T}\varphi(X(s))ds  $ as a time-averaging estimator of $\bar{\varphi}$. We can also say based on (\ref{bias}) that
\begin{equation*}
    \bias\bigg(\frac{1}{T}\int_{0}^{T}\varphi(X(s))ds\bigg) = \mathcal{O}\Big(\frac{1}{T}\Big).
\end{equation*}
One may notice that by combining (\ref{n6.6}) and the above bound on the bias, we get
\begin{equation*}
    \var\bigg(\frac{1}{T}\int_{0}^{T}\varphi(X(s))ds\bigg) = \mathcal{O}\Big(\frac{1}{T}\Big).
\end{equation*}

{\bf Time-averaging estimator for $\bar{\psi}^{'}$ from (\ref{q2.26}).}
\rf{To obtain results \rs{related to} ergodic limits on the boundary, we will consider the PDE problem (\ref{gpe1})-(\ref{gpe2}) with $\phi_{2}(z) = \frac{1}{\alpha(z)}$ or $ \frac{\psi(z)-\bar{\psi}^{'}}{\alpha(z)}$, where $\alpha(z)$ depends on $a(z)$ ($a(z)$ is the restriction of $a(x)$ on $\partial G$), and hence we need a new assumption on $a(x)$ to ensure that $\phi_{2}(z) \in C^3(\partial G)$ and therefore to guarantee that the solution $u(x) \in C^{4}(\bar{G})$ (cf. Assumption \ref{assu6.1}).
\begin{assumption1}{4.2${}^{'}$}
 $a(x) \in  C^{3}(\bar{G})$  and $b(x) \in C^{2}(\bar{G})$ 
 in (\ref{gpe1}).\label{as4.2'}
\end{assumption1}}
Before we move ahead to discuss estimators for $\bar{\psi}^{'}$, we need to look at the asymptotic behavior of the integral $\int_{0}^{t}\frac{1}{\alpha(X(s))}dL(s)$. Consider the following Poisson equation with  Neumann boundary condition:
\begin{align}
    \mathscr{A}u(x) &= -\kappa, & x \in G, \label{eq:5.34}\\
    (\nu(z)\cdot\nabla u(z)) &= 1/\alpha(z), & z \in \partial G, \label{eq:5.35}
\end{align}
where $\kappa$  and $\alpha(z)$ are defined in Subsection~\ref{subsection2.4.2} (see (\ref{eb2.25})) and Subsection~\ref{subsection2.4.0} (see (\ref{vrho})), respectively. Note that under Assumption~\ref{as:4}, $\alpha(z) > 0$ for all $z \in \partial G$.
If we take $\phi_{1}(x) = -\kappa$ and $ \phi_{2}(z) = \frac{1}{\alpha(z)}$ in (\ref{gpe1})-(\ref{gpe2}) then it is not difficult to notice that compatibility condition (\ref{cc}) is satisfied.
This implies that under Assumptions~\ref{as:3} and \ref{as:4}, \rf{\ref{as4.2'}} the solution $u(x)$  of (\ref{eq:5.34})-(\ref{eq:5.35}) belongs to $C^{\rf{4}}(\bar{G})$.

Introduce the notation
\begin{align*}
  \rf{Z_{1}(t) = \int_{0}^{t}\frac{\psi(X(s))}{\alpha(X(s))}dL(s), \;\;\;\;\;\; Z_{2}(t) = \int_{0}^{t}\frac{1}{\alpha(X(s))}dL(s)}.
  \end{align*}

Applying Ito's  formula to  $u(X(t))$ with $u(x)$ being the solution of (\ref{eq:5.34})-(\ref{eq:5.35}) and acting analogously to how we obtained (\ref{n6.6}), (\ref{varphibar}) and (\ref{bias}) above, we get
\begin{align}
   &\mathbb{E}\bigg(\rf{\frac{1}{t}}Z_{2}(t) - \kappa \bigg)^{2} \leq \frac{C}{t}, \label{vakp} \\
    & \lim\limits_{t \rightarrow \infty}\frac{1}{t}Z_{2}(t) = \kappa \;\;\;\;\; a.s., \label{3.17} \\
      &   \bigg| \frac{1}{t} \mathbb{E}\big(Z_{2}(t)\big) - \kappa\bigg| \leq \frac{C}{t},  \label{318} \\
        &  \lim\limits_{t\rightarrow \infty}\frac{1}{t}\mathbb{E}\big(Z_{2}(t)\big) = \kappa. \label{eq:5.18}
\end{align}

  We use a similar reasoning to find an estimator for $\bar{\psi}$. For that purpose, first consider the problem
\begin{align}
\mathscr{A}u(x) &= 0,\;\;\; \;\;\;\;\;\;\;\;\;\;\;\;\;\;  &x \in G,  \label{p1} \\
(\nabla u(z)\cdot\nu(z)) &= \big(\psi(z) - \bar{\psi}^{'}\big)/\alpha (z),\;\;\;\;\;  &z \in \partial G. \label{p2}
\end{align}
As one can check, the compatibility condition (\ref{cc}) is verified. Then under Assumptions \ref{as:3} and \ref{as:4}, \rf{\ref{as4.2'}}, for any $\psi \in C^{3}(\partial G)$, there exists a solution of (\ref{p1})-(\ref{p2}) which is unique up to an additive constant (see Subsection~\ref{subsection2.4.0}). Therefore, Ito's formula again gives us (\ref{itof}), however in this case $\mathscr{A}\rf{u}(X(s))= 0$ and $(\nabla u(X(s))\cdot\nu(X(s))) = (\psi(X(s))-\bar{\psi}^{'})/\alpha(X(s))$. Consequently, after rearrangement and dividing by $t$, we have
\begin{align}
    \frac{1}{t}\int_{0}^{t}\frac{\psi(X(s))}{\alpha(X(s))}dL(s) - \bar{\psi}^{'}\frac{1}{t}\int_{0}^{t}\frac{1}{\alpha(X(s))}dL(s) = -\frac{M(t)}{t} + \frac{u(X(t)) -u(x)}{t}. \label{en}
\end{align}
Using (\ref{3.17}) and applying the same arguments as used above, we obtain
\begin{align}
   \lim\limits_{t\rightarrow \infty}\frac{1}{t}\rf{Z_{1}(t)} = \bar{\psi}^{'}\kappa = \bar{\psi} \label{psiestas} \;\;\;\;\;\;\; a.s.
\end{align}

If we take expectation on both sides of (\ref{en}) and use (\ref{318}), we get
\begin{equation*}
    \Big| \frac{1}{t}\mathbb{E}\rf{(Z_{1}(t))}- \bar{\psi} \Big| \leq  \frac{C}{t},
\end{equation*}
where $C$ is some positive constant independent of time $t$, and
\begin{equation}
    \lim\limits_{t\rightarrow \infty} \frac{1}{t}\mathbb{E}\rf{\big(Z_{1}(t)\big)} = \bar{\psi}. \label{eq:3.25}
\end{equation}
Moreover, one can also show
\begin{equation}
    \mathbb{E}\Big(\rf{\frac{Z_{1}(t)}{t}} - \bar{\psi}\Big)^{2} \leq \frac{C}{t}.
\end{equation}

Note that using (\ref{vakp}) and Chebyshev's inequality, we get
\begin{equation*}
     \mathbb{P}(Z_{2}(t) = 0) \leq \frac{C}{t}, \label{eq:prob0}
 \end{equation*}
 where $C >0$ is independent of $t$.

 \rf{Furthermore, from almost sure convergence in (\ref{3.17}), we have a set $\Omega_{0}\subset \Omega$ with $\mathbb{P}(\Omega_{0}) = 1$, and for every $\omega \in \Omega_{0}$ and for every $\epsilon > 0$, there  exists a $t_{0}(\omega, \epsilon)$ such that for all $t > t_{0}(\omega, \epsilon)$
  $ | Z_{2}(t)/t - \kappa | < \epsilon$,
 whence choosing  $\epsilon = \kappa/2$:
\begin{equation*}
    \frac{\kappa t}{2} < Z_{2}(t) < \frac{3 \kappa t}{2},
\end{equation*}
consequently, for sufficiently large $t$ we have  $ Z_{2}(t)>0$ a.s.}
Then, combining (\ref{3.17}) with (\ref{psiestas}), we get
\begin{align}
   \lim\limits_{t\rightarrow \infty}\rf{\big(\left. Z_{1}(t)\middle/Z_{2}(t)\right. \big)}&= \bar{\psi}^{'}\;\;\;\;\;\; a.s. \label{sidash1}
 \end{align}
\rf{Note that the limit (\ref{eq:5.18}) guarantees that for a sufficiently large $t$ we have $\mathbb{E}(Z_2(t))>\kappa t/2$. Therefore, by taking expectation of (\ref{en}) and using (\ref{318}), we get the following for sufficiently large $t$}
 \begin{align}
     \big| \mathbb{E}\rf{\big(Z_{1}(t)\big) / \mathbb{E}\big(Z_{2}(t)\big) }-\bar{\psi}^{'} \big| & \leq \frac{C}{t}.\label{sidash2}
 \end{align}
 Furthermore, \rf{we have 
 \begin{align*}
     (Z_{1}(t))^{2} = \bigg(\int_{0}^{t}\frac{\psi(X(s))}{\alpha(X(s))}dL(s)\bigg)^{2} \leq C\bigg(\int_{0}^{t}\frac{1}{\alpha(X(s))}dL(s)\bigg)^{2} \leq C(Z_{2}(t))^{2}\;\; a.s.
 \end{align*}
 which implies
 \begin{align*}
     \bigg(\frac{Z_{1}(t)}{Z_{2}(t)}\bigg)^{2}I_{(0,\infty)}\big(Z_{2}(t)\big) \leq C\;\; a.s.,
 \end{align*}
 where $C>0$ is independent of $t$. Therefore,
 \begin{align} \label{6.27nya}
     &\mathbb{E}\bigg(\frac{Z_{1}(t)}{Z_{2}(t)}I_{(0,\infty)}\big(Z_{2}(t)\big) - \bar{\psi}^{'}\bigg)^{2} 
     =\mathbb{E}\bigg(\frac{Z_{1}(t)/t}{Z_{2}(t)/t}I_{(0,\infty)}\big(Z_{2}(t)\big)
     -\frac{\bar{\psi}}{\kappa}I_{(0,\infty)}(Z_{2}(t)) \\ & \;\;\;\; -\frac{\bar{\psi}}{\kappa}I_{\{Z_2(t)=0\}}
     -\frac{Z_1(t)/t}{\kappa}I_{(0,\infty)}(Z_{2}(t)) + \frac{Z_1(t)/t}{\kappa}I_{(0,\infty)}(Z_{2}(t))
     \bigg)^{2}\nonumber \\
     & \leq C\Bigg(\mathbb{E}\bigg(\frac{Z_{1}(t)}{\kappa Z_{2}(t)}I_{(0,\infty)}(Z_{2}(t))
     \Big(\frac{Z_{2}(t)}{t} - \kappa\Big)\bigg)^{2} \nonumber \\ &\;\;\;\;
     +\mathbb{E}\Big(\frac{Z_{1}(t)}{t} - \bar{\psi}\Big)^{2}
     +\mathbb{P}(Z_{2}(t) = 0)\Bigg)
         \leq \frac{C}{t},\nonumber
  \end{align}
  where $C > 0$ is independent of $t$.}  \rf{In the similar manner, using Cauchy$-$Bunyakovsky$-$Schwarz inequality, we get
  \begin{align} \label{6.26nya}
      &\bigg|\mathbb{E}\bigg(\frac{Z_{1}(t)}{Z_{2}(t)}I_{(0,\infty)}(Z_{2}(t)) - \bar{\psi}^{'}\bigg)\bigg| 
      \leq \frac{1}{\kappa} \bigg|\mathbb{E}\bigg(\frac{Z_{1}(t)}{t} - \bar{\psi}\bigg) \bigg|
      + \frac{1}{\kappa} \bigg| \mathbb{E}\bigg( \Big(\frac{Z_{2}(t)}{t} -  \kappa\Big)\\ & \;\;\; \times \Big(\frac{Z_{1}(t)}{Z_{2}(t)}I_{(0,\infty)}(Z_{2}(t))  -\bar{\psi}^{'}\Big)\bigg)\bigg| + \frac{\bar{\psi}^{'}}{\kappa}\bigg|\mathbb{E}\bigg(\frac{Z_{2}(t)}{t} - \kappa\bigg)\bigg| +
      \bar{\psi}^{'}\mathbb{P}(Z_{2}(t) = 0)\nonumber\\
     &\leq \frac{1}{\kappa}\bigg|\mathbb{E}\bigg(\frac{Z_{1}(t)}{t} - \bar{\psi}\bigg)\bigg|  +  \frac{1}{\kappa}\Bigg(\mathbb{E}\bigg(\frac{Z_{1}(t)}{Z_{2}(t)}I_{(0,\infty)}(Z_{2}(t)) - \bar{\psi}^{'}\bigg)^{2}\mathbb{E}\bigg(\frac{Z_{2}(t)}{t} - \kappa\bigg)^{2}\Bigg)^{1/2}\nonumber \\ & \;\;\;\;
     +\frac{\bar{\psi}^{'}}{\kappa}\bigg|\mathbb{E}\bigg(\frac{Z_{2}(t)}{t} - \kappa\bigg)\bigg|
     + \bar{\psi}^{'}\mathbb{P}(Z_{2}(t) = 0) \leq \frac{C}{t} . \nonumber
  \end{align}
 }

For a fixed $T$, we can view
 \begin{align}
  \rf{\big(Z_{1}(T)/Z_{2}(T)\big)I_{(0,\infty)}(Z_{2}(T))}\label{n6.19}
 \end{align}
 as a time-averaging estimator for $\bar{\psi}^{'}$. It follows from the above analysis that the bias and variance of the estimator (\ref{n6.19}) is $\mathcal{O}\big(\frac{1}{T}\big)$.

{\bf Time-averaging estimator for $\tilde{\psi}$ from (\ref{tildepsi}).}
In Subsection~\ref{subsection2.4.2}, we discussed the invariant density $\tilde{\rho}(z)$ of $\tilde{X}(t)$, a jump process evolving on $\partial G$.
 From \cite[Lemma 2.1]{72} and (\ref{semigrp}), we have
\begin{align}\label{new1}
    \mathbb{E}\bigg( \frac{\rf{I_{(0,\infty)}(L(t))}}{L(t)}&\int_{0}^{t} \psi(X(s))dL(s) - \tilde{\psi}\bigg)^{2} \\ & =  \mathbb{E}\bigg(\frac{\rf{I_{(0,\infty)}(L(t))}}{L(t)}\int_{0}^{L(t)}\psi(X(L^{-1}(s)))ds - \tilde{\psi}\bigg)^{2} \nonumber \\ & =  \tilde{\mathbb{E}}\bigg(\frac{\rf{I_{(0,\infty)}(L(t))}}{L(t)}\int_{0}^{L(t)}\psi(\tilde{X}(s))ds - \tilde{\psi}\bigg)^{2}. \nonumber
\end{align}
\rf{Recalling that $\tilde X(s)$ is an ergodic process, $\lim_{t \rightarrow \infty}L(t) = \infty$ (cf. (\ref{3.17})) and also (\ref{tildepsi}), it is not difficult to see that the right-hand side of (\ref{new1}) converges to $0$ as $t \rightarrow \infty$. }
Introduce
\begin{align}
\tilde{\kappa} = \int_{\partial G} \alpha(z) \rho(z) dz\;\;\text{and}\;\; \tilde{\psi}_{0}  = \int_{\partial G}\psi(z)\alpha(z)\rho(z)dz, \label{tildekappa}
\end{align}
where $\alpha(z)$ is from (\ref{vrho}). It follows from (\ref{psiestas}) that
    \begin{equation}\label{new3}
      \tilde{\psi}_{0}   =  \lim\limits_{t\rightarrow \infty}\frac{1}{t}\int_{0}^{t}\psi(X(s))dL(s)\;\;\;\; a.s.  \;\;\text{and}\;\;\;\;\;
      \tilde{\kappa}  = \lim\limits_{t\rightarrow \infty}\frac{L(t)}{t}\;\;\;\; a.s.
  \end{equation}
 We can also obtain \rf{the following by the same procedure used to ascertain (\ref{6.27nya}):}
\begin{align}
\label{p2.27}
 \mathbb{E}\Bigg(\frac{\frac{1}{t}\int_{0}^{t} \psi(X(s))dL(s)}{\frac{L(t)}{t}}\rf{I_{(0,\infty)}(L(t))} - \frac{\tilde{\psi_0}}{\tilde \kappa}\Bigg)^{2}  \leq \frac{C}{t},
\end{align}
where $C$ is a positive constant independent of $t$.
Furthermore,
   \begin{equation}\label{new4}
     \rs{ \frac{\tilde{\psi}_{0}}{\tilde{\kappa}}} =  \lim\limits_{t\rightarrow \infty}\frac{1}{L(t)}\int_{0}^{t}\psi(X(s))dL(s)\;\;\;\; a.s.
  \end{equation}
 Comparing (\ref{new1}) and (\ref{p2.27}), we get
\begin{equation*}
  \tilde{\psi}= \tilde{\psi_0}/\tilde \kappa,
\end{equation*}
and by (\ref{tildepsi}) and (\ref{tildekappa}), we arrive at the relationships between the densities $\tilde{\rho}(z)$, $\rho(z)$ and \rf{$\rho'(z)$ (see (\ref{eb2.25}))}:
 \begin{equation}
 \tilde{\rho}(z) = \alpha(z)\rho(z)/\tilde{\kappa} \rf{=\alpha(z)\rho'(z)\kappa/{\tilde \kappa}}
 \end{equation}
 and
 \begin{equation}
\rho'(z) = \tilde{\kappa}\tilde{\rho}(z)/(\kappa\alpha(z)).
\end{equation}
  Based on (\ref{new3}) and (\ref{new4}),  we can take
  $$
     \frac{1}{t}\int_{0}^{t}\psi(X(s))dL(s),\;\;\;\;
      \frac{L(t)}{t}, \;\;\;\;
      \frac{\rf{I_{(0,\infty)}(L(t))}}{L(t)}\int_{0}^{t}\psi(X(s))dL(s),
 $$
      as the continuous time-averaging estimators for  $\tilde{\psi}_{0} $, $\tilde{\kappa} $, and $\tilde{\psi}$, respectively.

 \rf{
\begin{remark}
If we replace $\nu(X(s))$ by $a(X(s))\nu(X(s))/2$ (\rs{one should} note that $a(z) \nu(z)/|a(z) \nu(z)|$ represents the conormal direction at $\partial G$)
in RSDEs (\ref{rsde}) then the compatibility (centering) condition to be satisfied for the existence of the solution of corresponding Poisson PDE (\ref{gpe1}) with boundary condition $(a(z)\nu(z)\cdot \nabla u(z))/2 = \phi_{2}(z)$ is (see \cite{45}):
\begin{equation*}
 \int_{G}\phi_{1}(x)\rho(x)dx + \int_{\partial G}\phi_{2}(z)\rho(z)dz  = 0.
\end{equation*}
In this case, one can show that $\tilde{\kappa} = \kappa = \rs{\int_{\partial G}\rho(z)dz}$ and $\tilde{\psi}_{0} = \bar{\psi} = \rs{\int_{\partial G}\psi(z)\rho(z)dz}$ which implies $\rho'(z) = \tilde{\rho}(z)$.
In particular, this equality also holds for the RSDEs (\ref{rgsde}) when $\nu(X(s))$ is replaced by $\sigma^2\nu(X(s))/2$. See also Section~\ref{section7.2}.
\end{remark}
}

 \subsubsection{Ensemble-averaging estimators}\label{subsection4.2.2}

 It is known \cite{18} that reflected diffusion governed by (\ref{rsde}) satisfies Doeblin's condition \rf{under Assumptions~\ref{as:3}, \ref{as:4} and \ref{as:5}}, and therefore for any bounded and measurable function $\varphi$,
  \begin{align}
       \big|\mathbb{E}\varphi\big(X(t)\big) - \bar{\varphi}\big| \leq Ce^{-\lambda t} \label{eq:34},
\end{align}
where $ \bar{\varphi} =  \int_{G}\varphi(x)\rho(x)dx$, $\rho(x)$ is invariant density of $X(t)$ (see Subsection~\ref{section2.3}), and $C$ and $\lambda$ are positive constants independent of $t$. This implies that we can compute $\bar{\varphi}$ approximately by evaluating $\mathbb{E}\varphi(X(t))$ for sufficiently large $t $ using the Monte Carlo technique. 

Further, from (\ref{sidash2})-(\ref{6.26nya}), we infer that we can calculate $\bar{\psi}^{'}$ defined in (\ref{q2.26}) by evaluating
\begin{equation*}
     \left.\mathbb{E}\big(\rf{Z_1(t)}\big) \middle/\mathbb{E}\big(\rf{Z_2(t)}\big)\right. \;\; \text{or}\;\;  
     \left. \mathbb{E}\Big(\big( \rf{Z_1(t)}\middle/\rf{Z_2(t)} \big)\rf{I_{(0,\infty)}\big(Z_2(t)\big)}\Big) \right. 
\end{equation*}
for sufficiently large $t$.
\begin{remark}
At the end of Subsection \ref{subsection4.2.1} we introduced the continuous time-averaging estimator to find $\tilde{\psi}$. In the same manner the ensemble-averaging estimators for calculating the expectation with respect to the invariant law of $\tilde{X}(t)$ are
\begin{equation*}
    \mathbb{E}\bigg(\int_{0}^{t}\psi(X(s))dL(s)\bigg)\bigg/\mathbb{E}L(t)\;\;\;\;\;\;\text{and}\;\;\;\;\;\;\; \mathbb{E}\bigg(\frac{\rf{I_{(0,\infty)}(L(t))}}{L(t)} \int_{0}^{t}\psi(X(s)) dL(s)\bigg).
\end{equation*}
\end{remark}

\subsection{Numerical time-averaging and ensemble-averaging estimators}\label{subsection4.3}
 We now come to the central part of this section. We first introduce the discrete time-averaging estimators $\hat{\varphi}_{N}$ and $ \hat{\psi}_{N}^{'}$ to compute  $\bar{\varphi}$ and $\bar{\psi}^{'}$ (see (\ref{varphibar}) and (\ref{sidash1})), respectively, as well as $\hat{\kappa}_{N}$ and $\hat{\psi}_{N} $ to estimate $\kappa$ and $ \bar{\psi}$ (see (\ref{3.17}) and (\ref{psiestas})), respectively:
\begin{align}
    \hat{\varphi}_{N} &= \frac{1}{N}\sum\limits_{k=0}^{N-1}\varphi_{k}, \label{3.32}\\
\hat{\kappa}_{N} &=
     \frac{2}{Nh}\sum\limits_{k=0}^{N-1}\frac{r_{k+1}}{\alpha_{k+1}^{\pi}}I_{\bar{G}^{c}}(X_{k+1}^{'}),\label{3.33} \\
     \hat{\psi}_{N}   &=\frac{2}{Nh}\sum\limits_{k=0}^{N-1}\frac{r_{k+1}\psi_{k+1}^{\pi}}{\alpha_{k+1}^{\pi}}I_{\bar{G}^{c}}(X_{k+1}^{'}), \label{5.29}\\
     \hat{\psi}_{N}^{'} &= \frac{\hat{\psi}_{N}}{\hat{\kappa}_{N}} \rf{I_{(0,\infty)}(\hat{\kappa}_{N})} = \frac{ \sum\limits_{k=0}^{N-1}\frac{r_{k+1}\psi_{k+1}^{\pi}}{\alpha_{k+1}^{\pi}}I_{\bar{G}^{c}}(X_{k+1}^{'})}{\sum\limits_{k=0}^{N-1}\frac{r_{k+1}}{\alpha_{k+1}^{\pi}}I_{\bar{G}^{c}}(X_{k+1}^{'})}\rf{I_{(0,\infty)}(\hat{\kappa}_{N})},\label{5.39}
 \end{align}
where $X_{k}$ is the approximation of  RSDEs (\ref{rsde}) according to Algorithm~\ref{algorithm2.1},  $X^{'}_{k+1} $ is the auxiliary step from Algorithm~\ref{algorithm2.1}, $r_{k+1} = \dist(X_{k+1}^{'},X_{k+1}^{\pi})$, $\varphi_{k} = \varphi(X_{k}) $, $ \psi_{k+1}^{\pi} = \psi(X_{k+1}^{\pi})$, $\alpha_{k+1}^{\pi} = \alpha(X_{k+1}^{\pi})$, and $X_{k+1}^{\pi}$ is the projection of $X_{k+1}^{'}$ on $\partial G$.
In Subsection~\ref{section5.2}, we prove (Theorems~\ref{thrm3.2} and \ref{thrm6.5} and Lemma~\ref{aapl4.4}) that the bias of all the above numerical time-averaging estimators is  ${\cal{O}}(h+1/T)$ and that the second moment of the error of the estimators (Theorems~\ref{variancephi} and \ref{variancepsidash} and Lemma~\ref{aapl4.5}) is  ${\cal{O}}(h^2+1/T)$.

Further, we take the expectation $\mathbb{E}(\varphi_{N})$ as a discretized ensemble-averaging estimator for computing $\bar{\varphi}$. For calculating $\bar{\psi}^{'}$, we take the following as discretized ensemble-averaging estimators:
\begin{equation} \label{6.65n}
      \left.\mathbb{E}\biggl(\sum\limits_{k=0}^{N-2}\frac{r_{k+1}\psi_{k+1}^{\pi}}{\alpha_{k+1}^{\pi}}I_{\bar{G}^{c}}(X_{k+1}^{'})\biggr)\middle/ \mathbb{E}\biggl(\sum\limits_{k=0}^{N-2}\frac{r_{k+1}}{\alpha_{k+1}^{\pi}}I_{\bar{G}^{c}}(X_{k+1}^{'})\biggr) \right.
      \end{equation}
      or
 \begin{align}  \label{6.66n}
      \left.\mathbb{E}\Bigg(\biggl( \sum\limits_{k=0}^{N-1}\frac{r_{k+1}\psi_{k+1}^{\pi}}{\alpha_{k+1}^{\pi}}I_{\bar{G}^{c}}\big(X_{k+1}^{'}\big)\;\middle/\;\sum\limits_{k=0}^{N-1}\frac{r_{k+1}}{\alpha_{k+1}^{\pi}}I_{\bar{G}^{c}}\big(X_{k+1}^{'}\big)\biggr)\rf{I_{(0,\infty)}(\hat{\kappa}_{N})}\Bigg).\right.
\end{align}
In Subsection~\ref{nea} we prove (Theorem~\ref{theorem6.14}) that the error of the estimator $\mathbb{E}(\varphi_{N})$ is  ${\cal{O}}(h+e^{-\lambda T})$ for some $\lambda >0$ and (Theorem~\ref{theorem3.13}) that the error of the estimator (\ref{6.65n}) for $\bar{\psi}^{'}$  is ${\cal{O}}(h+1/T)$. \rf{The upper index of the sums in (\ref{6.65n}) is $N-2$ due to the error analysis (see Remark~\ref{rem:Nminus2}).}
The error of (\ref{6.66n}) is same as the bias of (\ref{5.39}).

\begin{remark}\label{remark5.2}
At the end of Subsection \ref{subsection4.2.1}, $\frac{\rf{I_{(0,\infty)}(L(t))}}{L(t)}\int_{0}^{t}\psi(X(s))dL(s)$
was introduced as the continuous time-averaging estimator to calculate the expectation with respect to the invariant law of $\tilde{X}(t)$, i.e. $\tilde{\psi}$. To approximate $\tilde{\psi}$, we take the discrete time-averaging estimator $ \tilde{\psi}_{N}$  as
\rf{
\begin{align*}
        \tilde{\psi}_{N} &= \frac{\tilde{Z}_{1,N}}{\tilde{Z}_{2,N}}I_{(0,\infty)}(\tilde{Z}_{2,N}),
 \end{align*}}
 and discretized ensemble averaging estimators as
 \rf{
\begin{equation*}
      \frac{\mathbb{E}(\tilde{Z}_{1,N-1})}{\mathbb{E}(\tilde{Z}_{2,N-1})},\;\;\; \text{or}\;\;\;
  \mathbb{E}\bigg(\frac{\tilde{Z}_{1,N}}{\tilde{Z}_{2,N}}I_{(0,\infty)}(\tilde{Z}_{2,N})\bigg),
\end{equation*}}
where $ \rf{\tilde{Z}_{1,N}} = \sum\limits_{k=0}^{N-1}r_{k+1}\psi_{k+1}^{\pi}I_{\bar{G}^{c}}(X_{k+1}^{'}) $ and $\rf{\tilde{Z}_{2,N}}= \sum\limits_{k=0}^{N-1}r_{k+1}I_{\bar{G}^{c}}(X_{k+1}^{'})$.
\end{remark}
\subsection{Error analysis for numerical time-averaging estimators}  \label{section5.2}

In this subsection our aim is to establish the closeness of numerical time-averaging estimators  obtained through the approximation of RSDEs (\ref{rsde}) by Algorithm~\ref{algorithm2.1} to their corresponding ergodic limits. 
Our approach to the error analysis of numerical time-averaging estimators is analogous to the one used in \cite{41} in the case of the usual SDEs.

For brevity, we write $u_{k}=u(X_{k})$, $u_{k}^{\prime}= u(X_{k}^{'}) $, $b_{k} = b(X_{k})$, $\sigma_{k} = \sigma(X_{k}) $, $\nu_{k}^{\pi}= \nu(X_{k}^{\pi}) $, $\alpha_{k}^{\pi} = \alpha(X_{k}^{\pi})$, $\phi_{1,k} = \phi_{1}(X_{k}) $ and $\phi_{2,k}^{\pi} = \phi_{2}(X_{k}^{\pi})$, where $X_{k}^{'}$ and $X_{k}^{\pi}$ are as introduced in Algorithm \ref{algorithm2.1}.  From Proposition 1.17 in \cite{19}, it is known that under Assumption~\ref{as:3}, $u(x) \in C^{4}( \bar{G})$ can be extended to a function in $C^{4}(\bar{G}\cup\bar{G}_{-r})$, where $G_{-r}$ was introduced in the beginning of Subsection~\ref{subsection3.4}. 
This also implies that $u(x)$ and its derivatives up to fourth order are uniformly bounded on $\bar{G}\cup\bar{G}_{-r}$.

We write $u(X_{k+1}) - u(X_{k})$ as
\begin{align*}
    u_{k+1} - u_{k} &= u_{k+1} - u_{k+1}^{\prime} + u_{k+1}^{\prime} - u_{k}
 = (u_{k+1}- u_{k+1}^{\prime})I_{\bar{G}^{c}}(X_{k+1}^{'})  + u_{k+1}^{\prime} - u_{k} 
\end{align*}
since $u_{k+1} = u_{k+1}^{\prime}$ if $X_{k+1}^{'} \in \bar{G}$. Using Taylor expansion, we get
\begin{align}
    u_{k+1} - u_{k} 
    & =  \Big(2r_{k+1}\phi_{2,k+1}^{\pi} + R_{5,k+1} + R_{6,k+1} \Big)I_{\bar{G}^{c}}(X_{k+1}') + h\phi_{1,k} + R_{7,k+1} + R_{8,k+1}, \label{5.28}
\end{align}
where
\begin{align*}
        R_{5,k+1} &= (r_{k+1}^{3}/6)D^{3}u(X_{k+1}^{\pi} + \alpha_{1}r_{k+1}\nu_{k+1}^{\pi})[\nu_{k+1}^{\pi},\nu_{k+1}^{\pi},\nu_{k+1}^{\pi}], \\
        R_{6\rs{,}k+1} &= -(r_{k+1}^{3}/6)D^{3}u(X_{k+1}^{\pi}- \alpha_{2}r_{k+1}\nu_{k+1}^{\pi})[\nu_{k+1}^{\pi},\nu_{k+1}^{\pi},\nu_{k+1}^{\pi}], \\
                R_{7,k+1} &= (h^{2}/2)D^{2}u_{k}[b_{k},b_{k}] + (h^{2}/2)D^{3}u_{k}[b_{k},\sigma_{k}\xi_{k+1},\sigma_{k}\xi_{k+1}] + (h^{3}/6)D^{3}u_{k}[b_{k},b_{k},b_{k}] \\ & \;\;\;\; + (1/24)D^{4}u(X_{k}+\alpha_{3}\delta_{k+1})[\delta_{k+1},\delta_{k+1},\delta_{k+1},\delta_{k+1}]\\
            R_{8,k+1} &= h^{1/2}(\sigma_{k}\xi_{k+1}\cdot\nabla)u_{k} + (h/2)\big(D^{2}u_{k}[\sigma_{k}\xi_{k+1},\sigma_{k}\xi_{k+1}] - (a_{k}:\nabla\nabla)u_{k}\big)
                     \\ & + h^{3/2}D^{2}u_{k}[b_{k},\sigma_{k}\xi_{k+1}] + (h^{3/2}/6)D^{3}u_{k}[\sigma_{k}\xi_{k+1},\sigma_{k}\xi_{k+1},\sigma_{k}\xi_{k+1}] \\ & \;\;\;\; + (h^{5/2}/2)D^{3}u_{k}[b_{k},b_{k},\sigma_{k}\xi_{k+1}],
    \end{align*}
    with \rs{$\delta_{k+1} = b_{k}h + \sigma_{k}\xi_{k+1}h^{1/2}$} and $\alpha_{1}, \; \alpha_{2}, \; \alpha_{3} \in (0,1)$.

    Let us estimate the error terms $ R_{j,k+1}$, $ j=5,\dots, 8$.
It is not difficult to deduce that  for any $k = 0, \dots,N-1$:
\begin{equation}\mathbb{E}(R_{8,k+1}|X_{k}) = 0.  \label{R8}
\end{equation}
 Recall that the functions $b(x)$, $\sigma(x) \in C^{2}$ for $x \in \bar{G}$  and $u(x) \in C^{4}$,  $ x \in   \bar{G}\cup \bar{G}_{-r}$. Then, using boundedness of $\xi_{k+1}$, we obtain for all $k=0,\dots ,N-1$:
    \begin{align}
     \rf{|R_{i,k+1}|}&\leq Cr_{k+1}^{3}\;\; a.s.,\;\;\;\;\; i=5,6, \label{eq:5.26} \\
     \rf{|R_{7,k+1}|} & \leq Ch^{2}\;\; a.s., \label{eq:5.27}
    \end{align}
where $C$ is a positive constant independent of $T$ and $h$. Proceeding in the same way as above, we can also get the following estimates for all $k=0,\dots ,N-1$:
\begin{align}
    \rf{|R_{i,k+1}R_{j,k+1}|}&\leq Cr_{k+1}^{6}\;\;a.s.,\;\;\;\;\;\; i,j=5,6, \label{b3.35}\\
    \rf{|R_{7,k+1}^{2}| }& \leq Ch^{4}\;\;a.s.,\;\;\;  \label{b3.36}\\
    \rf{|R_{8,k+1}^{2}|} & \leq Ch\;\;a.s.,\;\;\; \label{b3.37}
\end{align}
where $C$ is a positive constant independent of $T$ and $h$.

 The next lemma is related to an estimate of the number of steps which the Markov chain $X_{k}^{'}$ spends in the layer $G_{-r}$. We note that in comparison with the analogous Lemma~\ref{bl} the estimate in the below lemma has explicit (linear) dependence on time $T$, which is important for the error analysis of the numerical time-averaging estimators introduced in the previous subsection.

\begin{lemma} \label{sjlm}
Under Assumptions \ref{as:3} and \ref{as:5}, the following inequality holds 
for sufficiently large $T$:
\begin{equation*}
\mathbb{E}\bigg(\sum\limits_{k=1}^{N}r_{k}I_{G_{-r}}(X_{k}^{'})  \bigg)\leq CT, \label{eq:42}
\end{equation*}
where $C$ is a positive constant independent of $T$ and $h$.
\end{lemma}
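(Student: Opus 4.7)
The plan is to adapt the proof of Lemma~\ref{bl} with a simpler test function that is linear (rather than exponential) in $T-t$, so as to obtain a bound that grows linearly in $T$ rather than one independent of $T$. Concretely, I would apply the boundary value problem (\ref{eq:18})-(\ref{eq:19}) associated with the Markov chain $(t_k, X_k')$ in the special case $q(x) \equiv 1$ and $g(t,x) = r_0(x) I_{G_{-r}}(x)$. By the representation (\ref{eq:20}), the solution is exactly
\[
v(t_0, x) \;=\; \mathbb{E}\Bigl(\sum_{k=0}^{N-1} r_k I_{G_{-r}}(X_k')\Bigr),
\]
which coincides with the quantity in the lemma statement (the index shift $\sum_{k=1}^{N}$ vs.\ $\sum_{k=0}^{N-1}$ is harmless since $X_0 = x \in \bar G$ forces $r_0 I_{G_{-r}}(X_0') = 0$, and the extra boundary term $r_N I_{G_{-r}}(X_N') = \mathcal{O}(\sqrt{h})$ is absorbed into $CT$).

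The plan then is to construct a discrete super-solution. I would take
\[
U(t,x) \;=\; K_1 (T-t) \;+\; w(x), \qquad w(x) \;=\; L - \dist(x, \mathbb{S}_{-l}),
\]
with $w$ from the proof of Lemma~\ref{bl} and $K_1 > 0$ to be chosen. Using the one-step expansion (\ref{eq:22})-(\ref{eq:23}) together with $\Delta X' = 2 r_0(x) \nu I_{G_{-r}}(x) + \delta$, $\mathbb{E}(\delta) = \mathcal{O}(h)$, and the geometric identity $(\nu(x^\pi) \cdot (x - x^\pi_{-l})/|x - x^\pi_{-l}|) = 1$, I would compute
\[
Pw(x) - w(x) \;=\; -\,2 r_0(x) I_{G_{-r}}(x) + \mathcal{O}(h),
\]
so that
\[
U(t,x) - PU(t,x) \;=\; K_1 h + 2 r_0(x) I_{G_{-r}}(x) + \mathcal{O}(h).
\]
Choosing $K_1$ sufficiently large (independently of $h$) absorbs the $\mathcal{O}(h)$ remainder and yields $U(t,x) - PU(t,x) \geq r_0(x) I_{G_{-r}}(x)$ for all sufficiently small $h > 0$.

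To finish, I would invoke the standard telescoping/martingale argument: since $\mathbb{E}(U(t_k, X_k') - U(t_{k+1}, X_{k+1}') \mid X_k') = U(t_k, X_k') - PU(t_k, X_k') \geq r_0(X_k') I_{G_{-r}}(X_k')$, summing over $k = 0, \dots, N-1$ and using $U(T, X_N') = w(X_N') \geq 0$ gives
\[
U(t_0, x) \;\geq\; \mathbb{E}\Bigl(\sum_{k=0}^{N-1} r_k I_{G_{-r}}(X_k')\Bigr) \;+\; \mathbb{E}\,w(X_N') \;\geq\; \mathbb{E}\Bigl(\sum_{k=0}^{N-1} r_k I_{G_{-r}}(X_k')\Bigr).
\]
Since $U(t_0, x) = K_1 T + w(x) \leq K_1 T + L$, the bound $\leq CT$ follows for all sufficiently large $T$. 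The only delicate point is the careful accounting of the $\mathcal{O}(h)$ contributions from the drift and from the second-order Taylor remainder in (\ref{eq:22}), but this is precisely the calculation already carried out in Lemma~\ref{bl} and so is not a genuine obstacle; the novelty here is simply that the zeroth-order-in-$T$ decay of the exponential factor in Lemma~\ref{bl} is replaced by the explicit linear growth $K_1(T-t)$.
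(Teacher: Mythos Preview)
Your proposal is correct and follows essentially the same route as the paper: both set $q(x)\equiv 1$, $g(t,x)=r_0(x)I_{G_{-r}}(x)$ in the discrete boundary value problem (\ref{eq:18})--(\ref{eq:19}), and both construct a supersolution of the form $U(t,x)=K(T-t)+w(x)$ with the key identity $(\nu\cdot (x-x_{-l}^{\pi})/|x-x_{-l}^{\pi}|)=1$ doing the work in the $G_{-r}$ layer.

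The only substantive difference is the choice of $w$: you take $w(x)=L-\dist(x,\mathbb{S}_{-l})$ (reusing the function from Lemma~\ref{bl}), whereas the paper takes $w(x)=L^{2}-\dist^{2}(x,\mathbb{S}_{-l})$. With the squared distance, the paper's one-step computation picks up the factor $\dist(x,\mathbb{S}_{-l})I_{G_{-r}}(x)=(l-r_{0}(x))I_{G_{-r}}(x)$ and yields $PU-U=-4r_{0}(x)lI_{G_{-r}}(x)-Kh+\mathcal{O}(h)$, requiring the side condition $l>1/4$ to dominate $r_{0}(x)I_{G_{-r}}(x)$. Your linear $w$ gives directly $PU-U=-2r_{0}(x)I_{G_{-r}}(x)-K_{1}h+\mathcal{O}(h)$, so no constraint on $l$ is needed and the argument is marginally cleaner. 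Both variants lead to the same bound $v(t_0,x)\leq U(t_0,x)\leq CT$.
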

\begin{proof}
  The proof follows the same arguments as those in Lemma~\ref{bl}, but with (cf. (\ref{utx}))
 \begin{equation*}
    U(t,x) =
    \begin{cases}
    0, & (t,x) \in \{T\}\times \big(\bar{G} \cup \bar{G}_{-r}\big),\\
    \rs{\big(}K(T-t) + \rf{w(x)}\rs{\big)/l}, & (t,x) \in [0, T-h] \times \big(\bar{G} \cup \bar{G}_{-r}\big),
    \end{cases}
\end{equation*}
and with an appropriate choice of $K$. \rs{Here} $w(x)$ is from (\ref{wx}) \rs{and $l$ is the distance between $\mathbb{S}_{l}$ and $\partial G$.}
\end{proof}

We will first consider estimates for bias of the estimator $\hat{\varphi}_{N}$ (see (\ref{3.32})) and second moment of its error  followed by respective  error estimates of $\hat{\kappa}_{N}$, $\hat{\psi}_{N}$ and $\hat{\psi}_{N}^{'}$ (see (\ref{3.33})-(\ref{5.39})).

\begin{theorem} \label{thrm3.2}
Under Assumptions \ref{as:3} and \ref{as:4}-\ref{as:5}, the following estimate holds for $\varphi \in C^{2}(\bar{G})$:
\begin{equation}
    \mid \mathbb{E}(\hat{\varphi}_{N}) - \bar{\varphi}\mid  \leq C\Big( h+\frac{1}{T}\Big), \label{est1}
 \end{equation}
where $\hat{\varphi}_{N}$ is from (\ref{3.32}), $\bar{\varphi}$ is from (\ref{q2.24}), and $C>0$ is independent of $T$ and $h$.
\end{theorem}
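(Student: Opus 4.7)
The plan is to follow the continuous-time derivation around equations (\ref{n311})--(\ref{n6.5}) but replace Ito's formula by the discrete telescoping identity coming from expansion (\ref{5.28}). First I would invoke the Poisson PDE with Neumann boundary condition
\begin{align*}
\mathscr{A}u(x) &= \varphi(x)-\bar\varphi,\qquad x\in G,\\
(\nabla u(z)\cdot\nu(z)) &= 0,\qquad z\in\partial G,
\end{align*}
which fits the template (\ref{gpe1})--(\ref{gpe2}) with $\phi_1=\varphi-\bar\varphi$ and $\phi_2\equiv 0$. The centering condition (\ref{cc}) holds by the very definition (\ref{q2.24}) of $\bar\varphi$, so under Assumptions \ref{as:3} and \ref{as:4}--\ref{as:5} there is a solution $u\in C^4(\bar G)$ (unique up to a constant), which by Proposition~1.17 of \cite{19} extends to $C^4(\bar G\cup\bar G_{-r})$ with uniformly bounded derivatives.

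Next I would apply the expansion (\ref{5.28}) with this specific $u$ (so that $\phi_{1,k}=\varphi_k-\bar\varphi$ and $\phi_{2,k+1}^\pi=0$) and telescope:
\begin{equation*}
u_N-u_0 \;=\; h\sum_{k=0}^{N-1}(\varphi_k-\bar\varphi)\;+\;\sum_{k=0}^{N-1}\bigl(R_{5,k+1}+R_{6,k+1}\bigr)I_{\bar G^c}(X_{k+1}')\;+\;\sum_{k=0}^{N-1}\bigl(R_{7,k+1}+R_{8,k+1}\bigr).
\end{equation*}
Dividing by $T=Nh$ and rearranging yields
\begin{equation*}
\hat\varphi_N-\bar\varphi \;=\; \frac{u_N-u_0}{T} \;-\; \frac{1}{T}\sum_{k=0}^{N-1}(R_{5,k+1}+R_{6,k+1})I_{\bar G^c}(X_{k+1}') \;-\;\frac{1}{T}\sum_{k=0}^{N-1}(R_{7,k+1}+R_{8,k+1}).
\end{equation*}

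Now I would take expectation and bound each term separately. Boundedness of $u$ on $\bar G$ gives $|\mathbb E(u_N-u_0)|/T\le C/T$. The martingale-type term satisfies $\mathbb E(R_{8,k+1}|X_k)=0$ by (\ref{R8}); the Euler remainder satisfies $|\mathbb E(R_{7,k+1}|X_k)|\le Ch^2$ by (\ref{eq:5.27}), contributing at most $Nh^2/T=h$ after summation. The delicate part is the boundary remainder: (\ref{eq:5.26}) only gives $|\mathbb E(R_{5,k+1}+R_{6,k+1}|X_{k+1}'\in\bar G^c)|\le Cr_{k+1}^3$, which by itself is too crude. The trick is that on $\{X_{k+1}'\in\bar G^c\}$ we have $r_{k+1}\le r=\mathcal O(h^{1/2})$, so $r_{k+1}^3\le r^2\,r_{k+1}\le Ch\,r_{k+1}$; hence
\begin{equation*}
\frac{1}{T}\sum_{k=0}^{N-1}\bigl|\mathbb E[(R_{5,k+1}+R_{6,k+1})I_{\bar G^c}(X_{k+1}')]\bigr| \;\le\; \frac{Ch}{T}\,\mathbb E\!\left(\sum_{k=0}^{N-1}r_{k+1}I_{G_{-r}}(X_{k+1}')\right),
\end{equation*}
and Lemma~\ref{sjlm} bounds the expectation on the right by $CT$, yielding an $\mathcal O(h)$ contribution. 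Collecting the three bounds gives the claim.

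The main obstacle is precisely this last step: naively the $R_{5},R_{6}$ terms only scale as $h^{3/2}$ per step, summing to $h^{1/2}$, which is too weak. The key insight is to convert one factor of $r_{k+1}$ into the deterministic bound $r=\mathcal O(h^{1/2})$ and leave the remaining $r_{k+1}I_{G_{-r}}(X_{k+1}')$ to be controlled by Lemma~\ref{sjlm}, whose linear-in-$T$ growth matches the $1/T$ prefactor exactly to produce the order-$h$ error. This is the discrete analogue of the $\mathcal O(1/t)$ control of the boundary integral obtained via Ito's formula in the continuous derivation.
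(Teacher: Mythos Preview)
Your proposal is correct and follows essentially the same route as the paper: set up the Poisson problem (\ref{n311})--(\ref{n312}), telescope via the expansion (\ref{5.28}), kill $R_{8}$ by (\ref{R8}), bound $R_{7}$ by (\ref{eq:5.27}), and handle the boundary remainder by writing $r_{k+1}^{3}\le Ch\,r_{k+1}$ and invoking Lemma~\ref{sjlm}. In fact you make the conversion $r_{k+1}^{3}\le Ch\,r_{k+1}$ more explicit than the paper does; the paper simply jumps from (\ref{eq:5.26}) to the bound $\tfrac{C}{T}h\,\mathbb{E}\sum r_{k+1}I_{\bar G^{c}}(X_{k+1}')$ and then cites Lemma~\ref{sjlm}.
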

\begin{proof}
Consider the Neumann problem for Poisson equation, (\ref{n311})-(\ref{n312}), i.e., the problem (\ref{gpe1})-(\ref{gpe2}) with $\phi_{1}(x) = \varphi(x) - \bar{\varphi}$ and $\phi_{2}(z) = 0$. Then (\ref{5.28}) becomes
\begin{align*}
    u_{k+1} - u_{k}     & = h( \varphi_{k} -\bar{\varphi})  +  R_{7,k+1} + R_{8,k+1} + \Big(R_{5,k+1} + R_{6,k+1} \Big)I_{\bar{G}^{c}}(X_{k+1}^{'}),
\end{align*}
and, summing over the first $N$ terms, we get
\begin{align} \label{eqn3.45}
    \sum\limits_{k=0}^{N-1}\Big(u_{k+1}-u_{k}\Big) &= h\sum\limits_{k=0}^{N-1}\Big(\varphi_{k}-\bar{\varphi}\Big) + \sum\limits_{k=0}^{N-1}\Big(R_{7,k+1} + R_{8,k+1}\Big) \nonumber  \\ & \;\;\;\; + \sum\limits_{k=0}^{N-1}\Big(\big(R_{5,k+1} + R_{6,k+1}\big)I_{\bar{G}^{c}}\big(X_{k+1}^{'}\big)\Big),
\end{align}
whence by reordering terms, taking expectation on both sides, dividing by $T=Nh$, and using (\ref{R8}), we obtain
\begin{align*}
   |\mathbb{E}(\hat{\varphi}_{N}) - \bar{\varphi}| & \leq \frac{|\rs{\mathbb{E}}u_{N} - u_{0}|}{T} + \frac{1}{Nh}\sum\limits_{k=0}^{N-1}\big|\mathbb{E}\rf{R_{7,k+1}}\big| \\ & \;\;\;\; + \frac{1}{T}\sum\limits_{k=0}^{N-1}\mathbb{E}\Big(\rf{\big(R_{5,k+1} + R_{6,k+1}\big)} I_{\bar{G}^{c}}\big(X_{k+1}^{'}\big)\Big).
\end{align*}
Notice that $|\rs{\mathbb{E}}u_{N}-u_{0}| \leq C $  and using (\ref{eq:5.26})-(\ref{eq:5.27}), we obtain
\begin{align*}
    |\mathbb{E}(\hat{\varphi}_{N}) - \bar{\varphi}| & \leq C\Big(\frac{1}{T} + h \Big) + \frac{C}{T}h\mathbb{E}\bigg(\sum\limits_{k=0}^{N-1}r_{k+1}I_{\bar{G}^{c}}\big(X_{k+1}^{'}\big)\bigg),
\end{align*}
which by Lemma \ref{sjlm} gives the quoted result.
\end{proof}
\begin{theorem} \label{variancephi}
Under Assumptions \ref{as:3} and \ref{as:4}-\ref{as:5}, the following estimate holds  for $\varphi \in C^{2}(\bar G)$:
\begin{equation*}
     \mathbb{E}\big(\hat{\varphi}_{N} - \bar{\varphi}\big)^{2}  \leq C\Big( h^{2} +\frac{1}{T}\Big), 
 \end{equation*}
where $C$ is a positive constant independent of $T$ and $h$.
\end{theorem}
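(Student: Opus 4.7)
The plan is to combine the decomposition of $\hat{\varphi}_{N} - \bar{\varphi}$ established in the proof of Theorem~\ref{thrm3.2} with second-moment estimates for each of the four resulting pieces. Rearranging identity (\ref{eqn3.45}) and dividing by $T = Nh$ gives
\[
\hat{\varphi}_N - \bar{\varphi} = \frac{u_N - u_0}{T} - \frac{1}{T}\sum_{k=0}^{N-1} R_{7,k+1} - \frac{1}{T}\sum_{k=0}^{N-1} R_{8,k+1} - \frac{1}{T}\sum_{k=0}^{N-1}(R_{5,k+1}+R_{6,k+1})I_{\bar{G}^{c}}(X_{k+1}^{'}).
\]
Applying the elementary inequality $(a+b+c+d)^2 \leq 4(a^2+b^2+c^2+d^2)$, it suffices to bound the second moment of each piece. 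The first piece $(u_N-u_0)^2/T^2$ is trivially $O(1/T^2)$ since $u \in C^4(\bar{G}\cup\bar{G}_{-r})$ is uniformly bounded.

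For $\sum_k R_{7,k+1}$, Cauchy--Schwarz with $N$ summands combined with (\ref{b3.36}) yields $\mathbb{E}\bigl(\sum_{k} R_{7,k+1}\bigr)^{2} \leq N \sum_k \mathbb{E} R_{7,k+1}^2 \leq CN^2 h^4 = CT^2 h^2$, contributing $O(h^2)$ after dividing by $T^2$. For $\sum_k R_{8,k+1}$, the key is the martingale property (\ref{R8}): since $\mathbb{E}(R_{8,k+1}\mid X_k) = 0$ and $R_{8,k+1}$ is $\mathscr{F}_{k+1}$-measurable, the sum forms a martingale so that all cross-terms vanish, giving $\mathbb{E}\bigl(\sum_k R_{8,k+1}\bigr)^2 = \sum_k \mathbb{E}R_{8,k+1}^2 \leq CNh = CT$ via (\ref{b3.37}). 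Dividing by $T^2$ produces the $O(1/T)$ contribution.

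The main obstacle is controlling the boundary-layer sum $\sum_k (R_{5,k+1}+R_{6,k+1})I_{\bar{G}^{c}}(X_{k+1}^{'})$. Using $|R_{5,k+1}+R_{6,k+1}| \leq C r_{k+1}^{3} \leq C h \cdot r_{k+1}$ on $\{X_{k+1}^{'}\in\bar{G}^{c}\}$, this sum is pointwise bounded by $ChV_N$ where $V_N := \sum_{k=0}^{N-1} r_{k+1}I_{\bar{G}^{c}}(X_{k+1}^{'})$, so this piece contributes at most $(Ch^{2}/T^{2})\mathbb{E}V_{N}^{2}$ to $\mathbb{E}(\hat{\varphi}_{N}-\bar{\varphi})^2$. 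Lemma~\ref{sjlm} gives $\mathbb{E}V_N \leq CT$; to upgrade to the second moment, I would split
\[
V_{N}^{2} = \sum_{k} r_{k+1}^{2} I_{\bar{G}^{c}}(X_{k+1}^{'}) + 2\sum_{k<j} r_{k+1} r_{j+1} I_{\bar{G}^{c}}(X_{k+1}^{'}) I_{\bar{G}^{c}}(X_{j+1}^{'}).
\]
The diagonal part contributes $O(\sqrt{h}\,T)$ because $r_{k+1}^2 \leq C\sqrt{h}\,r_{k+1}$. For the off-diagonal part, conditioning on $\mathscr{F}_{k+1}$ and using the strong Markov property of the chain started afresh at $X_{k+1}$, Lemma~\ref{sjlm} applied to the remaining $N-k-1$ steps gives $\mathbb{E}\bigl(\sum_{j>k}r_{j+1} I_{\bar{G}^{c}}(X_{j+1}^{'}) \bigm| \mathscr{F}_{k+1}\bigr) \leq C(T-t_{k+1}) \leq CT$; pulling this out and applying Lemma~\ref{sjlm} once more to $\mathbb{E}\sum_k r_{k+1}I_{\bar{G}^{c}}(X_{k+1}^{'})$ yields $\mathbb{E}V_N^2 \leq CT^2$. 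Thus the boundary-layer piece contributes $O(h^2)$, and collecting all four bounds delivers the claimed estimate $C(h^2 + 1/T)$.
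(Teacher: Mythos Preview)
Your proof is correct and follows essentially the same approach as the paper's: the same four-term decomposition from (\ref{eqn3.45}), the same martingale/orthogonality argument for $R_{8}$, Cauchy--Schwarz for $R_{7}$, and for the boundary-layer term the same conditioning-plus-Lemma~\ref{sjlm} trick to control the off-diagonal part of the squared sum. The only cosmetic difference is that you first bound $|R_{5,k+1}+R_{6,k+1}|\leq Ch\,r_{k+1}$ pointwise and then estimate $\mathbb{E}V_{N}^{2}$, whereas the paper keeps the $R$-notation and conditions directly, but the underlying estimates and the final bounds coincide.
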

\begin{proof} The proof is based on two steps. We first square both sides of (\ref{eqn3.45}) and use the estimates (\ref{b3.36})-(\ref{b3.37}). The second step is to apply the following inequality obtained by using (\ref{eq:5.26}) and (\ref{b3.35}):
\begin{align*}
    &\mathbb{E}\bigg(\sum\limits_{k=0}^{N-1} R_{k+1} I_{\bar{G}^{c}}\big(X_{k+1}^{'}\big)\bigg)^{2}
       =  \mathbb{E}\bigg( \sum\limits_{k=0}^{N-1} \rf{R_{k+1}^{2}}I_{\bar{G}^{c}}\big(X_{k+1}^{'}\big)\bigg) \\ & \;\;\;\; + 2\mathbb{E}\bigg(\sum\limits_{k=1}^{N}\mathbb{E}\bigg(\sum\limits_{j = k+1}^{N} R_{j}I_{\bar{G}^{c}}\big(X_{j}^{'}\big) \big| X_{k}^{'} \bigg)  R_{k}I_{\bar{G}^{c}}\big(X_{k}^{'}\big)\bigg)
     \\  & \leq        C h^{5/2} \mathbb{E}\bigg(\sum\limits_{k=0}^{N-1} r_{k+1} I_{\bar{G}^{c}}\big(X_{k+1}^{'}\big)\bigg) + Ch\mathbb{E}\bigg(\sum\limits_{k=1}^{N}\mathbb{E}\bigg(\sum\limits_{j = k+1}^{N}r_{j}I_{\bar{G}^{c}}\big(X_{j}^{'}\big)\big| X_{k}^{'}\bigg)R_{k}I_{\bar{G}^{c}}\big(X_{k}^{'}\big)\bigg),
   \end{align*}
  where $R_{k+1} := R_{5,k+1} + R_{6,k+1}  $ and $C$ is a positive constant independent of $T$ and $h$. Then one can obtain the desired result  using Lemma~\ref{sjlm} and combining the above stated two steps.
\end{proof}

 Assumption~\ref{as:5} suffices for proving the previous two theorems. However, in the subsequent lemmas and theorems \rf{we will need Assumption~\ref{as4.2'}}.

Now we proceed to error analysis for time-averaging estimators \rs{related to} ergodic limits on the boundary.  The next lemma is proved analogously to Theorem~\ref{thrm3.2}.

\begin{lemma}\label{aapl4.4}
Under Assumptions \ref{as:3}, \ref{as:4} and \ref{as4.2'}, the following hold for any $\psi \in C^{3}(\partial G)$:
\begin{align}
    \vert \mathbb{E}(\hat{\kappa}_{N}) - \kappa\vert  &\leq C\Big( h+\frac{1}{T}\Big),\label{kappa} \\
     \vert \mathbb{E}(\hat{\psi}_{N}) - \bar{\psi}\vert  &\leq C\Big( h+\frac{1}{T}\Big), \label{psi}
\end{align}
where $\hat{\kappa}_{N}$ is from (\ref{3.33}), $\kappa$ is from (\ref{eb2.25}), $\hat{\psi}_{N}$ is from (\ref{5.29}), $\bar{\psi}$ is from (\ref{psibar}), and $C$ is a positive constant independent of $T$ and $h$.
\end{lemma}

The next lemma is proved analogously to Theorem~\ref{variancephi}.

\begin{lemma}\label{aapl4.5}
Under Assumptions  \ref{as:3}, \ref{as:4} and \ref{as4.2'}, the following hold  for any $\psi \in C^{3}(\partial G)$:
\begin{align}
     \mathbb{E}\big(\hat{\kappa}_{N} - \kappa\big)^{2}  &\leq C\Big( h^{2}+\frac{1}{T}\Big), \label{kappavariance} \\
      \mathbb{E}\big(\hat{\psi}_{N} - \bar{\psi}\big)^{2}  & \leq C\Big( h^{2}+\frac{1}{T}\Big),  \label{psivariance}
\end{align}
where $C$ is a positive constant independent of $T$ and $h$.
\end{lemma}
\rf{Using Chebyshev's inequality and (\ref{kappavariance}), we have
\begin{equation}
    \mathbb{P}(\hat{\kappa}_{N} = 0) \leq C\Big( h^{2}+\frac{1}{T}\Big),
    \label{eq:kappa0}
\end{equation}
where $C>0$ is independent of $T$ and $h$.}
\begin{theorem}\label{variancepsidash}
Under Assumptions \ref{as:3}, \ref{as:4} and \ref{as4.2'}, the following holds for any $\psi \in C^{3}(\partial G)$:
\begin{equation*}
     \mathbb{E}\big(\hat{\psi}^{'}_{N} - \bar{\psi}^{'}\big)^{2}  \leq C\Big( h^{2}+\frac{1}{T}\Big),
\end{equation*}
where $\hat{\psi}^{'}_{N}$ is from (\ref{5.39}), $\bar{\psi}^{'}$ is from (\ref{q2.26}), and $C$ is a positive constant independent of $T$ and $h$.
\end{theorem}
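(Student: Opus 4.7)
The plan is to reduce the second-moment bound on the ratio estimator $\hat{\psi}'_N = \hat{\psi}_N/\hat{\kappa}_N$ to the already-established second-moment bounds on $\hat{\psi}_N$ (Lemma~\ref{psivariance}) and $\hat{\kappa}_N$ (Lemma~\ref{kappavariance}). The starting point is the algebraic identity
\begin{equation*}
\hat{\psi}'_N - \bar{\psi}' \;=\; \frac{1}{\hat{\kappa}_N}\Bigl[(\hat{\psi}_N - \bar{\psi}) - \bar{\psi}'(\hat{\kappa}_N - \kappa)\Bigr],
\end{equation*}
valid whenever $\hat{\kappa}_N > 0$, which isolates the only genuine difficulty: the random denominator $\hat{\kappa}_N$ appearing in any bound for $(\hat{\psi}'_N - \bar{\psi}')^2$.

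The main obstacle is that $\hat{\kappa}_N$ could in principle be small or even vanish (for instance, if the discretised trajectory happens never to leave $\bar{G}$ during $N$ steps), so $1/\hat{\kappa}_N$ is not deterministically bounded. To overcome this, I will split the sample space according to the event $A := \{\hat{\kappa}_N \geq \kappa/2\}$, exploiting the strict positivity of $\kappa = \int_{\partial G}\rho(z)\,dz > 0$. By Chebyshev's inequality and Lemma~\ref{kappavariance},
\begin{equation*}
\mathbb{P}(A^c) \;\leq\; \frac{4}{\kappa^2}\,\mathbb{E}(\hat{\kappa}_N - \kappa)^2 \;\leq\; C\bigl(h^2 + 1/T\bigr).
\end{equation*}

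On $A$, the factor $1/\hat{\kappa}_N$ is bounded by $2/\kappa$, so the elementary inequality $(a+b)^2 \leq 2a^2 + 2b^2$ together with Lemmas~\ref{psivariance} and~\ref{kappavariance} gives
\begin{equation*}
\mathbb{E}\bigl[(\hat{\psi}'_N - \bar{\psi}')^2 I_A\bigr] \;\leq\; \frac{8}{\kappa^2}\Bigl[\mathbb{E}(\hat{\psi}_N - \bar{\psi})^2 + (\bar{\psi}')^2\,\mathbb{E}(\hat{\kappa}_N - \kappa)^2\Bigr] \;\leq\; C\bigl(h^2 + 1/T\bigr).
\end{equation*}
On $A^c$, I will use the fact that---wherever it is defined---$\hat{\psi}'_N$ is a convex combination of the uniformly bounded values $\psi(X^\pi_{k+1})$: the weights $r_{k+1}/\alpha^\pi_{k+1}$ are non-negative because the uniform ellipticity Assumption~\ref{as:4} implies $\alpha(z) = \nu(z)^\top a(z)\nu(z)/2 \geq a_0/2 > 0$ on $\partial G$. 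Hence $|\hat{\psi}'_N| \leq \sup_{\partial G}|\psi|$, the same crude bound holds for $|\bar{\psi}'|$, and therefore $(\hat{\psi}'_N - \bar{\psi}')^2 I_{A^c} \leq 4(\sup_{\partial G}|\psi|)^2 I_{A^c}$. Combining this with the Chebyshev bound on $\mathbb{P}(A^c)$ controls the contribution from $A^c$, and adding the two pieces yields the required estimate. A minor technical nuance---the meaning of $\hat{\psi}'_N$ on the (small-probability) event $\{\hat{\kappa}_N = 0\}$---is harmless: any convention such as $\hat{\psi}'_N := 0$ there is absorbed into the $A^c$ term.
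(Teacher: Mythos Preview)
Your proof is correct, but it takes a more elaborate route than the paper's. The paper uses essentially the same algebraic identity (in the equivalent form $\hat{\psi}'_N - \bar{\psi}' = \kappa^{-1}\bigl[(\hat{\psi}_N - \bar{\psi}) - (\hat{\kappa}_N - \kappa)\hat{\psi}'_N\bigr]$), but then observes directly that the ratio $\hat{\psi}_N/\hat{\kappa}_N$ is \emph{deterministically} bounded by $\sup_{\partial G}|\psi|$ everywhere it is defined --- exactly the convex-combination argument you invoke, but applied globally rather than only on $A^c$. With that bound in hand, the random denominator disappears immediately:
\[
\mathbb{E}(\hat{\psi}'_N - \bar{\psi}')^2 \;\leq\; \frac{2}{\kappa^2}\Bigl[\mathbb{E}(\hat{\psi}_N - \bar{\psi})^2 + C\,\mathbb{E}(\hat{\kappa}_N - \kappa)^2\Bigr],
\]
and Lemmas~\ref{kappavariance} and~\ref{psivariance} finish the job in one line. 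Your Chebyshev split on $A = \{\hat{\kappa}_N \geq \kappa/2\}$ is therefore unnecessary: you already have the ingredient (the pointwise bound on $\hat{\psi}'_N$) that makes the split redundant, but you only deploy it on the bad event. On the other hand, your treatment of the degenerate event $\{\hat{\kappa}_N = 0\}$ is more explicit than the paper's, which simply asserts that $\hat{\kappa}_N > 0$ for $N$ large.
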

\begin{proof}
Since $\psi(z) \in C^{3}(\partial G)$, we have (cf. (\ref{3.33}) and (\ref{5.29})):
\begin{align*}
\hat{\psi}_{N}^{2} = \bigg(\rf{\frac{2}{Nh}}\sum\limits_{k=0}^{N-1}\frac{r_{k+1}\psi^{\pi}_{k+1}}{\alpha^{\pi}_{k+1}}I_{\bar{G}^{c}}(X_{k+1}^{'})\bigg)^{2} \leq C \bigg(\rf{\frac{2}{Nh}}\sum\limits_{k=0}^{N-1}\frac{r_{k+1}}{\alpha^{\pi}_{k+1}}I_{\bar{G}^{c}}(X_{k+1}^{'})\bigg)^{2} = C \hat{\kappa}^{2}_{N}.
\end{align*}
Under Assumption \ref{as:4}, $\alpha(z) > 0$ for all $z \in \partial G$ (see (\ref{vrho})) which implies 
\begin{align}
    \big(\hat{\psi}_{N}^{2}/\hat{\kappa}_{N}^{2}\big)\rf{I_{(0,\infty)}(\hat{\kappa}_{N})} \leq C.  \label{tb3.63}
\end{align}
We have
\begin{align*}
&\mathbb{E}\big(\hat{\psi}_{N}^{'} - \bar{\psi}^{'}\big)^{2}  = 
\mathbb{E}\bigg(\frac{\hat{\psi}_{N}\kappa - \bar{\psi}\hat{\kappa}_{N}}{\hat{\kappa}_{N}\kappa}\rf{I_{(0,\infty)}(\hat{\kappa}_{N}) - \frac{\bar{\psi}}{\kappa}I_{\{\hat{\kappa}_{N} = 0\} }} \bigg)^{2} \\ & = \mathbb{E}\bigg(\frac{(\hat{\psi}_{N} - \bar{\psi})\hat{\kappa}_{N} - (\hat{\kappa}_{N} - \kappa)\hat{\psi}_{N}}{\hat{\kappa}_{N}\kappa}\rf{I_{(0,\infty)}(\hat{\kappa}_{N})} \rf{- \frac{\bar{\psi}}{\kappa}I_{\{\hat{\kappa}_{N} = 0\} }}\bigg)^{2} \\& \leq \rf{C}\bigg(\mathbb{E}\big(\hat{\psi}_{N} - \bar{\psi}\big)^{2} + \mathbb{E}\bigg(\big(\hat{\kappa}_{N} - \kappa\big)^{2}\Big(\frac{\hat{\psi}_{N}}{\hat{\kappa}_{N}}\Big)^{2}\rf{I_{(0,\infty)}(\hat{\kappa}_{N})}\bigg) \rf{+ \mathbb{P}(\hat{\kappa}_{N} = 0)}\bigg).
\end{align*}
Using (\ref{tb3.63}), we get
\begin{align*}
    \mathbb{E}\big(\hat{\psi}_{N}^{'} - \bar{\psi}^{'}\big)^{2} & \leq C \big(\mathbb{E}\big(\hat{\psi}_{N} - \bar{\psi}\big)^{2} + \mathbb{E}\big(\hat{\kappa}_{N} - \kappa\big)^{2} + \rf{\mathbb{P}(\hat{\kappa}_{N} = 0)}\big),
\end{align*}
which by (\ref{kappavariance}), (\ref{psivariance}) and \rf{(\ref{eq:kappa0})} gives the desired result.
\end{proof}

\begin{theorem}\label{thrm6.5}
Under Assumptions  \ref{as:3}, \ref{as:4} and \ref{as4.2'}, the following holds for any $\psi \in C^{3}(\partial G)$:
\begin{equation}
    \mid \mathbb{E}(\hat{\psi}^{'}_{N}) - \bar{\psi}^{'}\mid  \leq C\Big( h+\frac{1}{T}\Big), \label{g5.57}
\end{equation}
where $C$ is a positive constant independent of $T$ and $h$.
\end{theorem}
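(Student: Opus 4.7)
The proof is a classical ratio-estimator bias argument, where everything we need has already been established in Lemmas~\ref{kappa}, \ref{psi}, \ref{kappavariance}, \ref{psivariance} and Theorem~\ref{variancepsidash}. Since $\mathbb{E}|\hat{\psi}^{'}_N-\bar{\psi}^{'}|\leq C\sqrt{h^{2}+1/T}$ follows at once by Jensen from Theorem~\ref{variancepsidash}, the point is to improve the $\sqrt{h^{2}+1/T}$ rate to the optimal $h+1/T$ rate for the bias.

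The key idea is to exploit the relation $\bar{\psi}=\bar{\psi}^{'}\kappa$ to introduce a centered numerator. Set $N_1:=\hat{\psi}_N-\bar{\psi}^{'}\hat{\kappa}_N$; then
\[
N_1=(\hat{\psi}_N-\bar{\psi})-\bar{\psi}^{'}(\hat{\kappa}_N-\kappa),
\]
so that Lemmas~\ref{kappa} and \ref{psi} give $|\mathbb{E}(N_1)|\leq C(h+1/T)$, while Lemmas~\ref{kappavariance} and \ref{psivariance} give $\mathbb{E}(N_1^{2})\leq C(h^{2}+1/T)$. From the definition $\hat{\psi}^{'}_N=\hat{\psi}_N/\hat{\kappa}_N$ we have the algebraic identity $N_1=\hat{\kappa}_N(\hat{\psi}^{'}_N-\bar{\psi}^{'})$. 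Writing $\hat{\kappa}_N=\kappa+(\hat{\kappa}_N-\kappa)$ and taking expectations yields
\[
\kappa\bigl(\mathbb{E}\hat{\psi}^{'}_N-\bar{\psi}^{'}\bigr)=\mathbb{E}(N_1)-\mathbb{E}\bigl[(\hat{\kappa}_N-\kappa)(\hat{\psi}^{'}_N-\bar{\psi}^{'})\bigr].
\]

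The first term on the right is controlled by the bound on $|\mathbb{E}(N_1)|$ above. The second term is a cross-term of a small quantity with a small quantity, which I would bound by Cauchy--Schwarz:
\[
\bigl|\mathbb{E}\bigl[(\hat{\kappa}_N-\kappa)(\hat{\psi}^{'}_N-\bar{\psi}^{'})\bigr]\bigr|\leq \sqrt{\mathbb{E}(\hat{\kappa}_N-\kappa)^{2}}\,\sqrt{\mathbb{E}(\hat{\psi}^{'}_N-\bar{\psi}^{'})^{2}}\leq C(h^{2}+1/T),
\]
using Lemma~\ref{kappavariance} and Theorem~\ref{variancepsidash}. Since $\kappa>0$ by Assumption~\ref{as:4} (so that $\alpha(z)>0$ on $\partial G$ and $\rho(z)>0$), dividing through gives the claim $|\mathbb{E}\hat{\psi}^{'}_N-\bar{\psi}^{'}|\leq C(h+1/T)$.

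The only conceptual obstacle is the implicit need for $\hat{\kappa}_N>0$ so that $\hat{\psi}^{'}_N$ is well-defined; this is already handled in the excerpt (in the proof of Theorem~\ref{variancepsidash} the authors observe that $\hat{\kappa}_N$ is positive for sufficiently large $N$ under Assumption~\ref{as:4}), and in our manipulation the random variable $1/\hat{\kappa}_N$ never appears explicitly thanks to the multiplicative rewriting $N_1=\hat{\kappa}_N(\hat{\psi}^{'}_N-\bar{\psi}^{'})$. The main technical obstacle is therefore not an estimate but the choice of decomposition: a naive Taylor expansion $1/\hat{\kappa}_N=1/\kappa-(\hat{\kappa}_N-\kappa)/(\kappa\hat{\kappa}_N)$ leaves a residual containing $1/\hat{\kappa}_N$, while the identity above gives a \emph{closed} equation for $\mathbb{E}\hat{\psi}^{'}_N-\bar{\psi}^{'}$ in terms of quantities we already control.
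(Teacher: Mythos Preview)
Your proof is correct and is essentially the same argument as the paper's: both arrive at the identity
\[
\kappa\bigl(\hat{\psi}^{'}_{N}-\bar{\psi}^{'}\bigr)=(\hat{\psi}_{N}-\bar{\psi})-\bar{\psi}^{'}(\hat{\kappa}_{N}-\kappa)-(\hat{\kappa}_{N}-\kappa)(\hat{\psi}^{'}_{N}-\bar{\psi}^{'}),
\]
then control the first two terms by Lemmas~\ref{kappa} and~\ref{psi} and the cross-term via Cauchy--Schwarz with Lemma~\ref{kappavariance} and Theorem~\ref{variancepsidash}. The only cosmetic difference is that you package the first two terms as $N_{1}$ while the paper writes out the fraction $\frac{\hat{\psi}_{N}\kappa-\bar{\psi}\hat{\kappa}_{N}}{\hat{\kappa}_{N}\kappa}$ directly; your side remark that $\mathbb{E}(N_{1}^{2})\le C(h^{2}+1/T)$ is correct but unused in the argument.
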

\begin{proof}
We have
\begin{align*}
&|\mathbb{E}\hat{\psi}_{N}^{'} - \bar{\psi}^{'}| = \bigg|\mathbb{E}\Big(\frac{\hat{\psi}_{N}}{\hat{\kappa}_{N}}\rf{I_{(0,\infty)}(\hat{\kappa}_{N})}\Big) - \frac{\bar{\psi}}{\kappa}\bigg|
 = \bigg|\mathbb{E}\Big(\frac{\hat{\psi}_{N}\kappa - \bar{\psi}\hat{\kappa}_{N}}{\hat{\kappa}_{N}\kappa}\rf{I_{(0,\infty)}(\hat{\kappa}_{N})}\Big) -\rf{\frac{\bar{\psi}}{\kappa}I_{\{\hat{\kappa}_{N} =0\}}}\bigg| \\ & = \bigg|\mathbb{E}\Big(\frac{(\hat{\psi}_{N} - \bar{\psi})\hat{\kappa}_{N} + (\kappa-\hat{\kappa}_{N})\hat{\psi}_{N}}{\hat{\kappa}_{N}\kappa}\rf{I_{(0,\infty)}(\hat{\kappa}_{N})}\Big)-\rf{\frac{\bar{\psi}}{\kappa}I_{\{\hat{\kappa}_{N} =0\}}}\bigg|
\leq \frac{1}{\kappa}|\mathbb{E}(\hat{\psi}_{N})-\bar{\psi}| \\ & \;\;\;\; + \frac{1}{\kappa}\Big|\mathbb{E}\Big((\kappa-\hat{\kappa}_{N})\frac{\hat{\psi}_{N}}{\hat{\kappa}_{N}}\rf{I_{(0,\infty)}(\hat{\kappa}_{N})}\Big)\Big| + \bar{\psi}^{'}\mathbb{P}(\hat{\kappa}_{N}=0) \\
& \leq \frac{1}{\kappa}|\mathbb{E}(\hat{\psi}_{N})-\bar{\psi}|  + \frac{1}{\kappa}\big|\mathbb{E}(\kappa -\hat{\kappa}_{N})(\hat{\psi}_{N}^{'} - \bar{\psi}^{'})\big| + \rs{\frac{\bar{\psi}^{'}}{\kappa}}\big|\mathbb{E}(\kappa -\hat{\kappa}_{N})\big|+\rf{\bar{\psi}^{'}\mathbb{P}(\hat{\kappa}_{N}=0)}.
\end{align*}
By the Cauchy$-$Bunyakovsky$-$Schwarz inequality, \rs{we get $\big|\mathbb{E}(\kappa -\hat{\kappa}_{N})(\hat{\psi}_{N}^{'} - \bar{\psi}^{'})\big| \leq  \big(\mathbb{E}(\kappa -\hat{\kappa}_{N})^{2}\big)^{1/2}\big(\mathbb{E}(\hat{\psi}_{N}^{'} - \bar{\psi}^{'})^{2}\big)^{1/2}$.} Then, using (\ref{kappa})-(\ref{kappavariance}), \rf{(\ref{eq:kappa0})} and Theorem \ref{variancepsidash}, we obtain (\ref{g5.57}).
\end{proof}

\begin{remark}
\rf{In Remark~\ref{remark5.2}, we introduced the numerical time-averaging estimators for \rs{the ergodic limit}  $\tilde{\psi}$, for which we can prove the following error estimates analogously to the proofs of Theorems~\ref{variancepsidash} and \ref{thrm6.5} for $\bar{\psi}^{'}$,
\begin{align*}
    |\mathbb{E}\tilde{\psi}_{N} - \tilde{\psi}| &\leq C\bigg(h +\frac{1}{T}\bigg),\;\;\;\;\;\;
   \mathbb{E}(\tilde{\psi}_{N} - \tilde{\psi})^{2}  \leq C\bigg(h^{2} + \frac{1}{T}\bigg),
\end{align*}
where $C>0$ is independent of $T$ and $h$.}

\end{remark}

\subsection{Error analysis for numerical ensemble-averaging estimators}\label{nea}
In this subsection our aim is to estimate errors when we approximate the stationary averages $\bar{\varphi}$ and $\bar{\psi}^{'}$ using discretized ensemble-averaging estimators introduced in Subsection~\ref{subsection4.3}. We split our discussion in two parts. Firstly, we will consider the error of the ensemble-averaging estimator for $\bar{\varphi}$. The proof exploits the backward Kolmogorov (parabolic) equation (cf. the similar approach in \cite{42} for the case of SDEs in $\mathbb{R}^d$). In the second part, we will estimate the error of the numerical ensemble-averaging estimator for $ \bar{\psi}^{'}$. We take $t_{0} =0 $, therefore $Q = [0,T) \times G $.

Consider the parabolic equation with non-zero terminal condition and homogeneous boundary condition:
\begin{align}
    \frac{\partial u}{\partial t}(t,x) + \mathscr{A} u(t,x) &= 0, \;\;\;\; (t,x) \in [0,T)\times \bar{G}, \label{eq:47}\\
    u(T,x) &= \varphi(x), \;\;\;\; x \in \bar{G}, \label{eq:48}\\
    \frac{\partial u}{\partial \nu}(t,z) &= 0, \;\;\;\; 0 \leq t < T,\;\;\; z \in \partial G. \label{eq:49}
\end{align}

If  Assumptions~\ref{as:3} and \ref{as:4}-\ref{as:5} hold with $\varphi(x) \in C^{4}(\bar{G})$ then the solution $u(t,x)$ of the problem (\ref{eq:47})-(\ref{eq:49})  belongs to $ C^{2,4}([0,T)\times \bar G)$  \cite{3,64}. 
Further, $u(t,x) \in C(\bar{Q})$ 
\cite{lunbook}. 
The probabilistic representation of the solution is given by (see Section~\ref{section3.1}):
\begin{equation*}
    u(0,x) = \mathbb{E}\varphi(X(T)), 
\end{equation*}
 where $ X(s) $ is the solution of the  RSDEs (\ref{rsde}).

 \noindent  Introduce the norm for $t \in [0,T)$
\begin{equation*}
    \langle u(t,x) \rangle_{\bar{G} }^{p} = \sum\limits_{2i+|j|=p}\max\limits_{\bar{G}}|D_{t}^{i}D_{x}^{j}u(t,x)|,
\end{equation*}
where $D^{j}_{x}u = \frac{\partial^{|j|}u }{\partial x^{j_{1}}\dots x^{j_{d}}} $ and $j$ is a multi-index. We make the following natural assumption for the solution of the Neumann problem (\ref{eq:47})-(\ref{eq:49}).
\begin{assumption}\label{a7}
There are positive constants $C $ and $\lambda$ independent of $T$ such that the following bound holds for all $t \in [0,T)$:
\begin{equation}\label{r5.67}
\sum\limits_{p=1}^{4}\langle u(t,x)\rangle_{ \bar{G} }^{p} \leq Ce^{-\lambda (T-t)}.
\end{equation}
\end{assumption}
A bound of the form (\ref{r5.67}) but with supremum over $ G$ is given in Theorem~4.1 of \cite{65} under Assumptions \ref{as:3}, \ref{as:4} and \ref{as:5}. We also mention that such a decaying bound for the gradient of the solution was proved for the heat equation with homogeneous Neumann boundary condition in \cite{71}. A bound similar to (\ref{r5.67}) is proved for the Cauchy case in \cite{42}.

From \cite[Proposition 1.17]{19}, if Assumptions \ref{as:3}, 
\ref{as:4}, \ref{as:5} and \ref{a7} hold then the solution $u(t,x) \in C^{2,4}([0,T)\times \bar{G})$ of problem (\ref{eq:47})-(\ref{eq:49}) can be extended to a function in $C^{2,4}([0,T)\times \bar{G}\cup\bar{G}_{-r})$ satisfying the bound (\ref{r5.67}).

Under Assumptions \ref{as:3}, 
\ref{as:4}, \ref{as:5} and \ref{a7}, the following error bounds hold for all $k = 0,1\dots,N-2$:
\begin{align}
\Big|\mathbb{E}\Big(u_{k+1}^{\prime}-u_{k}\big|X_{k} \Big)\Big| &\leq C h^{2}e^{-\lambda (T- t_{k})},\label{3.5}\\
    \rf{ \big| u_{k+1} - u_{k+1}^{\prime}\big|} &\leq Chr_{k+1}e^{-\lambda (T-t_{k+1})}I_{\bar{G}^{c}}(X_{k+1}^{'})\;\;\;\; a.s., \label{3.6}
\end{align}
where $C>0$ is independent of $T$ and $h$. The above bounds directly follow from Lemmas~\ref{lemma2.2} and \ref{lemma2.3} together with Assumption \ref{a7} if we note \rs{that} under Assumption \ref{a7}, 
the terms appearing in $R_{1,k+1}$ in Lemma~\ref{lemma2.2} will be bounded by $Ch^{2}e^{-\lambda(T- t_{k})}$  and the terms $R_{3,k+1}$ and $R_{4,k+1}$ in Lemma~\ref{lemma2.3}  by $Cr_{k+1}^{3}e^{-\lambda(T- \rs{t_{k+1}})}$.

Next, we state an appropriate lemma related to the average number of steps which the chain $X_{k}^{'}$ spends in $G_{-r}$.

\begin{lemma} \label{lemmaonexp}
  Under Assumptions \ref{as:3} and \ref{as:5}, the following inequality holds for \rf{any} $\lambda > 0$:
  \begin{equation}
      \mathbb{E}\bigg(\sum\limits_{k=1}^{N}e^{\lambda t_{k}}r_{k}I_{G_{-r}}(X_{k}^{'})\bigg) \leq Ce^{\lambda T}, \label{5.70}
  \end{equation}
  where $C$ is a positive constant independent of $T$ and $h$.
\end{lemma}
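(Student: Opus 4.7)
The plan is to deduce the exponentially weighted bound from the linear-in-$T$ bound of Lemma~\ref{sjlm} by combining it with the Markov property of the auxiliary chain $(X_k^{'})_{k\ge 0}$ via a ``chunking'' argument. I first considered the direct route of constructing a single supersolution of the form $e^{\lambda t}[K(T-t)+w(x)]$ in the spirit of Lemmas~\ref{bl} and \ref{sjlm}, but this fails: the cross term $\lambda h K(T-t)$ appearing in $PU(t,x)-U(t,x)$ cannot be absorbed uniformly in $T$ into the $-r_0 I_{G_{-r}}$ contribution, and any direct fix with a single supersolution seems to produce $Te^{\lambda T}$ rather than $e^{\lambda T}$. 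The chunking argument avoids this by keeping time horizons bounded and paying only a geometric cost in the exponential weight.

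Fix a constant $\tau>0$ independent of $T$ and $h$ (e.g.\ $\tau = 1$), set $M = \lceil T/\tau\rceil$, and partition the grid points into the subintervals $I_m = (m\tau,(m+1)\tau]$, $m=0,\dots,M-1$. Using the trivial majorization $e^{\lambda t_k}\le e^{\lambda(m+1)\tau}$ on $I_m$,
\begin{equation*}
\mathbb{E}\bigg(\sum_{k=1}^{N} e^{\lambda t_k}\, r_k\, I_{G_{-r}}(X_k^{'})\bigg)
\ \le\ \sum_{m=0}^{M-1} e^{\lambda(m+1)\tau}\,
\mathbb{E}\bigg(\sum_{k:\,t_k\in I_m} r_k\, I_{G_{-r}}(X_k^{'})\bigg).
\end{equation*}

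The main step is the uniform-in-$m$ bound on each inner expectation. Since $X_k$ is a deterministic function of $X_k^{'}$ via the projection/reflection rule (\ref{eq:15}), the chain $(X_k^{'})$ is itself Markov, so we may condition on $X_{m\tau/h}^{'}$ and apply exactly the supersolution construction of Lemma~\ref{sjlm} on the subinterval of \emph{fixed} length~$\tau$ rather than $T$. The function $U(t,x) = K(\tau + m\tau - t) + w(x)$, with $w(x) = L^2 - \dist^2(x,\mathbb S_{-l})$, gives
\begin{equation*}
\mathbb{E}\bigg(\sum_{k:\,t_k\in I_m} r_k\, I_{G_{-r}}(X_k^{'})\,\Big|\, X_{m\tau/h}^{'}\bigg)\ \le\ K\tau + w\big(X_{m\tau/h}^{'}\big)\ \le\ K\tau + L^2\ =:\ C(\tau),
\end{equation*}
where uniformity in $m$ follows because $0\le w(x)\le L^2$ on $\bar{G}\cup\bar{G}_{-r}$, which is the only region the chain can occupy. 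Taking expectations and substituting back,
\begin{equation*}
\mathbb{E}\bigg(\sum_{k=1}^{N} e^{\lambda t_k}\, r_k\, I_{G_{-r}}(X_k^{'})\bigg)\ \le\ C(\tau)\sum_{m=0}^{M-1} e^{\lambda(m+1)\tau}\ \le\ \frac{C(\tau)\,e^{\lambda\tau}}{e^{\lambda\tau}-1}\,e^{\lambda M\tau}\ \le\ C\,e^{\lambda(T+\tau)}\ =\ C\,e^{\lambda T},
\end{equation*}
after absorbing the fixed $\tau$-dependence (and hence also $\lambda$-dependence) into the constant $C$. The only delicate point is checking that the Lemma~\ref{sjlm} supersolution remains valid when the starting point $X_{m\tau/h}^{'}$ may lie in $\bar G_{-r}$ rather than $\bar G$, which is immediate from the nonnegativity and boundedness of $w$ on the whole domain $\bar G\cup\bar G_{-r}$.
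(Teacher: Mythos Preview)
Your chunking argument is correct and yields the desired bound. However, it takes a genuinely different route from the paper's proof, and your opening claim that a single supersolution ``fails'' is too pessimistic.

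The paper \emph{does} construct a single supersolution directly, but not of the form $e^{\lambda t}[K(T-t)+w(x)]$ you tried. Instead it takes
\[
U(t,x)=e^{\lambda T}(K+L)-e^{\lambda t}\bigl(K+w(x)\bigr),\qquad w(x)=\dist(x,\mathbb S_{-l}),
\]
so that the $(T-t)$ factor is replaced by a \emph{pure constant} $K$ in the $e^{\lambda t}$-coefficient. When you compute $PU(t,x)-U(t,x)$, the $e^{\lambda T}(K+L)$ piece cancels identically, and what remains is
\[
PU-U=e^{\lambda t}\bigl(-2r_0(x)I_{G_{-r}}(x)-\lambda Kh-\lambda w(x)h+\mathcal O(h)\bigr).
\]
Here $w(x)\le L$ is uniformly bounded, so the cross term you were worried about is simply $\lambda Lh$, not $\lambda K(T-t)h$; choosing $K$ large absorbs all $\mathcal O(h)$ contributions uniformly in $T$. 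This gives $v(0,x)\le U(0,x)\le e^{\lambda T}(K+L)$ in one shot.

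In short: your chunking proof is valid and has the virtue of reusing Lemma~\ref{sjlm} modularly, while the paper's proof shows the supersolution machinery still works once one abandons the $(T-t)$ factor and lets the exponential carry the growth on its own. A minor technical point in your write-up: $m\tau/h$ need not be an integer, so you should either take $\tau$ as an integer multiple of $h$ or condition on $X_{\lfloor m\tau/h\rfloor}$.
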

\begin{proof}
\rf{We} take $U(t,x)$ in Lemma~\ref{bl} as (cf. (\ref{utx}))
 \begin{equation*}
    U(t,x) =
    \begin{cases}
    0, & (t,x) \in \{T\}\times \big(\bar{G} \cup \bar{G}_{-r}\big),\\
   \rf{ Ke^{\lambda T} - e^{\lambda t}\big( K-  \frac{1}{l}w(x)\big)}, & (t,x) \in [0, T-h] \times \big(\bar{G} \cup \bar{G}_{-r}\big),
    \end{cases}
\end{equation*}
\rf{where $w(x)$ and $l$ are as in Lemma~\ref{bl}. Following   arguments similar to  the ones used in the proof of Lemma~\ref{bl} with an appropriate choice of $K$, we can obtain the desired result.}
\end{proof}

\begin{lemma} \label{theorem3.9}
  Under Assumptions \ref{as:3}, 
  \ref{as:4}, \ref{as:5} and \ref{a7}, the following inequality holds for $\varphi \in C^{4}(\bar{G})$:
 \begin{equation*}
    \mid \mathbb{E}(\varphi(X_{N})\rs{)} - u(0,x)\mid \;  \leq \; Ch,
\end{equation*} 
where $u(0,x)$ is the solution of (\ref{eq:47})-(\ref{eq:49}) and $C > 0$ is 
independent of $T$ and $h$.
\end{lemma}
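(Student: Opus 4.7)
The plan is to mimic the proof of Theorem~\ref{conp} given in Subsection~\ref{ssec:thm31prf}, but to keep the exponential decay of the derivatives of $u$ (Assumption~\ref{a7}) inside every local estimate, so that the resulting constant does not depend on $T$. Since $u(T,X_N)=\varphi(X_N)$, I would start from the telescoping identity
\[
\mathbb{E}\varphi(X_N) - u(0,x) = \sum_{k=0}^{N-1}\mathbb{E}\bigl(u(t_{k+1},X_{k+1}) - u(t_k,X_k)\bigr),
\]
and, using $X_{k+1}=X_{k+1}'$ whenever $X_{k+1}'\in\bar{G}$, decompose each summand as
\[
u(t_{k+1},X_{k+1}) - u(t_k,X_k) = \bigl(u(t_{k+1},X_{k+1}) - u(t_{k+1},X_{k+1}')\bigr)I_{\bar{G}^c}(X_{k+1}') + \bigl(u(t_{k+1},X_{k+1}') - u(t_k,X_k)\bigr).
\]
Taking conditional expectations and then the outer expectation, I would estimate the two parts separately.

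For the Euler step I invoke the local bound (\ref{3.5}), $|\mathbb{E}(u_{k+1}'-u_k\mid X_k)|\le Ch^{2}e^{-\lambda(T-t_k)}$, which is just Lemma~\ref{lemma2.2} with every spatial derivative of $u$ in $R_{1,k+1}$ controlled via Assumption~\ref{a7}. Summing over $k$,
\[
Ch^{2}\sum_{k=0}^{N-1}e^{-\lambda(T-t_k)} = Ch\cdot h\sum_{k=0}^{N-1}e^{-\lambda(T-t_k)} \le \frac{C}{\lambda}\,h,
\]
since the Riemann sum is bounded uniformly in $T$ by $\int_{0}^{T}e^{-\lambda(T-s)}ds\le 1/\lambda$. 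For the reflection correction I use (\ref{3.6}), $|\mathbb{E}(u_{k+1}-u_{k+1}'\mid X_{k+1}'\in\bar{G}^c)|\le Chr_{k+1}e^{-\lambda(T-t_{k+1})}$, which comes from Lemma~\ref{lemma2.3} with $R_{3,k+1},R_{4,k+1}$ similarly decorated by the decay factor. Summing and applying Lemma~\ref{lemmaonexp},
\[
Ch\,\mathbb{E}\sum_{k=1}^{N}e^{-\lambda(T-t_k)}r_{k}I_{G_{-r}}(X_k') = Ch\,e^{-\lambda T}\,\mathbb{E}\sum_{k=1}^{N}e^{\lambda t_k}r_{k}I_{G_{-r}}(X_k') \le Ch.
\]
Combining the two contributions gives the claimed bound with a constant independent of $T$.

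The main obstacle is to make rigorous that the local errors from Lemmas~\ref{lemma2.2}-\ref{lemma2.3} do inherit the decay factor $e^{-\lambda(T-t_k)}$. This requires the $C^{2,4}$-extension of $u(t,x)$ across $\partial G$ (cited from \cite{19} just before the statement), together with the observation that Assumption~\ref{a7} transfers to this extension; once that is in hand, the expansions of $R_{1,k+1}$ and of $R_{3,k+1},R_{4,k+1}$ involve only derivatives of $u$ that have already been bounded by $e^{-\lambda(T-t)}$, so the two ingredients (\ref{3.5})-(\ref{3.6}) follow by direct inspection and the combination above yields the $\mathcal{O}(h)$ estimate uniformly in $T$.
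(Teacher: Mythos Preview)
Your approach is essentially the one taken in the paper: telescoping, local Euler and reflection estimates with the decay factor from Assumption~\ref{a7}, and Lemma~\ref{lemmaonexp} for the weighted boundary sum. There is, however, one point you glide over which the paper treats explicitly.

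You telescope from $k=0$ to $N-1$ and invoke (\ref{3.5})--(\ref{3.6}) for every $k$. These bounds are stated (and proved) only for $k=0,\dots,N-2$. The reason is that the Neumann condition $\partial_\nu u(t,z)=0$ in (\ref{eq:49}) holds on $[0,T)\times\partial G$ only; at $t=T$ one has $u(T,\cdot)=\varphi$, and since no compatibility condition is assumed here, $\partial_\nu\varphi$ need not vanish. Consequently the cancellation that drives Lemma~\ref{lemma2.3} (and hence (\ref{3.6})) breaks down at the terminal step: for $k=N-1$ the reflection error is $2r_N(\nu\cdot\nabla\varphi)(X_N^\pi)+O(r_N^3)=O(r_N)$, not $O(hr_N)$, and this alone would spoil the uniform $O(h)$ bound.

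The paper therefore stops the telescoping at $k=N-2$ and isolates the last increment $\mathbb{E}\varphi(X_N)-\mathbb{E}u(t_{N-1},X_{N-1})$, bounding it by $Ch$ via a one–step time–Taylor argument (using $|u(t_N,X_{N-1})-u(t_{N-1},X_{N-1})|\le Ch$ with $C$ independent of $T$ thanks to Assumption~\ref{a7}). You should do the same; once this single step is handled separately, the rest of your write-up is correct and matches the paper.
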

\begin{proof}
Analogously to the proof of Theorem \ref{conp}, we have (cf. (\ref{ep4.16})):
\begin{align*}
    \big| \mathbb{E}\big(&\varphi(X_{N})\big) - u(0, X_{0})\big|
     \leq  \Bigg| \mathbb{E}\Bigg( \sum\limits_{k=0}^{N-2}\big(u_{k+1} - u_{k+1}^{\prime} \big)I_{G_{-r}}\big(X_{k+1}^{'}\rs{\big)} \Bigg)\Bigg| \\ & \;\;\;\; + \Bigg| \mathbb{E}\Bigg(  \sum \limits_{k=0}^{N-2}\mathbb{E}\Big( u_{k+1}^{\prime}  - u_{k}\Big| X_{k}\Big) \Bigg)\Bigg|  + |\mathbb{E}\varphi(X_{N}) - u(t_{N-1},X_{N-1})|,
\end{align*}
 then using (\ref{3.5}) and (\ref{3.6}), we obtain
\begin{align}
 \big|\mathbb{E}\big(\varphi(X_{N}) \big) -  \mathbb{E}\big(\varphi &(X(T))\big) \big|  \leq C h\Bigg| \mathbb{E}\Bigg( \sum\limits_{k=0}^{N-2}e^{-\lambda(T- t_{k+1})}r_{k+1}I_{G_{-r}}\big( X_{k+1}^{'}\big) \Bigg)\Bigg| \nonumber
 \\ & + Ch^{2}e^{-\lambda T}\sum\limits_{k=0}^{N-2}e^{\lambda t_{k}}  + |\mathbb{E}(\varphi(X_{N-1}) - u(t_{N-1},X_{N-1}))| + Ch.
 \end{align}
Notice that due to Taylor expansion of $u(t_{N},X_{N-1})$ around $(t_{N-1},X_{N-1})$, we have $|u(t_{N},X_{N-1})-u(t_{N-1},X_{N-1})| \leq Ch$, where $C$ is independent of $T$ under Assumption~\ref{a7}. Using Lemma~\ref{lemmaonexp}, we obtain
\begin{equation}
    \big|\mathbb{E}\big(\varphi(X_{N})\big)
    -\mathbb{E}\big(\varphi(X_{0,x}(T))\big)\big| \leq Ch. \label{eq:3.47}
\end{equation}
\end{proof}

The proof of the next theorem directly follows from combining (\ref{eq:34}) and (\ref{eq:3.47}).
\begin{theorem} \label{theorem6.14}
Under Assumptions \ref{as:3}, 
\ref{as:4}-\ref{as:5} and \ref{a7}, the following inequality holds for $\varphi \in C^4(\bar{G})$:
\begin{equation}\label{e5.73}
    \big| \bar{\varphi} - \mathbb{E}\big( \varphi(X_N) \big)\big| \leq C\Big(h  + e^{-\lambda T}\Big),
\end{equation}
where $C$ is a positive constant independent of $T$ and $h$.
\end{theorem}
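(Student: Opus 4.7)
The plan is to combine two already-established ingredients via the triangle inequality, using the function $u(t,x)$ from the Cauchy--Neumann problem (\ref{eq:47})--(\ref{eq:49}) as the bridge. By the Feynman--Kac representation, $u(0,x) = \mathbb{E}\varphi(X_{0,x}(T))$, where $X(t)$ solves the RSDEs (\ref{rsde}). So the strategy is simply to write
\begin{equation*}
\Big|\bar{\varphi} - \mathbb{E}\varphi(X_N)\Big| \leq \Big|\bar{\varphi} - \mathbb{E}\varphi(X(T))\Big| + \Big|\mathbb{E}\varphi(X(T)) - \mathbb{E}\varphi(X_N)\Big|,
\end{equation*}
and bound each term separately.

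For the first term, I would invoke the exponential ergodicity stated in (\ref{eq:34}): under Assumptions \ref{as:3} and \ref{as:4}--\ref{as:5} the reflected diffusion $X(t)$ satisfies Doeblin's condition, and for any bounded measurable $\varphi$ (in particular our $\varphi \in C^4(\bar G)$),
\begin{equation*}
\Big|\mathbb{E}\varphi(X(T)) - \bar\varphi\Big| \leq C e^{-\lambda T},
\end{equation*}
which takes care of the $e^{-\lambda T}$ part of the target bound.

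For the second term, I would apply Lemma \ref{theorem3.9}, which is precisely the finite-time weak-convergence result for Algorithm \ref{algorithm2.1} applied to the homogeneous Neumann problem (\ref{eq:47})--(\ref{eq:49}) under Assumption \ref{a7}, yielding
\begin{equation*}
\Big|\mathbb{E}\varphi(X_N) - u(0,x)\Big| = \Big|\mathbb{E}\varphi(X_N) - \mathbb{E}\varphi(X(T))\Big| \leq C h,
\end{equation*}
with $C$ independent of $T$ and $h$ (the $T$-independence is the whole point of the decaying-derivative Assumption \ref{a7}, which was used in Lemma \ref{theorem3.9} to prevent the usual $e^{CT}$ blow-up of the weak error constant). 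Summing the two estimates gives (\ref{e5.73}), possibly after replacing $\lambda$ by the smaller of the decay rate in (\ref{eq:34}) and the one in Assumption \ref{a7}.

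There is essentially no obstacle here: all the hard work has been done already. The only point that is worth double-checking is that the two occurrences of $\lambda$ (the one in the ergodic bound (\ref{eq:34}) and the one governing the decay of derivatives of $u$ in Assumption \ref{a7}) can consistently be used under the same symbol in the final inequality; one simply selects $\min$ of the two rates and absorbs the implicit constants into $C$.
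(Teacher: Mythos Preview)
Your proposal is correct and matches the paper's approach exactly: the paper's proof is the one-line statement ``The proof directly follows from combining (\ref{eq:34}) and (\ref{eq:3.47}),'' which is precisely your triangle-inequality split using the ergodic bound and Lemma~\ref{theorem3.9}. Your remark about reconciling the two $\lambda$'s is more careful than the paper itself.
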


 Now we move to estimating the error of the discretized ensemble-averaging estimators corresponding to (\ref{6.65n}). Notice that the error bound for the discretized ensemble-averaging estimator (\ref{6.66n}) has already been proved in Theorem \ref{thrm6.5}.

 Consider the parabolic equation with zero terminal condition and non-zero Neumann boundary condition:
\begin{align}
    \frac{\partial u}{\partial t}(t,x) + \mathscr{A} u(t,x) &= 0, \;\;\;\; (t,x) \in [0,T)\times \bar{G},  \label{eq:3.64} \\
             u(T,x) &= 0, \;\;\;\; x \in \bar{G}, \label{eq:3.65}\\
    \frac{\partial u}{\partial \nu}(t,z) &= \frac{\psi(z) - \bar{\psi}^{'}}{\alpha(z)}, \;\;\;\; 0\leq t < T,\;\;\; z \in \partial G.  \label{Nnonhb}
\end{align}
If Assumptions~\ref{as:3}, \ref{as:4} and \ref{as4.2'} hold with $\psi(z) \in C^{3}(\partial G)$ then there exists a solution $u(t,x) \in C^{2,4}([0,T)\times \bar G)$ of the problem (\ref{eq:3.64})-(\ref{Nnonhb}) \cite{3,64}. Further, $u(t,x) \in C(\bar{Q})$ \cite{lunbook}.
The probabilistic representation of the solution is given by (cf. (\ref{eq:7})):
\begin{equation}
    u(\rf{t},x) = \mathbb{E}\bigg(\int_{0}^{T\rf{-t}}\frac{\bar{\psi}^{'}-\psi(X(s))}{\alpha(X(s))}dL(s)\bigg), \label{y5.77}
\end{equation}
 where $ X(s) $ is the solution of the RSDEs (\ref{rsde}).

 We make the following natural assumption for the solution of (\ref{eq:3.64})-(\ref{Nnonhb}).
\begin{assumption} \label{as10}
The following inequality holds uniformly for $(t,x) \in [0,T)\times \bar{G}$:
\begin{equation}
   \sum\limits_{p=0}^{4}\sum\limits_{2i+|j|=p}\sup\limits_{[0,T)\times \bar{G}}|D_{t}^{i}D_{x}^{j}u(t,x)|\leq C, \label{equation3.75}
\end{equation}
where  $C$ is a positive constant independent of $T$.
\end{assumption}
With respect to Assumption~\ref{as10}, we note that the solution of the problem (\ref{eq:3.64})-(\ref{Nnonhb}) converges to the solution of a stationary problem \cite{70,69} when time $T$ goes to infinity (cf. the case of the problem (\ref{eq:47})-(\ref{eq:49}) where the solution converges to a constant). Also, note that the solution $u(t,x)$ can be expanded asymptotically in powers of $1/(T-t)$ (see Theorem~3 and Section 7 in \cite{70} for more details).

 From \cite[Proposition 1.17]{19}, if Assumptions~\ref{as:3}, \ref{as:4}, \ref{as4.2'} and \ref{as10} hold then the solution $u(t,x) \in C^{2,4}([0,T)\times \bar{G})$  can be extended to a function in $C^{2,4}([0,T)\times \bar{G}\cup\bar{G}_{-r})$ satisfying the bound (\ref{equation3.75}).

Now we state two results which directly follow from Lemmas~\ref{lemma2.2} and \ref{lemma2.3} provided Assumptions~\ref{as:3}, \ref{as:4}, \ref{as4.2'} and \ref{as10} hold along with $\psi(z) \in C^{3}(\partial G)$:
\begin{align}
     \Big|\mathbb{E}\Big(u'_{k+1}-u_{k}  \big|X_{k} \Big)\Big| &\leq C h^{2}, \label{lemma3.10}\\
     \rf{\big| u_{k+1} - u'_{k+1} + Z_{k+1} -Z_{k}\big|} &\leq  Cr_{k+1}^{3}I_{\bar{G}^{c}}(X_{k+1}^{'}), \;\;\; a.s.\label{lemma3.11}
\end{align}
where $C$ is independent of $T$ and $h$, and $k = 0,1\dots,N-2$.

Before proceeding further, we introduce the additional notation
\begin{align*}
    Z_{1,N-1} & = \sum\limits_{k=0}^{N-2}\frac{2r_{k+1}\psi_{k+1}^{\pi}}{\alpha_{k+1}^{\pi}}I_{\bar{G}^{c}}(X_{k+1}^{'}), & Z_{2,N-1} &= \sum\limits_{k=0}^{N-2}\frac{2r_{k+1}}{\alpha_{k+1}^{\pi}}I_{\bar{G}^{c}}(X_{k+1}^{'}).
\end{align*}
Recall that $\bar{\psi}^{'}=\lim\limits_{T \to \infty} \frac{\mathbb{E}Z_{1}(T)}{\mathbb{E}Z_2(T)}$ (see (\ref{sidash2})) and the ensemble-averaging estimator for $\bar{\psi}^{'}$ is equal to $\frac{\mathbb{E}Z_{1,N-1}}{\mathbb{E}Z_{2,N-1}}$ (see (\ref{6.65n})).

\begin{lemma} \label{lemma3.12}
  Under Assumptions~\ref{as:3}, \ref{as:4}, \ref{as4.2'} and \ref{as10}, the following inequality holds for $\psi \in C^{3}(\partial G)$:
 \begin{equation*}
    | \mathbb{E}(\bar{\psi}^{'}Z_{2,N-1} - Z_{1,N-1}) - u(0,x)| \;  \leq \; ChT,  \label{b5.81}
\end{equation*} 
where $u(0,x)$ is from (\ref{y5.77}) and $C$ is a positive constant independent of $T$ and $h$.
\end{lemma}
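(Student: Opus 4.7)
My plan is to mimic the telescoping identity (\ref{5.28}) used in the time-averaging analysis, but applied now to the time-dependent solution $u(t,x)$ of the parabolic problem (\ref{eq:3.64})--(\ref{Nnonhb}). The PDE (\ref{eq:3.64}) will kill the interior contribution $h[\partial_t u + \mathscr{A}u](t_k,X_k)$ that appears in the Euler expansion, while the Neumann condition (\ref{Nnonhb}) will produce the desired boundary term $2r_{k+1}(\psi_{k+1}^{\pi}-\bar{\psi}^{'})/\alpha_{k+1}^{\pi}$ at each excursion outside $\bar G$. Together with the terminal condition $u(T,\cdot)=0$, this should yield an exact telescoping relation between $u(0,x)$ and $Z_{1,N-1}-\bar{\psi}^{'}Z_{2,N-1}$ up to small remainders.

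Concretely, I would split $u(t_{k+1},X_{k+1})-u(t_k,X_k)$ as in the derivation of (\ref{5.28}) into a reflection correction $\bigl(u(t_{k+1},X_{k+1})-u(t_{k+1},X_{k+1}')\bigr)I_{\bar G^c}(X_{k+1}')$ plus a pure Euler increment $u(t_{k+1},X_{k+1}')-u(t_k,X_k)$. Taylor expansion of the first around $X_{k+1}^{\pi}$ as in (\ref{2.26})--(\ref{2.27}), combined with the boundary condition (\ref{Nnonhb}), gives the leading term $2r_{k+1}(\psi_{k+1}^{\pi}-\bar{\psi}^{'})/\alpha_{k+1}^{\pi}$ plus the cubic-in-$r_{k+1}$ remainders $R_{5,k+1}+R_{6,k+1}$; Taylor expansion of the second in $h$ and the Euler increment cancels the interior drift via (\ref{eq:3.64}) and leaves $R_{7,k+1}+R_{8,k+1}$. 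Summing from $k=0$ to $N-2$, rearranging, and taking expectation yields
\[
2\mathbb{E}\bigl(\bar{\psi}^{'}Z_{2,N-1}-Z_{1,N-1}\bigr)-u(0,x) = -\mathbb{E}u(t_{N-1},X_{N-1})+\mathbb{E}\!\sum_{k=0}^{N-2}(R_{7,k+1}+R_{8,k+1})+\mathbb{E}\!\sum_{k=0}^{N-2}(R_{5,k+1}+R_{6,k+1})I_{\bar G^c}(X_{k+1}').
\]

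It then remains to estimate the three terms on the right. By (\ref{R8}) the $R_{8,k+1}$ contribution vanishes termwise, and by (\ref{eq:5.27}) the $R_{7,k+1}$ contribution is $\mathcal{O}(Nh^2)=\mathcal{O}(Th)$. For the boundary remainders, (\ref{eq:5.26}) bounds each by $Cr_{k+1}^3\leq Chr_{k+1}$, and Lemma~\ref{sjlm} then controls the sum by $ChT$. The terminal-layer term $u(t_{N-1},X_{N-1})$ is handled using $u(T,\cdot)=0$ from (\ref{eq:3.65}) together with the uniform bound on $\partial_t u$ supplied by Assumption~\ref{as10}, giving $|u(t_{N-1},X_{N-1})|\leq Ch$. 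Combining these produces the $\mathcal{O}(Th)$ bound stated in the lemma (the factor $2$ versus $1$ on the left-hand side is absorbed in the constant $C$).

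The main obstacle is conceptual rather than algebraic: all constants must be uniform in $T$. The local-error bounds (\ref{eq:5.26})--(\ref{eq:5.27}) depend on sup-norms of the derivatives of $u(t,x)$, and Lemma~\ref{sjlm} likewise must hold with a constant independent of $T$. Assumption~\ref{as10} is tailored precisely to guarantee the former (extended to $\bar G\cup\bar G_{-r}$ by Proposition~1.17 of \cite{19}), while Lemma~\ref{sjlm} was proved precisely to supply the latter. Without Assumption~\ref{as10} one would pick up a multiplicative factor like $e^{\lambda T}$ (as occurred in the finite-time analysis under Assumption~\ref{a7}), which would destroy the $\mathcal{O}(hT)$ rate and, downstream, the ergodic error estimate in Theorem~\ref{theorem3.13}.
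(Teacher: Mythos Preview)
Your approach is essentially identical to the paper's: telescope $u(t_{k+1},X_{k+1})-u(t_k,X_k)$, let the PDE (\ref{eq:3.64}) kill the interior term and the Neumann condition (\ref{Nnonhb}) produce the boundary increments, then control the Euler remainders by $O(Nh^2)$, the reflection remainders by $O(h)\sum r_{k+1}$ via Lemma~\ref{sjlm}, and the terminal contribution $u(t_{N-1},X_{N-1})$ by $O(h)$ using $u(T,\cdot)=0$ together with the uniform bound on $\partial_t u$ from Assumption~\ref{as10}. One correction: the factor $2$ on the left cannot be ``absorbed in the constant $C$'' --- a multiplicative factor on the quantity being estimated does not turn into an additive error --- so what your telescoping actually yields is $\bigl|2\,\mathbb{E}(\bar\psi'Z_{2,N-1}-Z_{1,N-1})-u(0,x)\bigr|\leq ChT$; the paper's own proof carries the same slip when it identifies $Z_{N-1}$ with $\bar\psi'Z_{2,N-1}-Z_{1,N-1}$, but this is harmless for the downstream Theorem~\ref{theorem3.13}, where only the ratio $\mathbb{E}Z_{1,N-1}/\mathbb{E}Z_{2,N-1}$ and the bound $|u(0,x)|\leq C$ are used and the factor $2$ cancels.
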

\begin{proof}
Analogously to the proof of Theorem \ref{conp} (cf. (\ref{ep4.16}) with
$Z_{N-1}=\bar{\psi}^{'}Z_{2,N-1} - Z_{1,N-1}$), we have (note that $u(T,x)=0$ and $Z_0=0$):
\begin{align*}
    &\big| \mathbb{E}\big(\bar{\psi}^{'}Z_{2,N-1} - Z_{1,N-1}\big)  - u(0, x) - Z_{0}\big| \leq  \Bigg| \mathbb{E}\Bigg(  \sum \limits_{k=0}^{N-2}\mathbb{E}\Big( u_{k+1}^{\prime}  - u_{k} \big| X_{k}, Z_{k}\Big) \Bigg)\Bigg|  \\
     &  + \Bigg| \mathbb{E}\Bigg( \sum\limits_{k=0}^{N-2}\Big(u_{k+1} - u_{k+1}^{\prime} + Z_{k+1}-Z_{k}\Big)I_{G_{-r}}\big(X_{k+1}^{'}\big) \Bigg)\Bigg| \\ & + \big|\mathbb{E}\big( u(t_{N},X_{N-1}) - u_{N-1}\big)\big|. 
\end{align*}
Note that  $\big|\mathbb{E}\big( u(t_{N},X_{N-1}) - u_{N-1}\big)\big| \leq Ch$ due to Taylor expansion of $u(t_{N},X_{N-1})$ around $(t_{N-1},X_{N-1}).$ 
Then, using  (\ref{lemma3.10}), (\ref{lemma3.11}) and Lemma \ref{sjlm}, we obtain
\begin{align*}
 \big| \mathbb{E}\big(\bar{\psi}^{'}Z_{2,N-1}-Z_{1,N-1}  \big) & - u(0, x) - Z_{0}\big|  \leq Ch\Bigg| \mathbb{E}\Bigg( \sum\limits_{k=0}^{N-2}r_{k+1}I_{G_{-r}}\big( X_{k+1}^{'}\big) \Bigg)\Bigg| \\ & +  C h^{2}(N-1) +Ch \leq  ChT.\nonumber
  \end{align*}
\end{proof}
\begin{remark} \label{rem:Nminus2}
\rf{The solution $u(t,x)$ of (\ref{eq:3.64})-(\ref{Nnonhb}) belongs to  $C^{2,4}([0,T)\times \bar{G})$ and $C(\bar{Q})$, which does not allow us to use Taylor expansion in the previous lemma as we have done in the proof of Theorem~\ref{conp} (cf. Lemma~\ref{lemma2.2}) in the final step, i.e., from $k=N-1$ to $k=N$. Instead, in the above proof we use the fact $u(T,x) = 0$, take the estimator $\mathbb{E}(Z_{1,N-1})/\mathbb{E}(Z_{2,N-1})$ and expand only in time $t$ in the final step from $k=N-1$ to $k=N$. This also, in particular, explains why we have used $N-2$ and not $N-1$ in the estimator (\ref{6.65n}).}
\end{remark}
\begin{theorem}\label{theorem3.13}
      Under Assumptions~\ref{as:3}, \ref{as:4}, \ref{as4.2'} and \ref{as10}, the following inequality holds for $\psi \in C^{3}(\partial G)$:
 \begin{equation}\label{v5.82}
    \bigg| \frac{\mathbb{E}(Z_{1,N-1})}{\mathbb{E}(Z_{2,N-1})} - \bar{\psi}^{'}\bigg|  \leq \; C\bigg(h+\frac{1}{T}\bigg),
\end{equation} 
where $\bar{\psi}^{'}$ is from (\ref{q2.26}) and $C$ is a positive constant independent of $T$ and $h$.
\end{theorem}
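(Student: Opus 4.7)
The plan is to rewrite the target quantity as a ratio whose numerator is exactly what Lemma~\ref{lemma3.12} controls and whose denominator is comparable to $T$, so that dividing the $\mathcal{O}(hT)$ error from Lemma~\ref{lemma3.12} by $\mathbb{E}(Z_{2,N-1}) \gtrsim T$ yields the advertised rate $\mathcal{O}(h + 1/T)$. Concretely, observe the algebraic identity
\begin{equation*}
\bar{\psi}^{'} - \frac{\mathbb{E}(Z_{1,N-1})}{\mathbb{E}(Z_{2,N-1})} \;=\; \frac{\mathbb{E}\bigl(\bar{\psi}^{'} Z_{2,N-1} - Z_{1,N-1}\bigr)}{\mathbb{E}(Z_{2,N-1})},
\end{equation*}
and note that the numerator is precisely the quantity for which Lemma~\ref{lemma3.12} gives $|\mathbb{E}(\bar{\psi}^{'} Z_{2,N-1} - Z_{1,N-1}) - u(0,x)| \le ChT$.

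Next I would bound the numerator independently of $T$. By (\ref{y5.77}) the function $u(0,x)$ is the probabilistic representation of the solution to (\ref{eq:3.64})--(\ref{Nnonhb}), and Assumption~\ref{as10} gives $|u(0,x)| \le C$ uniformly in $T$. Combining with Lemma~\ref{lemma3.12} yields
\begin{equation*}
\bigl|\mathbb{E}\bigl(\bar{\psi}^{'} Z_{2,N-1} - Z_{1,N-1}\bigr)\bigr| \;\le\; |u(0,x)| + ChT \;\le\; C(1 + hT).
\end{equation*}

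For the denominator I would use Lemma~\ref{kappa}. Since $\hat{\kappa}_N = \frac{2}{Nh} Z_{2,N-1} + \frac{2}{Nh}\cdot \frac{r_N}{\alpha_N^{\pi}}I_{\bar{G}^{c}}(X_N^{'})$ with the last term nonnegative and bounded by $Ch^{1/2}/(Nh) = C/(T\sqrt{h})$ in magnitude (recall $r_N = \mathcal{O}(h^{1/2})$ and $\alpha$ is bounded below by Assumption~\ref{as:4}), we obtain
\begin{equation*}
\mathbb{E}(Z_{2,N-1}) \;\ge\; \frac{T}{2}\mathbb{E}(\hat{\kappa}_N) - Ch^{1/2} \;\ge\; \frac{T}{2}\bigl(\kappa - C(h+1/T)\bigr) - Ch^{1/2} \;\ge\; cT
\end{equation*}
for some $c>0$, provided $h$ is sufficiently small and $T$ sufficiently large (using $\kappa > 0$). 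Inserting the two bounds into the identity above gives
\begin{equation*}
\left|\bar{\psi}^{'} - \frac{\mathbb{E}(Z_{1,N-1})}{\mathbb{E}(Z_{2,N-1})}\right| \;\le\; \frac{C(1+hT)}{cT} \;=\; C\!\left(\frac{1}{T} + h\right),
\end{equation*}
which is (\ref{v5.82}). No step is really an obstacle here; the only subtlety is realizing that the seemingly weak bound $\mathcal{O}(hT)$ in Lemma~\ref{lemma3.12} is precisely compensated by the linear-in-$T$ growth of the denominator, and that Assumption~\ref{as10} is what saves the constant term $u(0,x)$ from spoiling the $1/T$ rate.
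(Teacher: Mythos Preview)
Your proof is correct and follows essentially the same approach as the paper: write the target as $\mathbb{E}(\bar{\psi}^{'}Z_{2,N-1}-Z_{1,N-1})/\mathbb{E}(Z_{2,N-1})$, bound the numerator by $C(1+hT)$ via Lemma~\ref{lemma3.12} and Assumption~\ref{as10}, and show the denominator is $\ge cT$ via Lemma~\ref{kappa}. One trivial slip: the equality $Ch^{1/2}/(Nh)=C/(T\sqrt{h})$ should read $C\sqrt{h}/T$, but this does not affect the argument.
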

 \begin{proof}
 From Lemma \ref{lemma3.12}, we have $\mathbb{E}\big(Z_{1,N-1} - Z_{2,N-1}\bar{\psi}^{'} +u(0,x)\big) = \mathcal{O}(h)T$,
 whence by rearranging the terms we get
 \begin{equation*}
      \frac{\mathbb{E}(Z_{1,N-1})}{\mathbb{E}(Z_{2,N-1})} - \bar{\psi}^{'}  =  \frac{\mathcal{O}(h)T - u(0,x)}{\mathbb{E}(Z_{2,N-1})}.
 \end{equation*}
 Then, using (\ref{equation3.75}) and (\ref{kappa}), we ascertain
 \begin{equation*}
     \bigg| \frac{\mathbb{E}(Z_{1,N-1})}{\mathbb{E}(Z_{2,N-1})} - \bar{\psi}^{'}\bigg|  \leq \frac{C\big(h+\frac{1}{T}\big)}{\frac{\mathbb{E}(Z_{2,N-1})}{T}} \leq \frac{C\big(h+\frac{1}{T}\big)}{\kappa + \mathcal{O}\big(h + \frac{1}{T}\big) }  \leq C \bigg( h + \frac{1}{T}\bigg).
 \end{equation*}
 \end{proof}
\begin{remark}
\rf{For the discretized ensemble-averaging estimators introduced in Remark~\ref{remark5.2} for $\tilde{\psi}$, we can get the following error bound in the same manner   as we did in Theorem~\ref{theorem3.13} for $\bar{\psi}^{'}$,
\begin{align*}
    \bigg| \frac{\mathbb{E}(\tilde{Z}_{1,N-1})}{\mathbb{E}(\tilde{Z}_{2,N-1})} - \tilde{\psi}\bigg| \leq C\bigg(h+ \frac{1}{T}\bigg),
\end{align*}
where $C> 0$ is independent of $T$ and $h$.}
\end{remark}

 \subsection{Error analysis for a stationary measure of Algorithm \ref{algorithm2.1} \label{section5.5}}

 We can express Algorithm~\ref{algorithm2.1} (its part for $X_k$) as the following recursive formula
 \begin{align*}
    X_{k+1}  &= F(X_{k},\rf{\xi_{k+1}}) + 2r_{0}(F(X_{k},\rf{\xi_{k+1}}))\nu(F(X_{k},\rf{\xi_{k+1}}))I_{\bar{G}^{c}}(F(X_{k},\rf{\xi_{k+1}})) \\ &:= H(X_{k},\rf{\xi_{k+1}}),
 \end{align*}
 where $F(x,\rf{\xi})= x + b(x)h + \sigma(x)\xi h^{1/2}$, $r_{0}(F(x,\rf{\xi})) = \dist(F(x,\rf{\xi}), \partial G)$, $\nu(F(x,\rf{\xi})) = \nu (F^{\pi}(x,\xi))$,
 and $F^{\pi}(x,\rf{\xi})$
 is the projection of $F(x,\rf{\xi})$ on $\partial G$ for all realizations of $\xi$, $x \in \bar{G}$, and $F(x) \in \bar{G}\cup\bar{G}_{-r}$. Under Assumptions~\ref{as:3} and \ref{as:5}, $F(x,\rf{\xi})$, $F^{\pi}(x,\rf{\xi})$ and $\nu(F(x,\rf{\xi}))$ are continuous functions on $\bar{G}$ \rf{for every realization of $\xi$}. Note that for $y = F(x,\xi) \in \bar{G} \cup\bar{G}_{-r}$, $I_{\bar{G}^{c}}(y)$ is discontinuous but $r_{0}(y)I_{\bar{G}^{c}}(y)$ is continuous on $\bar{G} \cup\bar{G}_{-r}$ provided Assumption~\ref{as:3} holds. This implies that $H(x,\rf{\xi}) \in \bar{G}$ is a continuous function for all $x \in \bar{G}$ \rf{and for every realization of $\xi$}. Consider a function $g(x) \in C(\bar{G})$, where $C(\bar{G})$ denotes the class of continuous functions on $\bar{G}$. Now take a sequence $\{x_{n}\}_{n\in \mathbb{N}}$ such that  $x_{n} \rightarrow x$ as $n \rightarrow \infty$. From the above discussion, it is clear that $g(H(x_{n},\rf{\xi})) \rightarrow g(H(x,\rf{\xi})) $ a.s. as $n \rightarrow \infty$. Using the bounded convergence theorem, we obtain $\mathbb{E}g(H(x_{n},\rf{\xi})) \rightarrow \mathbb{E}g(H(x,\rf{\xi}))$ as $n \rightarrow \infty$. This shows that $Pg(x) \in C(\bar{G})$, where $Pg(x)$ is the one-step transition operator defined as $Pg(x) = \mathbb{E}(g(X_{1})|X_{0} = x )$. Hence  $(X_{k})_{k \geq 0}$ is a Feller chain \cite{108}. Since  the state space $\bar{G}$ is compact, it follows from the Krylov-Bogoliubov theorem  \cite{83} that there exists a stationary measure $\mu^{h}$ of $X_k$. We note that $\mu^{h}$ is, as a rule, not unique.

 In this subsection, our focus is on how close $\mu$ and $\mu^{h}$ are. To this end, introduce the 
 metric $D$ between two probability measures $\mu_{1}$ and $\mu_{2}$:
 \begin{equation*}
     D(\mu_{1},\mu_{2}) = \sup\limits_{f \in \mathcal{H}}\bigg| \int_{G}f(x)\mu_{1}(dx) - \int_{G}f(x)\mu_{2}(dx)\bigg|,
 \end{equation*}
 where $ \mathcal{H} = \{ f : \bar{G} \rightarrow \mathbb{R}\; \text{and}\; \mid f\mid^{(2)}_{\bar{G}} \leq 1 \}$. Now, we are in a position to state the theorem.

 \begin{theorem}
  Let Assumptions  \ref{as:3}, \ref{as:4} and \ref{as:5} hold. Suppose $\mu^{h}$ is a stationary measure of the Markov chain $(X_{k})_{k\geq 0}$ described in Algorithm~\ref{algorithm2.1}, then
  \begin{equation*}
      D(\mu,\mu^{h}) \leq Ch,
  \end{equation*}
  where $C$ is a positive constant independent of $h$.
 \end{theorem}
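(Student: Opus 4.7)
The plan is to use a Poisson--Neumann test function to convert invariance of $\mu^h$ into a quantitative statement about $\int f\,d\mu^h-\int f\,d\mu$, reusing the one-step expansions of Subsection~\ref{section5.2} together with the number-of-steps estimate of Subsection~\ref{section2.4}. Fix $f\in\mathcal{H}$ and set $\bar f = \int_{G}f(x)\rho(x)\,dx$. The centring condition (\ref{cc}) is automatic for $\phi_1 = f-\bar f$, $\phi_2 = 0$, so under Assumptions~\ref{as:3} and \ref{as:4}-\ref{as:5} Subsection~\ref{subsection2.4.0} provides a solution $u\in C^4(\bar G)$ of $\mathscr{A}u = f-\bar f$ in $G$ with $(\nabla u\cdot\nu)=0$ on $\partial G$, unique up to an additive constant. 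The linear a priori estimate for this problem yields $|u|^{(4)}_{\bar G}\le C|f-\bar f|^{(2)}_{\bar G}\le 2C$ with $C$ independent of $f\in\mathcal{H}$; as in Subsection~\ref{subsection3.4}, $u$ and all derivatives up to order four extend to $\bar G\cup\bar G_{-r}$.

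Stationarity of $\mu^h$ under the chain $(X_k)$ gives $\int_{\bar G}(Pu-u)\,d\mu^h=0$, where $Pu(x)=\mathbb{E}(u(X_1)\mid X_0=x)$. Substituting the one-step expansion (\ref{5.28}) with the above $\phi_1,\phi_2$ and averaging over $\mu^h$ yields
\begin{equation*}
0 = h\Bigl(\int_{\bar G}f\,d\mu^h - \bar f\Bigr) + \mathbb{E}_{\mu^h}(R_{7,1}+R_{8,1}) + \mathbb{E}_{\mu^h}\bigl((R_{5,1}+R_{6,1})I_{\bar G^c}(X_1')\bigr).
\end{equation*}
By (\ref{R8}) the $R_8$-contribution vanishes, and (\ref{eq:5.27}) gives $|\mathbb{E}(R_{7,1}\mid X_0)|\le Ch^2$, so the interior error is $O(h^2)$. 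For the boundary piece, (\ref{eq:5.26}) combined with $r_1\le Ch^{1/2}$ produces
\begin{equation*}
\bigl|\mathbb{E}_{\mu^h}\bigl((R_{5,1}+R_{6,1})I_{\bar G^c}(X_1')\bigr)\bigr| \le Ch\int_{\bar G}\mathbb{E}_x\bigl(r_1 I_{G_{-r}}(X_1')\bigr)\,d\mu^h(x).
\end{equation*}

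The main obstacle is controlling this last integral by $Ch$. Applying Lemma~\ref{sjlm} pointwise in $X_0=x$ and integrating against $\mu^h$,
\begin{equation*}
\int_{\bar G}\mathbb{E}_x\Bigl(\sum_{k=1}^{N}r_k I_{G_{-r}}(X_k')\Bigr)d\mu^h(x)\le CT = CNh.
\end{equation*}
Since $X_k'$ arises from $X_{k-1}$ via the same measurable map for every $k\ge 1$, stationarity of $\mu^h$ makes every summand equal, so $\int_{\bar G}\mathbb{E}_x(r_1 I_{G_{-r}}(X_1'))\,d\mu^h(x)\le Ch$. Combining the bounds, $h\bigl|\int f\,d\mu^h - \bar f\bigr|\le Ch^2$, i.e.\ $\bigl|\int f\,d\mu^h - \int f\,d\mu\bigr|\le Ch$ uniformly in $f\in\mathcal{H}$, and taking the supremum over $\mathcal{H}$ produces $D(\mu,\mu^h)\le Ch$. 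I expect the most delicate step to be the stationary transfer just described, because $X_k'$ is not itself a state of the chain; a secondary technical point is verifying that the constant in the a priori estimate for the Poisson--Neumann problem is independent of $f$, which follows from the linearity of (\ref{gpe1})-(\ref{gpe2}) and the regularity results quoted in Subsection~\ref{subsection2.4.0}.
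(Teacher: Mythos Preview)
Your proof is correct but takes a different route from the paper's. The paper simply invokes Theorem~\ref{thrm3.2} as a black box: stationarity of $\mu^h$ gives $\int f\,d\mu^h = \int \frac{1}{N}\sum_{k=1}^{N}\mathbb{E}_x f(X_k)\,\mu^h(dx)$, and then Theorem~\ref{thrm3.2} bounds the integrand by $\bar f + C(h+1/(Nh))$ uniformly in $x$; sending $N\to\infty$ removes the $1/T$ term. Your argument instead works at the level of a single step: you integrate the one-step identity $\int(Pu-u)\,d\mu^h=0$ directly, so the telescoping term $u_N-u_0$ that produces the $1/T$ contribution in Theorem~\ref{thrm3.2} never appears. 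The price is that you must separately control the boundary residual $\mathbb{E}_{\mu^h}\bigl(r_1 I_{G_{-r}}(X_1')\bigr)$, and your stationarity transfer (noting that $X_k'=F(X_{k-1},\xi_k)$ has the same law for every $k\ge1$ when $X_0\sim\mu^h$, so the $N$ summands in Lemma~\ref{sjlm} are identical) is a clean way to do this. Both approaches rest on the same three ingredients: the Poisson--Neumann solution with uniform $C^4$ bound, the expansion~(\ref{5.28}), and Lemma~\ref{sjlm}. The paper's version is shorter because it reuses a result already proved; yours is more self-contained and avoids the artificial limit $N\to\infty$. Your two flagged technical points (uniformity of the elliptic constant over $\mathcal{H}$, and the stationary transfer for $X_k'$) are both handled correctly.
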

 \begin{proof}
 Denote by $\mathbb{E}_{x}$  the conditional expectation with conditioning on the initial point $X(0) = x$. We have
 \begin{equation*}
     \int_{G} f(x)\mu^{h}(dx) = \int_{G}\mathbb{E}_{x}f(X_{k})\mu^{h}(dx),
 \end{equation*}
 where  $X_{k}$ is from Algorithm~\ref{algorithm2.1} and $f \in \mathcal{H}$. Therefore, we can write
 \begin{equation*}
     \int_{G} f(x)\mu^{h}(dx) = \int_{G}\frac{1}{N}\sum\limits_{k=1}^{N}\mathbb{E}_{x}f(X_{k})\mu^{h}(dx).
 \end{equation*}
 Using Theorem~\ref{thrm3.2}, we get
 \begin{align*}
     \bigg| \int_{G}f(x)\big(\mu^{h}(dx) - 
     \mu(dx)\big)\bigg| & = \bigg|\int_{G}\bigg(\frac{1}{N}\sum\limits_{k=1}^{N}\mathbb{E}_{x}f(X_{k}) - \bar{f}\bigg)\mu^{h}(dx)\bigg| \leq C\bigg(h + \frac{1}{Nh}\bigg),
 \end{align*}
 where $\bar{f}$ is the expectation of $f$ with respect to the invariant measure $\mu$ and (in comparison with Theorem~\ref{thrm3.2}) the constant $C$ does not depend on $f$
 as here we consider only $f \in \mathcal{H}$.  By \rf{letting} $N \rightarrow \infty$ in the above equation, we obtain the stated result.
 \end{proof}

  \section{Solving elliptic PDEs with Robin boundary condition} \label{section5}
  In Subsection~\ref{subsection5.1}, we introduce elliptic PDEs with Robin boundary condition and discuss their link to reflected diffusion processes via the Feynman-Kac formula. We use Algorithm~\ref{algorithm3.1} to numerically solve elliptic PDEs with decay. However, the same algorithm does not work (we highlight the reason for that later) when employed to solve the Poisson PDE with Neumann boundary condition. To deal with that, we propose a new adaptive time-stepping algorithm in Subsection~\ref{subsection5.2}.  We state the two convergence theorems for the two cases (with decay and without decay) in Subsection~\ref{subsection5.3} and prove them in Subsection~\ref{subsection5.4}.

  \subsection{Probabilistic representation}\label{subsection5.1}
  Consider the following elliptic equation
\begin{equation}\label{eqn2.10}
\mathscr{A}u + c(x)u + g(x) = 0, \; \; \;  x \in G,
\end{equation}
with Robin boundary condition
\begin{equation}\label{eqn2.11}
\frac{ \partial u}{\partial \nu} + \gamma(z)u = \psi(z), \; \; \;  z\in \partial G,
\end{equation}
 where $\mathscr{A}$ was introduced in Subsection~\ref{subsection2.4.0}. 
We make the following assumptions in addition to Assumptions \ref{as:3}, \ref{as:4} and \ref{as:5}.

 \begin{assumption}\label{a07}
 $ g(x) \in C^{2}(\bar{G})$ and $ \psi(z) \in C^{3}(\partial G)$.
 \end{assumption}
 \begin{assumption}\label{a8}
 $c(x) \in C^{2}(\bar{G}) $ is negative for all $x \in \bar{G}$ and $\gamma(z) \in C^{3}(\partial G) $  is non-positive for all $z \in \partial G$.
 \end{assumption}
  If Assumptions \ref{as:3}, \ref{as:4}-\ref{as:5} and \ref{a07}-\ref{a8} are satisfied then the problem (\ref{eqn2.10})-(\ref{eqn2.11}) has a unique solution $u(x) \in C^{4}(\bar{G})$ (see \cite{50,104} and \cite[Theorem 3]{107}). 
  Further, the  following estimates hold (see \cite{44,107}):
 \begin{equation*}
     \mid u\mid^{(4)}_{\bar{G}} \leq C\big( \mid g \mid^{(2)}_{G} +  \mid\psi\mid^{(3)}_{\partial G}\big).  \label{eq:36}
 \end{equation*}

 The probabilistic representation of the solution of equations (\ref{eqn2.10})-(\ref{eqn2.11}) is given by \cite{18}:
\begin{equation*}\label{eq:2.14}
u(x) = \lim\limits_{T \rightarrow \infty}\mathbb{E}\Big(Z_{x}(T)\Big),
\end{equation*}
where $Z_{x}(s)$, $x \in \bar{G}$, is governed by the following RSDEs
\begin{align*}
 dY(s) &= c(X(s))Y(s)ds + \gamma(X(s))I_{\partial G}(X(s))Y(s)dL(s),  \, &Y(0) = 1, \\
 dZ(s) &= g(X(s))Y(s)ds - \psi(X(s))I_{\partial G}(X(s))Y(s)dL(s),
 \, &Z(0) = 0,
 \end{align*}
 and $X(s)$ is from (\ref{rsde}).
The diffusion matrix $ \sigma(x)$ in (\ref{rsde}) is related to $a(x)$ as
$ \sigma(x)\sigma(x)^{\top} = a(x).$

\textbf{The case $c(x) = 0$ and $\gamma(z) = 0$}. Consider the Poisson equation (\ref{gpe1}) with Neumann boundary condition (\ref{gpe2}). We discussed the existence and uniqueness of the solution of (\ref{gpe1})-(\ref{gpe2}) in Subsection~\ref{subsection2.4.0}. The probabilistic representation of the solution $u(x)$ has the form \cite{18,45}:
\begin{equation}
    u(x) = \lim\limits_{T \rightarrow \infty}\mathbb{E}Z_x(T) + \bar{u}, \label{eqb6.10}
\end{equation}
where $\bar{u} = \int_{G}u(x)\rho(x)dx$, $\rho(x)$ is the solution of the adjoint problem (\ref{sFP})-(\ref{sFPbc}) (note that $\rho(x)$ is the invariant density of $X(s)$ from (\ref{rsde})), and $Z_x(s)=Z(s)$ is governed by
\begin{align}
  dZ(s) = -\phi_1(X(s))ds - \phi_2(X(s))I_{\partial G}(X(s))dL(s),      \,\, Z(0) = 0,\label{n10new}
 \end{align}
 where $X(s)$ is given by (\ref{rsde}).

 \subsection{Numerical method for the Poisson equation}\label{subsection5.2}
 We will numerically solve elliptic PDE (\ref{eqn2.10}) with Robin boundary condition (\ref{eqn2.11}) using Algorithm~\ref{algorithm3.1} and will prove its first order of convergence when $T\rightarrow \infty$ in Subsection~\ref{subsection5.4}. However, if $c(x) = 0$ and $\gamma(z) = 0$, i.e. if we consider the Poisson problem (\ref{gpe1})-(\ref{gpe2}), and use Algorithm~\ref{algorithm3.1} to numerically solve it then the algorithm's error increases linearly with $N$ for  fixed $h \in (0,1)$, i.e., Algorithm~\ref{algorithm3.1} diverges for this case when $T \to \infty$. Therefore, this case needs a different approach and a new algorithm. This new algorithm (Algorithm~\ref{algorithm6.1}) constructed in this subsection is based on double partitioning of the time interval $[0,T]$ and is convergent in $h$ and $T$, while
  its computational cost grows with decrease of tolerance only slightly higher than linear. This idea is of potential interest in other stochastic numerics settings.

We discretize the interval $[0,T]$ as follows
\begin{equation*}
    T_{j} - T_{j-1} := \Delta_{j} T = N_{j}h_{j},
\end{equation*}
where $N_{j}$ is the number of steps and $h_{j}$ is the time step size in the interval $(T_{j-1}, T_{j}]$. The $h_{j}$ and $N_{j}$ are given by
\begin{equation*}
   h_{j} = \frac{h}{j^{\beta}} \;\;\;\;\; \text{and} \;\;\;\;\;  N_{j} = \floor*{\frac{\Upsilon}{h_{j}j^{\ell}}},
\end{equation*}
where $ 0 <\rf{\ell \leq \beta} \leq 1$, $h>0$ is a fixed sufficiently small number, and $\Upsilon > 0$ is a constant chosen independently of $T$. One can see that $\Delta_{j} T \leq \Upsilon/j^{\ell}$. 
Now we define a constant $\Lambda $ as the smallest natural number so that the following inequality is satisfied:
 \begin{equation}
 T \leq T_{\Lambda} : = \sum\limits_{j=1}^{\Lambda}\Delta_{j}T.
 \label{eq:Lamb}
    \end{equation}
\rf{We note that $\Lambda $ is well defined because of $\Delta_{j} T \geq \Upsilon/j^{\ell}-h_j$ and the condition $ 0 < \ell \leq \beta \leq 1$.}
The total number of steps until time $T_{j}$ is equal to
\begin{equation*}
    N^{'}_{j} = \sum\limits_{i=1}^{j} N_{i},
\end{equation*}
and set $N_{0}^{'} = 0$ and $N = N^{'}_{\Lambda}$. Note that $\Delta_{j}T$ is independent of $h$.

Consider an interval $(T_{j-1},T_{j}]$. In this interval, as in Algorithm~\ref{algorithm3.1}, while moving from $X_{ k}$ to $X_{k+1}^{'}$ in (\ref{eq:11}), we take the following step
\begin{equation}\label{b6.11}
    X_{k+1}^{'} = X_{k} + h_{j}b_{k} + h^{1/2}_{j}\sigma_{k}\xi_{k+1}, \;\;\;\;\;\;\; k = N^{'}_{j-1},\dots,N^{'}_{j}-1.
\end{equation}
\rs{Let us denote $ \phi_{1,k} = \phi_{1}(X_{k})$ and $ \phi_{2,k}^{\pi} = \phi_{2}(X_{k}^{\pi})$.} If $X_{k+1}^{'} \in \bar{G}$ then
\begin{equation}
X_{k+1} = X_{k+1}^{'}, \;\;\;\;\;\;\;\; Z_{k+1} = Z_{k}  - h_{j}\phi_{1,k}, \label{a6.12}
\end{equation}
else we take the reflection step as in Algorithm~\ref{algorithm3.1}:
\begin{equation}
    X_{k+1} = X_{k+1}^{'} + 2r_{j,k+1}\nu,\;\;\;\;\;\;\; k = N^{'}_{j-1},\dots,N^{'}_{j}-1, \label{a6.13}
\end{equation}
where $r_{j,k+1} $ denotes $\dist(X_{k+1}^{'},X^{\pi}_{k+1})$ in time interval $(T_{j-1},T_{j}]$, $X_{k+1}^{\pi}$ is projection of $X_{k+1}^{'}$ on $\partial G$ and $\nu $ is the inward unit normal at $X_{k+1}^{\pi} \in \partial G$.
   Analogous to (\ref{eq:17}), we write
\begin{equation}
    Z_{k+1} = Z_{k} - h_{j}\phi_{1,k} -2r_{j,k+1}\phi_{2,k+1}^{\pi},\;\;\;\;\;\;\; k = N^{'}_{j-1},\dots,N^{'}_{j}-1. \label{a6.14}
\end{equation}
\vspace{-5ex}
\begin{algorithm}
\caption{Algorithm to solve (\ref{gpe1})-(\ref{gpe2})}\label{algorithm6.1}
\begin{steps}
\begin{em}
\item \begin{textit} Set $X_{0} = x$, $Z_{0} = 0$, $T_{0} = 0$, $j=0$, $k=0$, $N^{'}_{0} = 0$.\end{textit}
\item  \textbf{If} $T_{j} < T$,
 set $j := j+1$, $h_{j} := \frac{h}{j^{\beta}}$, $N_{j}^{'} := N_{j-1}^{'} + \floor*{\frac{\Upsilon}{h_{j}j^{\ell}}} $, $T_{j} := T_{j-1} + h_{j}\floor*{\frac{\Upsilon}{h_{j}j^{\ell}}}$,  and \textbf{Goto} Step~3, \textbf{else} \textbf{Stop}.
\item \begin{textit}Simulate $\xi_{k+1}$ and find $X_{k+1}^{'}$ using (\ref{b6.11}).\end{textit}
\item \textbf{If} ${ X_{k+1}^{'} \in \bar{G}} $ then find $X_{k+1}$ and $Z_{k+1}$ according to (\ref{a6.12}),\\
\textbf{else} find $X_{k+1}$ and $Z_{k+1}$ according to (\ref{a6.13})-(\ref{a6.14}).
\item Put $k := k+1$ and \textbf{if} $k < N^{'}_{j}$ then \textbf{return} to Step 3 \textbf{else} \textbf{return} to Step 2.
\end{em}
\end{steps}

\end{algorithm}
\vspace{-5ex}
\subsection{Two convergence  theorems}\label{subsection5.3}
Theorem~\ref{thrm5.4} shows the estimate of error incurred while solving the Robin problem for elliptic PDE using Algorithm~\ref{algorithm3.1}. Theorem~\ref{theorem6.7} gives the error estimate when Algorithm~\ref{algorithm6.1} is employed to solve the Poisson PDE with Neumann boundary condition.
\begin{theorem}\label{thrm5.4}
  Under Assumptions \ref{as:3}, \ref{as:4}-\ref{as:5} and \ref{a07}-\ref{a8}, the following inequality holds for sufficiently small $h>0$:
 \begin{equation}
    \mid \mathbb{E}( Z_{N}) - u(x) \mid \;  \leq \; C\big(h + e^{-\lambda T}\big), \label{eq:th5.4}
\end{equation}
where $Z_{N}$ is calculated according to Algorithm~\ref{algorithm3.1} approximating the solution  $u(x)$ of $(\ref{eqn2.10})$-$(\ref{eqn2.11})$, and  $C$ and $\lambda $ are positive constants independent of $T$ and $h$.
\end{theorem}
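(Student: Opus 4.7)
The plan is to split the total error via the triangle inequality as
\begin{equation*}
|\mathbb{E}(Z_N) - u(x)| \le |\mathbb{E}(u(X_N) Y_N + Z_N) - u(x)| + |\mathbb{E}(u(X_N) Y_N)|,
\end{equation*}
handling the time-truncation piece through the exponential decay of $Y$ and the discretization piece through a one-step weak-error analysis based on the \emph{time-independent} elliptic solution $u$.

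For the truncation term, Assumption~\ref{a8} together with compactness of $\bar G$ yields $c(x) \le c_0 < 0$ on $\bar G$ and $\gamma(z) \le 0$ on $\partial G$, so the continuous process $Y$ satisfies $Y(t) \le e^{c_0 t}$ almost surely. An analogous discrete bound holds: the multiplicative factor updating $Y_k$ is $1 + h c_k$ in the interior and $1 + h c_k + 2 r_{k+1} \gamma_{k+1}^\pi + 2 r_{k+1}^2 (\gamma_{k+1}^\pi)^2 = 1 + h c_k + 2 r_{k+1} \gamma_{k+1}^\pi (1 + r_{k+1} \gamma_{k+1}^\pi)$ at the boundary. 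For sufficiently small $h$, $1 + r_{k+1} \gamma_{k+1}^\pi$ stays positive while $2 r_{k+1} \gamma_{k+1}^\pi \le 0$, so the boundary factor is bounded by $1 + h c_k \le 1 + h c_0 \le e^{h c_0}$. Iterating gives $Y_k \le e^{-\lambda t_k}$ a.s.\ with $\lambda := -c_0 > 0$, and since $u \in C^4(\bar G)$ is bounded, this produces $|\mathbb{E}(u(X_N) Y_N)| \le \|u\|_{\infty} \, e^{-\lambda T}$.

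For the discretization term I would mirror the proof of Theorem~\ref{conp}, Taylor-expanding the elliptic solution $u$ around $X_k$ in the interior Euler step and around the projection $X_{k+1}^\pi$ in the symmetrized reflection step, using (\ref{eqn2.10}) to cancel the $\mathcal{O}(h)$ drift/diffusion terms and (\ref{eqn2.11}) to cancel the $\mathcal{O}(r_{k+1})$ boundary contributions. Because $u$ carries no time dependence, the one-step estimates are identical in form to those of Lemmas~\ref{lemma2.2} and~\ref{lemma2.3}, but the underlying constants depend only on the (fixed) $C^4(\bar G)$-norm of $u$ and are therefore independent of $T$. Telescoping yields
\begin{equation*}
|\mathbb{E}(u(X_N) Y_N + Z_N) - u(x)| \le C h\, \mathbb{E}\!\sum_{k=0}^{N-1} r_{k+1} Y_k I_{G_{-r}}(X'_{k+1}) + C h^2\, \mathbb{E}\!\sum_{k=0}^{N-1} Y_k,
\end{equation*}
and the pointwise bound $Y_k \le e^{-\lambda t_k}$ controls the second sum by $\sum_k e^{-\lambda t_k} \le C/h$, producing an $\mathcal{O}(h)$ contribution uniformly in $T$.

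The principal obstacle is bounding $\mathbb{E}\!\sum r_{k+1} Y_k I_{G_{-r}}(X'_{k+1})$ by a constant independent of $T$: Lemma~\ref{bl} provides only an exponential-in-$T$ bound, while Lemma~\ref{sjlm} provides only a linear-in-$T$ bound, and neither suffices on its own. My plan is to partition $[0,T]$ into windows of length $1/\lambda$, observe that on each window $Y_k$ varies by at most a factor of $e$ and can be majorised by its value at the left endpoint, apply Lemma~\ref{sjlm} on each window to bound the unweighted sum by $C/\lambda$, and then sum over windows against the geometrically decaying envelope $e^{-n}$ to obtain a $T$-uniform total. An alternative, perhaps cleaner, route is to construct a Lyapunov function $U(t,x) = e^{-\lambda t} W(x)$ in the spirit of Lemma~\ref{bl} that absorbs both the boundary indicator and the exponential decay in a single pass, delivering the weighted bound directly. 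Either route yields $|\mathbb{E}(u(X_N) Y_N + Z_N) - u(x)| \le Ch$ uniformly in $T$, which combined with the truncation estimate gives (\ref{eq:th5.4}).
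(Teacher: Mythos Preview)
Your proposal is correct and structurally identical to the paper's proof: the same decomposition via $u(X_N)Y_N + Z_N$, the same a.s.\ bound $Y_k \le (1-\lambda h)^k$ obtained from $c<0$, $\gamma\le 0$, the same telescoping against the time-independent elliptic solution $u$, and the same one-step estimates (which the paper records as Lemmas~\ref{l5.2} and~\ref{l5.1}). The only substantive divergence is how the weighted boundary sum $\mathbb{E}\sum_k r_{k+1}(1-\lambda h)^k I_{G_{-r}}(X'_{k+1})$ is bounded uniformly in $T$.

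The paper handles this by a self-referential device (Lemma~\ref{sjlme}): it notes that this sum is precisely $\tfrac12 Z_N$ for the \emph{auxiliary} Robin problem $\mathscr{A}v-\lambda v=0$, $\partial_\nu v=-1$, applies the very same telescoping bound to $v$, and then kills the residual boundary error using only the crude estimate $r_{k+1}=\mathcal{O}(h^{1/2})$, which against $Ch\sum_k e^{-\lambda t_k}$ still yields a bounded quantity. Thus the desired bound is bootstrapped out of the boundedness of the auxiliary $v$. Your windowing route (blocks of length $1/\lambda$, Lemma~\ref{sjlm} on each block uniformly in the starting point via the Markov property, then a geometric sum) is a valid alternative and mirrors the later Lemma~\ref{blu}; it is a bit more pedestrian but avoids introducing the auxiliary PDE. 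Your second suggestion, a direct Lyapunov function $U(t,x)=e^{-\lambda t}W(x)$, can also be made to work but is less automatic than you indicate: the exponential factor contributes only an $\mathcal{O}(h)$ drift per step while $\lambda=\min_{\bar G}|c|$ is fixed by the data, so one must choose $W$ carefully (for instance quadratic in the distance, with a large additive constant as in Lemma~\ref{sjlm}) to ensure the $\mathcal{O}(h)$ residuals from $b,\sigma$ do not spoil nonnegativity of the comparison function on $\bar G$.
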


\begin{theorem}\label{theorem6.7}
     Let \rs{$0 < \ell \leq  \beta \leq 1$} and $\ell/2 + \beta > 1$ and Assumptions~\ref{as:3},  \ref{as:4}, \ref{as:5}, \ref{assu6.1} hold along with the centering condition (\ref{cc}).
     Assume that $v(t,x_{0})=\mathbb{E}u(X_{t,x_{0}}(T))$ satisfies Assumption~\ref{as10}, where $u(x)$ is the solution of (\ref{gpe1})-(\ref{gpe2}) and $X_{t,x_{0}}(s)$ solves (\ref{rsde}). Then
      \begin{equation}
          \big| \mathbb{E}(Z_{N}) - \lim\limits_{T\rightarrow \infty}Z_{\rf{x_0}}(T) \big| \leq  C \big( h +  e^{-\lambda T} \big),  \label{eqnwa5.12}
      \end{equation}
    where $Z_{N}$ is calculated according to Algorithm~\ref{algorithm6.1},
    $Z(s) $ solves (\ref{n10new}), $C$ and $\lambda$ are positive constants independent of $T$ and $h$, and $N = N^{'}_{\Lambda}$.
\end{theorem}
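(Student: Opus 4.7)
The plan is to reduce the ergodic statement (\ref{eqnwa5.12}) to a finite-time weak-error estimate via exponential ergodicity, and then control the latter by a telescoping argument built around a carefully chosen auxiliary function adapted to the adaptive partition.

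First, applying Ito's formula to $u(X(s))$ with $X$ the solution of (\ref{rsde}) and $u$ the solution of (\ref{gpe1})-(\ref{gpe2}) yields the identity
\begin{equation*}
\mathbb{E}Z_{x}(T_{\Lambda}) = u(x) - \mathbb{E}u(X_{0,x}(T_{\Lambda})).
\end{equation*}
Since $u \in C^{4}(\bar G)$ is bounded, the Doeblin-type exponential ergodicity (\ref{eq:34}) gives $|\mathbb{E}u(X_{0,x}(T_{\Lambda})) - \bar u| \leq Ce^{-\lambda T_{\Lambda}} \leq Ce^{-\lambda T}$, which together with (\ref{eqb6.10}) reduces the theorem to proving $|\mathbb{E}Z_{N} - \mathbb{E}Z(T_{\Lambda})| \leq Ch$ with a constant independent of $T$.

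Second, introduce $v(t,x):=\mathbb{E}u(X_{t,x}(T_{\Lambda}))$. By hypothesis $v$ satisfies Assumption~\ref{as10}, so $v \in C^{2,4}([0,T_{\Lambda})\times \bar G)$ with derivatives bounded uniformly in $T_{\Lambda}$, and by Proposition~1.17 of \cite{19} it extends with the same bounds to $\bar G \cup \bar G_{-r}$; $v$ solves $\partial_{t}v + \mathscr{A}v = 0$ with the Neumann condition $\partial_{\nu}v = 0$ (the backward Kolmogorov equation for normally reflected diffusion) and terminal datum $v(T_{\Lambda},\cdot)=u$. Set $W(t,x):=u(x)-v(t,x)$. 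Then $W$ solves
\begin{equation*}
\partial_{t}W + \mathscr{A}W = \phi_{1}, \qquad \partial_{\nu}W\big|_{\partial G} = \phi_{2}, \qquad W(T_{\Lambda},\cdot)=0,
\end{equation*}
with $W(0,x_{0})=\mathbb{E}Z(T_{\Lambda})$ and uniformly bounded $C^{2,4}$-norms. The telescoping identity
\begin{equation*}
\mathbb{E}Z_{N} - W(0,x_{0}) = \sum_{k=0}^{N-1}\mathbb{E}\bigl[W(t_{k+1},X_{k+1}) + Z_{k+1} - W(t_{k},X_{k}) - Z_{k}\bigr]
\end{equation*}
is then set up so that the source $\phi_{1}$ is exactly cancelled by the Euler update $Z_{k+1}-Z_{k}=-h_{j}\phi_{1,k}$ and the Neumann datum $\phi_{2}$ is exactly cancelled by the reflection update $-2r_{j,k+1}\phi_{2,k+1}^{\pi}$. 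Consequently the proofs of Lemmas~\ref{lemma2.2} and \ref{lemma2.3}, with $W$ in place of $u$, $Y\equiv 1$, $c\equiv 0$, $\gamma\equiv 0$, carry over verbatim on each interval $(T_{j-1},T_{j}]$ with step $h_{j}$ and deliver a per-step error $\mathcal{O}(h_{j}^{2})$ in the interior and $\mathcal{O}(r_{j,k+1}^{3})$ near the boundary, with constants independent of $T_{\Lambda}$ thanks to Assumption~\ref{as10}.

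Finally, conditioning on $X_{N^{'}_{j-1}}$ and applying Lemma~\ref{sjlm} interval-by-interval (its constants are uniform in the starting point and in time) yields $\mathbb{E}\sum_{k\in\text{interval } j} r_{j,k+1}I_{G_{-r}}(X_{k+1}^{'}) \leq C\Delta_{j}T$. Combined with $r_{j,k+1}^{3}\leq Ch_{j}r_{j,k+1}$ and $N_{j}h_{j}^{2} = h_{j}\Delta_{j}T$, this gives a per-interval contribution of order $h_{j}\Delta_{j}T = \Upsilon h/j^{\beta+\ell}$. A sharpened accounting of the boundary contribution via a second-moment bound on $\sum r_{j,k+1}^{3}I_{G_{-r}}$ in the spirit of the proof of Theorem~\ref{variancephi} (gaining an extra $h_{j}^{1/2}$ from Cauchy-Schwarz applied against the $\mathcal{O}(\Delta_{j}T/\sqrt{h_{j}})$ boundary-visit count) yields an aggregate series of the form $\sum_{j}j^{-(\beta+\ell/2)}$. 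The main obstacle is precisely this summation: one must show that the resulting series is bounded uniformly in $\Lambda$ under the sharp hypothesis $\ell/2+\beta>1$ (rather than the weaker $\ell+\beta>1$ that a naive bound would require), so that the finite-time weak error is $\mathcal{O}(h)$ with a constant independent of $T$. Combining this with the first-paragraph exponential bound gives (\ref{eqnwa5.12}).
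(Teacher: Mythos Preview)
Your reduction and setup are correct and, in fact, a clean repackaging of what the paper does: the function $W=u-v$ solves exactly the parabolic problem with source $\phi_1$, Neumann datum $\phi_2$, and zero terminal condition, so the telescoping sum $\sum_k \mathbb{E}[W(t_{k+1},X_{k+1})+Z_{k+1}-W(t_k,X_k)-Z_k]$ has one-step errors $\mathcal{O}(h_j^2)$ in the interior and $\mathcal{O}(r_{j,k+1}^3)$ near the boundary, with constants uniform in $T_\Lambda$ by Assumption~\ref{as10}. That part is fine.

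The gap is in the last paragraph. Using Lemma~\ref{sjlm} interval-by-interval gives only $\mathbb{E}\sum_{k\in j} r_{j,k+1}I_{G_{-r}}(X_{k+1}')\leq C\,\Delta_jT$, hence a per-interval contribution $Ch_j\Delta_jT$ and an aggregate $\sum_j j^{-(\beta+\ell)}$, which converges only under the stronger hypothesis $\beta+\ell>1$, not the stated $\beta+\ell/2>1$. Your proposed ``sharpened accounting'' via a second-moment bound in the style of Theorem~\ref{variancephi} does not recover the missing half-power: if you bound $\mathbb{E}\bigl(\sum_{k\in j} R_{k+1}I_{G_{-r}}\bigr)^2$ exactly as in (\ref{r3.57}) with $T$ replaced by $\Delta_jT$ and $h$ by $h_j$, the off-diagonal part still contributes $C(\Delta_jT)^2h_j^2$, so the square root still carries a term $C\,\Delta_jT\,h_j$. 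No Cauchy--Schwarz against a boundary-visit count $\mathcal{O}(\Delta_jT/\sqrt{h_j})$ helps either, since $h_j^{3/2}\cdot\Delta_jT/\sqrt{h_j}=h_j\Delta_jT$ again.

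What actually produces the exponent $\ell/2$ is a \emph{sharper} boundary-counting lemma than Lemma~\ref{sjlm}: the paper's Lemma~\ref{blu}. There one introduces an \emph{interior} surface $\mathbb{S}_{l_j}\subset G$ at distance $l_j=\min\{(\Delta_jT)^{1/2},R/2,1\}$ from $\partial G$ (see (\ref{leq})), and builds a barrier $U(t,x)=\tfrac{1}{l_j}\bigl(K(T_j-t)+\operatorname{dist}^2(x,\mathbb{S}_{l_j})\bigr)$ on the layer between $\mathbb{S}_{l_j}$ and $\partial G$ (extended by $0$ inside). Because the quadratic distance is at most $(l_j+r)^2=\mathcal{O}(l_j^2)$, the barrier is bounded by $C\,\Delta_jT/l_j$, and the comparison argument yields
\[
\mathbb{E}\!\sum_{k\in j} r_{j,k}\,I_{G_{-r}}(X_k')\;\leq\;C\,\frac{\Delta_jT}{l_j}\;\leq\;C\bigl(\Delta_jT+(\Delta_jT)^{1/2}\bigr),
\]
so the per-interval error is $Ch_j\bigl((\Delta_jT)^{1/2}+\Delta_jT\bigr)$ and the aggregate series is $\sum_j j^{-(\beta+\ell/2)}$, convergent precisely when $\beta+\ell/2>1$. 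The point is geometric: the $1/l_j$ scaling of the barrier, with $l_j\sim(\Delta_jT)^{1/2}$, exploits the fact that on a short interval the chain cannot penetrate far into $G$ before returning, so the effective ``thickness'' of the relevant boundary layer shrinks. This idea is absent from your argument and cannot be recovered by moment inequalities alone.
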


The proofs follow the same procedure as in Subsection~\ref{subsection3.4}, i.e. we first obtain error estimates for local approximation and prove a lemma related to the number of steps the Markov chain $ X_{k}^{'}$ spends in $\bar{G}_{-r}$, and then based on them we prove the main convergence theorems. Computational complexity of Algorithm~\ref{algorithm6.1} together with an optimal choice of $\ell$ and $\beta$   is discussed at the end of Subsection~\ref{section6.3}.

We remark that, in Theorem~\ref{theorem6.7}, Assumptions~\ref{as:3},  \ref{as:4}, \ref{as:5}, \ref{assu6.1} and the compatibility condition~(\ref{cc}) guarantee (see Subsection~\ref{subsection2.4.0}) that the solution $u(x)$ of the considered  elliptic problem (\ref{gpe1})-(\ref{gpe2}) belongs to $C^4(\bar G)$. Hence we can consider the parabolic problem (\ref{eq:47})-(\ref{eq:49}) with the coefficients as in (\ref{gpe1})-(\ref{gpe2}) and with the terminal condition $u(x)$ instead of  $\varphi(x)$. Consequently, the solution $v(t,x)=\mathbb{E}u(X_{t,x}(T))$ of this parabolic problem can be assumed to satisfy Assumption~\ref{a7}, which is natural (see the discussion after Assumption~\ref{a7}). However, to prove Theorem~\ref{theorem6.7}, it is sufficient to assume a weaker assumption on $v(t,x)$, Assumption~\ref{as10}.

\begin{remark}
\rf{If we take $\phi_{1}(x) = -f(x) + \bar{f}$ and $\phi_{2}(z) = 0$ in (\ref{gpe1})-(\ref{gpe2}), where $\bar{f} = \int_{\bar{G}}f(x) \rho(x)dx$ and $\rho(x)$ is the invariant density of $(\ref{rsde})$, then, for any $f(x) \in C^{2}(\bar{G})$, the solution of (\ref{gpe1})-(\ref{gpe2}) is given by $u(x) = \lim_{T\rightarrow \infty}\mathbb{E}( Z(T)) = \int_{0}^{\infty}\mathbb{E}\big(f(X(s)) -\bar{f}\big)ds$ under Assumptions~\ref{as:3}, \ref{as:4} and \ref{as:5}. The solution of this stationary problem is of potential interest for certain applications  (see e.g. \cite{25}). }
\end{remark}

\subsection{Proofs}\label{subsection5.4}

We first (Subsection~\ref{sec:negc}) consider the case of (\ref{eqn2.10})-(\ref{eqn2.11}) with $c(x)<0,\; x \in G$, and $\gamma(z) \leq 0,\; z\in \partial G$. 
The case of (\ref{gpe1})-(\ref{gpe2}) with $c(x)=0,\;x\in G$ and $\gamma(z)=0,\;z\in \partial G$ is considered in Subsection~\ref{section6.3}.

We need the following notation, $u_{k+1} = u(X_{k+1})$, $u_{k}= u(X_{k})$, $u_{k+1}^{\pi}=u(X_{k+1}^{\pi})$, $u_{k}^{\prime} = u(X_{k}^{'})$, $\varphi_{k} = \varphi(X_{k})$,  $\psi_{k+1}^{\pi} = \psi(X_{k+1}^{\pi})$, $\gamma_{k+1}^{\pi}= \gamma(X_{k+1}^{\pi})$, $g_{k}= g(X_{k})$, $c_{k}= c(X_{k})$, $b_{k}= b(X_{k})$, $a_{k} =a( X_{k})$, $\sigma_{k} = \sigma(X_{k})$, where $X_{k}$, $Y_{k}$, $Z_{k}$, $X_{k}^{'}$, $Y_{k}^{'}$, $Z_{k}^{'}$, $X_{k}^{\pi}$ are appropriately as in Algorithm~\ref{algorithm3.1} in Subsection \ref{sec:negc} and Algorithm~\ref{algorithm6.1} in Subsection~\ref{section6.3}. Here $u(x)$ denotes the solution of (\ref{eqn2.10})-(\ref{eqn2.11}) in Subsection~\ref{sec:negc} and of (\ref{gpe1})-(\ref{gpe2}) in Subsection~\ref{section6.3}.

\subsubsection{The case $c(x)<0$ and $\gamma(z) \leq 0$ \label{sec:negc}} We first give error bounds on one-step approximations. Under Assumptions \ref{as:3}, \ref{as:4}-\ref{as:5} and \ref{a07}-\ref{a8}, the following hold for all $k = 0,1\dots,N-1$:
\begin{align}
     \Big|\mathbb{E}\Big(u_{k+1}^{\prime}Y_{k+1}^{'}+ Z_{k+1}^{'}-u_{k}Y_{k}  - Z_{k} \big|X_{k},Y_{k},Z_{k} \Big)\Big| &\leq C h^{2}Y_{k}, \label{l5.2} \\
     \rf{\big|u_{k+1}Y_{k+1} + Z_{k+1} - u_{k+1}^{'}Y_{k+1}^{'}  -Z_{k+1}^{'}  \big|} &\leq  Chr_{k+1}Y_{k}I_{\bar{G}^{c}}(X_{k+1}^{'})\;\;\; a.s., \label{l5.1}
\end{align}
where $C$ is positive constant independent of $h$ and $T$. With the change of notation as introduced in the beginning  of the current subsection, we can obtain (\ref{l5.2}) and (\ref{l5.1}) by following exactly the same procedure as in the proof of Lemmas~\ref{lemma2.2} and \ref{lemma2.3}, respectively. The independence of $C$ from $T$ trivially follows from the fact that we are dealing with the elliptic equation.

The next lemma gives a bound related to the number of steps which the chain $X_{k}^{'}$ spends in $\bar{G}^{c}$.
\begin{lemma}\label{sjlme}
  Under Assumptions \ref{as:3}, \ref{as:4} and \ref{as:5}, the following bound holds for $\lambda > 0$ and sufficiently small $h>0$:
  \begin{equation*}
      \mathbb{E}\bigg(\sum\limits_{k=0}^{N-1} r_{k+1}(1-\lambda h)^{k}I_{\bar{G}^{c}}\big(X_{k+1}^{'}\big)\bigg)\leq C,
  \end{equation*}
  where $C$ is a positive constant independent of $T$ and $h$.
\end{lemma}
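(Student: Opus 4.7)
The strategy, borrowing from Lemma~\ref{bl}, is to construct a bounded non-negative supersolution of the backward Kolmogorov inequality associated with the Markov chain $(X_{k}^{'})$, namely a function $U(x)\geq 0$ satisfying
\begin{equation*}
(1-\lambda h)PU(x) - U(x) \leq -g(x), \qquad g(x) := r_{0}(x)I_{\bar{G}^{c}}(x),
\end{equation*}
where $r_{0}(x):=\dist(x,\partial G)$ and $PU(x)=\mathbb{E}[U(X_{1}^{'})\mid X_{0}^{'}=x]$. The key point is that the expected value of the sum in the statement can be identified with $Pv(\cdot,N)(x)$, where $v(x,N):=\mathbb{E}_{x}\bigl[\sum_{k=0}^{N-1}g(X_{k}^{'})(1-\lambda h)^{k}\bigr]$ satisfies the one-step recursion $v(\cdot,N)=g+(1-\lambda h)Pv(\cdot,N-1)$ with $v(\cdot,0)=0$. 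A straightforward induction on $N$ will then give $v(\cdot,N)\leq U$ uniformly, whence $Pv(\cdot,N)\leq PU \leq (1-\lambda h)^{-1}\sup U\leq 2\sup U$ for small $h$, independent of $T$.

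Following the geometry of Lemma~\ref{bl}, introduce the surface $\mathbb{S}_{-l}$ parallel to $\partial G$ at distance $l>0$ outside $G$ and set $w(x)=L-\dist(x,\mathbb{S}_{-l})\geq 0$ on $\bar{G}\cup\bar{G}_{-r}$. The ansatz is the \emph{time-independent} function
\begin{equation*}
U(x) = A e^{K w(x)} + B, \qquad A,B,K>0.
\end{equation*}
Applying the same Taylor expansion as in (\ref{eq:22})--(\ref{eq:24}), together with $\mathbb{E}(\Delta X^{'}\cdot\nu_{-l})=2r_{0}(x)I_{\bar{G}^{c}}(x)+\mathcal{O}(h)$ and $\mathbb{E}(\Delta X^{'}\cdot\nu_{-l})^{2}=\mathcal{O}(h)$ (since the Bernoulli increments $\xi$ have bounded moments), a direct calculation yields $\mathbb{E}[e^{Kw(X_{1}^{'})}]=e^{Kw(x)}\bigl(1-2Kr_{0}(x)I_{\bar{G}^{c}}(x)+C_{1}h\bigr)$ for a constant $C_{1}=C_{1}(K)$, and hence
\begin{equation*}
(1-\lambda h)PU(x)-U(x) = A e^{Kw(x)}\bigl[-2Kr_{0}(x)I_{\bar{G}^{c}}(x)+(C_{1}-\lambda)h\bigr] - B\lambda h + \mathcal{O}(h^{2}).
\end{equation*}

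Choose $A=K=1$: the reflection term $-2AKr_{0}Ie^{Kw}\leq -r_{0}I$ carries a slack of at least $r_{0}I$ (since $2e^{w}-1\geq 1$), which absorbs the $\mathcal{O}(h)r_{0}I$ corrections. Next choose $B$ large enough that $B\lambda\geq Ae^{KL}\max(C_{1}-\lambda,0)+1$, so that the stray interior drift $Ae^{Kw}(C_{1}-\lambda)h$ is pointwise dominated by $-B\lambda h$, in particular on the set $\{r_{0}I=0\}$ (i.e.\ the interior of $\bar{G}$). Then for $h$ sufficiently small the supersolution inequality $(1-\lambda h)PU-U\leq -g$ holds on $\bar{G}\cup\bar{G}_{-r}$, while $U\geq A+B>0$ is uniformly bounded by $Ae^{KL}+B$, completing the proof via the induction of the first paragraph. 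The main obstacle is precisely the interior drift of size $\mathcal{O}(h)$ in the expansion of $PU$: a pure-exponential ansatz $U(x)=e^{Kw(x)}$ alone would force the constraint $C_{1}(K)\leq\lambda$ on admissible $\lambda$, whereas the additive constant $B$ supplies the extra $-B\lambda h$ term needed to absorb this drift uniformly, for any $\lambda>0$.
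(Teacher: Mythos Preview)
Your proof is correct and takes a genuinely different route from the paper's. You adapt the discrete supersolution machinery of Lemma~\ref{bl} directly to the discounted recursion $v(\cdot,N)=g+(1-\lambda h)Pv(\cdot,N-1)$, using the time-independent ansatz $U(x)=Ae^{Kw(x)}+B$; the additive constant $B$ is the new ingredient that manufactures the extra $-B\lambda h$ needed to kill the interior $\mathcal{O}(h)$ drift, replacing the role played by the time factor $e^{K_1(T-t)}$ in Lemma~\ref{bl}. The paper instead introduces an auxiliary elliptic problem $\mathscr{A}v-\lambda v=0$, $(\nabla v\cdot\nu)=-1$, notes that Algorithm~\ref{algorithm3.1} applied to it yields $Y_k=(1-\lambda h)^k$ and $Z_N$ equal to twice the desired sum, and then bounds $\mathbb{E}Z_N$ by invoking the one-step error Lemmas~\ref{l5.2}--\ref{l5.1} and the uniform boundedness of $v$. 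The paper's argument is shorter because it recycles the existing error lemmas, but it leans on elliptic regularity for the auxiliary problem; your argument is more self-contained and purely combinatorial, needing no PDE solution at all. One minor correction: the cross term $2K\lambda h\,r_0 I$ makes the remainder $\mathcal{O}(h^{3/2})$ rather than $\mathcal{O}(h^{2})$, but this does not affect your conclusion.
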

\begin{proof}
For $\lambda > 0$, consider the elliptic problem
\begin{align*}
    &\mathscr{A}v(x) - \lambda v(x)= 0, \,\,\,\,  x  \in G, \\
   & (\nabla v(z)\cdot\nu(z))  = -1, \,\,\,\,  z  \in \partial G.
\end{align*}
In this case it follows from (\ref{mdy1}) and (\ref{eq:17}):
\begin{align}
    Y_{k} &= (1-\lambda h)^{k} \leq e^{-\lambda t_{k}}, \label{b5.1} \\
    Z_{k} &= 2\sum\limits_{i=0}^{k-1}r_{i+1}(1-\lambda h)^{i}I_{\bar{G}^{c}}(X^{'}_{i+1}). \label{b5.2}
\end{align}
 Using (\ref{l5.2}), (\ref{l5.1}) and (\ref{b5.1}), we get
 \begin{align*}
    \Big|\mathbb{E}\Big(v_{k+1}^{\prime}Y_{k+1}^{'}-v_{k}Y_{k} + Z_{k+1}^{'} - Z_{k} \big|X_{k},Z_{k} \Big)\Big| &\leq C h^{2}e^{-\lambda t_{k}}, \\     \rf{\big| v_{k+1}Y_{k+1} - v_{k+1}^{\prime}Y_{k+1}^{'}  + Z_{k+1}-Z_{k+1}^{'}  \big|} &\leq  C h  r_{k+1}e^{-\lambda t_{k}}I_{\bar{G}^{c}}(X_{k+1}^{'})\;\;\;a.s.,
    \end{align*}
Similarly to the proof of Theorem \ref{conp}, we obtain
    \begin{eqnarray}
 \big| \mathbb{E}\big(v(X_{N})Y_{N} + Z_{N}\big)  - v( X_{0})Y_{0} - Z_{0}\big| & \leq  Ch \mathbb{E}\Bigg( \sum\limits_{k=0}^{N-1}r_{k+1}e^{-\lambda t_{k}}I_{G_{-r}}\big( X_{k+1}^{'}\big) \Bigg) \nonumber  \\ & +  C h^{2} \sum\limits_{k=0}^{N-1}e^{-\lambda t_{k}}. \label{eq6.3}
   \end{eqnarray}
Using (\ref{b5.1})-(\ref{eq6.3}) and the facts that $r_{k+1} = \mathcal{O}(h^{1/2})$ and $v(x)$ is uniformly bounded for $x \in \bar{G}$, we get (note that $Z_{N} \geq 0$ for $\lambda h <1$):
\begin{align*}
      \mathbb{E}\bigg(&\sum\limits_{k=0}^{N-1} r_{k+1}(1-\lambda h)^{k}I_{\bar{G}^{c}}\big(X_{k+1}^{'}\big)\bigg)  = \rs{\frac{1}{2}\mathbb{E}(Z_{N})}  \\ &   \leq  \big|v(x)\big| + \big| \mathbb{E}v(X_{N})Y_{N} \big| + Ch \mathbb{E}\Bigg( \sum\limits_{k=0}^{N-1}r_{k+1}e^{-\lambda t_{k}}I_{G_{-r}}\big( X_{k+1}^{'}\big) \Bigg) + C h^{2} \sum\limits_{k=0}^{N-1}e^{-\lambda t_{k}} \\ & \leq C + C(h^{3/2} + h^{2}) \sum\limits_{k=0}^{N-1}e^{-\lambda t_{k}},
\end{align*}
which completes the proof.
\end{proof}

\begin{proof}[Proof of Theorem \ref{thrm5.4}]
Using the notation introduced earlier in this section, we begin \rs{the} analysis with an inequality analogous to (\ref{ep4.16}) from the proof of Theorem \ref{conp} \rs{combining which with} (\ref{l5.2}) and (\ref{l5.1}), we get
\begin{align}
 \big| \mathbb{E}\big(u(X_{N})Y_{N} + Z_{N}\big)  - u(X_{0})Y_{0} - Z_{0}\big|  &\leq Ch\Bigg| \mathbb{E}\Bigg( \sum\limits_{k=0}^{N-1}r_{k+1}Y_{k}I_{G_{-r}}\big( X_{k+1}^{'}\big) \Bigg)\Bigg| \nonumber \\& +  C h^{2}\Bigg|\mathbb{E}\Bigg( \sum\limits_{k=0}^{N-1}Y_{k}\Bigg)\Bigg|. \label{ne5.4}
  \end{align}
  From (\ref{mdy1}) and (\ref{eq:16}), we have
\begin{align*}
    Y_{k} &= Y_{k-1} + hc_{k-1}Y_{k-1} + 2r_{k}\gamma_{k}^{\pi}Y_{k-1}I_{G_{-r}}(X_{k}^{'}) + 2r_{k}^{2}\big(\gamma_{k}^{\pi}\big)^{2}Y_{k-1}I_{G_{-r}}(X_{k}^{'}).
\end{align*}
Under the assumption $ \gamma(z) \leq 0$ and sufficiently small $h>0$, we have $2r_{k}\gamma_{k}^{\pi}Y_{k-1}I_{G_{-r}}(X_{k}^{'})(1+r_{k}\gamma_{k}) \leq 0$. Hence
\begin{equation}
    Y_{k}  \leq Y_{k-1}\big(1 + c_{k-1}h\big)  \leq (1-\lambda h)^{k} \leq    e^{-\lambda t_{k}},
    \label{n5.3}
\end{equation}
where $\lambda = \min\limits_{x \in \bar{G}}|c(x)|>0$. Then substituting (\ref{n5.3}) in (\ref{ne5.4}), we obtain
\begin{align}
     \big|\mathbb{E}\big( Z_{N}\big) - u(x)\big|   &\leq  C\big| \mathbb{E}u(X_{N})\big|e^{-\lambda T} +  Ch\mathbb{E}\Bigg( \sum\limits_{k=0}^{N-1}r_{k+1}(1-\lambda h)^{k}I_{G_{-r}}\big( X_{k+1}^{'}\big)\Bigg) \nonumber \\ & \;\;\; +  C h^{2} \sum\limits_{k=0}^{N-1}e^{-\lambda t_{k}}. \label{s5.21}
\end{align}
 Applying Lemma \ref{sjlme} to (\ref{s5.21}), we get (\ref{eq:th5.4}).
\end{proof}

\subsubsection{The case $c(x) = 0$ and $\gamma (z) = 0$}\label{section6.3}
Consider the problem (\ref{gpe1})-(\ref{gpe2}) as described in Subsection~\ref{subsection2.4.0} together with the probabilistic representation (\ref{eqb6.10}), (\ref{rsde}), (\ref{n10new}).

Now we are in a position to state two results regarding the one-step approximation in the time interval $(T_{j-1},T_{j}]$. Under Assumptions~\ref{as:3}, \ref{as:4}, \ref{as:5}, \ref{assu6.1}, these results directly follow from (\ref{l5.2}) and (\ref{l5.1}) for all $ k = N^{'}_{j-1},\dots,N^{'}_{j}-1$:
\begin{align}
    |\mathbb{E}(u_{k+1}^{\prime} + Z_{k+1}^{'} - u_{k} - Z_{k}| X_{k}, Z_{k})| &\leq Ch_{j}^{2}, \label{a6.10}\\
    \rf{| u_{k+1} + Z_{k+1} - u_{k+1}^{\prime} - Z_{k+1}^{'}|} & \leq Ch_{j}r_{j,k+1}I_{\bar{G}^{c}}(X_{k+1}^{'})\;\;\;a.s., \label{a6.11}
\end{align}
 where $C$ is a positive constant independent of $T$ and $h$.

Recall that the layer $\bar{G}_{-r}$ was introduced in Subsection~\ref{subsection3.4}. We now state a lemma which is related to the number of steps of $X_{k}^{'}$ in $G_{-r}$. For that purpose, we first consider a surface $ \mathbb S_{l_{j}}$ which belongs to $G$ and is parallel to the boundary $\partial G$. The distance between $\partial G$ and $\mathbb S_{l_{j}}$ is
\begin{equation}
l_{j}=\min\{(\Delta_{j} T)^{1/2}, R/2, 1\}, \label{leq}
\end{equation}
where \rs{$R$ is the radius of the uniform interior sphere} inside $G$. We take $R/2$ in (\ref{leq}) so that a surface $\mathbb{S}_{l_{j}}$, which is parallel to $\partial G$, exists 
when $R$ is not sufficiently large.     We assume that $h>0$ is sufficiently small so that $l_{j} >> h_{j}^{1/2}$. By virtue of Assumption \ref{as:3},  $l_{j}$-neighbourhood of any point $x\in \mathbb S_{l_{j}}$ entirely belongs to $G$. The layer between $\mathbb S_{l_{j}}$ and $\partial G$ will be denoted as $G_{l_{j}}$.

\begin{lemma} \label{blu}
Under Assumptions \ref{as:3} and \ref{as:5}, the following inequality holds
\begin{equation}
    \mathbb{E}\Bigg(\sum\limits_{k=N^{'}_{j-1}+1}^{N_{j}^{'}} r_{j,k}I_{G_{-r}}(X_{k}^{'})\; \Big| \; X_{N_{j-1}^{'}}\Bigg)\leq C\frac{\Delta_{j} T}{l_{j}},\;\;\;\;\;\;\;\; j = 1,\dots,\Lambda, \label{eq:lem65}
\end{equation}
where $C$ is a positive constant independent of $\Delta_{j} T$ and $h$.
\end{lemma}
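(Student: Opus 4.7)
The plan is to apply the potential-theoretic template of Lemmas \ref{bl}, \ref{sjlm} and \ref{lemmaonexp}: restrict the boundary-value problem (\ref{eq:18})--(\ref{eq:19}) to the sub-interval $[T_{j-1},T_{j}-h_{j}]$ with step $h_{j}$, take $q(x)\equiv 1$ and $g(t,x)=r_{0}(x)I_{G_{-r}}(x)$; by (\ref{eq:20}) its solution at $(T_{j-1},x)$ equals the conditional expectation on the left-hand side of (\ref{eq:lem65}). It therefore suffices to construct a super-solution $V(t,x)$ on $[T_{j-1},T_{j}]\times(\bar G\cup\bar G_{-r})$ satisfying $V(T_{j},\cdot)=0$, $(P-I)V\le-g$, and $V(T_{j-1},x)\le C\,\Delta_{j}T/l_{j}$; comparison then yields the lemma.

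The crucial departure from Lemma \ref{sjlm} is that the spatial variation of the test function must live on the length scale $l_{j}$ rather than $O(1)$, because the sub-interval can be as short as $l_{j}^{2}$. Write $d(x)$ for the signed distance to $\partial G$, positive in $G$ and negative in $G_{-r}$, which is $C^{4}$ in a tubular neighborhood of $\partial G$ by Assumption \ref{as:3}. Fix once and for all a smooth profile $\chi\in C^{\infty}(\mathbb{R};[0,1])$ which is constant outside a compact set, strictly decreasing on $[-1,1]$, and satisfies $\chi'(0)\le -1$. Set
$$V(t,x)\;=\;\frac{K(T_{j}-t)}{l_{j}}\;+\;w(x),\qquad w(x):=l_{j}\,\chi\!\left(\frac{d(x)}{l_{j}}\right),$$
with $K>0$ a constant to be fixed. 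By construction $0\le w\le l_{j}$, $\|\nabla w\|_{\infty}=O(1)$ and $\|D^{2}w\|_{\infty}=O(1/l_{j})$, and $w\in C^{4}(\bar G\cup\bar G_{-r})$.

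To check $(P-I)V\le -g$ I split into two cases. For $y\in G_{-r}$ the reflected intermediate point $\tilde y=y+2r_{0}(y)\nu$ has $d(\tilde y)=r_{0}(y)=-d(y)$, so that Taylor expansion of $\chi$ at $0$ gives $w(\tilde y)-w(y)=2\chi'(0)r_{0}(y)+O(h_{j}/l_{j})$, while a standard Euler expansion around $\tilde y$ with $\delta=b(\tilde y)h_{j}+\sigma(\tilde y)\sqrt{h_{j}}\xi$ gives $\mathbb{E}[w(\tilde y+\delta)-w(\tilde y)]=O(h_{j}/l_{j})$; combining them yields $\mathbb{E}[w(X_{1}')-w(y)]\le -r_{0}(y)+O(h_{j}/l_{j})$. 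For $y\in\bar G$ (no reflection) the same Euler expansion gives $\mathbb{E}[w(X_{1}')-w(y)]=O(h_{j}/l_{j})$. Adding the time term $-Kh_{j}/l_{j}$ and choosing $K$ a sufficiently large absolute constant absorbs every $O(h_{j}/l_{j})$ remainder and delivers $(P-I)V\le -r_{0}I_{G_{-r}}$ on $[T_{j-1},T_{j}-h_{j}]\times(\bar G\cup\bar G_{-r})$. Finally,
$$V(T_{j-1},x)\;\le\;\frac{K\Delta_{j}T}{l_{j}}+l_{j}\;\le\;\frac{(K+1)\Delta_{j}T}{l_{j}},$$
because $l_{j}^{2}\le\Delta_{j}T$ by (\ref{leq}), and comparison $v\le V$ delivers (\ref{eq:lem65}).

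The main technical obstacle will be to make all Euler-step expansions uniform in $l_{j}$: the bound $\|D^{2}w\|_{\infty}=O(1/l_{j})$ enlarges every second-order Taylor remainder from $O(h_{j})$ to $O(h_{j}/l_{j})$, and one must verify (using (\ref{leq}) together with $h$ small enough to ensure $h_{j}\ll l_{j}^{2}$ uniformly in $j$) that these remainders really are dominated by the $-Kh_{j}/l_{j}$ decay of the time part of $V$. All other ingredients are routine variations on the moment computations already carried out in the proofs of Lemmas \ref{lemma2.2} and \ref{sjlm}.
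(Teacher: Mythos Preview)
Your proposal is correct and follows the same potential-theoretic template as the paper: both reduce the bound to constructing a super-solution $V$ of the discrete boundary-value problem (\ref{eq:18})--(\ref{eq:19}) on $[T_{j-1},T_{j}]$ with step $h_{j}$, $q\equiv 1$, and $g(t,x)=r_{0}(x)I_{G_{-r}}(x)$, and both build $V$ from a time part $K(T_{j}-t)/l_{j}$ plus a spatial part living on the length scale $l_{j}$.

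The genuine difference lies in the spatial part. The paper takes $w(x)=\dist^{2}(x,\mathbb S_{l_{j}})$ inside the layer $\bar G_{l_{j}}\cup\bar G_{-r}$ and $w\equiv 0$ outside, then sets $U=(K(T_{j}-t)+w(x))/l_{j}$. Because this $w$ is only $C^{1,1}$ across the interface $\mathbb S_{l_{j}}$, the paper must introduce an intermediate strip $S_{h_{j}}$ of width $O(h_{j}^{1/2})$ around $\mathbb S_{l_{j}}$ and carry out a four-case analysis (far interior, strip $S_{h_{j}}$, boundary layer $G_{l_{j}}\backslash S_{h_{j}}$, exterior $\bar G_{-r}$). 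Your choice $w(x)=l_{j}\chi(d(x)/l_{j})$ with a smooth profile $\chi$ sidesteps this entirely: $w$ is globally $C^{4}$, so a single Euler-step expansion works uniformly in $\bar G$ and only the reflection case needs separate treatment. This buys you a cleaner two-case argument at the price of importing the signed-distance function and verifying (as you correctly flag at the end) that the $O(1/l_{j})$ growth of $D^{2}w$ and $O(1/l_{j}^{2})$ growth of $D^{3}w$ are absorbed by the $-Kh_{j}/l_{j}$ time drift, which indeed follows from $h_{j}\ll l_{j}^{2}$ implied by (\ref{leq}) for small $h$. The paper's squared-distance choice keeps $D^{2}w$ bounded and so has smaller remainders, but pays for it with the extra case splitting. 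Both routes give the same final bound $V(T_{j-1},x)\le C\Delta_{j}T/l_{j}$ via the same inequality $l_{j}^{2}\le\Delta_{j}T$ from (\ref{leq}).
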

\begin{proof}
Let us fix $j\rs{\in}\{1,\dots,\Lambda\}$ and hence consider $t \in [T_{j-1},T_{j}]$. 
Define a function $U(t,x) $ as
\begin{equation*}
    U(t,x) =
    \begin{cases}
    0, & (t,x) \in \{T_{j}\}\times \big(\bar{G} \cup \bar{G}_{-r}\big),\\
    \frac{1}{l_{j}}\big(K(T_{j} - t) + w(x)\big), & (t,x) \in [T_{j-1}, T_{j}-h_{j}] \times \big(\bar{G} \cup \bar{G}_{-r}\big),
    \end{cases}
\end{equation*}
\rf{where $w(x)$ is from (\ref{wx}) but with distance from the surface $\mathbb{S}_{l_{j}}$}. Introduce the region $ S_{h_{j}} = \{ x \mid \dist(x,\mathbb S_{l_{j}}) < K_{1}h_{j}^{1/2}\} $. We choose $K_{1}$ so that for any point $x \in G_{l_{j}}\backslash S_{h_{j}}$, there is zero probability that in  one step transition any of the $2^{d}$ realizations of $X_{j,1}^{'}$ cross the surface $\mathbb S_{l_{j}}$, where $ X^{'}_{j,1}$ is constructed according to Algorithm~\ref{algorithm6.1} given $X^{'}_{j,0} = x$. Note that the region $S_{h_{j}}$ contains points $x$ starting from which one-step realizations of $X_{j,1}^{'}$ may or may not cross the surface $\mathbb S_{l_{j}}$. As we did in Lemma~\ref{bl}, we calculate $PU(t,x)-U(t,x)$ at points $(t,x)$ lying in different regions identified by four cases:  (I) $x \in G\backslash(G_{l_{j}}\cup S_{h_{j}})$, when $X_{j,1}^{'}$ remains in $G\backslash G_{l_{j}}$; (II) $x \in  S_{h_{j}}$; (III) $x \in G_{l_{j}}\backslash S_{h_{j}}$; (IV) $x \in \bar{G}_{-r}$. We analyze $PU(t,x) - U(t,x)$, calculated according to \rs{the} above four cases, analogously as in the proof of Lemma~\ref{bl} to obtain the desired bound.
\end{proof}
The next lemma gives an error estimate for Algorithm~\ref{algorithm6.1} applied to (\ref{gpe1})-(\ref{gpe2}) accumulated over an interval $(T_{j-1},T_{j}]$.

\begin{lemma}\label{lemma6.6}
Under Assumptions \ref{as:3} and \ref{as:4}-\ref{assu6.1}, the following inequality holds for all $j = 1,\dots,\Lambda$:
\begin{equation}
    \big|\mathbb{E}(u(X_{N^{'}_{j}}) + Z_{N^{'}_{j}} - u(X_{N^{'}_{j-1}}) - Z_{N^{'}_{j-1}}| X_{N^{'}_{j-1}}, Z_{N^{'}_{j-1}})\big| \leq C\frac{h_{j}\Delta_{j} T}{l_{j}}, \label{aap1}
\end{equation}
where $C>0$ is independent of $h_{j}$ and $\Delta_{j}T$.
\end{lemma}
\begin{proof}
We can write
\begin{align*}
     &\big| \mathbb{E}\big(u(X_{N_{j}^{'}}) + Z_{N_{j}^{'}}  - u(X_{N_{j-1}^{'}}) - Z_{N_{j-1}^{'}} | X_{N_{j-1}^{'}}, Z_{N_{j-1}^{'}}\big) \big| \\ &\leq  \Bigg| \mathbb{E}\Bigg( \sum\limits_{k=N^{'}_{j-1}}^{N^{'}_{j}-1}\big(u_{k+1} - u_{k+1}^{\prime}+ Z_{k+1}-Z_{k+1}^{'}\big)I_{G_{-r}}\big(X_{k+1}^{'}\big)\bigg| X_{N^{'}_{j-1}},Z_{N^{'}_{j-1}} \Bigg)\Bigg| \\
     & \;\;\;\;\;\; + \Bigg| \mathbb{E}\Bigg(  \sum \limits_{k=N^{'}_{j-1}}^{N^{'}_{j}-1}\mathbb{E}\Big(u_{k+1}^{\prime} + Z_{k+1}^{'} - u_{k} - Z_{k} \Big| X_{k}, Z_{k}\Big) \bigg| X_{N^{'}_{j-1}}, Z_{N^{'}_{j-1}} \Bigg)\Bigg|.
\end{align*}
Using  the inequalities (\ref{a6.10}), (\ref{a6.11}), Lemma \ref{blu}, and (\ref{leq}), we ascertain (\ref{aap1}).
\end{proof}

The probabilistic representation (\ref{eqb6.10}) of the solution of (\ref{gpe1})-(\ref{gpe2}) contains two parts, $\lim\limits_{T\rightarrow \infty}Z(T) $ and $\bar{u}$. In Theorem~\ref{theorem6.7}, we estimate the error incurred in approximating $\lim\limits_{T\rightarrow \infty}Z(T) $, which proof is presented below. 

\begin{proof}[Proof of Theorem \ref{theorem6.7}]
 We have
\begin{equation*}
 \mathbb{E}\big(u(X_{N}) + Z_{N} - u(x)\big) =
   \mathbb{E}\bigg(\sum\limits_{j=1}^{\Lambda}\mathbb{E}\big(u(X_{N^{'}_{j}}) + Z_{N^{'}_{j}} - u(X_{N^{'}_{j-1}}) - Z_{N^{'}_{j-1}}\big| X_{N^{'}_{j-1}}, Z_{N^{'}_{j-1}}\big)\bigg).
  \end{equation*}
Using Lemma \ref{lemma6.6}, we get
 \begin{equation*}
     \big| \mathbb{E}\big(u(X_{N}) + Z_{N}\big) - u(x) \big|  \leq C \sum\limits_{j=1}^{\Lambda}\frac{h_{j}\Delta_{j} T}{l_{j}}.
  \end{equation*}
\rf{From (\ref{leq}), we know that 
either $l_{j} = (\Delta_{j}T)^{1/2}  $ or $l_{j} = (R/2) \wedge 1$.} Therefore, we have
\begin{equation}
    \big| \mathbb{E}\big(u(X_{N}) + Z_{N}\big)  - u(x) \big|  
    \leq C\sum\limits_{j=1}^{\Lambda}h_{j}\big(\Delta_{j} T + (\Delta_{j} T)^{1/2}\big).  \label{6.21q}
\end{equation}
Now, consider the parabolic problem (\ref{eq:47})-(\ref{eq:49}) whose solution we denote as $v(t,x)$ instead of $u(t,x)$ and whose terminal condition at $t = T_{\Lambda}$ is $u(x)$ instead of $\varphi(x)$. In the current setting we have assumed that the solution $v(t,x)$ satisfies the following bound
\begin{equation}
    \sum\limits_{p=0}^{4}\sum\limits_{2i+|l|=p}\sup\limits_{[0,T_{\Lambda})\times \bar{G}}|D_{t}^{i}D_{x}^{l}v(t,x)|\leq C,\label{cb}
\end{equation}
where $C>0$ is independent of $T_{\Lambda}$. Then, analogously to the previous lemma, we can prove
\begin{equation}
    |\mathbb{E}(v(t_{j},X_{N_{j}^{'}}) -v(t_{j-1},X_{N_{j-1}^{'}})|X_{N_{j-1}^{'}})| \leq C\frac{h_{j}\Delta_{j} T}{l_{j}},\;\;\; j \label{bov} =1,\dots,\Lambda,
\end{equation}
where $C>0$ is independent of $T_{\Lambda}$ (hence also of $T$) due to (\ref{cb}). Note that to obtain (\ref{bov}) in the interval $(T_{\Lambda-1},T_{\Lambda}]$, we use the facts that $\mathbb{E}u(X_{N}) = \mathbb{E}u(X_{N-1}) + \mathcal{O}(h_{\Lambda})$ and $|v(t_{N},X_{N-1}) - v(t_{N-1},X_{N-1})| \leq Ch_{\Lambda}$ (cf. the proof of Lemma~\ref{theorem3.9}). Therefore, we have (cf. (\ref{6.21q}))
\begin{equation*}
    |\mathbb{E}u(X_{N}) - \mathbb{E}u(X(T_{\Lambda}))|= |\mathbb{E}u(X_{N}) - v(0,x)|  \leq C\sum\limits_{j=1}^{\Lambda}h_{j}(\Delta_{j}T + (\Delta_{j}T)^{1/2}),
\end{equation*}
which together with (\ref{eq:34}) (though with the appropriate change of notation) gives
\begin{equation}
    \big| \mathbb{E}u(X_{N}) - \bar{u}  \big| \leq C\Big(\sum\limits_{j=1}^{\Lambda}h_{j}(\Delta_{j}T + (\Delta_{j}T)^{1/2}) + e^{-\lambda T}\Big), \label{6.23r}
\end{equation}
where $\bar{u}$ is from (\ref{eqb6.10}), and $C$ and $\lambda$ are positive constants independent of $T$.

Using (\ref{eqb6.10}), (\ref{6.21q}), (\ref{6.23r}) and the fact that $ 1 < \ell/2 + \beta \leq 3/2$, we obtain
\begin{align}
 \big| \mathbb{E}\big( Z_{N}\big)  &- \lim\limits_{T \rightarrow \infty} Z(T) \big| 
 = \big|\mathbb{E}\big( (u(X_{N}) + Z_{N} - u(x)) - (u(X_{N}) -\bar{u})\big)\big| \nonumber \\ & \leq C \Big( \sum\limits_{j=1}^{\Lambda}h_{j}\big(\Delta_{j} T + (\Delta_{j} T)^{1/2}\big) + e^{-\lambda T}\Big)  \leq C \bigg( h\sum\limits_{j=1}^{\Lambda} \Big(\frac{1}{j^{\ell/2 + \beta}} + \frac{1}{j^{\ell + \beta}}\Big) + e^{-\lambda T}
\bigg)\nonumber \\ &\;\;\;\;\;\; \leq C\bigg(\frac{h}{\ell/2 + \beta - 1} + h+ e^{-\lambda T}\bigg) .\label{t6.27}
\end{align}
  \end{proof}

We now discuss the cost of Algorithm~\ref{algorithm6.1}. Recall that the cost of Algorithms~\ref{algorithm2.1} and \ref{algorithm3.1}, in  all applications considered in this paper, is proportional to $1/h$. First, let us define (cf. (\ref{eqnwa5.12})) the tolerance of Algorithm~\ref{algorithm6.1} as $\text{tol} := \frac{1}{2}(h + e^{-\lambda T})$. Taking $h = e^{-\lambda T}$ (i.e., equal contributions from the two sources of the total error), we get $\text{tol} = h$ and $T = -\frac{\ln{h}}{\lambda}$. Further, we choose $\Lambda$ to guarantee $T \leq T_{\Lambda} \rs{\leq} \sum\limits_{j=1}^{\Lambda}\frac{\Upsilon}{j^{\ell}}$.
Let $\ell \neq 1$.  Then
 $ T \sim \Upsilon \Lambda^{-\ell + 1}$. Therefore, we need $\Lambda \sim |\ln{h}|^{1/(1-\ell)}  $ (we dropped $\lambda $ and $\Upsilon$ because here our only concern is to know how the cost grows with decrease of the tolerance tol via decrease of $h$).
  The cost of Algorithm~\ref{algorithm6.1} is appropriate to measure via the number of steps of a single trajectory of the algorithm which is equal to
\begin{equation*}
\text{Cost} = \sum\limits_{j=1}^{\Lambda} N_{j}  = \sum\limits_{j=1}^{\Lambda}\frac{\Delta_{j} T}{h_{j}} \sim \frac{1}{h}\sum\limits_{j=1}^{\Lambda} \frac{1}{j^{\ell-\beta}} \sim \frac{1}{h}\Lambda^{\beta - \ell + 1} \sim \frac{1}{h}|\ln{h}|^{1 + \frac{\beta}{1-\ell}}
=\frac{1}{\text{tol}}|\ln{\text{tol}}|^{1 + \frac{\beta}{1-\ell}}.
\end{equation*}
We need to choose $\ell$ and $\beta$ so that factor $\frac{\beta}{1-\ell}$ is small in the cost while the constant $\frac{1}{\ell/2 + \beta - 1}$ in the error (\ref{t6.27}) is not too big under the constraints $\ell/2 + \beta - 1 > 0$ \rs{and} $0< \ell \leq  \beta \leq 1$. In this case it is optimal to take $\beta = 1$ and small $\ell$. For example, $\ell = 1/10$ gives $\frac{\beta}{1-\ell} = 10/9$, $\frac{1}{\ell/2 + \beta - 1} = 20$ and $\text{Cost} \sim \frac{|\ln{h}|^{2.1}}{h}$.
Hence, we infer that with appropriate choice of $\ell$ and $\beta$, the cost of Algorithm~\ref{algorithm6.1} is only slightly higher than linear in $1/h$.

 We note that instead of using the two level partitioning of the time interval $[0,T]$ we could use the following single partition of $[0,T]$: $T =  \sum_{k=1}^{\Lambda}\frac{h}{k^{\beta}}$ with $ 0 <\beta < 1 $. For this single partition, by similar arguments as in the proof of  Theorem~\ref{theorem6.7}, we would only prove
\begin{equation*}
    \big| \mathbb{E}\big( Z_{N}\big)  - \lim\limits_{T \rightarrow \infty} Z(T) \big| \leq C \bigg( h^{3/2}\sum\limits_{j=1}^{\Lambda} \frac{1}{j^{3\beta/2}} + e^{-\lambda T}\bigg) \leq C\Big( \frac{2h^{3/2}}{3\beta - 2} + e^{-\lambda T}\Big), \, 2/3 < \beta <1.
\end{equation*}
Here again it is natural to define the tolerance tol $:= \frac{1}{2}(h^{3/2} + e^{-\lambda T})$ and again to take $h^{3/2} = e^{-\lambda T}$. Then the cost in terms of tol is
\begin{equation*}
\text{Cost}:=\Lambda \sim \Big(\frac{T}{h}\Big)^{1/(1-\beta)}
    \sim \frac{|\ln{\text{tol}}|^{1/(1-\beta)}}{\text{tol}^{2/(3(1-\beta))}}, \, \, 2/3 < \beta < 1,
\end{equation*}
which is significantly worse than linear increase in {1}/tol (e.g., for $\beta=2/3$, we have Cost $\sim |\ln{\text{tol}}|^{3}/$tol$^2$).
This emphasizes the importance of the idea of double partitioning of the time interval in Algorithm~\ref{algorithm6.1}.

 \section{Extensions}\label{section7}

In Section \ref{section7.1}, we present an algorithm to approximate RSDEs (\ref{eq:8}) with second order of weak convergence based on adaptive time stepping near the boundary. In Section~\ref{section7.2}, we generalise Algorithm~\ref{algorithm2.1} to approximate RSDEs in weak sense when reflection is in an inward oblique direction.

\subsection{Second-order approximation}\label{section7.1}

In this subsection we modify Algorithm \ref{algorithm2.1} to construct a second-order weak
approximation of the solution $X(t)$ of the RSDEs (\ref{eq:1}) on an
interval $[T_{0},T]$ (for simplicity, we do not include here a second-order
approximation of (\ref{eq:9})-(\ref{eq:10})). To obtain a second-order method for (\ref{eq:1}),
we need an approximation of weak local  order $3$ for SDEs without
reflection, which we will use inside the domain $G$, and we need a more accurate approximation of the reflection than in Algorithm~\ref{algorithm2.1}.
To achieve the \rf{latter}, we will introduce an \rf{adaptive (random)} time step so that
the auxiliary chain $X_{k}^{' }$ belongs to $\bar{G}\cup \bar{G}_{-r}$ with \rf{$r= \Theta(h)$} , where the notation $\bar{G}_{-r}$ was
introduced in Subsection~\ref{section2.4} \rf{and the Landau Big Theta notation, $\Theta(h)$, implies that there are constants $C_{1},\;C_{2}>0$ independent of $h$ such that $C_{1}h\leq r \leq C_{2}h$}. When $X_{k}^{' }\in $ $\bar{G}_{-r},$ we will use the same reflection as in Algorithm~\ref{algorithm2.1}, but because the
layer $\bar{G}_{-r}$ is of size \rf{$\Theta(h)$} in this subsection, the one-step
error of reflection will be $\mathcal{O}(h^{3}).$ Note that in Algorithm \ref{algorithm2.1} $%
X_{k}^{' }\in \bar{G}\cup \bar{G}_{-r}$ with $r=\mathcal{O}(h^{1/2}) $ and the
one-step error of reflection was $\mathcal{O}(h^{3/2})$.

Let us write a generic approximation of weak local order $3$ in the form
suitable for this section:
\begin{equation}
X=x+\delta (t,x;h;\xi ),  \label{s1}
\end{equation}%
where $h>0$ is a time step, $\xi $ is a random vector \rf{the  components of which} are
some bounded mutually independent random variables and $\delta $ is such that for all $x\in \bar{G%
}$%
\begin{equation}
\mathbb{E}f(X_{t,x}(t+h))-\mathbb{E}f(x+\delta (t,x;h;\xi ))=\mathcal{O}(h^{3}),  \label{s2}
\end{equation}%
with $X_{t,x}(t+h)$ being a solution of the SDEs (\ref{eq:1}) from which the
reflection term is excluded, and $f$ being an arbitrary sufficiently smooth
function with bounded derivatives. Note that $\delta $ and $\xi $ in (\ref%
{s1})  used in this subsection are different \rf{from} $\delta $ and $\xi $
associated with Algorithm~\ref{algorithm2.1} which have been used everywhere else in this paper except this Subsection~\ref{section7.1}. There
are various numerical approximations satisfying (\ref{s1})-(\ref{s2}) (see
e.g. \cite{49} and references therein).

Introduce a layer $S_{t,h}\in \bar{Q}$ such that if $(t,x)\in \bar{Q}%
\backslash S_{t,h}$ then $X$ from (\ref{s1}) belongs to $\bar{G}$ and if $%
(t,x)\in S_{t,h}$ then at least one realization $_{(i)}X$ of $X$ does not belong
to $\bar{G}.$ Consequently, if $(t,x)\in \bar{Q}\backslash S_{t,h},$ we can
use (\ref{s1}) to approximate (\ref{eq:1}).

If $(t,x)\in S_{t,h},$ we find the largest time step $\theta \in \left[
h^{2},h\right] $ \rf{such} that all realizations of $x+\delta (t,x;\theta ;\xi )$
belong to $\bar{G}\cup \bar{G}_{-r}$ with \rf{$r=\Theta(h)$}. Then we draw a realization of
$\xi $ and compute
\[
X^{' }=x+\delta (t,x;\theta ;\xi ).
\]%
If $X^{' }\in \bar{G},$ we set $X=X^{' },$ otherwise
\[
X=X^{'}+2r_{0}\nu (X^{\pi }),
\]%
where as usual $r_{0}=\mathrm{dist}(X^{' },X^{\pi})$ and $X^{\pi }$
is the projection of $X^{' }$ on $\partial G.$

Having the above two ingredients, we now can construct a Markov chain $X_{k}$
approximating the solution of (\ref{eq:1}) in the weak sense, which is formulated
as Algorithm~\ref{algorithm7.1}.

\begin{algorithm}
\caption{Algorithm for second-order approximation}\label{algorithm7.1}
\begin{steps}
\begin{em}
\item  Set $\tau _{0}=\rs{t_{0}},$ $X_{0}=x,$ $k=0.$

\item \textbf{If} $\tau _{k}\leq T-h,$ then $h_{k} = h;$ \textbf{else} $h_{k}=T-\tau
_{k}.$

\item \textbf{If} $\rf{(\tau_k,X_{k})\in \bar{Q}}\backslash S_{\tau _{k},h_{k}},$ then
simulate $\xi _{k+1}$ independently of $\xi _{1},\ldots ,\xi _{k}$ and
\begin{eqnarray*}
\tau _{k+1} &=&\tau _{k}+h_{k}, \\
X_{k+1} &=&X_{k}+\delta (\tau _{k},X_{k};h_{k};\xi _{k+1}),
\end{eqnarray*}
\;\;\;\;\;\;where $\delta$ satisfies (\ref{s2}).
\textbf{Goto} Step 6.

\item Find the largest time step $\theta _{k+1} \in [ h^{2},h_{k} ] $ so that all realizations of $X_{k}+\delta (\tau
_{k},X_{k}; \theta _{k+1}; \rs{\xi_{k+1}})$ belong to $\bar{G}\cup \bar{G}_{-r}$
with $\rf{r=\Theta(h)}$. Simulate $\xi_{k+1}$ independently of $\xi
_{1},\ldots ,\xi _{k}$ and
\begin{eqnarray*}
\tau _{k+1} &=&\tau _{k}+\theta _{k+1}, \\
X_{k+1}^{' } &=&X_{k}+\delta (\tau _{k},X_{k};\theta _{k+1};\xi _{k+1}).
\end{eqnarray*}

\item \textbf{If} $X_{k+1}^{' }\in \bar{G},$ then $X_{k+1}=X_{k+1}^{' }$,
\textbf{else} (the case $X_{k+1}^{' }\notin \bar{G}$)
\[
X_{k+1}=X_{k+1}^{'}+2r_{k+1}\nu (X_{k+1}^{\pi }),
\]

\;\;\;\;\;\;where $r_{k+1}=\mathrm{dist}(X_{k+1}^{' }, X_{k+1}^{'})$ and $X_{k+1}^{\pi
} $ is the projection of $X_{k+1}^{'}$ on $\partial G$.

\item \textbf{If} $\tau _{k+1}\geq T-h^{2}$ then the random number of steps $%
\varkappa =k+1$, the final state of the chain $X_{\varkappa }=X_{k+1}$ \;\;\;\;\;\; and STOP; \textbf{else} $k=k+1$ and \textbf{Goto} Step 2.
\end{em}
\end{steps}
\end{algorithm}

Using the ideas introduced to study first order convergence of Algorithm \ref{algorithm2.1} in
Section \ref{section4}, one can prove convergence of Algorithm~\ref{algorithm7.1} with weak order 2, i.e.
\[
|\mathbb{E}f(X_{t,x}(T))-\mathbb{E}f(X_{\varkappa })|\leq Ch^{2},
\]%
where $X_{t,x}(T)$, $T_{0}\leq t \leq T$, is the solution of (\ref{eq:1}) and $C>0$ is a constant
independent of $h.$  It can be shown that the average number of steps
of Algorithm~\ref{algorithm7.1} is ${\cal{O}}(1/h)$. \rf{The Markov chain $(\tau_k,X_k)$ on average spends ${\cal{O}}(1/h)$ steps in the domain $\bar{Q}\backslash S_{\tau _{k},h_{k}}$ and also 
${\cal{O}}(1/h)$ steps
when $(\tau _{k},X_k) \in S_{\tau _{k},h_{k}}$. The proof of the latter exploits the rule how $\theta_k$ is chosen in the algorithm 
and also that, thanks to the definition of $S_{t,h}$, if $(\tau_k,X_k) \in S_{\tau _{k},h_{k}} $ then at least one realisation of $X_{k+1} \notin \bar G$.}

\subsection{Oblique reflection}\label{section7.2}

Consider the RSDEs
\begin{align}\label{eq:A1}
    dX(s) &= b(s,X(s))ds + \sigma(s,X(s))dW(s) + \eta(X(s))I_{\partial G}(X(s)) dL(s),
\end{align}
where $\eta$ is a unit vector field belonging to class $C^{4}$, and we assume that there exists a constant $c_{0} > 0$ such that
\begin{equation}
    (\eta(z)\cdot\nu(z)) > c_{0},
\end{equation}
for all $z \in \partial G$. The uniqueness and existence of the strong solution of SDEs with reflection in the oblique direction  was proved in \cite{11} under weaker assumptions than Assumptions~\ref{as:3}-\ref{as:2}, but we will continue using  Assumptions~\ref{as:3}-\ref{as:2} to ensure first-order of weak convergence of our algorithms. We note that $\eta$ in this subsection is an arbitrary oblique direction, while in Section~\ref{section6} $\eta$ was the co-normal, however this should not lead to any confusion. For applications of (\ref{eq:A1}) see e.g. \cite{william2019}.

 In Section \ref{section4}, we proposed Algorithm \ref{algorithm3.1} to approximate the system (\ref{eq:8})-(\ref{eq:10}). Here, for simplicity we only consider approximation of the RSDEs (\ref{eq:A1}). Again we assume a uniform discretization of the time interval $[t_{0},T]$, $ t_{0} < \dots < t_{N}=T$, $h = (T-t_{0})/N$ and $t_{k+1} = t_{k} + h$. As was elaborated in Section~\ref{section2} while constructing  the Markov chain $(X_{k})_{k\geq 0}$, we take an auxiliary step
 \begin{equation}
     X_{k+1}^{'} = X_{k} + hb_{k} + h^{1/2} \sigma_{k}\xi_{k+1}, \label{eq:38}
 \end{equation}
 where $b_{k} = b(t_{k},X_{k})$, $\sigma_{k} = \sigma(t_{k},X_{k})$, and $\xi_{k+1} $ represents the same random vector as in (\ref{eq:11}). We follow the same idea as in Algorithm~\ref{algorithm2.1} of first taking an intermediate step, $ X_{k+1}^{'}$. If $ X_{k+1}^{'} \in \bar{G}$ then we set $X_{k+1} = X_{k+1}^{'}$, and if $X_{k+1}^{'} \in \bar{G}^{c} $ then we solve the following \rs{system of implicit equations}:
 \begin{align}
     X^{\pi}_{k+1} &= X^{'}_{k+1} + \dist(X^{'}_{k+1},X_{k+1}^{\pi})\eta(X_{k+1}^{\pi}), \label{3.11}\\
          X_{k+1}^{\pi} &\in \partial G, \label{3.13}
 \end{align}
 in order to find the projection $X_{k+1}^{\pi}$ of $X_{k+1}^{'}$ on $ \partial G$ along the direction $\eta(X_{k+1}^{\pi})$, where $\eta(X_{k+1}^{\pi})$ is the inward oblique direction at $X_{k+1}^{\pi}$.  By \cite[Proposition 1]{2} there exists a unique solution of the system of equations  (\ref{3.11})-(\ref{3.13}) under Assumption \ref{as:3} \rf{for all $X_{k+1}^{'} \in \bar{G}_{-r}$}.

 We denote $ r_{k+1}$ as the distance of $X_{k+1}^{'}$ from the boundary $\partial G$ along the direction $\eta(\rf{X_{k+1}^{\pi}})$, i.e. $r_{k+1}=\dist(X_{k+1}^{'}, X_{k+1}^{\pi})$.
 If $ X_{k+1}^{'}$ goes outside $ \bar{G}$,  the following symmetric step is taken:
 \begin{equation}
     X_{k+1} = X_{k+1}^{'} + 2r_{k+1}\eta(X_{k+1}^{\pi}).  \label{eq:41}
 \end{equation}
 This algorithm is presented in a formalized form as Algorithm~\ref{alg:euclid}. One can prove its first-order of weak convergence analogously to proofs for Algorithm~\ref{algorithm3.1} considered in earlier sections.

 \begin{algorithm}
\caption{Algorithm to approximate obliquely reflected diffusion}\label{alg:euclid}
\begin{steps}
\begin{em}
\item \begin{textit} Set $X_{0} = x$, $X_{0}^{'}= x$, $k=0$.\end{textit}
\item \begin{textit}Simulate $\xi_{k+1}$ and find $X_{k+1}^{'}$ according to (\ref{eq:38}).\end{textit}
\item \textbf{If}  ${ X_{k+1}^{'} \in \bar{G}} $ then $ X_{k+1} = X_{k+1}^{'}$,\;\textbf{else}
 \begin{enumerate}[label= (\roman*)]
\item  find the projection, $X_{k+1}^{\pi}$, of $X_{k+1}^{'}$ on $\partial G$ along the oblique direction $\eta(X_{k+1}^{\pi})$ according to (\ref{3.11})-(\ref{3.13});
\item compute $ r_{k+1} = dist(X_{k+1}^{'}, X_{k+1}^{\pi}) $ and find $X_{k+1}$ according to (\ref{eq:41}).
\end{enumerate}
\item \textbf{If} $k+1=N$ then \textbf{stop}, \textbf{else} put $k := k+1$ and \textbf{return} to Step 2.
\end{em}
\end{steps}
\end{algorithm}
\vspace{-3ex}
\section{Numerical experiments}\label{section8}

In this section we perform numerical experiments to support the theoretical results obtained in Sections~\ref{section4}-\ref{section5}.

To evaluate the expectation $\mathbb{E}\rf{\Gamma}$ where $\rf{\Gamma}$ is a generic random variable with some finite moments, we use the Monte Carlo technique in the usual fashion:
\begin{align}\label{montcest}
    \mathbb{E}\rf{\Gamma} \simeq \check{\rf{\Gamma}}_{M} := \frac{1}{M}\sum\limits_{m=1}^{M}\rf{\Gamma}^{(m)},
\end{align}
where $\rf{\Gamma}^{(m)}$ are independent realisations of $\rf{\Gamma}$. The Monte Carlo error  is computed via the sample variance:
\begin{equation*}
    \bar{D}_{M} = \frac{1}{M}\bigg(\frac{1}{M}\sum\limits_{m=1}^{M} (\rf{\Gamma}^{(m)})^{2} -\check{\rf{\Gamma}}_{M}^{2}\bigg).
\end{equation*}
The $95\% $ confidence interval for $\mathbb{E}\rf{\Gamma}$ is $\check{\rf{\Gamma}}_{M} \pm 2 \sqrt{\bar{D}_{M}}$.

\subsection{Parabolic PDE}

In this subsection we solve a parabolic PDE with Neumann boundary condition using Algorithm~\ref{algorithm3.1}. 
\begin{experiment}\normalfont
Take the circular domain $G=\{x_{1}^{2} + x_{2}^{2} < R^{2}\}$ and $\partial G=\{x_{1}^{2} + x_{2}^{2} = R^{2}\}$. Consider the  parabolic problem
    \begin{align}\label{expeq1}
            \frac{\partial u}{\partial t}
            + \frac{1}{2}\frac{\partial^{2}u}{\partial x_{1}^{2}} + 2\frac{\partial^{2} u}{\partial x_{2}^{2}}
            - x_{2}\frac{\partial u}{\partial x_{1}} + x_{1}\frac{\partial u}{\partial x_{2}}   + 5(1+\exp^{-(T-t)})\nonumber & \\ - (25-x_{1}^{2} - x_{2}^{2})\exp^{-(T-t)} = 0, \;\; (t,x) \in [0,T) \times G,
    \end{align}
with terminal condition
\begin{equation}\label{expeq2}
    u(T,x) = 2(25-x_{1}^{2} - x_{2}^{2}), \;\;\;\;\;\;\;  x \in \bar{G},
\end{equation}
and Neumann boundary condition
\begin{equation}\label{expeq3}
    \frac{\partial u}{\partial \nu} = 2R(1+\exp^{-(T-t)}), \;\;\;\;\;\; z \in \partial G \;\; \text{and} \;\; t \in [0,T).
    \end{equation}
The solution of the above problem is given by $u(t,x_{1},x_{2}) = (25-x_{1}^{2}-x_{2}^{2})(1+\exp^{-(T-t)})$. The exact solution at $(t, x_{1}, x_{2}) = (0,0,0)$ with \rf{$R = 2$} and $T= 1$ is $34.1970$ (4 d.p.). We note that the construction of the model problem in this experiment follows the same path as in \cite{82} (see also \cite[Chapter 6]{49}).

We consider the absolute error $e=|u(0,0,0) - \check{u}_{M}|$, where  $\check{u}_{M}$ is the Monte Carlo estimator for $\bar u(0,0,0)=\mathbb{E}[u(T,X_N)Y_N+Z_N]$ which approximates $u(0,0,0)$. Here $(X_N,Y_N,Z_N)$ is due to Algorithm~\ref{algorithm3.1} applied to the problem (\ref{expeq1})-(\ref{expeq3}). The results are presented in Table~\ref{exptab8.1} and Figure~\ref{exptab8.1}, which demonstrate that the numerical integration error incurred in solving the parabolic problem  (\ref{expeq1})-(\ref{expeq3}) is of order $\mathcal{O}(h)$ and hence it is consistent with the prediction of  Theorem~\ref{conp}.
\end{experiment}

\begin{minipage}[b]{.40\textwidth}
\captionof{table}{Numerical solution of the parabolic problem (\ref{expeq1})-(\ref{expeq3}) using Algorithm \ref{algorithm3.1}.}\label{exptab8.1}

\centering 

\setlength{\tabcolsep}{2.9pt}
\begin{tabular}{c c c c c} 
\hline\hline 
 $h$ & $M$ & $\check{u}_{M}(0,0,0) \pm 2 \sqrt{\bar{D}_{M}} $ & $e$\\ [0.5ex] 
\hline 
  0.1 & $10^{5}$ & $36.5301 \pm \; 0.0408$ & 2.3331 \\ 
  0.05 & $10^{6}$ & $35.4104 \pm \;0.0133$ & 1.2134\\
 0.025 & $10^{6}$ & $34.8223 \pm \;0.0135$ & 0.6253\\
 0.0125 & $10^{6}$ & $34.5120 \pm \;0.0136$ & 0.3150\\[1ex]
\hline 
\end{tabular}
\end{minipage}\qquad
 \hfill%
  \begin{minipage}[t]{.47\textwidth}
   \centering
        \includegraphics[width=6.4cm]{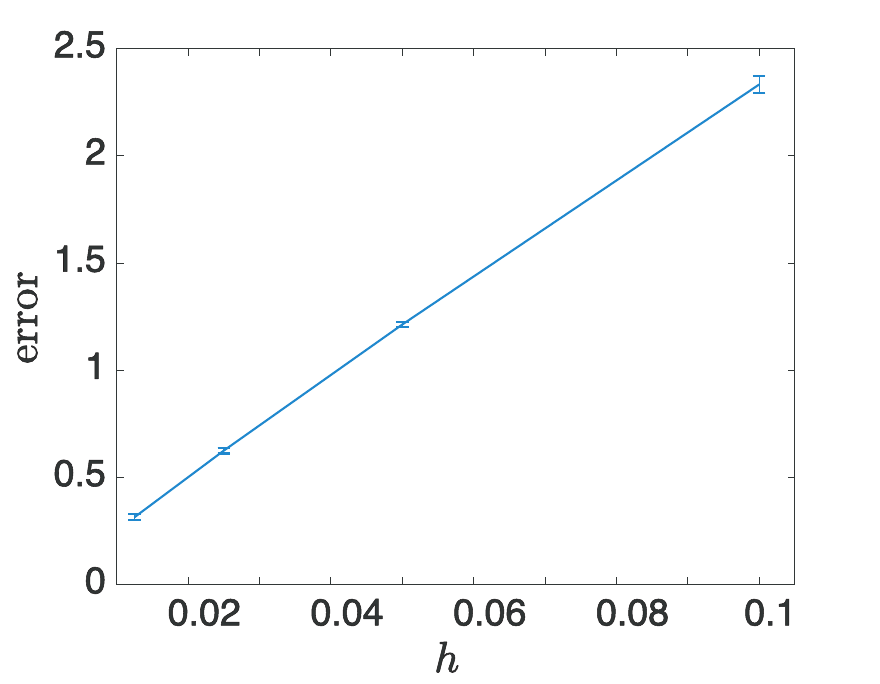}
    \captionof{figure}{Plot to show the first order of convergence of Algorithm~\ref{algorithm3.1} applied to (\ref{expeq1})-(\ref{expeq3}). Error bars correspond to the Monte Carlo error.}\label{expfig8.1}\end{minipage}\qquad \qquad \qquad

\subsection{Calculating ergodic limits}\label{section8.2}
In this subsection we approximate $\bar{\varphi}$ (cf. (\ref{q2.24})) and $\bar{\psi}^{'}$ (cf. (\ref{q2.26})) using the numerical time-averaging estimators  (cf. (\ref{3.32}) and (\ref{5.39})) and the numerical ensemble-averaging estimators (cf. (\ref{e5.73}) and (\ref{v5.82}) together with the Monte Carlo technique (\ref{montcest})) based on Algorithm \ref{algorithm2.1}.

There are three types of errors incurred while computing $\bar{\varphi}$ and $\bar{\psi}^{'}$ via numerical time-averaging estimators $\hat{\varphi}_{N}$ and $\hat{\psi}^{'}_{N}$ \cite{40,41}:  (i) the error due to discretization of RSDEs (\ref{rsde}) (the numerical integration error) which is estimated by $Ch$, (ii) the error incurred because integration is over finite time $T$ (the time truncation error) which is estimated by $C/T$, and (iii) the statistical error.
The statistical error is also controlled by the final finite time $T$ (see e.g. \cite{41} for further discussion on the statistical error).

In practice to find the statistical error of $\hat{\varphi}_{N}$ and $\hat{\psi}^{'}_{N}$, we simulate a long trajectory (here according to Algorithm~\ref{algorithm2.1}) and split that trajectory into $L$ blocks of time length $T'$, which means that the total length of the simulated trajectory is $T=LT'$.
We calculate $\hat{f}^{(i)}$ (here $\hat{f}^{(i)}$ is either  $\hat{\varphi}^{(i)}$ or $\hat{\psi}^{'}{}^{(i)}$) computed over the $i$th block, $i = 1,\dots,L$, and the statistical error is estimated as
\begin{equation}
    \bar{J}_{L} = \frac{1}{L}\bigg(\rf{\frac{1}{L}}\sum\limits_{i=1}^{L}\big(\hat{f}^{(i)}\big)^{2} - \Big(\frac{1}{L}\sum\limits_{i=1}^{L}\hat{f}^{(i)}\Big)^{2}\bigg).
\end{equation}
This implies that the $95\%$ confidence interval of $\mathbb{E}\hat{f}$ is given by $\hat{f} \pm 2\sqrt{\bar{J}_{L}}$, where the time-averaging estimator $\hat{f}$ is computed over the whole trajectory with the length $T=LT'$.

\begin{experiment}
\normalfont Consider the circular domain $ G=\{ x_{1}^{2} + x_{2}^{2} < R^{2}\} $ with boundary $\partial G=\{ x_{1}^{2} + x_{2}^{2}  = R^{2}\} $, where $R >0$ is the radius.
Consider the RSDEs
\begin{align} \label{exprsde1}
&\begin{bmatrix} dX_{1}(s)\\ dX_{2}(s) \end{bmatrix}= -\begin{bmatrix}
\frac{X_{1}(s)}{2} + \frac{X_{2}(s)}{4} \\ \frac{X_{1}(s)}{4} +\frac{X_{2}(s)}{2}
\end{bmatrix}ds \\&  \;\; + \begin{bmatrix}
     \sin(X_{1}(s)+X_{2}(s)) & \cos(X_{1}(s)+X_{2}(s)) \\
     \sin(X_{1}(s)+X_{2}(s)+\frac{\pi}{3}) & \cos(X_{1}(s) + X_{2}(s) +\frac{\pi}{3})
     \end{bmatrix}\begin{bmatrix} dW_{1}(s) \\ dW_{2}(s)\end{bmatrix} \nonumber - \frac{1}{R}\begin{bmatrix} X_{1}(s) \\ X_{2}(s)\end{bmatrix}dL(s) .
\end{align}
The invariant density of the RSDEs (\ref{exprsde1}) is
\begin{equation*}
    \rho(x_{1},x_{2}) = \frac{1}{2\pi(1-e^{-R^{2}/2})}e^{-(x_{1}^{2} + x_{2}^{2})/2}.
\end{equation*}
It is not difficult to verify that the density function $\rho(x_{1},x_{2})  $ satisfies the stationary Fokker-Planck equation (\ref{sFP}) with Neumann boundary condition (\ref{sFPbc}) corresponding to the RSDEs (\ref{exprsde1}).

In this experiment, we approximate the ergodic limit $\bar{\varphi}$ (cf. (\ref{q2.24})) with $\varphi(x_{1}, x_{2}) = x_{1}^{2} + x_{2}^{2}$ using numerical time-averaging estimator (cf. (\ref{3.32})) and discretized ensemble-averaging estimator (cf. (\ref{e5.73}) together with (\ref{montcest})) based on Algorithm \ref{algorithm2.1}. To evaluate the performance of Algorithm~\ref{algorithm2.1} for computing ergodic limits, we have the exact value of $\bar{\varphi}$:
\begin{equation*}
    \bar{\varphi} = \frac{2- 2e^{-R^{2}/2} - R^{2}e^{-R^{2}/2}}{1-e^{-R^{2}/2}}.
\end{equation*}
For $R = 2$, we have $\bar{\varphi} = 1.3739$ (4 d.p.).
We introduce $error_{\rm{ta}} = | \bar{\varphi} - \hat{\varphi}_{N}| $, which is the absolute error of the time-averaging estimator (\ref{q2.24}), and $error_{\rm{ea}} = | \bar{\varphi}-\check{\varphi}_{M}|$, where $\check{\varphi}_{M}$ is the Monte Carlo estimator (see (\ref{montcest})) of $\mathbb E \varphi(X_N) $ (see (\ref{e5.73})), i.e. $error_{\rm{ea}}$ is the absolute error of the ensemble-averaging estimator. The simulations are run by taking $(0,0)$ as the starting point in both time-averaging and ensemble-averaging.

In Table \ref{exptab8.2}, the second column corresponds to time-averaging estimation of $\bar{\varphi}$ and it also includes the statistical error. The fourth column corresponds to ensemble-averaging estimation of $\bar{\varphi}$ and it also includes the Monte Carlo error. Table~\ref{exptab8.2} and Figure~\ref{expfig8.2} verify the theoretical results of Theorem~\ref{thrm3.2} (see (\ref{est1})) and Theorem~\ref{theorem6.14} (see (\ref{e5.73})) as the absolute errors $error_{\rm{ta}}$ and $error_{\rm{ea}}$ are of order $\mathcal{O}(h)$.

\end{experiment}


\begin{minipage}{.60\textwidth}
\vspace{-37ex}
\captionof{table}{The parameters for calculating $\rs{\hat{\varphi}_{N}}$ are $L = 10^{4}$ and $T' = 10$  and the parameters for calculating $\check{\varphi}_{M}$ are $T =5$ and $M = 2 \times 10^{4}$.}\label{exptab8.2}
\begingroup
\setlength{\tabcolsep}{1.7pt}
\begin{tabular}{c c c c c } 
\hline\hline 
 $h$ & $\hat{\varphi}_{N} \pm 2\sqrt{\bar{J}_{L}}$ & $error_{\rm{ta}}$ & $\check{\varphi}_{M} \pm 2\sqrt{\bar{D}_{M}} $ & $error_{\rm{ea}}$ \\  
 [0.5ex] 
\hline 
 0.4 & 1.6590 $\pm$ 0.0073  & 0.2850  & 1.6651 $\pm$ 0.0167  & 0.2911\\
 0.25 & 1.5428 $\pm$ 0.0072& 0.1688 &  1.5403 $\pm$ 0.0164 & 0.1663 \\
 0.2 & 1.5039 $\pm$ 0.0072 & 0.1299 & 1.4989 $\pm$ 0.0161  & 0.1249\\
 0.1 & 1.4236 $\pm$ 0.0070 & 0.0496  & 1.4262 $\pm$ 0.0154  & 0.0522 \\
  [1ex]
\hline 
\end{tabular}
\endgroup
\end{minipage}
 \hfill%
  \begin{minipage}[b]{.30\textwidth}
  \hspace{-0.8cm}
        \includegraphics[width=4.7cm]{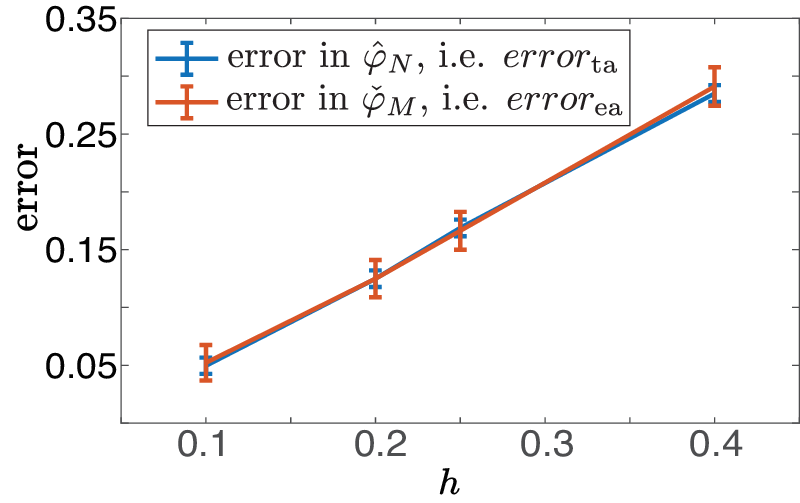}
 \captionof{figure}{Plot to show linear dependence of the errors in computing the ergodic limit, $error_{\rm{ta}}$ and $error_{\rm{ea}}$, on $h$.}\label{expfig8.2}
\end{minipage}

\begin{experiment}\label{experiment8.3}
\normalfont Consider the sphere $G=\{x_{1}^{2} +x_{2}^{2} + x_{3}^{2} < 1\}$ with boundary $\partial G=\{x_{1}^{2} + x_{2}^{2} + x_{3}^{2} = 1\}$ and the RSDEs
\begin{equation*}
    dX(s) = \frac{1}{|X(s)|}\Big(V - \frac{(V\cdot X(s))}{|X(s)|^{2}}X(s)\Big)ds + \sqrt{2}dW(s) - X(s)dL(s),
\end{equation*}
where $W(s)$ is a $3$-dimensional standard Wiener process and $V = (1/2,1/2,1/\sqrt{2})^{\top}$.  The invariant density of the above RSDEs is
\begin{equation*}
    \rho(x) = \frac{3}{4\pi \sinh{1}}e^{(V\cdot x)/|x|},
\end{equation*}
 and the corresponding normalised restricted density $\rho'(z)$ is
\begin{equation*}
    \rho'(z) = \frac{1}{4\pi \sinh{1}}e^{(V\cdot z)},
\end{equation*}
where $x=(x_{1},x_{2},x_{3}) \in \bar{G}$ and $z = (z_{1},z_{2},z_{3}) \in \partial G$. The function $\rho'(z)$ is the density of Fisher distribution with the concentration parameter $1$ and mean direction $V$ \cite{84}.

We are interested in calculating $\kappa$ (see (\ref{eb2.25})) and  $\bar{\psi}^{'}$  for $\psi(z) = z_{1} + z_{2} + z_{3}$. The exact value of $\kappa$ is $3$ and of $\bar{\psi}^{'}$ is 0.53438 (5 d.p.). We introduce the absolute errors $e_{1} = |\hat{\psi}^{'}_{N} - \bar{\psi}^{'} | $ and $e_{2} = | \hat{\kappa}_{N} - \kappa |$, where $\hat{\psi}^{'}_{N}$ is from (\ref{5.39}) and $\hat{\kappa}_{N}$ is from (\ref{3.33}).
\end{experiment}

\begin{minipage}{.53\textwidth}
\vspace{-28ex}
\captionof{table}{The parameters used are  $T' = 30$ and $L = 10^{4}$. The initial point taken for simulation is (-0.5,-0.5,-0.5). }\label{exptab8.3}
\centering 
\begingroup
\setlength{\tabcolsep}{1.7pt}
\begin{tabular}{c c c c c  } 
\hline\hline 
 $h$ & $\hat{\psi}_{N}^{'} \pm2\sqrt{\bar{J_{L}}}$ & $e_1$  & $\hat{\kappa}_{N}\pm2\sqrt{\bar{J_{L}}}$ &  $e_2$ \\ [0.5ex] 
\hline 
 0.1  & 0.5673 $\pm$ 0.0025 & 0.03294 & 3.1183 $\pm$ 0.0065 & 0.1183 \\
 0.075  & 0.5602 $\pm$ 0.0025 & 0.02577& 3.0813 $\pm$ 0.0071 & 0.0813\\
 0.05  & 0.5557  $\pm$ 0.0026 & 0.02127 & 3.0510 $\pm$ 0.0039 & 0.0510\\
 0.025  & 0.5457 $\pm$ 0.0026 & 0.01130 & 3.0227 $\pm$ 0.0040 & 0.0227\\ [1ex]
\hline 
\end{tabular}
\endgroup

\end{minipage}\qquad
 \hfill%
  \begin{minipage}[b]{.30\textwidth}
  \hspace{-0.5cm}
        \includegraphics[width=4.5cm]{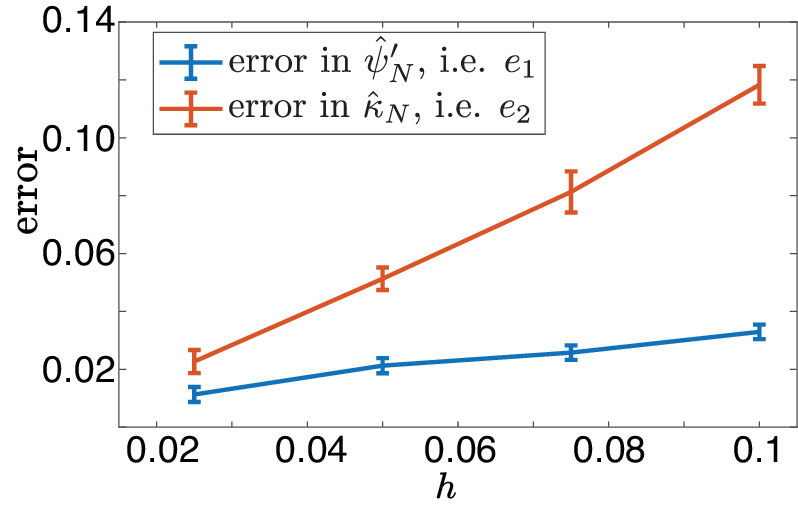}
    \captionof{figure}{Plot to show the dependence of the errors $e_{1}$ and $e_{2}$ on $h$.}\label{expfig8.3}
\end{minipage}\qquad \qquad \qquad

The second column in Table \ref{exptab8.3} corresponds to estimation of $\bar{\psi}^{'}$ and the fourth column - to estimation of $\kappa$  via the time averaging estimators (\ref{5.39}) and (\ref{3.33}), respectively. One can see from Table~\ref{exptab8.3} and Figure~\ref{expfig8.3} that the absolute errors $e_{1}$ and $e_{2}$ decay linearly with $h$ for sufficiently large  $T=LT'$, which verifies  Theorem~\ref{thrm6.5} and the estimate (\ref{kappa}) of Lemma~\ref{aapl4.4}.

\subsection{Elliptic PDEs}
In this subsection we present two numerical experiments to verify the theoretical results proved in Section~\ref{section5}. 

\begin{experiment} \normalfont
Consider the torus $ G =\{\big(\sqrt{x_{1}^{2} + x_{2}^{2}} - R\big)^{2} + x_{3}^2 < r^{2}\} $ with boundary $\partial G =\{\big(\sqrt{x_{1}^{2} + x_{2}^{2}} - R\big)^{2} + x_{3}^2 = r^{2}\} $, where $0 < r < R$, and the elliptic equation
    \begin{align}\label{expeleq}
    \frac{1}{2}\frac{\partial^{2}u}{\partial x_{1}^{2}} + & \frac{5}{2}\frac{\partial^{2} u}{\partial x_{2}^{2}}  + \frac{\partial^{2} u}{\partial x_{3}^{2}} + \frac{\partial^{2}u}{\partial x_{1}\partial x_{2}} + 2\frac{\partial^{2}u}{\partial x_{2}\partial x_{3}}
            - x_{3}\frac{\partial u}{\partial x_{1}} + x_{1}\frac{\partial u}{\partial x_{2}} +     x_{2}\frac{\partial u}{\partial x_{3}}     - 2u   +  2x_{3}^{3} \nonumber \\ & - 3x_{2}x_{3}^{2}   + 2x_{2}^{2} - 2x_{1}x_{2} + 2x_{1} - 5x_{3} - 5    = 0, \;\; (x_{1},x_{2},x_{3}) \in  G,
    \end{align}
with Neumann boundary condition
\begin{equation}\label{expelbc}
    \frac{\partial u}{\partial \nu} = \frac{1}{r}\bigg(\Big(\frac{R}{\sqrt{\rs{z_{1}^{2} + z_{2}^{2}}}} - 1\Big)\big(\rs{z_{1}} + 2\rs{z_{2}^{2}}\big) - 3\rs{z_{3}^{3}}\bigg), \;\;\;\;\;\; (z_{1},z_{2},z_{3}) \in \partial G.
    \end{equation}
 In this experiment we solve the elliptic equation (\ref{expeleq}) with boundary condition (\ref{expelbc}) and verify the estimate (\ref{eq:th5.4}) that the numerical integration error decays linearly with respect to step size $h$ for sufficiently large $T$. For this experiment, we choose $r= 2$ and $R= 4$. The exact solution is given by $u(x_{1},x_{2},x_{3}) = x_{1} + x_{2}^{2} + x_{3}^{3}$ and at $(x_{1}, x_{2}, x_{3}) = (1,2,1/2)$ solution is $5.125$. We denote the absolute error as $e = |u(1,2,1/2) - \check{u}_{M}|$, where $\check{u}_{M}$ is the Monte Carlo estimator (see (\ref{montcest})) of the approximation of the solution to (\ref{expeleq})-(\ref{expelbc}) computed using Algorithm~\ref{algorithm3.1}. Table \ref{exptab8.4} and Figure~\ref{fig8.4} confirm the result of Theorem \ref{thrm5.4}.
\end{experiment}

\begin{minipage}{.40\textwidth}
\vspace{-30ex}
  \captionof{table}{Results of numerical simulation for the elliptic problem (\ref{expeleq})-(\ref{expelbc}) with $T=4$.}\label{exptab8.4}
  \setlength{\tabcolsep}{1.7pt}
\begin{tabular}{c c c c} 
\hline\hline 
 $h$  & $M$ & $\check{u}_{M}$(1,2,1/2) $\pm$\;$2\sqrt{\bar{D}_{M}}$ & $e$\\ [0.5ex] 
\hline 
 0.1  & $10^{6}$ & 5.5144 $\pm$ \;0.0168 & 0.3894 \\ 
 0.08  & $10^{6}$ & 5.4615 $\pm$ \;0.0167 & \rs{0.3365}\\
 \rs{0.04}  & \rs{$10^{6}$} & \rs{5.2874 $\pm$ \;0.0163} & \rs{0.1624}\\
 0.01  & $10^{7}$ & 5.0820 $\pm$ \;0.0050 & 0.0430\\ [1ex] 
\hline 
\end{tabular}

\end{minipage}\qquad
 \hfill%
  \begin{minipage}[b]{.37\textwidth}
   \centering
        \includegraphics[width=5.5cm]{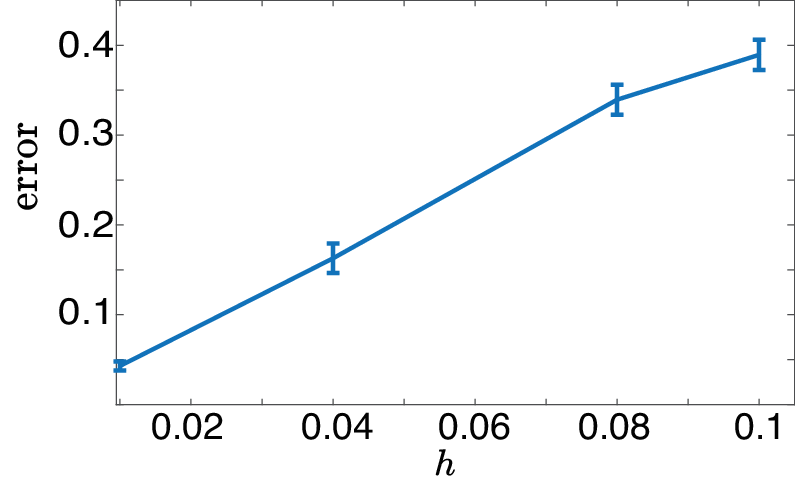}
    \captionof{figure}{Plot to show linear dependence of absolute error $e$ on $h$.}\label{fig8.4}
\end{minipage}\qquad \qquad \qquad

\begin{experiment}\label{experiment7.5} \normalfont
Consider the circular domain $ G=\{ x_{1}^{2} + x_{2}^{2} < 4\} $ with boundary $\partial G =\{x_{1}^{2} + x_{2}^{2}  = 4\}$ and the  Poisson PDE
    \begin{align}\label{expeq8.5.1}
    \frac{1}{2}\frac{\partial^{2}u}{\partial x_{1}^{2}} + \frac{1}{2}\frac{\partial^{2} u}{\partial x_{2}^{2}} + & \frac{1}{2}\frac{\partial^{2} u}{\partial x_{1} \partial x_{2}}
            - \Big(\frac{x_{1}}{2} + \frac{x_{2}}{4}\Big)\frac{\partial u}{\partial x_{1}} \nonumber \\ &  - \Big(\frac{x_{1}}{4} + \frac{x_{2}}{2}\Big)\frac{\partial u}{\partial x_{2}}         = 2 - x_{1}^{2} - x_{2}^{2} -  x_{1}x_{2}, \;\; (x_{1},x_{2}) \in  G,
    \end{align}
with Neumann boundary condition
\begin{equation}\label{expeq8.5.2}
    \frac{\partial u}{\partial \nu} = -4, \;\;\;\;\;\; (z_{1},z_{2}) \in \partial G.
    \end{equation}
Note that the underlying RSDE for (\ref{expeq8.5.1})-(\ref{expeq8.5.2}) is (\ref{exprsde1}).
It is not difficult to show that the compatibility condition (\ref{cc}) is satisfied for (\ref{expeq8.5.1})-(\ref{expeq8.5.2}). The exact solution is given by $u(x_1,x_2)=x_1^2+x_2^2$, $(x_1,x_2) \in \bar G$. In this experiment we use Algorithm \ref{algorithm6.1} to solve the Poisson problem (\ref{expeq8.5.1})-(\ref{expeq8.5.2}).  As we know from Subsection~\ref{section6.3}, we can numerically evaluate the solution of the Poisson PDE with Neumann boundary condition up to an additive constant $\bar{u}$. For comparing the exact solution and numerical solution, we computed the exact value of $\bar{u} = 1.374$ (3 d.p.). We evaluate $u(x_{1},x_{2})- \bar{u}$ at $(x_{1},x_{2}) = (\sqrt{2},\sqrt{2})$, \rf{the} exact value \rf{of which} is 2.626 (3 d.p.). We introduce the absolute error $e=|u(\sqrt{2},\sqrt{2}) - \bar{u} - \check{u}_{M}|$. Table~\ref{exptable8.5} and Figure~\ref{expfig8.5} show that $e=\mathcal{O}(h)$ that verifies the result of Theorem~\ref{theorem6.7}.
\end{experiment}
\begin{minipage}{.40\textwidth}
\vspace{-30ex}
\captionof{table}{Numerical solution of the Poisson problem (\ref{expeq8.5.1})-(\ref{expeq8.5.2}) at $(\sqrt{2},\sqrt{2})$ up to the additive constant $\bar u$ using Algorithm~\ref{algorithm6.1} with  parameters $T=5$, $\ell = 0.1$, $\beta = 1$, $\Upsilon = 1$.}\label{exptable8.5}
\setlength{\tabcolsep}{1.7pt}
\begin{tabular}{c c c c c} 
\hline\hline 
$h$ & $M$ & $\check{u}_{M}(\sqrt{2}$,$\sqrt{2}$)$\pm$ $2\sqrt{\bar{D}_{M}}$ & $e$\\ [0.5ex] 
\hline 
 0.5  & $10^{4}$ &3.781 $\pm$ \;0.111 &  1.155 \\
 0.4  & $10^{5}$ & 3.438 $\pm$ \;0.035 & 0.812\\
 0.3  & $10^{5}$ & 3.194 $\pm$ \;0.034 & 0.568\\
0.2 & $10^{5}$ & 2.998 $\pm$ \;0.035 & 0.372\\ [1ex] 
\hline 
\end{tabular}
\end{minipage}\qquad
 \hfill%
 \begin{minipage}[b]{.50\textwidth}
   \centering
        \includegraphics[width=6cm]{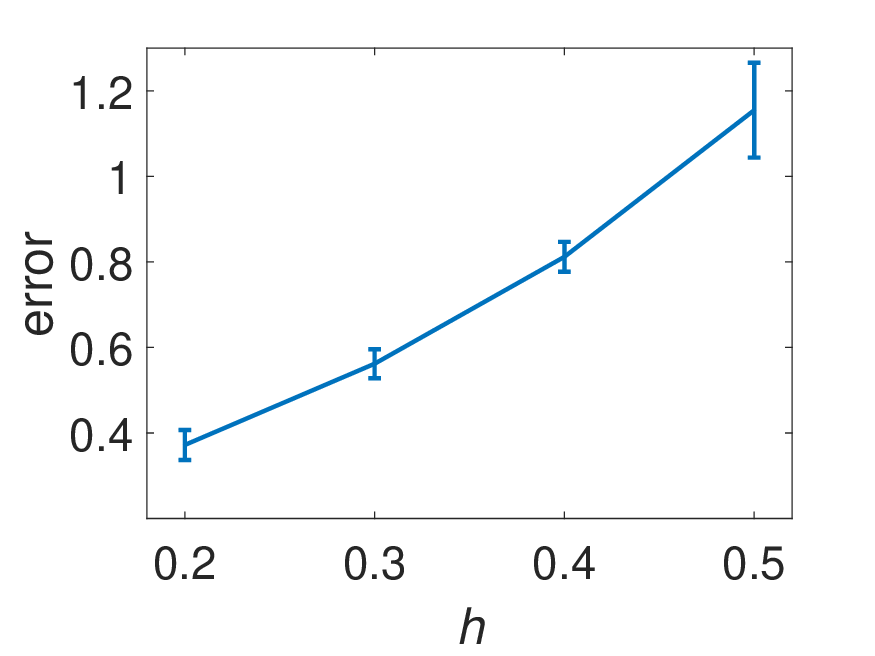}
    \captionof{figure}{Plot to show linear dependence of the absolute error $e$ on $h$. Error bars correspond to the Monte Carlo error from the third column of Table \ref{exptable8.5}.}\label{expfig8.5}
\end{minipage}\qquad \qquad \qquad

\section*{Acknowledgements}
BL and MVT thank Institute for Computational and Experimental Research in Mathematics (ICERM, Providence, RI) where this work was nucleated during a workshop hosted by ICERM.
BL  was  supported by EPSRC grant no. EP/P006175/1 and by the Alan Turing Institute (EPSRC EP/N510129/1) as a Turing Fellow. AS was supported by the University of Nottingham Vice-Chancellor's Scholarship for Research Excellence (International).
\rf{The authors thank anonymous referees for useful suggestions.}

\bibliographystyle{imsart-number} 
\bibliography{references}

\begin{thebibliography}{73}

\bibitem{stats1}
\begin{binproceedings}[author]
\bauthor{\bsnm{Afshar},~\bfnm{H.}\binits{H.}} \AND
  \bauthor{\bsnm{Domke},~\bfnm{J.}\binits{J.}}
(\byear{2015}).
\btitle{Reflection, refraction, and {H}amiltonian {M}onte {C}arlo}.
In \bbooktitle{Proceedings of the 28th International Conference on Neural
  Information Processing Systems}
\bvolume{2}
\bpages{3007-3015}.
\end{binproceedings}
\endbibitem

\bibitem{44}
\begin{barticle}[author]
\bauthor{\bsnm{Agmon},~\bfnm{S.}\binits{S.}},
  \bauthor{\bsnm{Douglis},~\bfnm{A.}\binits{A.}} \AND
  \bauthor{\bsnm{Nirenberg},~\bfnm{L.}\binits{L.}}
(\byear{1959}).
\btitle{Estimates near the boundary for solutions of elliptic partial
  differential equations satisfying general boundary conditions. {I}}.
\bjournal{Comm. Pure Appl. Math.}
\bvolume{12}
\bpages{623-727}.
\end{barticle}
\endbibitem

\bibitem{102}
\begin{binproceedings}[author]
\bauthor{\bsnm{Ahn},~\bfnm{S.}\binits{S.}},
  \bauthor{\bsnm{Korattikara},~\bfnm{A.}\binits{A.}} \AND
  \bauthor{\bsnm{Welling},~\bfnm{M.}\binits{M.}}
(\byear{2012}).
\btitle{Bayesian posterior sampling via stochastic gradient {F}isher scoring}.
In \bbooktitle{Proc. the 29th Intern. Conference on Machine Learning}
\bpages{1591--1598}.
\end{binproceedings}
\endbibitem

\bibitem{william2019}
\begin{barticle}[author]
\bauthor{\bsnm{Anderson},~\bfnm{D.~F.}\binits{D.~F.}},
  \bauthor{\bsnm{Higham},~\bfnm{D.~J.}\binits{D.~J.}},
  \bauthor{\bsnm{Leite},~\bfnm{S.~C.}\binits{S.~C.}} \AND
  \bauthor{\bsnm{Williams},~\bfnm{R.~J.}\binits{R.~J.}}
(\byear{2019}).
\btitle{On constrained {L}angevin equations and (bio)chemical reaction
  networks}.
\bjournal{Multiscale Model. Simul.}
\bvolume{17}
\bpages{1--30}.
\end{barticle}
\endbibitem

\bibitem{24}
\begin{barticle}[author]
\bauthor{\bsnm{Asmussen},~\bfnm{S.}\binits{S.}}
(\byear{1992}).
\btitle{Queueing simulation in heavy traffic}.
\bjournal{Math. Oper. Res.}
\bvolume{17}
\bpages{84--111}.
\end{barticle}
\endbibitem

\bibitem{asmussen1995}
\begin{barticle}[author]
\bauthor{\bsnm{Asmussen},~\bfnm{S.}\binits{S.}},
  \bauthor{\bsnm{Glynn},~\bfnm{P.}\binits{P.}} \AND
  \bauthor{\bsnm{Pitman},~\bfnm{J.}\binits{J.}}
(\byear{1995}).
\btitle{Discretization error in simulation of one-dimensional reflecting
  {B}rownian motion}.
\bjournal{Ann. Appl. Probab.}
\bvolume{5}
\bpages{875--896}.
\end{barticle}
\endbibitem

\bibitem{bayer2010}
\begin{barticle}[author]
\bauthor{\bsnm{Bayer},~\bfnm{C.}\binits{C.}},
  \bauthor{\bsnm{Szepessy},~\bfnm{A.}\binits{A.}} \AND
  \bauthor{\bsnm{Tempone},~\bfnm{R.}\binits{R.}}
(\byear{2010}).
\btitle{Adaptive weak approximation of reflected and stopped diffusions}.
\bjournal{Monte Carlo Methods Appl.}
\bvolume{16}
\bpages{1--67}.
\end{barticle}
\endbibitem

\bibitem{45}
\begin{barticle}[author]
\bauthor{\bsnm{Bench\'{e}rif-Madani},~\bfnm{A.}\binits{A.}} \AND
  \bauthor{\bsnm{Pardoux},~\bfnm{E.}\binits{E.}}
(\byear{2009}).
\btitle{A probabilistic formula for a {P}oisson equation with {N}eumann
  boundary condition}.
\bjournal{Stoch. Anal. Appl.}
\bvolume{27}
\bpages{739-746}.
\end{barticle}
\endbibitem

\bibitem{23}
\begin{bbook}[author]
\bauthor{\bsnm{Bensoussan},~\bfnm{A.}\binits{A.}} \AND
  \bauthor{\bsnm{Lions},~\bfnm{J.~L.}\binits{J.~L.}}
(\byear{1987}).
\btitle{Impulse control and quasi variational inequalities}.
\bpublisher{John Wiley}.
\end{bbook}
\endbibitem

\bibitem{Bernal2019}
\begin{barticle}[author]
\bauthor{\bsnm{Bernal},~\bfnm{F.}\binits{F.}}
(\byear{2019}).
\btitle{An implementation of {M}ilstein's method for general bounded
  diffusions}.
\bjournal{J. Sci. Comput.}
\bvolume{79}
\bpages{867--890}.
\end{barticle}
\endbibitem

\bibitem{75}
\begin{barticle}[author]
\bauthor{\bsnm{Blei},~\bfnm{D.~M.}\binits{D.~M.}},
  \bauthor{\bsnm{Ng},~\bfnm{A.~Y.}\binits{A.~Y.}},
  \bauthor{\bsnm{Jordan},~\bfnm{M.~I.}\binits{M.~I.}} \AND
  \bauthor{\bsnm{Lafferty},~\bfnm{J.}\binits{J.}}
(\byear{2003}).
\btitle{Latent {D}irichlet allocation}.
\bjournal{J. Mach. Learn. Res.}
\bvolume{3}
\bpages{993--1022}.
\end{barticle}
\endbibitem

\bibitem{2}
\begin{barticle}[author]
\bauthor{\bsnm{Bossy},~\bfnm{M.}\binits{M.}},
  \bauthor{\bsnm{Gobet},~\bfnm{E.}\binits{E.}} \AND
  \bauthor{\bsnm{Talay},~\bfnm{D.}\binits{D.}}
(\byear{2004}).
\btitle{A symmetrized {E}uler scheme for an efficient approximation of
  reflected diffusions}.
\bjournal{J. Appl. Probab.}
\bvolume{41}
\bpages{877--889}.
\end{barticle}
\endbibitem

\bibitem{56}
\begin{barticle}[author]
\bauthor{\bsnm{Brosse},~\bfnm{N.}\binits{N.}},
  \bauthor{\bsnm{Durmus},~\bfnm{A.}\binits{A.}},
  \bauthor{\bsnm{Moulines},~\bfnm{E.}\binits{E.}} \AND
  \bauthor{\bsnm{Pereyra},~\bfnm{M.}\binits{M.}}
(\byear{2017}).
\btitle{Sampling from a log-concave distribution with compact support with
  proximal {L}angevin {M}onte {C}arlo}.
\bjournal{Proc. Mach. Learn. Res.}
\bvolume{65}
\bpages{319--342}.
\end{barticle}
\endbibitem

\bibitem{55}
\begin{barticle}[author]
\bauthor{\bsnm{Bubeck},~\bfnm{S.}\binits{S.}},
  \bauthor{\bsnm{Eldan},~\bfnm{R.}\binits{R.}} \AND
  \bauthor{\bsnm{Lehec},~\bfnm{J.}\binits{J.}}
(\byear{2018}).
\btitle{Sampling from a log-concave distribution with Projected {L}angevin
  {M}onte {C}arlo}.
\bjournal{Discrete Comput. Geom.}
\bvolume{59}
\bpages{757--783}.
\end{barticle}
\endbibitem

\bibitem{58}
\begin{barticle}[author]
\bauthor{\bsnm{Byrne},~\bfnm{S.}\binits{S.}} \AND
  \bauthor{\bsnm{Girolami},~\bfnm{M.}\binits{M.}}
(\byear{2013}).
\btitle{Geodesic {M}onte {C}arlo on embedded manifolds}.
\bjournal{Scand. J. Stat.}
\bvolume{40}
\bpages{825-845}.
\end{barticle}
\endbibitem

\bibitem{43}
\begin{barticle}[author]
\bauthor{\bsnm{Cattiaux},~\bfnm{P.}\binits{P.}},
  \bauthor{\bsnm{Le\'{o}n},~\bfnm{J.~R.}\binits{J.~R.}} \AND
  \bauthor{\bsnm{Prieur},~\bfnm{C.}\binits{C.}}
(\byear{2017}).
\btitle{Invariant density estimation for a reflected diffusion using an {E}uler
  scheme}.
\bjournal{Monte Carlo Methods Appl.}
\bvolume{23}
\bpages{71-88}.
\end{barticle}
\endbibitem

\bibitem{74}
\begin{barticle}[author]
\bauthor{\bsnm{Celeux},~\bfnm{G.}\binits{G.}},
  \bauthor{\bsnm{Anbari},~\bfnm{M.~El}\binits{M.~E.}},
  \bauthor{\bsnm{Marin},~\bfnm{J.~M.}\binits{J.~M.}} \AND
  \bauthor{\bsnm{Robert},~\bfnm{C.~P.}\binits{C.~P.}}
(\byear{2012}).
\btitle{Regularization in regression: comparing {B}ayesian and frequentist
  methods in a poorly informative situation}.
\bjournal{Bayesian Anal.}
\bvolume{7}
\bpages{477--502}.
\end{barticle}
\endbibitem

\bibitem{15}
\begin{barticle}[author]
\bauthor{\bsnm{Costantini},~\bfnm{C.}\binits{C.}},
  \bauthor{\bsnm{Pacchiarotti},~\bfnm{B.}\binits{B.}} \AND
  \bauthor{\bsnm{Sartoretto},~\bfnm{F.}\binits{F.}}
(\byear{1998}).
\btitle{Numerical approximation for functionals of reflecting diffusion
  processes}.
\bjournal{SIAM J. Appl. Math.}
\bvolume{58}
\bpages{73--102}.
\end{barticle}
\endbibitem

\bibitem{chem2}
\begin{barticle}[author]
\bauthor{\bsnm{Dangerfield},~\bfnm{C.~E.}\binits{C.~E.}},
  \bauthor{\bsnm{Kay},~\bfnm{D.}\binits{D.}} \AND
  \bauthor{\bsnm{Burrage},~\bfnm{K.}\binits{K.}}
(\byear{2012}).
\btitle{Modeling ion channel dynamics through reflected stochastic differential
  equations}.
\bjournal{Phys. Rev. E}
\bvolume{85}
\bpages{051907}.
\end{barticle}
\endbibitem

\bibitem{83}
\begin{bbook}[author]
\bauthor{\bsnm{Da\;Prato},~\bfnm{G.}\binits{G.}} \AND
  \bauthor{\bsnm{Zabczyk},~\bfnm{J.}\binits{J.}}
(\byear{1996}).
\btitle{Ergodicity for infinite dimensional systems}.
\bpublisher{Cambr. Univ. Press}.
\end{bbook}
\endbibitem

\bibitem{79}
\begin{barticle}[author]
\bauthor{\bsnm{Davis},~\bfnm{M.~H.~A.}\binits{M.~H.~A.}} \AND
  \bauthor{\bsnm{Norman},~\bfnm{A.~R.}\binits{A.~R.}}
(\byear{1990}).
\btitle{Portfolio selection with transaction costs}.
\bjournal{Math. Oper. Res.}
\bvolume{15}
\bpages{676--713}.
\end{barticle}
\endbibitem

\bibitem{stats3}
\begin{barticle}[author]
\bauthor{\bsnm{Deligiannidis},~\bfnm{G.}\binits{G.}},
  \bauthor{\bsnm{Bouchard-Cote},~\bfnm{A.}\binits{A.}} \AND
  \bauthor{\bsnm{Douce},~\bfnm{A.}\binits{A.}}
(\byear{2019}).
\btitle{Exponential ergodicity of the bouncy particle sampler}.
\bjournal{Ann. Statist.}
\bvolume{47}
\bpages{1268-1287}.
\end{barticle}
\endbibitem

\bibitem{22}
\begin{barticle}[author]
\bauthor{\bsnm{Elliott},~\bfnm{R.~J.}\binits{R.~J.}}
(\byear{2001}).
\btitle{A continuous time {K}ronecker's lemma and martingale convergence}.
\bjournal{Stoch. Anal. Appl.}
\bvolume{19}
\bpages{433-437}.
\end{barticle}
\endbibitem

\bibitem{25}
\begin{btechreport}[author]
\bauthor{\bsnm{Faugeras},~\bfnm{O.}\binits{O.}},
  \bauthor{\bsnm{Cl{\'e}ment},~\bfnm{F.}\binits{F.}},
  \bauthor{\bsnm{Deriche},~\bfnm{R.}\binits{R.}},
  \bauthor{\bsnm{Keriven},~\bfnm{R.}\binits{R.}},
  \bauthor{\bsnm{Papadopoulo},~\bfnm{T.}\binits{T.}},
  \bauthor{\bsnm{Roberts},~\bfnm{J.}\binits{J.}},
  \bauthor{\bsnm{Vi{\'e}ville},~\bfnm{T.}\binits{T.}},
  \bauthor{\bsnm{Devernay},~\bfnm{F.}\binits{F.}},
  \bauthor{\bsnm{Gomes},~\bfnm{J.}\binits{J.}},
  \bauthor{\bsnm{Hermosillo},~\bfnm{G.}\binits{G.}},
  \bauthor{\bsnm{Kornprobst},~\bfnm{P.}\binits{P.}} \AND
  \bauthor{\bsnm{Lingrand},~\bfnm{D.}\binits{D.}}
(\byear{1999}).
\btitle{{The inverse {EEG} and {MEG} problems: The adjoint state approach I:
  The continuous case}}
\btype{Research Report} No. \bnumber{RR-3673},
\bpublisher{{INRIA}}.
\end{btechreport}
\endbibitem

\bibitem{cont}
\begin{barticle}[author]
\bauthor{\bsnm{Filho},~\bfnm{C.}\binits{C.}}
(\byear{2019}).
\btitle{On the boundary conditions in {L}agrangian particle methods and the
  physical foundations of continuum mechanics}.
\bjournal{Contin. Mech. Thermodyn.}
\bvolume{31}
\bpages{475--489}.
\end{barticle}
\endbibitem

\bibitem{84}
\begin{barticle}[author]
\bauthor{\bsnm{Fisher},~\bfnm{R.~A.}\binits{R.~A.}}
(\byear{1953}).
\btitle{Dispersion on a sphere}.
\bjournal{Proc. Royal Society London A.}
\bvolume{217}
\bpages{295-305}.
\end{barticle}
\endbibitem

\bibitem{18}
\begin{bbook}[author]
\bauthor{\bsnm{Freidlin},~\bfnm{M.}\binits{M.}}
(\byear{1985}).
\btitle{Functional integration and partial differential equations}.
\bpublisher{Princeton Univ. Press}.
\end{bbook}
\endbibitem

\bibitem{70}
\begin{barticle}[author]
\bauthor{\bsnm{Friedman},~\bfnm{A.}\binits{A.}}
(\byear{1961}).
\btitle{Asymptotic behavior of solutions of parabolic equations of any order}.
\bjournal{Acta Math.}
\bvolume{106}
\bpages{1--43}.
\end{barticle}
\endbibitem

\bibitem{19}
\begin{bbook}[author]
\bauthor{\bsnm{Garroni},~\bfnm{M.~G.}\binits{M.~G.}} \AND
  \bauthor{\bsnm{Menaldi},~\bfnm{J.~L.}\binits{J.~L.}}
(\byear{1992}).
\btitle{Green functions for second order parabolic integro-differential
  problems}.
\bpublisher{Longman Scientific \& Technical}.
\end{bbook}
\endbibitem

\bibitem{16}
\begin{bbook}[author]
\bauthor{\bsnm{Gikhman},~\bfnm{I.~I.}\binits{I.~I.}} \AND
  \bauthor{\bsnm{Skorokhod},~\bfnm{A.~V.}\binits{A.~V.}}
(\byear{1972}).
\btitle{Stochastic differential equations}.
\bpublisher{Springer}.
\end{bbook}
\endbibitem

\bibitem{104}
\begin{bbook}[author]
\bauthor{\bsnm{Gilbarg},~\bfnm{D.}\binits{D.}} \AND
  \bauthor{\bsnm{Trudinger},~\bfnm{N.~S.}\binits{N.~S.}}
(\byear{1983}).
\btitle{Elliptic partial differential equations of second order}.
\bpublisher{Springer}.
\end{bbook}
\endbibitem

\bibitem{57}
\begin{barticle}[author]
\bauthor{\bsnm{Girolami},~\bfnm{M.}\binits{M.}} \AND
  \bauthor{\bsnm{Calderhead},~\bfnm{B.}\binits{B.}}
(\byear{2011}).
\btitle{Riemann manifold {L}angevin and {H}amiltonian {M}onte {C}arlo methods}.
\bjournal{J. R. Stat. Soc. Ser. B. Stat. Methodol.}
\bvolume{73}
\bpages{123-214}.
\end{barticle}
\endbibitem

\bibitem{65}
\begin{barticle}[author]
\bauthor{\bsnm{Gobet},~\bfnm{E.}\binits{E.}}
(\byear{2001}).
\btitle{Efficient schemes for the weak approximation of reflected diffusions}.
\bjournal{Monte Carlo Methods Appl.}
\bvolume{7}
\bpages{193-202}.
\end{barticle}
\endbibitem

\bibitem{64}
\begin{barticle}[author]
\bauthor{\bsnm{Gobet},~\bfnm{E.}\binits{E.}}
(\byear{2001}).
\btitle{Euler schemes and half-space approximation for the simulation of
  diffusion in a domain}.
\bjournal{ESAIM Probab. Stat.}
\bvolume{5}
\bpages{261-297}.
\end{barticle}
\endbibitem

\bibitem{bio2}
\begin{barticle}[author]
\bauthor{\bsnm{Hanks},~\bfnm{E.~M.}\binits{E.~M.}},
  \bauthor{\bsnm{Johnson},~\bfnm{D.~S.}\binits{D.~S.}} \AND
  \bauthor{\bsnm{Hooten},~\bfnm{M.~B.}\binits{M.~B.}}
(\byear{2017}).
\btitle{Reflected stochastic differential equation models for constrained
  animal movement}.
\bjournal{J. Agr. Biol. Envir. Stat.}
\bvolume{22}
\bpages{353--372}.
\end{barticle}
\endbibitem

\bibitem{17}
\begin{bbook}[author]
\bauthor{\bsnm{Ikeda},~\bfnm{N.}\binits{N.}} \AND
  \bauthor{\bsnm{Watanabe},~\bfnm{S.}\binits{S.}}
(\byear{1989}).
\btitle{Stochastic differential equations and diffusion processes}.
\bpublisher{North-Holland}.
\end{bbook}
\endbibitem

\bibitem{80}
\begin{bbook}[author]
\bauthor{\bsnm{Kabanov},~\bfnm{Y.}\binits{Y.}} \AND
  \bauthor{\bsnm{Safarian},~\bfnm{M.}\binits{M.}}
(\byear{2010}).
\btitle{Markets with transaction costs}.
\bpublisher{Springer}, \baddress{Berlin}.
\end{bbook}
\endbibitem

\bibitem{73}
\begin{bbook}[author]
\bauthor{\bsnm{Klein},~\bfnm{J.~P.}\binits{J.~P.}} \AND
  \bauthor{\bsnm{Moeschberger},~\bfnm{M.~L.}\binits{M.~L.}}
(\byear{2005}).
\btitle{Survival analysis: techniques for censored and truncated data}.
\bpublisher{Springer}.
\end{bbook}
\endbibitem

\bibitem{bio3}
\begin{barticle}[author]
\bauthor{\bsnm{Kuritz},~\bfnm{K.}\binits{K.}},
  \bauthor{\bsnm{St$\ddot{\text{o}}$hr},~\bfnm{D.}\binits{D.}},
  \bauthor{\bsnm{Pollak},~\bfnm{N.}\binits{N.}} \AND
  \bauthor{\bsnm{Allg$\ddot{\text{o}}$wer},~\bfnm{F.}\binits{F.}}
(\byear{2017}).
\btitle{On the relationship between cell cycle analysis with ergodic principles
  and age-structured cell population models}.
\bjournal{J. Theoret. Biol.}
\bvolume{414}
\bpages{91--102}.
\end{barticle}
\endbibitem

\bibitem{3}
\begin{bbook}[author]
\bauthor{\bsnm{Ladyzhenskaya},~\bfnm{O.~A.}\binits{O.~A.}},
  \bauthor{\bsnm{Solonnikov},~\bfnm{V.~A.}\binits{V.~A.}} \AND
  \bauthor{\bsnm{Ural'tseva},~\bfnm{N.~N.}\binits{N.~N.}}
(\byear{1968}).
\btitle{Linear and quasi-linear equations of parabolic type}.
\bseries{Trans. Math. Monog.}
\bvolume{23}.
\bpublisher{Amer. Math. Soc.}, \baddress{Providence, RI}.
\end{bbook}
\endbibitem

\bibitem{100}
\begin{bbook}[author]
\bauthor{\bsnm{Leimkuhler},~\bfnm{B.}\binits{B.}} \AND
  \bauthor{\bsnm{Matthews},~\bfnm{C.}\binits{C.}}
(\byear{2015}).
\btitle{Molecular dynamics with deterministic and stochastic numerical
  methods}.
\bpublisher{Springer}, \baddress{Berlin}.
\end{bbook}
\endbibitem

\bibitem{103}
\begin{barticle}[author]
\bauthor{\bsnm{Leimkuhler},~\bfnm{B.}\binits{B.}} \AND
  \bauthor{\bsnm{Shang},~\bfnm{X.}\binits{X.}}
(\byear{2016}).
\btitle{Adaptive thermostats for noisy gradient systems}.
\bjournal{SIAM J. Sci. Comput.}
\bvolume{38}
\bpages{A712--A736}.
\end{barticle}
\endbibitem

\bibitem{leite2019}
\begin{barticle}[author]
\bauthor{\bsnm{Leite},~\bfnm{S.~C.}\binits{S.~C.}} \AND
  \bauthor{\bsnm{Williams},~\bfnm{R.~J.}\binits{R.~J.}}
(\byear{2019}).
\btitle{A constrained {L}angevin approximation for chemical reaction networks}.
\bjournal{Ann. Appl. Probab.}
\bvolume{29}
\bpages{1541--1608}.
\end{barticle}
\endbibitem

\bibitem{107}
\begin{barticle}[author]
\bauthor{\bsnm{Lieberman},~\bfnm{G.~M.}\binits{G.~M.}}
(\byear{1986}).
\btitle{Intermediate {S}chauder estimates for oblique derivative problems}.
\bjournal{Arch. Ration. Mech. Anal.}
\bvolume{93}
\bpages{129-134}.
\end{barticle}
\endbibitem

\bibitem{69}
\begin{barticle}[author]
\bauthor{\bsnm{Lio},~\bfnm{F.~D.}\binits{F.~D.}}
(\byear{2008}).
\btitle{Large time behavior of solutions to parabolic equations with {N}eumann
  boundary conditions}.
\bjournal{J. Math. Anal Appl.}
\bvolume{339}
\bpages{384-398}.
\end{barticle}
\endbibitem

\bibitem{11}
\begin{barticle}[author]
\bauthor{\bsnm{Lions},~\bfnm{P.~L.}\binits{P.~L.}} \AND
  \bauthor{\bsnm{Sznitman},~\bfnm{A.~S.}\binits{A.~S.}}
(\byear{1984}).
\btitle{Stochastic differential equations with reflecting boundary conditions}.
\bjournal{Comm. Pure Appl. Math.}
\bvolume{37}
\bpages{511-537}.
\end{barticle}
\endbibitem

\bibitem{14}
\begin{bphdthesis}[author]
\bauthor{\bsnm{Liu},~\bfnm{Y.}\binits{Y.}}
(\byear{1993}).
\btitle{Numerical approaches to stochastic differential equations with boundary
  conditions},
\btype{PhD thesis},
\bpublisher{University of Purdue}.
\end{bphdthesis}
\endbibitem

\bibitem{lunbook}
\begin{bbook}[author]
\bauthor{\bsnm{Lunardi},~\bfnm{A.}\binits{A.}}
(\byear{1995}).
\btitle{Analytic semigroups and optimal regularity in parabolic problems}.
\bpublisher{Springer}.
\end{bbook}
\endbibitem

\bibitem{76}
\begin{bbook}[author]
\bauthor{\bsnm{Mardia},~\bfnm{K.~V.}\binits{K.~V.}} \AND
  \bauthor{\bsnm{Jupp},~\bfnm{P.~E.}\binits{P.~E.}}
(\byear{2009}).
\btitle{Directional statistics}.
\bpublisher{Wiley}.
\end{bbook}
\endbibitem

\bibitem{77}
\begin{barticle}[author]
\bauthor{\bsnm{Mardia},~\bfnm{K.~V.}\binits{K.~V.}},
  \bauthor{\bsnm{Taylor},~\bfnm{C.~C.}\binits{C.~C.}} \AND
  \bauthor{\bsnm{Subramaniam},~\bfnm{G.~K.}\binits{G.~K.}}
(\byear{2007}).
\btitle{Protein bioinformatics and mixtures of bivariate von {M}ises
  distributions for angular data}.
\bjournal{Biometrics}
\bvolume{63}
\bpages{505-512}.
\end{barticle}
\endbibitem

\bibitem{41}
\begin{barticle}[author]
\bauthor{\bsnm{Mattingly},~\bfnm{J.}\binits{J.}},
  \bauthor{\bsnm{Stuart},~\bfnm{A.}\binits{A.}} \AND
  \bauthor{\bsnm{Tretyakov},~\bfnm{M.~V.}\binits{M.~V.}}
(\byear{2010}).
\btitle{Convergence of numerical time-averaging and stationary measures via
  {P}oisson equations}.
\bjournal{SIAM J. Numer. Anal.}
\bvolume{48}
\bpages{552-577}.
\end{barticle}
\endbibitem

\bibitem{1}
\begin{barticle}[author]
\bauthor{\bsnm{Milstein},~\bfnm{G.~N.}\binits{G.~N.}}
(\byear{1997}).
\btitle{Application of the numerical integration of stochastic equations to
  solving boundary-value problems with {N}eumann's boundary conditions}.
\bjournal{Theory Probab. Appl.}
\bvolume{41}
\bpages{170--177}.
\end{barticle}
\endbibitem

\bibitem{46}
\begin{barticle}[author]
\bauthor{\bsnm{Milstein},~\bfnm{G.~N.}\binits{G.~N.}} \AND
  \bauthor{\bsnm{Tretyakov},~\bfnm{M.~V.}\binits{M.~V.}}
(\byear{1999}).
\btitle{Simulation of a space-time bounded diffusion}.
\bjournal{Ann. Appl. Probab.}
\bvolume{9}
\bpages{732--779}.
\end{barticle}
\endbibitem

\bibitem{82}
\begin{barticle}[author]
\bauthor{\bsnm{Milstein},~\bfnm{G.~N.}\binits{G.~N.}} \AND
  \bauthor{\bsnm{Tretyakov},~\bfnm{M.~V.}\binits{M.~V.}}
(\byear{2003}).
\btitle{The simplest random walks for the {D}irichlet problem}.
\bjournal{Theory Probab. Appl.}
\bvolume{47}
\bpages{53--68}.
\end{barticle}
\endbibitem

\bibitem{49}
\begin{bbook}[author]
\bauthor{\bsnm{Milstein},~\bfnm{G.~N.}\binits{G.~N.}} \AND
  \bauthor{\bsnm{Tretyakov},~\bfnm{M.~V.}\binits{M.~V.}}
(\byear{2004}).
\btitle{Stochastic numerics for mathematical physics}.
\bpublisher{Springer}.
\end{bbook}
\endbibitem

\bibitem{40}
\begin{barticle}[author]
\bauthor{\bsnm{Milstein},~\bfnm{G.~N.}\binits{G.~N.}} \AND
  \bauthor{\bsnm{Tretyakov},~\bfnm{M.~V.}\binits{M.~V.}}
(\byear{2007}).
\btitle{Computing ergodic limits for {L}angevin equations}.
\bjournal{Phys. D}
\bvolume{229}
\bpages{81-95}.
\end{barticle}
\endbibitem

\bibitem{81}
\begin{barticle}[author]
\bauthor{\bsnm{Milstein},~\bfnm{G.~N.}\binits{G.~N.}} \AND
  \bauthor{\bsnm{Tretyakov},~\bfnm{M.~V.}\binits{M.~V.}}
(\byear{2002}).
\btitle{{A probabilistic approach to the solution of the Neumann problem for
  nonlinear parabolic equations}}.
\bjournal{IMA J. Numer. Anal.}
\bvolume{22}
\bpages{599-622}.
\end{barticle}
\endbibitem

\bibitem{50}
\begin{bbook}[author]
\bauthor{\bsnm{Miranda},~\bfnm{C.}\binits{C.}}
(\byear{1969}).
\btitle{Partial differential equations of elliptic type}.
\bpublisher{Springer}.
\end{bbook}
\endbibitem

\bibitem{53}
\begin{barticle}[author]
\bauthor{\bsnm{Molchanov},~\bfnm{S.~A.}\binits{S.~A.}}
(\byear{1965}).
\btitle{{On a problem in the theory of diffusion processes.}}
\bjournal{{Theory Probab. Appl.}}
\bvolume{9}
\bpages{472--477}.
\end{barticle}
\endbibitem

\bibitem{72}
\begin{barticle}[author]
\bauthor{\bsnm{Nagasawa},~\bfnm{M.}\binits{M.}} \AND
  \bauthor{\bsnm{Sato},~\bfnm{K.}\binits{K.}}
(\byear{1963}).
\btitle{Some theorems on time change and killing of {M}arkov processes}.
\bjournal{Kodai Math. Sem. Rep.}
\bvolume{15}
\bpages{195--219}.
\end{barticle}
\endbibitem

\bibitem{108}
\begin{bbook}[author]
\bauthor{\bsnm{Hernandez-Lerma},~\bfnm{O.}\binits{O.}} \AND
  \bauthor{\bsnm{Lasserre},~\bfnm{J.~B.}\binits{J.~B.}}
(\byear{2003}).
\btitle{Markov chains and invariant probabilities}.
\bseries{Progress in Mathematics}.
\bpublisher{Birkh{\"a}user Basel}.
\end{bbook}
\endbibitem

\bibitem{71}
\begin{barticle}[author]
\bauthor{\bsnm{Payne},~\bfnm{L.~E.}\binits{L.~E.}} \AND
  \bauthor{\bsnm{Philippin},~\bfnm{G.~A.}\binits{G.~A.}}
(\byear{1995}).
\btitle{Decay bounds for solutions of second order parabolic problems and their
  derivatives}.
\bjournal{Math. Models Methods Appl. Sci.}
\bvolume{05}
\bpages{95-110}.
\end{barticle}
\endbibitem

\bibitem{9}
\begin{barticle}[author]
\bauthor{\bsnm{Pettersson},~\bfnm{R.}\binits{R.}}
(\byear{1997}).
\btitle{Penalization schemes for reflecting stochastic differential equations}.
\bjournal{Bernoulli}
\bvolume{3}
\bpages{403--414}.
\end{barticle}
\endbibitem

\bibitem{chem1}
\begin{barticle}[author]
\bauthor{\bsnm{Pineda},~\bfnm{M.}\binits{M.}} \AND
  \bauthor{\bsnm{Stamatakis},~\bfnm{M.}\binits{M.}}
(\byear{2018}).
\btitle{On the stochastic modelling of surface reactions through reflected
  chemical {L}angevin equations}.
\bjournal{Comput. Chem. Eng.}
\bvolume{117}
\bpages{145--158}.
\end{barticle}
\endbibitem

\bibitem{RT96}
\begin{barticle}[author]
\bauthor{\bsnm{Roberts},~\bfnm{G.~O.}\binits{G.~O.}} \AND
  \bauthor{\bsnm{Tweedie},~\bfnm{R.~L.}\binits{R.~L.}}
(\byear{1996}).
\btitle{Exponential convergence of {L}angevin distributions and their discrete
  approximations}.
\bjournal{Bernoulli}
\bvolume{2}
\bpages{341--363}.
\end{barticle}
\endbibitem

\bibitem{12}
\begin{barticle}[author]
\bauthor{\bsnm{Saisho},~\bfnm{Y.}\binits{Y.}}
(\byear{1987}).
\btitle{Stochastic differential equations for multi-dimensional domain with
  reflecting boundary}.
\bjournal{Probab. Theory Related Fields}
\bvolume{74}
\bpages{455--477}.
\end{barticle}
\endbibitem

\bibitem{30}
\begin{barticle}[author]
\bauthor{\bsnm{Sato},~\bfnm{K.}\binits{K.}} \AND
  \bauthor{\bsnm{Ueno},~\bfnm{T.}\binits{T.}}
(\byear{1965}).
\btitle{Multi-dimensional diffusion and the {M}arkov process on the boundary}.
\bjournal{J. Math. Kyoto Univ.}
\bvolume{4}
\bpages{529--605}.
\end{barticle}
\endbibitem

\bibitem{8}
\begin{barticle}[author]
\bauthor{\bsnm{Slominski},~\bfnm{L.}\binits{L.}}
(\byear{2001}).
\btitle{Euler's approximations of solutions of {SDE}s with reflecting
  boundary}.
\bjournal{Stochastic Process. Appl.}
\bvolume{94}
\bpages{317--337}.
\end{barticle}
\endbibitem

\bibitem{42}
\begin{barticle}[author]
\bauthor{\bsnm{Talay},~\bfnm{D.}\binits{D.}}
(\byear{1990}).
\btitle{Second-order discretization schemes of stochastic differential systems
  for the computation of the invariant law}.
\bjournal{Stoch. Stoch. Reports}
\bvolume{29}
\bpages{13--36}.
\end{barticle}
\endbibitem

\bibitem{10}
\begin{barticle}[author]
\bauthor{\bsnm{Tanaka},~\bfnm{H.}\binits{H.}}
(\byear{1979}).
\btitle{Stochastic differential equations with reflecting boundary condition in
  convex regions}.
\bjournal{Hiroshima Math. J.}
\bvolume{9}
\bpages{163--177}.
\end{barticle}
\endbibitem

\bibitem{36}
\begin{barticle}[author]
\bauthor{\bsnm{Volkonskii},~\bfnm{V.}\binits{V.}}
(\byear{1958}).
\btitle{Random substitution of time in strong {M}arkov processes}.
\bjournal{Theory Probab. Appl.}
\bvolume{3}
\bpages{310-326}.
\end{barticle}
\endbibitem

\bibitem{101}
\begin{binproceedings}[author]
\bauthor{\bsnm{Welling},~\bfnm{M.}\binits{M.}} \AND
  \bauthor{\bsnm{Teh},~\bfnm{Y.~W.}\binits{Y.~W.}}
(\byear{2011}).
\btitle{Bayesian learning via stochastic gradient {L}angevin dynamics}.
In \bbooktitle{Proceedings of the 28th International Conference on Machine
  Learning}
\bpages{681--688}.
\end{binproceedings}
\endbibitem

\bibitem{20}
\begin{bbook}[author]
\bauthor{\bsnm{Wentzell},~\bfnm{A.~D.}\binits{A.~D.}}
(\byear{1981}).
\btitle{A course in the theory of stochastic processes}.
\bpublisher{McGraw-Hill}.
\end{bbook}
\endbibitem

\end{thebibliography}

\end{document}